\crefname{equation}{}{}
\apptocmd{\sloppy}{\hbadness 10000\relax}{}{} 
\crefname{algocf}{Algorithm}{Algorithms}
\crefname{equation}{}{} 
\crefname{conjecture}{Conjecture}{Conjectures} 
\colorlet{refkey}{orange!20}
\colorlet{labelkey}{blue!30}
\crefname{algocf}{Algorithm}{Algorithms}
\numberwithin{equation}{section}
\newtheorem{theorem}{Theorem}[section]
\newtheorem{lemma}[theorem]{Lemma}
\newtheorem{claim}[theorem]{Claim}
\crefname{claim}{Claim}{Claims}
\newtheorem{corollary}[theorem]{Corollary}
\newtheorem*{question*}{Question}
\newtheorem{fact}[theorem]{Fact}
\theoremstyle{definition}
\newtheorem{definition}[theorem]{Definition}
\newtheorem*{definition*}{Definition}
\theoremstyle{remark}
\newtheorem*{remark}{Remark}
\newcommand{\norm}[1]{\left\lVert#1\right\rVert}
\newcommand{\snorm}[1]{\lVert#1\rVert}
\newcommand{\ceil}[1]{\left\lceil #1 \right\rceil}
\newcommand{\E}{\mathop{\mathbb{E}}}
\newcommand{\mb}{\mathbb}
\newcommand{\mbf}{\mathbf}
\newcommand{\mbm}{\mathbbm}
\newcommand{\mc}{\mathcal}
\newcommand{\on}{\operatorname}
\newcommand{\wh}{\widehat}
\newcommand{\wt}{\widetilde}
\newcommand{\eps}{\varepsilon}
\newcommand{\imod}[1]{~\mathrm{mod}~#1}
\renewcommand{\bar}{\overline}
\newcommand{\rank}{\mathsf{rank}}
\newcommand{\supp}{\mathsf{supp}}
\newcommand{\wte}{\widetilde{\eps}}
\newif\ifshowcomments
\title{Quasipolynomial bounds for the corners theorem}
\author[A1]{Michael Jaber}
\address{Department of Computer Science, University of Texas, Austin, TX 78712}
\email{mjjaber@cs.utexas.edu}
\author[A2]{Yang P. Liu}
\address{Computer Science Department, Carnegie Mellon University, Pittsburgh, PA 15213}
\email{yangl7@andrew.cmu.edu}
\author[A3]{Shachar Lovett}
\address{Department of Computer Science and Engineering, University of California, San Diego, CA 92093}
\email{slovett@ucsd.edu}
\author[A4]{Anthony Ostuni}
\address{Department of Computer Science and Engineering, University of California, San Diego, CA 92093}
\email{aostuni@ucsd.edu}
\author[A5]{Mehtaab Sawhney}
\address{Department of Mathematics, Columbia University, New York, NY 10027}
\email{m.sawhney@columbia.edu}
\begin{document}

\begin{abstract}
Let $G$ be a finite abelian group and $A$ be a subset of $G \times G$ which is corner--free, meaning that there are no $x, y \in G$ and $d \in G \setminus \{0\}$ such that $(x, y)$, $(x+d, y)$, $(x, y+d) \in A$. We prove that
\[|A| \le |G|^2 \cdot \exp(-(\log |G|)^{\Omega(1)}).\]
As a consequence, we obtain polynomial (in the input length) lower bounds on the nondeterministic communication complexity of \textsf{Exactly-N} in the 3-player Number-on-Forehead model. We also obtain the first ``reasonable'' lower bounds on the coloring version of the $3$-dimensional corners problem, as well as on the nondeterministic communication complexity of \textsf{Exactly-N} in the 4-player Number-on-Forehead model.
\end{abstract}

\maketitle
\setcounter{tocdepth}{1}
\tableofcontents

\newpage 

\section{Introduction}

The cornerstone of additive combinatorics is Szemer\'{e}di's theorem \cite{Sze70,Sze75} which states that any dense subset of the integers contains arbitrarily long arithmetic progressions. More precisely, given any positive integer $k$ and $\delta>0$, there exists $N = N(k,\delta)$ such that if $A\subseteq [N]$ and $|A|\ge \delta N$ then $A$ contains a nontrivial $k$-term arithmetic progression. Equivalently, the size of the largest subset $A$ with no $k$-term arithmetic progression is at most $o_k(N)$.

Since Szemer\'{e}di's original work, there have been several different proofs of Szemer\'{e}di's result and various generalizations. One such generalization, the multidimensional Szemer\'{e}di's theorem of Furstenberg and Katznelson \cite{FK78}, extending the ergodic proof of Szemer\'{e}di's theorem of Furstenberg \cite{Fur77}, allows one to find arbitrary patterns in dense subsets of multidimensional grids. More precisely, given a finite pattern $\mc{P}$ in $d$ dimensions and $A\subseteq [N]^d$ which contains no homothetic copy of $\mc{P}$, then $|A| = o_{\mc{P}}(N^d)$. While the result of Furstenberg and Katznelson is a tremendous breakthrough, the use of ergodic theory prevents one from obtaining an explicit decay rate in the $o(N^d)$ bound. A quantitative decay rate was provided independently by works of Nagle, R\"odl, Schacht, and Skokan \cite{NRS06,RS04} and Gowers \cite{Gow07} due to versions of the hypergraph regularity lemma (with a subsequent alternate proof given by Tao \cite{Tao06}). However the quantitative decay rates provided by these works are exceedingly weak, and save at best powers of $\log^{\ast}(N)$ over the trivial bound. Here the iterated logarithm $\log^{\ast}(N)$ is the number of times one takes logarithms of $N$ to reach a number less than $e$.

The project of providing ``reasonable'' bounds (i.e., a density saving of at least a finite number of iterated logarithms) for the multidimensional Szemer\'{e}di's theorem has been reiterated several times by Gowers \cite{Gow98,Gow01}. This appears especially natural in light of seminal work of Gowers \cite{Gow98a,Gow01a} which proved the first reasonable decay rate for sets avoiding $k$-term arithmetic progressions. Precisely, Gowers proved that the densest subset of $[N]$ avoiding $k$-term arithmetic progressions has size $\ll N(\log\log N)^{-\Omega_k(1)}$. Despite a host of subsequent work in higher order Fourier analysis, progress towards obtaining effective bounds for the multidimensional Szemer\'{e}di's theorem has been limited.

In \cite{Gow01a}, Gowers asked whether one could obtain reasonable bounds for the \emph{corners problem}, which is the simplest case of the multidimensional Szemer\'{e}di's theorem: how large can $A \subseteq [N] \times [N]$ be while avoiding the pattern $(x, y)$, $(x+d, y)$, $(x, y+d)$ for $x, y, d \in \mb{Z}$ and $d \neq 0$? This was answered by Shkredov \cite{Shk05,Shk06}, who proved that if $A\subseteq [N] \times [N]$ avoids a corner then $|A| \le N^2(\log\log N)^{-\Omega(1)}$. Since then, the only additional pattern for which reasonable bounds are known is the $L$-shape $(x,y),~(x+d,y),~(x+2d,y),~(x, y+d)$ (over $\mb{F}_p^n$) due to (difficult) work of Peluse \cite{Pel24}.

We note that even considering just corner--free sets has a rich history. Before the work of Furstenberg and Katznelson, Ajtai and Szemer{\'e}di \cite{ASz74} were able to prove that corner--free subsets of $[N]\times [N]$ have size $o(N^2)$. This proof for instance inspired a portion of the combinatorial proof of the Density Hales--Jewett theorem of Polymath \cite{Pol12}; however due to needing to find growing length progressions via Szemer{\'e}di's theorem the proof gives an exceedingly weak bound. The well--known alternate proof of the corners theorem, due to Solymosi \cite{Sol03} via an application of the regularity/triangle removal lemma, also currently comes with weak bounds. 

The corners theorem is also known, via a standard projection argument (see \cite[Section~2.4]{Zha23}), to imply the case of Szemer{\'e}di's theorem for $3$-term arithmetic progressions, more commonly known as Roth's theorem \cite{Roth53}. Introducing the density-increment method, Roth proved that if $A\subseteq [N]$ does not contain a $3$-term arithmetic progression (henceforth $3$-AP) then $|A|\ll N(\log\log N)^{-1}$. Since then, there has been a long line of work devoted to improving this bound, including results by Heath--Brown \cite{HB87}, Szemer\'{e}di \cite{SZ90}, Bourgain \cite{Bou99,Bou08}, Sanders \cite{San11,San12}, Bloom \cite{Bloom16} and Bloom and Sisask \cite{BS21}; however these works at best gave a density savings of slightly better than a single logarithm. The best known lower bounds for $3$-AP free sets and corners are all derived from a construction of Behrend \cite{Beh46} which found sets of density $\ge e^{-O((\log N)^{1/2})}$ avoiding these patterns (see \cite{LS21, Gre21,Hun22} for improvements in the case of corners of the constant in the exponent and \cite{EHPS24} for improvements in the $3$-AP case). 

In a remarkable breakthrough work, Kelley and Meka \cite{KM23} recently nearly matched this bound, and prove that the densest $3$-AP avoiding set $A$ has size bounded by $Ne^{-\Omega((\log N)^{1/12})}$. The constant $1/12$ has been subsequently refined in work of Bloom and Sisask \cite{BS23b} to $1/9$. 

Given the work of Kelley and Meka \cite{KM23} and the relation between corners and $3$-AP free sets, many researchers have speculated whether the recent breakthrough of Kelley and Meka \cite{KM23} could be used to improve bounds for corner--free sets (see e.g.~\cite{mekatalk} and \cite{kelleytalk}). Peluse \cite[Problem~1.18]{Pel24b} even asked whether methods underlying Kelley--Meka could allow one to achieve a savings of one logarithm over Shkredov's bound. Preliminary work in this direction considered the easier problem of obtaining quasipolynomial bounds for so-called ``skew corners'' \cite{Mil24,JL024}; however the bounds of Shkredov remained unimproved. In this work, we provide quasipolynomial bounds for the corners theorem, achieving a doubly-exponential improvement over the work of Shkredov \cite{Shk06} and nearly matching Behrend's lower bound.

\begin{theorem}\label{thm:main}
There exists a constant $c>0$ such that the following holds. Let $(G, +)$ be a finite abelian group. Let $A \subseteq G \times G$ with no $x,y,d \in G$ with $d \neq 0$ such that $(x,y)$, $(x+d,y)$, $(x,y+d) \in A$. Then \[|A| \le |G|^{2} \cdot \exp(-c(\log |G|)^{1/600}).\]
\end{theorem}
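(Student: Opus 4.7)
The plan is to execute a density-increment argument in the Kelley--Meka style, adapted to the two-dimensional geometry of corners. Suppose $A \subseteq G \times G$ is corner-free of density $\alpha = |A|/|G|^2$. I will aim to find a structured subset $S \subseteq G \times G$ --- ideally of the form $(t_1 + B_1) \times (t_2 + B_2)$, where $B_1, B_2$ are cosets of subgroups or Bohr sets inside $G$ --- on which the relative density of $A$ exceeds $\alpha(1 + c\alpha^{O(1)})$. Iterating this $O(\alpha^{-O(1)})$ times, and carefully tracking the loss in ambient size at each step, should terminate when the density reaches $\Omega(1)$ and yield the stated bound, provided the per-step loss compounds to only a factor of $\exp(-(\log |G|)^{\Omega(1)})$.

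A single density increment would be obtained as follows. The counting form
\[ T(f_1, f_2, f_3) := \mathbb{E}_{x,y,d \in G}\, f_1(x,y)\, f_2(x+d,y)\, f_3(x,y+d) \]
satisfies $T(1_A, 1_A, 1_A) = \alpha/|G|$ on a corner-free $A$, whereas for a random set of density $\alpha$ it would be $\alpha^3$, so $|T(1_A - \alpha, 1_A, 1_A)| \ge \alpha^3/2$ once $|G|$ exceeds a small power of $1/\alpha$. Two Cauchy--Schwarz unfoldings along the row and column directions then bound the left-hand side by a two-dimensional box norm of $1_A - \alpha$, so this box norm is $\Omega(\alpha^{O(1)})$. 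The heart of the argument is to convert that largeness into an honest density increment on a product of structured sets. Following Kelley--Meka, I would first regularize $A$ so that its row and column marginals are simultaneously ``spread'' in an $L^p$ sense, then use a Croot--Sisask / dependent-random-choice almost-periodicity argument so that convolutions of $1_A$ with itself along either direction concentrate on a low-dimensional Bohr-type set, and finally apply Bogolyubov/Chang-style analysis on the resulting product Bohr structure to extract the density increment on a product $S = B_1 \times B_2$.

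The main obstacle will be the simultaneous row-and-column spreadness plus almost-periodicity step. The one-dimensional Kelley--Meka machinery controls $L^p$ correlation between a set and a single Bohr set, but the corners configuration intrinsically couples two different directions, so one must control interaction with a \emph{product} of Bohr sets while both spreadness hypotheses hold at once. The difficulty is to regularize in both coordinates without each regularization step destroying the structure produced by the other, and to ensure that the Bohr rank and radius compound acceptably across the $\alpha^{-O(1)}$ iterations. The final exponent $1/600$ should emerge from bookkeeping the Cauchy--Schwarz unfoldings, the Chang-type dimension loss in the almost-periodicity step, and the compounding across iterations. Once the single-step increment is in hand, the remainder is routine: verify that the substructure surviving after $\mathrm{polylog}(|G|)$ iterations still has size at least $|G| \cdot \exp(-(\log |G|)^{1/600})$, and read off the theorem.
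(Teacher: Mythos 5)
Your high-level plan --- density increment onto structured subsets, spreadness, almost-periodicity --- is pointed in the right direction, but as stated it reproduces Shkredov's framework, not the paper's, and it cannot produce a quasipolynomial bound.

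The central gap is in the increment economics. You propose obtaining a density increment $\alpha \to \alpha(1 + c\alpha^{O(1)})$ and iterating $O(\alpha^{-O(1)})$ times; combined with each pseudorandomization step passing to a piece of relative density $\delta^{-O(1)}$ (Chang/Bogolyubov-type), this is exactly Shkredov's argument and produces only an $N^2/(\log\log N)^c$ bound, not a quasipolynomial one. The paper spells this out explicitly in the overview: one logarithm is lost from the $\alpha^{-O(1)}$ iteration count, another from the $L^2$-energy pseudorandomization. To break through, the per-step density gain must be a multiplicative $(1+\varepsilon)$ for an \emph{absolute constant} $\varepsilon$, so that only $O(\log(1/\alpha))$ iterations are needed; this is obtained not via Cauchy--Schwarz but via the H\"older manipulation of Kelley--Meka type (the $k \approx \log(1/\alpha)$ lifting in the overview). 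Your proposal never departs from Cauchy--Schwarz, so the $\alpha^{O(1)}$ increment is baked in.

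A second, independent gap concerns the container geometry. You want the increment to land on a genuine rectangle $B_1 \times B_2$, but the sifting step that extracts a density increment from a large $(2,k)$-grid norm naturally produces increments on sets of the form $\{(x,y): x \in X,\; x+y \in D\}$ or $\{(x,y): y \in Y,\; x+y \in D\}$, not on pure rectangles. The paper therefore maintains $(X,Y,D)$-containers $S(X,Y,D) = \{(x,y): x \in X, y \in Y, x+y \in D\}$ rather than rectangles, and has to prove that such containers behave well (a ``relative'' counting and sifting theory, with $T$ acting as a sparse pseudorandom majorant). Moreover, there is a critical asymmetry: the $D$-side loses only $\exp(-O(\log(1/\alpha)^{O(1)}))$ in density per step while the $X,Y$-sides lose more, and the argument deliberately ``drops'' the $D$ indicator at a key moment rather than pseudorandomizing it. None of these elements --- the relative sifting theorem for functions bounded by a combinatorially spread majorant, the asymmetry between the $D$ and $X,Y$ directions, or the spread-decomposition replacement for $L^2$-energy increment in the pseudorandomization --- appears in your sketch, and all of them are needed to make the loss per step compound quasipolynomially rather than as a tower.
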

By embedding $[N]$ into the cyclic group $\mb{Z}/(4N\mb{Z})$ one may obtain a similar conclusion for corner--free subsets of $[N]\times [N]$. Furthermore a trick of Green (see \cite[Lemma~6.2]{Shk06}) allows one to prove essentially identical bounds only avoiding corners with $d>0$. Our techniques appear unable to yield results approaching the $1/2$ exponent in Behrend's lower bound, so we do not attempt to optimize the constant $1/600$.

We remark that the corners problem has a direct connection to theoretical computer science. In particular, the seminal work of Chandra, Furst, and Lipton \cite{CFL83} which introduced the Number-on-Forehead (NOF) communication complexity model noted an equivalence between the complexity of computing the \textsf{Exactly-N} function for $3$ parties and bounds for the the corners problem. We elaborate on this connection and state the associated corollaries in \cref{sec:nof}.

Moreover, our results can be combined with a proof of Graham and Solymosi \cite{GS06} to provide improved bounds for the coloring variant of finding $3$-dimensional corners. The following is a consequence of \cref{thm:main}.
\begin{corollary}\label{cor:3d}
For a sufficiently small constant $c$, and any abelian group $G$ and coloring of $G \times G \times G$ with $c \log \log \log |G|$ colors, there are $x, y, z, d \in G$ with $d \neq 0$ such that $(x,y,z)$, $(x+d,y,z)$, $(x,y+d,z)$, and $(x,y,z+d)$ are all of the same color.
\end{corollary}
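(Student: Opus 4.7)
The plan is to combine \cref{thm:main} with the reduction of Graham and Solymosi~\cite{GS06}, which turns density bounds for corner-free subsets of $G^2$ into coloring bounds for monochromatic $3$D corners in $G^3$.

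Assume for contradiction that $\chi : G \times G \times G \to [r]$ is an $r$-coloring with no monochromatic $3$D corner, and aim to upper bound $r$ in terms of $|G|$. By an initial pigeonhole, some color class $A \subseteq G^3$ has density at least $1/r$. Following Graham--Solymosi, I plan to extract a $2$D corner-free subset from $A$ by a sequence of Cauchy--Schwarz and pigeonhole passes across the three coordinate directions. Concretely, the first pass considers, for each nonzero $d \in G$, the ``vertical pair'' set $B_d = \{(x,y,z) : (x,y,z),\,(x,y,z+d) \in A\}$, and applies Cauchy--Schwarz on the $z$-fibers of $A$ to obtain a nonzero $d_z$ with $|B_{d_z}| \gtrsim |G|^3/r^2$; a typical $z$-slice of $B_{d_z}$ is then a subset of $G^2$ of density $\gtrsim 1/r^2$.

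The catch is that the no-$3$D-corner hypothesis forbids $2$D corners in the slices of $B_{d_z}$ only in the specific direction $d_z$, whereas \cref{thm:main} supplies a $2$D corner of an a priori arbitrary direction. The Graham--Solymosi fix is to iterate the analogous density increment in the $x$- and $y$-coordinate directions, pinning the eventual corner's direction to $d_z$. The three iterations compose into an iterated-exponential density loss in $r$, leaving a genuinely $2$D corner-free subset of $G^2$ of density at least $\exp(-\exp(\exp(O(r))))$. Applying \cref{thm:main} to this final set forces $\exp(-\exp(\exp(O(r)))) \leq \exp(-c(\log|G|)^{1/600})$, which rearranges to $r \leq c' \log\log\log|G|$, as claimed.

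The principal obstacle is the direction-alignment step: without a further iteration, \cref{thm:main} gives only a $2$D corner whose direction is unrelated to the distinguished $d_z$ from the first pigeonhole, and hence does not immediately lift to a $3$D monochromatic corner. The three density-increment passes across the three coordinate axes are what overcome this, and their composition is the source of the three nested logarithms in the statement of the corollary.
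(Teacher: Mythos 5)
You correctly locate the main obstacle: after the first pigeonhole, the no-3D-corner hypothesis forbids 2D corners only of the specific direction $d_z$ in slices of $B_{d_z}$, whereas \cref{thm:main} concerns corners of arbitrary direction. But the fix you sketch --- iterating the density increment across the $x$- and $y$-axes to ``pin'' the eventual corner's direction to $d_z$ --- is not a mechanism: each such pass would introduce its own difference element $d_x$ or $d_y$, and nothing in your outline forces these to coincide with $d_z$. Relatedly, you aim to manufacture a genuinely 2D corner-\emph{free} subset of $G^2$ of density $\gtrsim \exp(-\exp(\exp(O(r))))$ and then invoke the density \emph{upper} bound; the paper's proof never produces a corner-free set, and I do not see how your iteration would produce one either.

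The actual reduction carried out in \cref{sec:3dcorners} sidesteps direction-alignment by a different pigeonhole. One picks a color $c$ and a \emph{sum value} $g\in G$ (not a direction) so that $T=\{(x,y,z)\in A:\ f(x,y,z)=c,\ x+y+z=g\}$ is large, projects $T$ onto the three coordinate planes, and forms the cylinder intersection $A'$ of the projections (\cref{def:cylinder}). Because $T$ lies on the affine hyperplane $x+y+z=g$, any $(x,y,z)\in A'$ with $x+y+z\neq g$ is automatically the fourth vertex (with $d=g-x-y-z\neq 0$) of a 3D corner whose other three vertices lie in $T$ and are colored $c$; hence color $c$ is effectively eliminated on $A'$, no direction-pinning required. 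Meanwhile $|A'|$ is lower-bounded by the number of 2D corners in the projection $S'_{XY}$, which is controlled by the \emph{counting} theorem (\cref{thm:countmain}) --- a lower bound on the corner count of a dense set, not an upper bound on corner-free densities. Iterating this color-deletion (\cref{lemma:3d}) gives $\delta_L\ge e^{-L^{C^L}}$ after $L$ steps; this matches the triply-exponential density loss you anticipated, but arises through a fundamentally different recursion. (As a small note, the final rearrangement should give $r\ge\Omega(\log\log\log|G|)$; the inequality $r\le c'\log\log\log|G|$ you wrote points the wrong way.)
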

We remark that in order to obtain ``reasonable'' bounds in \cref{cor:3d}, it is crucial that we obtain a quasipolynomial bound in \cref{thm:main}; for example, an inverse logarithmic-type bound would give a tower type dependence in \cref{cor:3d}.

Following the influential survey of Green \cite{Gre05}, it has become commonplace to consider problems in additive combinatorics in the model setting of finite field vector spaces before considering general abelian groups. Our main result over $\mb{F}_2^n$ is the following bound for corners. 
\begin{theorem}\label{thm:mainff}
There exists a constant $c>0$ such that the following holds. Let $A \subseteq \mb{F}_2^n \times \mb{F}_2^n$ with no $x,y,d \in \mb{F}_2^n$ with $d \neq 0$ such that $(x,y)$, $(x+d,y)$, $(x,y+d) \in A$. Then for $N = 2^n$, it holds that \[|A| \le N^2 \cdot \exp(-c(\log N)^{1/178}).\]
\end{theorem}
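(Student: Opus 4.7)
The plan is to prove \Cref{thm:mainff} by a density-increment strategy in the spirit of Kelley--Meka, adapted to the bilinear geometry of corners. Let $A \subseteq \mathbb{F}_2^n \times \mathbb{F}_2^n$ be corner-free of density $\alpha$, and consider the corner count
\[
\Lambda(f_1, f_2, f_3) \;=\; \mathbb{E}_{x,y,d\in\mathbb{F}_2^n} f_1(x,y)\, f_2(x+d, y)\, f_3(x, y+d).
\]
Corner-freeness forces $\Lambda(1_A, 1_A, 1_A) = \alpha \cdot 2^{-n}$ since only the $d=0$ diagonal survives, whereas a random set of density $\alpha$ would yield $\alpha^3$. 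Writing $1_A = \alpha + f$ with $\mathbb{E} f = 0$, expanding trilinearly, and applying the triangle inequality shows that at least one hybrid form $\Lambda$ with exactly one slot replaced by $f$ has magnitude at least $\alpha^3/8$, provided $\alpha \geq 2 \cdot 2^{-n/2}$ (which we may assume).

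Next, a generalized von Neumann inequality---obtained by two Cauchy--Schwarz applications in the variables $y$ and $d$---should bound $|\Lambda(f_1,f_2,f_3)|$ by the box norm
\[
\|f\|_{\square}^4 \;=\; \mathbb{E}_{x_0, x_1, y_0, y_1}\; f(x_0, y_0)\, f(x_1, y_0)\, f(x_0, y_1)\, f(x_1, y_1)
\]
of whichever $f_i$ equals $f$. Combined with the previous step this yields $\|1_A - \alpha\|_{\square} \geq \alpha^{O(1)}$; that is, the balanced indicator of $A$ carries anomalously large box energy. This is the expected pseudorandomness reduction: spreadness in the box norm implies corner counts close to the random value.

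The crux is converting this box-norm defect into a density increment of the form $|A \cap (X \times Y)| \geq (1 + \Omega(1))\,\alpha\,|X||Y|$ on a combinatorial rectangle $X \times Y$, where $X, Y \subseteq \mathbb{F}_2^n$ are affine subspaces of total codimension $\mathrm{polylog}(1/\alpha)$. Following the Kelley--Meka blueprint, the plan is: (i) apply an $L^p$-spread / dependent-random-choice argument to the column slices $A_x = \{y : (x,y) \in A\}$ to produce many pairs $(x_0, x_1)$ for which $\langle 1_{A_{x_0}}, 1_{A_{x_1}}\rangle$ exceeds $\alpha^2$ by a polynomial margin; (ii) invoke Bogolyubov-style Fourier analysis, or directly the $\UF$ inverse theorem, on these slice statistics to locate an affine subspace $Y$ on which many intersections $A_{x_0} \cap A_{x_1}$ remain dense; and (iii) run the analogous spread-to-subspace extraction in the $x$-direction to produce $X$. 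The exponent $1/178$ will emerge from optimizing the polynomial losses in this chain, and iterating the resulting density increment at most $O(\log(1/\alpha))$ times---until the total codimension hits $n$---gives $\alpha \leq \exp(-c(\log N)^{1/178})$.

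The main obstacle is step (iii): in two dimensions the choice of $Y$ depends delicately on the pair $(x_0, x_1)$, so the second spread-to-subspace extraction in the $x$-variable must be carried out compatibly with the already-restricted $y$-coordinate without either inflating the codimension or washing out the density gain. Executing this bilinear analogue of Kelley--Meka $L^p$-spreading cleanly---so that the pipeline \emph{spreadness $\to$ Bogolyubov $\to$ inverse theorem $\to$ spreadness} closes with only polynomial losses in $\alpha$---is the heart of the argument, and is what distinguishes the present corners analysis from its one-dimensional Kelley--Meka ancestor.
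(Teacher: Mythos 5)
Your sketch correctly identifies the density-increment blueprint but misses the structural and technical ideas that make it work, and the container you propose to increment against cannot be right. First, a plain Cauchy--Schwarz reduction to the box norm, as in Shkredov, delivers only an $\alpha^{O(1)}$ density increment, not the multiplicative $(1+\Omega(1))$ gain required by your stated iteration count $O(\log(1/\alpha))$; obtaining a constant-factor increment is exactly what the H\"older-lifting step and the spectral-positivity lemma (\cref{lem:spectral-pos}) accomplish, and neither appears in your plan. Second, the H\"older-lifted analysis does not produce a density increment on a rectangle $X\times Y$ at all. After substituting $d\mapsto x+y+d$, one lands on a large $(2,k)$-grid norm for $F(x,d)=\mathbbm{1}_A(x,x+d)$ or $G(y,d)=\mathbbm{1}_A(y+d,y)$; sifting these gives an increment on a set $\{(x,y): x\in X',\ x+y\in D\}$ or $\{(x,y): y\in Y',\ x+y\in D\}$, so a third ``diagonal'' constraint $D$ unavoidably enters. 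The paper's container is therefore $S(X,Y,D)=\{(x,y): x\in X,\ y\in Y,\ x+y\in D\}$, not a rectangle with affine-subspace sides, and the increment iterates on these objects.

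The obstacle you flag in step (iii) is the right thing to worry about, but the resolution is not a second ``compatible'' subspace extraction; the real issue is that the function $F$ whose grid norm is large is supported inside a very sparse set (those $(x,d)$ with $x\in X$ and $x+d\in D$), and a na\"{i}ve application of sifting loses factors in the absolute density of that support, annihilating the gain. What is required is the relative sifting theorem (\cref{thm:quasisifting2}): when $F$ lives inside a combinatorially spread majorant $T$, one extracts a rectangle whose side densities depend only on the relative density of $F$ inside $T$, independent of how small $T$ is. Finally, the sets $X,Y$ produced by sifting are dense subsets of (cosets of) a subspace $W$, not subspaces themselves, and a separate pseudorandomization procedure --- a spread decomposition replacing Shkredov's $L^2$-energy increment --- must restore algebraic spreadness of $X,Y$ before the next round. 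Your Bogolyubov/$U^2$ route to extracting subspaces mirrors Shkredov's pseudorandomization step and would reintroduce precisely the double-logarithmic loss the theorem is designed to beat.
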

We remark that the previous state of the art in the finite field setting was essentially of the same shape as work of Shkredov (due to Lacey and McClain \cite{LM07}). This is unlike the case of three--term arithmetic progressions and the resolution of the capset problem due to Ellenberg and Gijswijt \cite{EG17} based on the polynomial method of Croot, Lev and Pach \cite{CLP17}. Furthermore it is now understood that na\"{i}ve generalizations of the polynomial method technique are unlikely to work for corners \cite{CFTZ22}.

\subsection{Corners and progressions in nonabelian groups}
\label{subsec:nonabelian}

In this section we record an argument due to Fox (personal communication) which shows that \cref{thm:main} implies quasipolynomial bounds for BMZ \cite{BMZ97} and na\"{i}ve corners in all groups, not necessarily abelian. This also implies similar bounds for Roth's theorem in general groups.
\begin{corollary}
\label{cor:bmz}
There exists a constant $c > 0$ such that the following holds.
Let $G$ be a finite group and $A \subseteq G \times G$ with no $x,y,g \in G$ with $g \neq \mbf{1}_G$ such that $(x,y), (xg,y), (x,gy) \in A$ (i.e., a BMZ corner). Then
\[ |A| \le |G|^2 \cdot \exp(-c(\log |G|)^{1/1200}). \]
\end{corollary}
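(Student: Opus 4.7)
The plan is to reduce the BMZ-corner problem in the arbitrary finite group $G$ to the abelian corners theorem (\cref{thm:main}) via an averaging argument over translates of a large abelian subgroup of $G$. Fix any abelian subgroup $H \le G$ and, for each $(x,y) \in G \times G$, consider the pullback
\[
S_{x,y} = \{(h_1, h_2) \in H \times H : (xh_1,\ h_2 y) \in A\} \subseteq H \times H.
\]
The key observation is that $S_{x,y}$ must be (abelian) corner-free: an abelian corner $(h_1,h_2), (h_1 h, h_2), (h_1, h_2 h)$ in $S_{x,y}$ with $h \neq \mbf{1}_G$ pulls back to three points $(xh_1, h_2 y),\ (xh_1 h, h_2 y),\ (xh_1, h_2 h\, y) \in A$. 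Using $h_2 h = h h_2$, which holds because $H$ is abelian, the third point equals $(xh_1, h \cdot h_2 y)$; writing $u = xh_1$, $v = h_2 y$, $g = h$, this is exactly a BMZ corner $(u,v),(ug,v),(u,gv)$ in $A$ with $g \neq \mbf{1}_G$, contradicting our hypothesis on $A$.

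Next, a direct double count gives $\sum_{x,y \in G} |S_{x,y}| = |A| \cdot |H|^2$, since each $(a,b) \in A$ arises from exactly $|H|^2$ quadruples $(x, y, h_1, h_2)$ with $xh_1 = a$ and $h_2 y = b$. Hence by pigeonhole there exists $(x, y)$ with $|S_{x,y}|/|H|^2 \ge \alpha := |A|/|G|^2$, and applying \cref{thm:main} to this corner-free subset of $H \times H$ yields
\[
\alpha \le \exp\bigl(-c(\log |H|)^{1/600}\bigr).
\]
The corollary then follows once one picks $H \le G$ with $\log |H| \ge \Omega((\log |G|)^{1/2})$, giving the claimed $\alpha \le \exp(-c'(\log|G|)^{1/1200})$.

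The main obstacle is this last step, which is a purely group-theoretic input: every finite group $G$ must admit an abelian subgroup of order at least $\exp(\Omega(\sqrt{\log |G|}))$. This is a classical fact which I would establish by combining Sylow theory with the bound (due to Alperin and others) that every $p$-group of order $p^k$ contains an abelian subgroup of order at least $p^{\Omega(\sqrt{k})}$, followed by an appropriate pigeonhole on the prime-power factorization of $|G|$. The resulting square-root loss in this step is exactly what turns the $1/600$ exponent of \cref{thm:main} into the $1/1200$ exponent asserted by the corollary; any improvement here (for example a lower bound of the form $|H| \ge \exp(\Omega((\log|G|)^{1-o(1)}))$) would immediately propagate to a better exponent in the conclusion.
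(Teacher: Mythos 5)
Your argument is correct and essentially identical to the paper's proof of \cref{cor:bmz}: your $S_{x,y}$ is the same set the paper calls $A_{x,y}$, your corner-freeness verification is the same, and your pigeonhole over $(x,y)$ and the paper's averaging over $(x,y)$ are interchangeable, both yielding $|A| \le |G|^2 \exp(-c(\log|H|)^{1/600})$ via \cref{thm:main}. The one place your sketch is not quite right is the claim that ``Sylow theory plus the Alperin $p$-group bound plus pigeonhole'' suffices to produce an abelian $H \le G$ with $\log|H| \ge \Omega(\sqrt{\log|G|})$. That route fails in general: if every Sylow subgroup has order $p^a$ with $a = O(\sqrt{\log|G|})$ and $p$ bounded, then the Alperin bound applied to the best Sylow subgroup only gives an abelian subgroup of size $p^{O(\sqrt{a})} = 2^{O((\log|G|)^{1/4})}$, far short of the required $2^{\Omega(\sqrt{\log|G|})}$. (The squarefree-order case can be rescued because Z-groups are metacyclic, but the intermediate regime of moderately small non-cyclic Sylow subgroups cannot.) The correct input is a theorem of Pyber \cite{Pyber97}, which is what the paper cites and whose proof involves substantially more structure theory (including CFSG). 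Treated as a black box, it makes your argument complete, with exactly the square-root loss you identify feeding into the final exponent $1/1200$.
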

\begin{proof}
Let $H \subseteq G$ be an abelian subgroup of $G$ of size $|H| \ge 2^{\Omega(\sqrt{\log |G|})}$ -- such a subgroup exists by a result of Pyber \cite{Pyber97}. For $x, y \in G$ define the set
\[ A_{x,y} \coloneqq \{ (h_1, h_2) \in H \times H : (xh_1, h_2y) \in A\}. \]
We claim that $A_{x,y}$ has no corners. If it does, then there are $h_1, h_2, h_3$ such that
\[ (xh_1, h_2y), (xh_1h_3, h_2y), (xh_1, h_3h_2y) \in A, \] which implies that $A$ contains a BMZ corner.
By \cref{thm:main} we conclude that
\[ |A_{x,y}| \le |H|^2 \cdot \exp(-c(\log |H|)^{1/600}) \le |H|^2 \cdot \exp(-c(\log |G|)^{1/1200}). \]
By averaging, we know that
\[ |A| = \frac{|G|^2}{|H|^2} \cdot \E_{x,y \in G}\Big[|A_{x,y}| \Big] \le |G|^2 \cdot \exp(-c(\log |G|)^{1/1200}). \qedhere\]
\end{proof}

\begin{corollary}
\label{cor:naive}
There exists a constant $c > 0$ such that the following holds.
Let $G$ be a finite group and $A \subseteq G \times G$ with no $x,y,g \in G$ with $g \neq \mbf{1}_G$ such that $(x,y), (xg,y), (x,yg) \in A$ (i.e., a na\"{i}ve corner). Then
\[ |A| \le |G|^2 \cdot \exp(-c(\log |G|)^{1/1200}). \]
\end{corollary}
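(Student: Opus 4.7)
The plan is to mirror the proof of \cref{cor:bmz} essentially verbatim, changing only the definition of the auxiliary set $A_{x,y}$ so that it captures naïve rather than BMZ corners. The key observation is that once we restrict to an abelian subgroup $H \subseteq G$, left and right multiplication coincide on $H$, so the distinction between naïve, BMZ, and ordinary corners disappears inside $H \times H$. What changes between the two proofs is only which parameterization correctly pulls a naïve corner in $A$ back to an abelian corner in $H \times H$ that \cref{thm:main} can bound.

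Concretely, I would invoke Pyber's theorem \cite{Pyber97} to fix an abelian subgroup $H \subseteq G$ with $|H| \ge 2^{\Omega(\sqrt{\log |G|})}$, and for each pair $x, y \in G$ define
\[ A_{x,y} \coloneqq \{(h_1, h_2) \in H \times H : (xh_1, yh_2) \in A\}. \]
The crucial change from \cref{cor:bmz} is the use of \emph{right} multiplication in \emph{both} coordinates, in contrast to right multiplication in the first coordinate and left multiplication in the second. Checking that $A_{x,y}$ is corner-free in the abelian sense is then immediate: a corner $(h_1, h_2), (h_1 h_3, h_2), (h_1, h_2 h_3) \in A_{x,y}$ with $h_3 \neq 1_H$ yields the triple $(xh_1, yh_2), (xh_1 h_3, yh_2), (xh_1, yh_2 h_3) \in A$, which is a naïve corner with difference $g = h_3 \neq 1_G$, contradicting the hypothesis on $A$.

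Having set this up, I would apply \cref{thm:main} to obtain $|A_{x,y}| \le |H|^2 \exp(-c(\log|H|)^{1/600})$ and then average exactly as in \cref{cor:bmz}: for each $(a,b) \in A$, there are precisely $|H|^2$ tuples $(x, y, h_1, h_2)$ with $xh_1 = a$ and $yh_2 = b$, so $\sum_{x,y} |A_{x,y}| = |A| \cdot |H|^2$, giving
\[ |A| = \frac{|G|^2}{|H|^2} \cdot \E_{x,y \in G}\bigl[|A_{x,y}|\bigr] \le |G|^2 \exp(-c(\log|H|)^{1/600}) \le |G|^2 \exp(-c'(\log|G|)^{1/1200}). \]
There is no substantive obstacle to overcome. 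The only point worth flagging is that a superficially more elegant route -- attempting a direct reduction to \cref{cor:bmz} via a coordinate inversion such as $(x,y) \mapsto (x, y^{-1})$ -- does not work, because a BMZ corner in the transformed set pulls back to a triple in $A$ in which the difference element appears as $g$ in one coordinate and $g^{-1}$ in the other, which is not a naïve corner in general. Passing through Pyber's abelian subgroup sidesteps this ambiguity transparently, at the cost of only an additional factor of two in the exponent.
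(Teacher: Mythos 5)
Your proposal is correct and is essentially identical to the paper's proof: the paper likewise notes that the argument for \cref{cor:bmz} goes through verbatim once $A_{x,y}$ is redefined as $\{(h_1,h_2) \in H \times H : (xh_1, yh_2) \in A\}$, exactly as you do. The remark about the failure of the naive coordinate-inversion reduction is a nice bit of extra context but not needed.
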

\begin{proof}
The proof is nearly identical to that of \cref{cor:bmz}, except one instead defines
\[ A_{x,y} \coloneqq \{ (h_1, h_2) \in H \times H : (xh_1, yh_2) \in A\}. \qedhere \]
\end{proof}

This implies quasipolynomial bounds for subsets $A \subseteq G$ avoiding nontrivial solutions to $xy = z^2$, which is an analog of the 3-AP question in nonabelian groups.

\begin{corollary}
\label{cor:roth}
There exists a constant $c > 0$ such that the following holds.
Let $G$ be a finite group and $A \subseteq G$ with no $x, y, z$ not all equal satisfying $xy = z^2$. Then
\[ |A| \le |G| \cdot \exp(-c(\log |G|)^{1/1200}). \]
\end{corollary}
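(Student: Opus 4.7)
The plan is to reduce the question for $A \subseteq G$ (avoiding $xy=z^2$) to the naive corners statement in \Cref{cor:naive}, in the same spirit as how Roth's theorem in an abelian group is derived by projecting a corner-free set to a 3-AP-free set. The key observation is that a multiplicative square root relation $xy = z^2$ naturally lives on a ``diagonal'' of $G \times G$, so one can linearize it by considering differences.

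Concretely, I would define
\[ B \coloneqq \set{(u,v) \in G \times G : u^{-1} v \in A}. \]
Then I would claim that $B$ contains no naive corner. Suppose for contradiction that $(u,v), (ug, v), (u, vg) \in B$ with $g \neq \mbf{1}_G$, and set
\[ a \coloneqq u^{-1} v, \qquad b \coloneqq (ug)^{-1} v = g^{-1} u^{-1} v, \qquad c \coloneqq u^{-1}(vg) = u^{-1} v \, g. \]
By definition $a,b,c \in A$. A direct computation gives
\[ c \cdot b = (u^{-1} v g)(g^{-1} u^{-1} v) = (u^{-1} v)^2 = a^2, \]
so $(x,y,z) = (c,b,a)$ is a solution to $xy = z^2$ in $A$. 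Moreover $b = g^{-1} a$, so $a = b$ would force $g = \mbf{1}_G$; hence $a,b,c$ are not all equal, contradicting the hypothesis on $A$.

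Applying \Cref{cor:naive} to $B$ yields $|B| \le |G|^2 \cdot \exp(-c (\log |G|)^{1/1200})$. Finally, for each $u \in G$ the fiber $\set{v : (u,v) \in B} = uA$ has size exactly $|A|$, so $|B| = |G| \cdot |A|$; dividing through gives the claimed bound. There is no real obstacle here — the entire argument is a short averaging reduction, with the only slightly delicate step being the verification that the translated triple $(a,b,c)$ is genuinely nontrivial (i.e.\ not all equal), which is ensured by $g \neq \mbf{1}_G$.
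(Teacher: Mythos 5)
Your proof is correct and is essentially identical to the paper's argument: define the "lifted" set $\{(u,v): u^{-1}v \in A\}$, check it is free of na\"{i}ve corners by computing $c\cdot b = a^2$, and apply \Cref{cor:naive} together with the observation that each fiber has size $|A|$. You are slightly more careful than the paper in explicitly verifying that the resulting triple $(c,b,a)$ is not all equal (via $b = g^{-1}a$ and $g \neq \mbf{1}_G$), but this is a minor stylistic point rather than a genuine difference in approach.
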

\begin{proof}
Define $S = \{ (x, y) \in G \times G : x^{-1}y \in A \}.$ We claim that $S$ does not contain a corner of the form in \cref{cor:naive}, i.e., $(x, y)$, $(xg, y)$, $(x, yg) \in S$ for some $x, y, g \in G$ with $g \neq \mbf{1}_G$. Indeed, otherwise $x^{-1}y, (xg)^{-1}y, x^{-1}yg \in A$, and $x^{-1}yg (xg)^{-1}y = (x^{-1}y)^2$. Thus, \cref{cor:naive} implies that
\[ |A||G| = |S| \le |G|^2 \cdot \exp(-c(\log |G|)^{1/1200}) \]
as desired.
\end{proof}
The previous best bound for general groups was $|G|^2 \cdot (\log \log |G|)^{-1}$ due to Sanders \cite{Sanders17} (which is recovered by the argument of Fox plus Shkredov's corners bound). For the symmetric group $S_n$, the previous best bound was $N \cdot e^{-\Omega((\log \log N)^2)}$ for $N = n!$, and is due to Keevash and Lifshitz \cite{KL23}.

\subsection{Applications to Communication Complexity}
\label{sec:nof}

The work of Chandra, Furst, and Lipton \cite{CFL83} in 1983 introduced the Number-on-Forehead model of communication complexity to model interaction between parties with shared information. The $k$-NOF model is defined by $k$ players communicating over a shared channel in order to compute a function $f : (\{0,1\}^n)^k \to \{0,1\}$. Each player can see the $k-1$ inputs of every other player, but they cannot see their own. This model has a number of striking connections in theoretical computer science and combinatorics. For instance lower bounds for $k = \omega(\log n)$ players would imply breakthrough circuit lower bounds \cite{BNS89, NW93, Raz00, BH09}. The work of Chandra, Furst, and Lipton \cite{CFL83} was primarily concerned with the \textsf{Exactly-N} problem, which is now one of the most studied problems in the NOF model. In \textsf{Exactly-N} each player receives a number in $[N] \coloneqq \{1,2,\dots,N\}$ (given as $\lceil \log_2 N \rceil$ bits) and the players aim to check if these numbers sum up to $N$. The authors of \cite{CFL83} in fact observe an equivalence between \textsf{Exactly-N} (for three players) and the size of sets $S \subseteq [N]^2$ without corners. Thus, \cref{thm:main} implies the following corollary.
\begin{corollary}\label{corollary:3nof}
Any nondeterministic $3$-NOF protocol computing \textsf{Exactly-N} requires $\Omega((\log N)^{\Omega(1)})$ bits of communication.
\end{corollary}

Previously, despite the fact that optimal separations between randomized and deterministic NOF communication were known nonexplicitly \cite{BDPW10} for more than a decade, explicit constructions exhibiting strong separations for $3$-NOF have only been developed recently. In particular, Kelley, Lovett, and Meka \cite{KLM24}, building off Kelley-Meka's work on $3$-APs \cite{KM23}, exhibited an explicit 3-player $n$-bit boolean function which has a constant cost randomized protocol, but requires $\Omega(n^{1/3})$ bits of communication to compute nondeterministically. 
Even more recently, Kelley and Lyu refined the analysis to improve the lower bound to $\Omega(n^{1/2})$ \cite{KL25}.
However, these works did not apply to the \textsf{Exactly-N} problem.

The communication complexity translation also carries over to colorings and higher-dimensional corners (see e.g.~\cite[Appendix A]{HPSSS24}) to give an $\Omega(\log\log\log\log N)$ lower bound against deterministic $4$-NOF protocols for \textsf{Exactly-N}. 
Moreover, Beame, David, Pitassi, and Woelfel observed \cite[Theorem 3.4]{BDPW10} that any \emph{nondeterministic} $4$-NOF protocol for \textsf{Exactly-N}\footnote{Technically, their result is for functions of the form $f:X_1\times \cdots X_k \to \{0,1\}$ such that for all $(x_2, \dots, x_k) \in X_2\times \cdots X_k$, there exists exactly one $x_1 \in X_1$ such that $f(x_1, x_2, \dots, x_k) = 1$.
This is not true of \textsf{Exactly-N}, since if the latter $k-1$ players' inputs sum to at least $N$, there is no possible choice of the first player's input to make the function evaluate to 1. However, they may perform an initial $k$-bit ``input validation'' round to check for this case, and afterwards perform the protocol in \cite{BDPW10}.} which exchanges $b$ bits of communication can be converted into a \emph{deterministic} one which exchanges $b + O(1)$ bits.
Thus, the $\Omega(\log\log\log\log N)$ lower bound also applies to nondeterministic protocols.
\begin{corollary}
\label{corollary:4nof}
Any nondeterministic $4$-NOF protocol for \textsf{Exactly-N} uses $\Omega(\log\log\log\log N)$ bits of communication.
\end{corollary}

\subsection{Organization of the remainder of the paper}

The remainder of the paper is organized as follows. In \cref{sec:overview} we give a high-level overview of the main ideas in the proof. In \cref{sec:relative-sifting} we define what it means for a set to be combinatorially spread against rectangles, and prove a \emph{sifting} statement, i.e., that functions with a large grid norm have large density on a large rectangle. Critically, our sifting statement is relative, meaning that it works even when the sets we consider are subsets of a sparse pseudorandom object. In \cref{sec:add-tool} we introduce additional additive combinatorial tools needed for our analysis, such as spectral positivity. In \cref{sec:finite-field} we prove \cref{thm:mainff}, our corners bound for finite fields, which captures a number of the main conceptual ideas of the general abelian group case (pseudorandomization, density increment), but is technically simpler.

The next two sections of the paper are devoted to proving \cref{thm:main}, the corners bound over general abelian groups. We introduce Bohr sets in \cref{sec:bohr-prelim}, and then define and apply several pseudorandomness properties of subsets of Bohr sets. In addition, we describe how to perform the pseudorandomization procedure in the Bohr set setting. Finally, \cref{sec:dsbohr} uses a density increment argument to prove \cref{thm:main}. \cref{cor:3d}, our coloring lower bound for $3$-dimensional corners, is proven in \cref{sec:3dcorners}. 
The appendix contains a section which states and applies almost periodicity (\cref{sec:almost-period}).

\subsection{Acknowledgements}
MJ, SL, and AO thank Russell Impagliazzo and David Zuckerman for helpful conversations and Ilya Shkredov for answering a question about the current state-of-the-art. MJ would like to thank Freddie Manners for useful discussions, Sarah Peluse for her encouragement and collaboration on this question, as well as Amey Bhangale and Surya Teja Gavva for their collaboration on this question at the Simons Institute for the Theory of Computing. MS thanks Tim Gowers, Huy Pham, Ashwin Sah for useful discussions and Sarah Peluse for discussions regarding \cref{sec:3dcorners}. A portion of this work was conducted when MS visited ``New Frontiers in Extremal and Probabilistic Combinatorics'' at the SwissMAP Research Station.

MJ is supported by NSF Grant CCF-2312573 and a Simons Investigator Award (\#409864, David Zuckerman). Part of this research was conducted when YL was a Postdoctoral Member at the IAS, and is based upon work supported by the National Science Foundation under Grant No.~DMS-1926686. SL and AO are supported by the Simons Investigator Award \#929894 (Shachar Lovett) and NSF award CCF-2425349. This research was conducted during the period MS served as a Clay Research Fellow.

A preliminary version of this manuscript appeared in the 66th IEEE Symposium on Foundations of Computer Science (FOCS) 2025.

\section{Overview of the Proof}
\label{sec:overview}

We now provide an overview of the proof strategy for \cref{thm:mainff} and then discuss the modifications required for \cref{thm:main}.
Our result, like several results in additive combinatorics, is established via a density increment argument -- either the set $A$ contains the expected number of corners, or has higher density onto some structured sub-instance. We start by reviewing Shkredov's bound \cite{Shk06} for the corners problem before discussing the novel aspects of this work.

\subsection{Shkredov's corners bound} Consider a subset $A \subseteq \mb{F}_2^n \times \mb{F}_2^n$ with density $\alpha$, so $|A| = \alpha \cdot 4^n$. In this case, the expected density of corners is about $\alpha^3$ for a random set $A$.
The first step in Shkredov's corners bound is to prove that if $A$ has much fewer than this expected density of corners, then $A$ has a density increment onto a subrectangle. That is, there are subsets $X, Y \subseteq \mb{F}_2^n$ such that:
\begin{enumerate}
    \item $|X|, |Y| \ge \alpha^{O(1)} \cdot 2^n$, and
    \item $|A \cap (X \times Y)| \ge (\alpha + \alpha^{O(1)})|X||Y|$.
\end{enumerate}
This is proven via a Cauchy-Schwarz argument: if $A$ has few corners, then its balanced indicator function, i.e.~$\mbm{1}_A - \alpha$, has a large \emph{box norm}, which is the $(2,2)$-grid norm (defined later in \cref{def:grid}).
This suggests a natural density increment approach: maintain $X \times Y \subseteq \mb{F}_2^n \times \mb{F}_2^n$ such that the density of $A$ within $X \times Y$ goes up over time. To carry this out, one must establish a statement of the following form: if $A \subseteq X \times Y$ has few corners, then $A$ admits an $\alpha^{O(1)}$ density increment onto some $X' \times Y' \subseteq X \times Y$.

It turns out that proving such a statement requires an additional pseudorandomness assumption on $X$ and $Y$. In Shkredov's approach, the pseudorandomness assumption was that the indicator functions of $X$ and $Y$ have very small nontrivial Fourier coefficients -- smaller than the densities of $X$ and $Y$ themselves. Because this pseudorandomness guarantee may not hold for the $X$ and $Y$ that are incremented onto, Shkredov uses a pseudorandomization procedure to make $X$ and $Y$ satisfy this property. At a high level, the pseudorandomization procedure repeatedly passes to subspaces of $\mb{F}_2^n$ on which the Fourier coefficients of $X$ and $Y$ are large.

We now explain why this approach gets a bound of $N^2/(\log \log N)^c$ for corners, where $N = 2^n$, which is two logarithmic factors off the Behrend-type bounds of $N^2/2^{O((\log N)^c)}$. The first point to understand is what the densities of $X$ and $Y$ are at the end of the density increment procedure. The number of density increment steps is $\alpha^{-O(1)}$, each of which decreases the densities of $X$ and $Y$ by $\alpha^{O(1)}$, so the final densities are $\delta \coloneqq \exp(-1/\alpha^{O(1)})$. In the pseudorandomization procedure, guaranteeing that all Fourier coefficients are less than $\delta$ requires passing to a codimension $\delta^{-O(1)}$ subspace. Thus, we need that $\delta^{-O(1)} \le n$ so $\alpha = (\log n)^{-c} = (\log \log N)^{-c}$.

This discussion clarifies why Shkredov's bound is doubly logarithmic: one logarithmic factor stems from the density increment, and the other from pseudorandomization. The remainder of this overview is devoted to explaining our approach for avoiding \emph{both} these losses, and establishing a Behrend-type bound for the corners theorem.

\subsection{H\"{o}ldering and $XYD$ containers}\label{subsec:XYD-expo}
In this section we discuss how to avoid the first logarithmic loss in Shkredov's bound, by improving the density increment obtained from $\alpha^{O(1)}$ to a multiplicative $(1+\eps)$ for some absolute constant $\eps > 0$. This reduces the number of density increment steps down to $O(\log(1/\alpha))$. The starting point for understanding how to do this is by replacing the initial use of Cauchy--Schwarz in Shkredov \cite{Shk06} with the following H\"{o}lder manipulation inspired by \cite{KM23} (see \cite[1.~H\"{o}lder--lifting]{BS23}).

Let $A$ denote a corner--free set of density $\alpha$ (so $|A| = \alpha \cdot 4^n$) and $f_A = \mbm{1}_A - \alpha$. For an even positive integer $k$ we have that (after substituting $d$ to be $x+y+d$) 
\begin{align*}
\Big|\E_{x,y,d\in \mb{F}_2^{n}}&\mbm{1}_A(x,y)\mbm{1}_A(x,y+d)f_A(x+d,y)\Big|^{k}=\Big|\E_{x,y,d\in \mb{F}_2^{n}}\mbm{1}_A(x,y)\mbm{1}_A(x,x+d)f_A(y+d,y)\Big|^{k}\\
&\le (\E_{x,y\in \mb{F}_2^{n}}\mbm{1}_A(x,y))^{k-1} \cdot (\E_{x,y\in \mb{F}_2^{n}}\mbm{1}_A(x,y)\cdot (\E_{d\in \mb{F}_2^n}\mbm{1}_A(x,x+d)f_A(y+d,y))^{k})\\
&\le (\E_{x,y\in \mb{F}_2^{n}}\mbm{1}_A(x,y))^{k-1} \cdot (\E_{x,y\in \mb{F}_2^{n}}(\E_{d\in \mb{F}_2^n}\mbm{1}_A(x,x+d)f_A(y+d,y))^{k}).
\end{align*}
Via switching the order of summation and a further application of Cauchy--Schwarz, we have that 
\begin{align*}
\phantom{\le} &(\E_{x,y\in \mb{F}_2^{n}}(\E_{d\in \mb{F}_2^n}\mbm{1}_A(x,x+d)f_A(y+d,y))^{k})^2\\
\le &(\E_{x_1,x_2\in \mb{F}_2^n}(\E_{d\in \mb{F}_2^n}\mbm{1}_A(x_1,x_1+d)\mbm{1}_A(x_2,x_2+d))^k) \cdot (\E_{y_1,y_2\in \mb{F}_2^n}(\E_{d\in \mb{F}_2^n}f_A(y_1+d,y_1)f_A(y_2+d,y_2))^k).
\end{align*}
Taking $k \approx \log(1/\alpha)$, we find that either
\[\E_{x_1,x_2\in \mb{F}_2^n}(\E_{d\in \mb{F}_2^n}\mbm{1}_A(x_1,x_1+d)\mbm{1}_A(x_2,x_2+d))^k\ge (3\alpha)^{2k}\]
or 
\[\E_{y_1,y_2\in \mb{F}_2^n}(\E_{d\in \mb{F}_2^n}f_A(y_1+d, y_1)f_A(y_2+d, y_2))^k\ge (\alpha/9)^{2k}.\]
We want to express the second case as some property of $\mbm{1}_A$ more directly. This is done by using a graph theoretic analog of spectral positivity (see \cref{lem:spectral-pos}), as developed by Kelley, Lovett, and Meka \cite{KLM24}, which upgrades the dichotomy to be:
\begin{equation} \E_{x_1,x_2\in \mb{F}_2^n}(\E_{d\in \mb{F}_2^n}\mbm{1}_A(x_1,x_1+d)\mbm{1}_A(x_2,x_2+d))^k\ge (3\alpha)^{2k} \label{eq:xdgrid} \end{equation}
or 
\begin{equation} \E_{y_1,y_2\in \mb{F}_2^n}(\E_{d\in \mb{F}_2^n}\mbm{1}_A(y_1+d,y_1)\mbm{1}_A(y_2+d,y_2))^{k'}\ge ((1+1/1000)\alpha)^{2k'} \label{eq:ydgrid} \end{equation}
where $k' \ge k$ but still satisfies  $k' \approx \log(1/\alpha)$. Observe that in our analysis to this point we have had $A$ which lives in $\mb{F}_2^{n}\times \mb{F}_2^{n}$.

Equations \eqref{eq:xdgrid} and \eqref{eq:ydgrid} are best interpreted as conditions on certain \emph{grid norms}, as introduced in work of Kelley, Lovett, and Meka \cite{KLM24}.
\begin{definition}[Grid norms]\label{def:grid}
Fix positive integers $k,\ell\ge 1$ and finite sets $\Omega_1, \Omega_2$. Let $A:\Omega_1\times \Omega_2\to \mb{R}$. We define the \emph{$(k,\ell)$-grid norm} of $A$ as  
\[\snorm{A}_{G(k,\ell)} = \Big|\E_{\substack{x_1,\ldots,x_{k}\in \Omega_1\\y_1,\ldots,y_{\ell}\in \Omega_2}}\prod_{\substack{1\le i\le k\\1\le j\le \ell}}A(x_i,y_j)\Big|^{1/(k\ell)}.\]
\end{definition}
\begin{remark}
It is known that the $(k,\ell)$-grid norm is a seminorm if $k,\ell$ are both even positive integers \cite{Hat10}.
\end{remark}
Intuitively, the grid norm counts the number of copies of the complete bipartite graph $K_{k,\ell}$ in a ``graph'' given by a function $f: \Omega_1 \times \Omega_2 \to \mb{R}$. To interpret \eqref{eq:xdgrid} as a grid norm, one can define the function $F: \mb{F}_2^n \times \mb{F}_2^n \to [0, 1]$ as $F(x, d) \coloneqq \mbm{1}_A(x, x+d)$ and note that \eqref{eq:xdgrid} is exactly equivalent to $\|F\|_{G(2,k)} \ge 3\alpha$. Similarly, defining $G: \mb{F}_2^n \times \mb{F}_2^n \to [0, 1]$ as $G(y, d) \coloneqq \mbm{1}_A(y+d, y)$, \eqref{eq:ydgrid} is equivalent to $\|G\|_{G(2,k')} \ge (1+1/1000)\alpha$. A key result of \cite{KLM24} is a structural theorem regarding functions with large grid norm, which they refer to as \emph{sifting}. Informally, it says that if $\|F\|_{G(k,\ell)} \ge \tau$ for a nonnegative function $F$, then there is a subrectangle $\Omega_1' \times \Omega_2' \subseteq \Omega_1 \times \Omega_2$ on which $F$ has average value at least $(1-\eps)\tau$, and $|\Omega_1'| \ge (\eps\tau)^{O(k+\ell)}|\Omega_1|$ and $|\Omega_2'| \ge (\eps\tau)^{O(k+\ell)}|\Omega_2|$, i.e., the subrectangle is not too small. For completeness, we provide a proof of such a sifting statement in \cref{thm:sift}. Later we will see that this sifting theorem does \emph{not} suffice for our corners bound for two reasons, and instead we require an \emph{asymmetric} and \emph{relative} sifting theorem which is much more challenging to establish (see \cref{thm:quasisifting2}).

Applying \cref{thm:sift}, if \eqref{eq:xdgrid} holds, one may find sets $X \subseteq \mb{F}_2^n$ and $D \subseteq \mb{F}_2^n$ such that
\[\E_{x \in X, d \in D} \mbm{1}_A(x,x+d) \ge (1+\Omega(1))\alpha\]
and 
\[\E_{x,d}\mbm{1}_{x\in X}\mbm{1}_{d\in D} \ge \exp(-O(\log(1/\alpha)^{2})).\]
In other words, the set $A$ has larger density on an $(X, D)$-set, i.e., a set of the form
\[ \{(x,y) \in \mb{F}_2^n \times \mb{F}_2^n : x \in X, x+y \in D\}. \]
Similarly, if \eqref{eq:ydgrid} holds then \cref{thm:sift} gives that $A$ has increased density on a $(Y,D)$-set:
\[ \{(x,y) \in \mb{F}_2^n \times \mb{F}_2^n : y \in Y, x+y \in D\}. \]

This now gives a first crucial difference between our setting and that of Shkredov \cite{Shk06}. While Shkredov's proof always maintained a set $X \times Y$ as the pseudorandom container to density increment against, we cannot afford to do this as we may be forced to use either $(X,D)$ or $(Y,D)$ increments. Instead, our pseudorandom containers take the form (for a subspace $W \subseteq \mb{F}_2^n$)
\[ S(X,Y,D) \coloneqq \{(x,y) \in W \times W : x \in X, y \in Y, x+y \in D\}. \]
We refer to these sets as $(X, Y, D)$-sets.
Thus, in our density increment we maintain a subset $A \subseteq S(X, Y, D)$ whose density increases over the course of the procedure.
While this initially seems worrying because there is no obvious guarantee on the size of the set $S(X,Y,D)$, it is not hard to prove that if any of $X, Y, D$ are Fourier-pseudorandom (like in Shkredov's proof), then the size of such a set is as expected: approximately $|X||Y||D|/|W|$.

It remains to discuss how having a more complicated pseudorandom container affects the remainder of the argument. The main difficulty to establishing bounds of the form $4^{n} \cdot \exp(-(\log n)^{\Omega(1)})$ (i.e., a single logarithmic improvement over Shkredov's bound) is in establishing a version of the ``sifting'' lemma in this setting. The challenge is that in \cref{thm:sift} the resulting output rectangle depends on the density/grid norm of the function. Because we are working with $(X,Y,D)$-sets, when viewed as a subset of $X \times Y$ e.g., the set itself is also extremely sparse. Thus, na\"{i}vely applying the sifting lemma of \cite{KLM24} loses factors corresponding to relative density of $S(X,Y,D)$ within $X \times Y$. A key innovation of this paper is therefore proving a version of sifting which does not lose such factors. 

\subsection{Quasirandom sifting}\label{subsec:quasi-expo}
We now discuss quasirandom sifting which is one of the primary technical innovations of the paper. The necessary quasirandom sifting statement concerns when a function has large $(2,k)$-grid norm. In this section, we take a graph-theoretic view -- if a function has large $(2,k)$-grid norm then the corresponding graph has ``excess'' copies of $K_{2,k}$. We prove that this implies that then there is a density increment onto a rectangle. 

We first specialize to the case of $K_{2,2}$ which is already nontrivial. In this case, we seek to analyze $f(x,y):\Omega_1\times \Omega_2\to [0,1]$ which is bounded by a pseudorandom majorant $0 \le T(x,y) \le 1$ (meaning that $f(x,y) \le T(x,y)$) such that $\E[f] = \alpha \tau$ and $\E[T] = \tau$ and with
\[ \E_{\substack{x, x' \in \Omega_1 \\ y, y' \in \Omega_2}}\left[f(x,y)f(x',y)f(x,y')f(x',y') \right] \ge (1+\eps)^4\alpha^4\tau^4.\]
We must specify what it means for $T$ to be pseudorandom. Our notion of pseudorandomness will be called \emph{combinatorial spreadness}: the density of $T$ within any somewhat large rectangle is bounded by $(1+\eps)\tau$ (\cref{def:combpseudo}).

We wish to find a large rectangle on which $f$ has density at least $(1+\Omega(\eps))\alpha\tau$. Crucially, we want the size of the rectangle to not depend on $\tau$, and only on $\alpha$. The proof of this must necessarily rely on the pseudorandomness of $T$. In the setting without the pseudorandom majorant $T$, the proof of sifting in \cite{KLM24} picks a random $(x',y')$ where $f(x',y') = 1$, fixes it, and then notes that $f(x, y')f(x', y)$ gives a rectangle which correlates to $f$. 

However, the density of the rectangle itself is $O(\tau^2)$, because for a fixed $(x',y')$ the set of $x$ such that $f(x,y') = 1$ is $O(\tau)$ due to the presence of the majorant $T$, and similarly for $y$.

Motivated by ``densification'' in works of Conlon, Fox, and Zhao \cite{CFZ14}, we try to replace the $K_{2,2}$ counts with counts of a larger graph such that there are edges not involved in cycles of length $4$. We will see later why this is useful. In order to not lose factors of $\alpha$, our first step is to apply H\"{o}lder's inequality to obtain that 
\[ \E_{\substack{x \in \Omega_1 \\ y\in \Omega_2}}\left[f(x,y) \right]^{k-1} \cdot \E_{\substack{x \in \Omega_1 \\ y\in \Omega_2}}\left[f(x,y)\Big(\E_{\substack{x' \in \Omega_1 \\ y'\in \Omega_2}} f(x',y)f(x,y')f(x',y')\Big)^k\right]\ge (1+\eps)^{4k}\alpha^{4k}\tau^{4k}. \]
Taking $k\approx \log(1/\alpha)/\eps$ and rearranging, we find that 
\[\E_{\substack{x \in \Omega_1 \\ y\in \Omega_2}}\left[f(x,y)\Big(\E_{\substack{x' \in \Omega_1 \\ y'\in \Omega_2}} f(x',y)f(x,y')f(x',y')\Big)^k\right]\ge (1+\eps)^{4k}\alpha^{4k}\tau^{4k}/(\alpha \cdot \tau)^{k-1}\ge (1+\eps)^{4k} \cdot \alpha^{3k+1}\tau^{3k+1}. \]
We next apply the fact that $f\le T$ to obtain 
\[\E_{\substack{x \in \Omega_1 \\ y\in \Omega_2}}\left[T(x,y)\Big(\E_{\substack{x' \in \Omega_1 \\ y'\in \Omega_2}} f(x',y)f(x,y')f(x',y')\Big)^k\right]\ge (1+\eps)^{4k} \cdot \alpha^{3k+1}\tau^{3k+1}.\]
As $T$ is combinatorially spread, we find that 
\[\E_{\substack{x \in \Omega_1 \\ y\in \Omega_2}}\left[\Big(\E_{\substack{x' \in \Omega_1 \\ y'\in \Omega_2}} f(x',y)f(x,y')f(x',y')\Big)^k\right]\ge (1+\eps)^{4k} \cdot \alpha^{3k+1}\tau^{3k}\ge (1+\eps)^{3k} \cdot \alpha^{3k}\tau^{3k}.\]

Observe that through these manipulations we have transformed a $C_4$ into a graph $G'$ which is a series of $k$ paths of length $3$ which are joined at a pair of vertices. The key difference is that this graph has girth $6$. By telescoping, there exists an edge $e$ and a subgraph $H\subseteq G'$ such that the count of $H$ is at least $(1+\eps)\alpha\tau$ times the count of $H \setminus \{e\}$; for the sake of simplicity we assume here that $H = G'$. This give that
\begin{align*}
\E_{\substack{x,x' \in \Omega_1 \\ y,y'\in \Omega_2}}&\left[f(x',y)f(x,y')f(x',y')\Big(\E_{\substack{x'' \in \Omega_1 \\ y''\in \Omega_2}} f(x'',y)f(x,y'')f(x'',y'')\Big)^{k-1}\right]\\
&\ge (1+\eps)\alpha\tau\cdot \E_{\substack{x,x' \in \Omega_1 \\ y,y'\in \Omega_2}}\left[f(x,y')f(x',y')\Big(\E_{\substack{x'' \in \Omega_1 \\ y''\in \Omega_2}} f(x'',y)f(x,y'')f(x'',y'')\Big)^{k-1}\right].
\end{align*}
Let $R(x,y) = \E_{\substack{x'' \in \Omega_1 \\ y''\in \Omega_2}} f(x'',y)f(x,y'')f(x'',y'')$ and observe that $R(x,y)$ has mean at least $\alpha^{3}\tau^{3}$ while essentially always being bounded by $\tau^{3}$. In particular, observe that $\tau^{-3}R(x,y)$ is a ``dense'' function. Furthermore let $R_2(x',x) = \E_{y'\in \Omega_2}f(x,y')f(x',y')$; we similarly have that $\tau^{-2}R(x',x)$ is a dense function.
This gives that 
\begin{align*}
\E_{x\in \Omega_1} \E_{\substack{x' \in \Omega_1 \\ y\in \Omega_2}}f(x',y)R_2(x',x)R(x,y)^{k-1}
&\ge (1+\eps)\alpha\tau\cdot \E_{x\in \Omega_1} \E_{\substack{x' \in \Omega_1 \\ y\in \Omega_2}}R_2(x',x)R(x,y)^{k-1}.
\end{align*}
Choosing an $x$ for which $R_2(x',x)$ and $R(x,y)$ are appropriately dense then immediately gives the desired conclusion. 
The extension of the above argument to $K_{2,k}$ involves an iterative combination of H\"{o}lder's inequality and induction on smaller graphs.

This technical manipulation of replacing $f$ by ``denser codegree'' functions has been described as ``densification'' in works of Conlon, Fox, and Zhao \cite{CFZ14} which proved various graph counting lemmas in the presence of a pseudorandom majorant. An extension of these results to hypergraph was given in work of the same authors \cite{CFZ15} to prove a relative Szemer\'{e}di theorem; this in turn simplified the proof of a key ingredient to the celebrated Green--Tao theorem \cite{GT08}. However these results, if applied directly, would lose various factors of $\alpha$ in the density increment which is unacceptable. Intuitively, one way we avoid losing these factors of $\alpha$ is by applying H\"{o}lder's inequality as above instead of Cauchy-Schwarz when doing the densification procedure.

There are several remarks in order for the general case. First observe that the results of Conlon, Fox, and Zhao \cite{CFZ14} and more generally counting lemmas with respect to sparse graphs have been phrased with forcing $T$ to be pseudorandom with respect to the cut norm. While this would be acceptable for $4^{n} \cdot \exp(-(\log n)^{\Omega(1)})$-bounds, it turns out to be insufficient for quasipolynomial bounds. An important observation is that for our proof all that is required is that $T$ is bounded ``on large rectangles''; e.g.~$\E[A(x)T(x,y)B(y)]\le (1+\eps)\tau \cdot \E[A(x)B(y)] + \gamma$ for all $1$-bounded functions $A$ and $B$. This pseudorandomness criterion will prove to be quantitatively superior.

One critical feature we have not discussed to this point is that the rectangles we obtain will naturally have asymmetric sizes -- note in the statement of \cref{thm:quasisifting2} that the size of the $D$ side does not have any dependence on $k$, and is instead just $2^{-O(\log(1/\alpha)^2)}$.
To see why this is sensible, let us first consider the simpler case where there is no pseudorandom majorant and $f$ has density $\alpha$.

Suppose that 
\[\E\Big[\prod_{i\in [k]}f(x_1,y_i)f(x_2,y_i)\Big]\ge (1+\eps)^{2k}\alpha^{2k}.\]
We will argue informally that there are $g_1(x)$ and $g_2(y)$ such that 
\begin{align*}
\E[g_1(x)f(x,y)g_2(y)]&\ge (1+\eps/2) \cdot \alpha \cdot \E[g_1] \cdot \E[g_2]\\
\E[g_1] &\ge \Omega(\eps\alpha)^{O(k)}\\
\E[g_2] &\ge \Omega(\eps\alpha)^{O(1)}.
\end{align*}
Think of how $g_1$ and $g_2$ are constructed in the proof of sifting (\cref{thm:sift}), say when the edge $(x_1, y_1)$ is removed. Then $g_1$ is formed as the intersection of neighborhoods of $k-1$ vertices $y_2, \dots, y_k$, and $g_2$ is the neighborhood of the vertex $x_2$. Thus, if $f$ has density $\alpha$, then one expects that $g_1$ has density at least $\alpha^{O(k)}$ and $g_2$ has density at least $\alpha^{O(1)}$.
Observe in particular that $\E[g_2]$ has size independent of $k$. This has previously gone unnoticed and will be crucial for our analysis.

\subsection{Asymmetric increment}\label{sec:overview_asymmetric_inc}
We now return to the initial H\"{o}lder manipulation in the case where $A \subseteq S(X, Y, D)$ (equivalently, $\mbm{1}_A(x,y)\le \mbm{1}_X(x)\mbm{1}_Y(y)\mbm{1}_D(x+y)$) and $\E[f(x,y)] = \alpha \cdot \delta_X\delta_Y\delta_D$, where $\delta_X, \delta_Y, \delta_D$ are the densities of $X,Y,D$ respectively. We have not currently specified the pseudorandomness conditions we will impose on $X$, $Y$, and $D$, but they will guarantee that $\E[\mbm{1}_X(x)\mbm{1}_Y(y)\mbm{1}_D(x+y)] \approx \delta_X \delta_Y \delta_D$.

We begin with the H\"{o}lder manipulation as in \cref{subsec:XYD-expo}; tracking various support functions we find that 
\[\E_{x,y\in \mb{F}_2^{n}}\mbm{1}_A(x,y)(\E_{d\in \mb{F}_2^n}\mbm{1}_A(x,x+d)f_A(y+d,y))^{k}\ge (c\alpha)^{2k+1} \cdot \delta_X^{k+1}\delta_Y^{k+1}\delta_D^{k+1}\]
where $c$ is a sufficiently small absolute constant. As $k$ is at least a large constant times $\log(1/\alpha)$, we may (at the cost of changing $c$) instead consider 
\[\E_{x,y\in \mb{F}_2^{n}}\mbm{1}_D(x+y)(\E_{d\in \mb{F}_2^n}\mbm{1}_A(x,x+d)f_A(y+d,y))^{k}\ge (c\alpha)^{2k+1} \cdot \delta_X^{k+1}\delta_Y^{k+1}\delta_D^{k+1}.\]

We now come to one of the main gambits in this paper. In the work of Shkredov, one maintains that the $X$ and $Y$ are Fourier pseudorandom. One may attempt to maintain $X,Y,D$ which are Fourier pseudorandom and this is possible; however the associated pseudorandomization procedure is inherently far too lossy for quasipolynomial bounds. The trick is to give up $D$ (and a factor of $\delta_D$). This is possible if one takes $k \approx \log(1/(\alpha\delta_D))$.

We now apply the appropriate quasirandom sifting theorem (\cref{thm:quasisifting2}) and we find that there exists $g_i$ such that $g_1(x)\le \mbm{1}_X(x)$, $g_2(d)\le \mbm{1}_D(d)$, and  
\begin{align*}
\E[g_1(x)\mbm{1}_A(x,x+d)g_2(d)]&\ge (1+\Omega(1)) \cdot \alpha \cdot \delta_Y \cdot \E[g_1] \cdot \E[g_2]\\
\E[g_1(x)] &\ge \delta_X \cdot \exp(-\log(1/( \alpha\delta_D))^{O(1)})\\
\E[g_2(d)] &\ge \delta_D \cdot \exp(-\log(1/\alpha)^{O(1)})
\end{align*}
or an analogous statement for $\mbm{1}_A(y+d,y)$. We are crucially using here the asymmetric nature of the density increment. In general, in our argument $D$ will end up being much denser than $X,Y$.

Heuristically, if pseudorandomization is not too costly, we would have that 
\begin{align*}
\delta_D' &\leftarrow \delta_D \cdot \exp(-\log(1/\alpha)^{O(1)}), \\
\delta_X'&\leftarrow \delta_X \cdot \exp(-\log(1/(\alpha\delta_D))^{O(1)}),\text{ and } \\
\delta_Y'&\leftarrow \delta_Y \cdot \exp(-\log(1/(\alpha\delta_D))^{O(1)}).
\end{align*}
As we will have only $\log(1/\alpha)$ iterations, the densities obtained will always be quasipolynomial. Furthermore, observe that this ``dropping'' of the $D$ indicator will mean that we will require no pseudorandomness conditions on $D$. 
(In our later discussion of the general abelian case, we will need to reimpose such a condition on $D$.)
This asymmetry and the realization to ``drop'' the indicator of $D$ (e.g.~not attempt to account for its density throughout) are crucial ingredients in this work.

\subsection{Pseudorandomization into spread components}

The one remaining consideration is how to decompose the ``dense'' set which is output by the previous step and pass to a rectangle where the sides are suitably ``spread'' (the pseudorandomness property we maintain on $X$ and $Y$). The work of Shkredov \cite{Shk06} relies on an $L^2$-energy increment argument. This however is rather costly; in order to guarantee that the underlying ``sides'' have Fourier coefficients bounded by $\delta$ one needs to pass to a codimension $\delta^{-O(1)}$ set. Furthermore observe that to this point in the argument we have made no use of the tools of almost periodicity which are crucial in the work of Kelley and Meka \cite{KM23} as well as much of the recent work on Roth's theorem.

The trick therefore is to relax the notion of pseudorandomness to being ``upper bounded'' against rectangles. More precisely, we will ensure that if $X:\mb{F}_2^{n}\to \{0,1\}$ then for all $A,B:\mb{F}_2^{n} \to [0,1]$ that 
\[\E[A(x)X(x+y)B(y)] \le (1+\eps) \cdot \E[X] \cdot \E[A] \cdot \E[B] + \gamma. \]
The main consequence of the methods of Kelley--Meka \cite{KM23} (essentially \cite[Theorem~4.10]{KM23}), is that if $X$ is not upper bounded in such a manner, then $X$ has increased density (by a $1+\Omega(\eps)$ factor) on a subspace of codimension $O_{\eps}(\log(1/\gamma)^{O(1)})$. This motivates our definition of an algebraically spread set $X$ (\cref{def:algpseudo}): a set $X$ is \emph{algebraically spread} if it does not admit a density increment onto an affine subspace of small codimension $r = O_{\eps}(\log(1/\gamma)^{O(1)})$.

This suggests the idea of using a ``spread decomposition'' to do the pseudorandomization procedure to $X$ and $Y$. Given a set $X$, one may either say $X$ is spread and output $X$ or repeatedly pass $X$ to a subspace where it has increased density. This procedure allows one to decompose $X$ into spread pieces and is closely related to various spread decompositions which have notably been used in the context of the sunflower conjecture \cite{ALWZ21} and by Kupavskii and Zakharov \cite{KZ24} in the context of forbidden intersection problems. In particular, this latter work crucially relies on a decomposition into spread pieces.  

The major difficulty in this situation, however, is finding a decomposition which makes both $X$ and $Y$ spread at the same time, while also not causing the density of $D$ and the set $A$ to drop significantly. The ultimate proof is essentially a certain recursive ping-pong; one ensures that $X$ is first so spread that various manipulations on $Y$ will not increase the density of $X$. Then one splits $Y$ and argues that almost all pieces that $X$ splits into do not drop in density, and are hence still spread. The result of this is a decomposition of $X \times Y$ into subrectangles, where at least half of them have both sides spread. Thus, recursively iterating on the rectangles that are not spread, one decomposes $X\times Y$ into a set of ``spread rectangles'', and this allows one to conclude the proof. The details are rather algorithmic/recursive in nature; the main issue whenever one is dealing with spread decomposition in our context is that while one has an upper bound on the density of $X$, it may drop dramatically on a small fraction of the set. We also remark that our proof could even be used in the case of Shkredov \cite{Shk06} and guarantee a Fourier pseudorandomness condition (with a similar dimension drop as in \cite{Shk06}) while completely avoiding the $L^2$-energy increment strategy central to this work.  

\subsection{Bohr set adaptations and additional pseudorandomization}
We now briefly discuss the crucial technical changes required for adapting the proof to general groups. The first technical issue is with the use of the parameterization of $(x,y)$, $(-y-z,y)$, $(x,-x-z)$ in the context of Bohr sets.
(Recall such a parameterization arises from the change of variable $z \to -x-y-z$ in the definition of a corner.)
Observe that if one restricts $x\in B_1$ and $y\in B_2$ where $B_1,B_2$ are Bohr sets and $B_2$ is more ``narrow'' than $B_1$, then essentially any restriction on the range of $z$ causes the three coordinates to not ``live'' on the same range. In this context, we instead consider 
\[(x+x',y+y'), (x-y'+z',y+y'), (x+x',y-x'+z')\]
where $x\sim B_1$, $y\sim B_2$, $x'\sim B_3$, $y'\sim B_4$ and $z'\sim B_5$. This essentially corresponds to considering corners in a ``narrow'' box around $(x,y)$, and note that now each coordinate individually ranges over the whole domain $B_1\times B_2$ in an essentially uniform way. The setup of the initial argument is now rather more delicate; one needs to find $(x,y)$ (which we call a \emph{great pair}) such that the three rectangles induced by $(x+x',y+y')$, $(x-y'+z',y+y')$, $(x+x',y-x'+z')$ all have sufficient density and spreadness. This is quite technical, but a certain relatively routine but lengthy argument suffices.

The final technical hurdle is rather more subtle. Observe in the case of Bohr sets that the size of the container is given by $\E_{\substack{x\sim B_1\\y\sim B_2}}\mbm{1}_X(x)\mbm{1}_Y(y)\mbm{1}_D(x+y)$. We want to enforce pseudorandomness notions on $X, Y, D$ that guarantee that the size of the container is close to expected. A natural notion to enforce on $X, Y$ is the generalization of algebraic spreadness to Bohr sets: there is no density increment onto a smaller Bohr set of slightly higher rank and smaller radius. Using that $X$ and $Y$ are spread one can prove that this is quite close to $(\E_{\substack{x\sim B_1\\y\sim B_2}}\mbm{1}_X(x)\mbm{1}_D(x+y)) \cdot (\E_{y\in B_2}\mbm{1}_Y(y))$. 

There is no guarantee, however, that $\E_{\substack{x\sim B_1\\y\sim B_2}}\mbm{1}_X(x)\mbm{1}_D(x+y)$ is close to $\E_{x\sim B_1}\mbm{1}_X(x) \cdot \E_{z\sim B_1}\mbm{1}_D(z)$. 
(This is unlike the finite field setting where the initial term factors.)
The failure case occurs precisely when $D$ places ``most'' of its mass where $X$ dips below its mean. Essentially, the minimal condition one can use to obtain our desired guarantee is that $D$ is typically not below its mean; for this the rather weak bound of the form $\E_{z\sim B_1}|\E_{z'\sim B_3}\mbm{1}_D(z+z') - \delta_D|\le \eps \cdot \delta_D$ suffices. Observe that if our pseudorandomization procedure could also make $D$ spread this would be sufficient. However, this seems technically quite challenging. Instead, we observe that if this weak $\ell_1$-condition is not true, then $\E_{z\sim B_1}|\E_{z'\sim B_3}\mbm{1}_D(z+z') - \delta_D|\le \eps \cdot \delta_D$ fails on a constant fraction of translates of $B_3$. This combined with a certain log-potential analysis (see \cref{lemma:dsplit}) and an additional recursive layer allows one to handle the necessary pseudorandomization procedure. It appears a rather interesting question to understand what the limit of these ``efficient pseudorandomization'' procedures are, as these are rather different than the more standard $L^2$-based strategies used in the literature. 

We remark that much of our analysis in this section is in terms of arithmetic grid norms following the work of Mili{\'c}evi{\'c} \cite{Mil24} on skew corners.

\subsection{Notation}
We use standard asymptotic notation for functions on $\mb{N}$ throughout. Given functions $f=f(x)$ and $g=g(x)$, we write $f=O(g)$, or $g = \Omega(f)$, to mean that there is a constant $C$ such that $|f(x)|\le Cg(x)$ for sufficiently large $x$. Additionally, we write $f = \Theta(g)$ to mean $f = O(g)$ and $g = O(f)$.
Subscripts indicate dependence on parameters. Furthermore given a function $f:\mb{Z}\to \mb{C}$, we say $f$ is $1$-bounded if $\sup_{x\in \mb{Z}}|f(x)|\le 1$. For a finite set $\Omega$ and function $f \colon \Omega \to \mb{C}$, we denote the $k$-norm of $f$ by 
$$
    \|f\|_k = \left(\E_{x \in \Omega} |f(x)|^k \right)^{1/k}.
$$
For a finite abelian group $G$, we denote the convolution of two functions $f,g : G \to \mathbb{R}$ by
$$
    (f \ast g)(x) = \E_{y \in G} \left[ f(y)g(x-y) \right].
$$

\textbf{Parameters:} $\alpha$ will always be related to the density of the set within the container.
Throughout the proof, $\eps$ with subscripts, etc.~will all denote absolute constants independent of the size of the group $G$. $K, k, \ell$ will be used as the size of the grid norms we consider, and will all be $O(\log(1/\alpha)^{O(1)})$. $\gamma$ will denote a pseudorandomness parameter, and will be $\exp(-\log(1/\alpha)^{O(1)})$. $r$ will be related to the dimension up to which our sets are algebraically spread, and will be $O(\log(1/\alpha)^{O(1)})$.

$\delta$ and $\delta_X, \delta_Y, \delta_D$ will denote densities of the sets inducing the container within the ambient subspace. These will all be at least $\exp(-\log(1/\alpha)^{O(1)})$.

In the section on Bohr sets, $\eta$ will be used to denote the ratio between radii of regular Bohr sets with the same set of frequencies. In particular, we will often consider $B_1 \supseteq B_2 \supseteq B_3 \supseteq \dots$ where $B_i$ are all regular and the radii (denoted as $r_i$) satisfy $r_{i+1}/r_i \le \eta$.

\section{Combinatorial Spreadness and Relative Sifting}\label{sec:relative-sifting}
Throughout our analysis in this paper the sets $\Omega_i$ are finite (and therefore have a well-defined uniform measure). We first define the notion of combinatorial spreadness which will be used throughout this section.
\begin{definition}[Combinatorial spreadness]
\label{def:combpseudo}
Let $\Omega_1, \Omega_2$ be sets. A subset $T \subseteq \Omega_1 \times \Omega_2$ is \emph{$(\tau, \gamma)$-combinatorially spread} if for all functions $f: \Omega_1 \to [0, 1]$ and $g: \Omega_2 \to [0, 1]$ it holds that
\[ \E_{x \in \Omega_1, y \in \Omega_2}[f(x)g(y)\mbm{1}_T(x,y)] \le \tau\E_{x \in \Omega_1}[f(x)] \E_{y \in \Omega_2}[g(y)] + \gamma. \]
\end{definition}
Intuitively, one should think that $\E_{x \in \Omega_1, y \in \Omega_2}[\mbm{1}_T(x, y)]$ is approximately $\tau$.
Note we will occasionally refer to a boolean function as $(\tau, \gamma)$-combinatorially spread if it is the indicator function of a $(\tau, \gamma)$-combinatorially spread set. 
If we restrict \Cref{def:combpseudo} to functions $f,g$ which are $\{0,1\}$-valued, it is equivalent to think of combinatorial spreadness as mandating that $T$ has no significant density increment on rectangles of density at least $\Omega(\gamma)$. Later, we will define a version of combinatorial spreadness which is asymmetric; namely, we will have different density requirements on the functions $f,g$, rather than a single density requirement on the rectangle $f(x)g(y)$.
We first note a counting lemma for our notion of combinatorial spreadness.
\begin{lemma}[Counting lemma]
\label{lemma:counting}
Let $\Omega_1, \Omega_2$ be sets, $T:\Omega_1\times\Omega_2\to \{0,1\}$ be $(\tau,\gamma)$-combinatorially spread. Let $G = ([k] \cup [\ell], E)$ be a bipartite graph with edge $(i^*,j^*)\in E$. For $(i,j)\in E\setminus \{(i^*,j^*)\}$ let $f_{ij}: \Omega_1\times\Omega_2\to [0,1]$ be nonnegative $1$-bounded functions. Then
\[ \E_{\substack{x_1,\dots,x_k \in \Omega_1 \\ y_1,\dots,y_\ell \in \Omega_2}} \Big[T(x_{i^*},y_{j^*}) \prod_{(i,j) \in E\setminus \{(i^*,j^*)\}} f_{ij}(x_{i},y_{j}) \Big] \le \tau \E_{\substack{x_1,\dots,x_k \in \Omega_1 \\ y_1,\dots,y_\ell \in \Omega_2}} \Big[\prod_{(i,j) \in E\setminus \{(i^*,j^*)\}} f_{ij}(x_{i},y_{j}) \Big] + \gamma. \]
\end{lemma}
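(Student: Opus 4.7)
\medskip

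\noindent\textbf{Proof plan.} The plan is to freeze every variable except $x_{i^*}$ and $y_{j^*}$, isolate the part of the product that depends on $x_{i^*}$ alone and the part that depends on $y_{j^*}$ alone (this separation is where the bipartite structure enters), then invoke \cref{def:combpseudo} with those two one-variable marginals, and finally integrate back over the remaining variables.

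First, I partition the edge set $E \setminus \{(i^*, j^*)\}$ into three groups according to whether they are incident to $x_{i^*}$, to $y_{j^*}$, or to neither. After fixing values of all $x_i$ for $i \neq i^*$ and all $y_j$ for $j \neq j^*$, define
\[
A(x_{i^*}) \coloneqq \prod_{\substack{j \neq j^* \\ (i^*,j) \in E}} f_{i^*,j}(x_{i^*}, y_j), \qquad
B(y_{j^*}) \coloneqq \prod_{\substack{i \neq i^* \\ (i,j^*) \in E}} f_{i,j^*}(x_i, y_{j^*}),
\]
\[
C \coloneqq \prod_{\substack{(i,j) \in E \\ i \neq i^*,\, j \neq j^*}} f_{ij}(x_i, y_j).
\]
Since each $f_{ij}$ is valued in $[0,1]$, so are $A$ and $B$, and $C$ is a fixed number in $[0,1]$. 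The key observation is that, because $G$ is bipartite, no edge other than $(i^*,j^*)$ involves both $x_{i^*}$ and $y_{j^*}$, so the full product factors as $T(x_{i^*},y_{j^*}) \, A(x_{i^*}) \, B(y_{j^*}) \, C$.

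Applying \cref{def:combpseudo} with the $1$-bounded functions $A$ and $B$ yields, pointwise in the remaining variables,
\[
\E_{x_{i^*} \in \Omega_1,\, y_{j^*} \in \Omega_2}\bigl[A(x_{i^*})\, T(x_{i^*},y_{j^*})\, B(y_{j^*})\bigr] \;\le\; \tau \, \E_{x_{i^*}}[A(x_{i^*})] \, \E_{y_{j^*}}[B(y_{j^*})] + \gamma.
\]
Multiplying both sides by the nonnegative quantity $C$, and then taking the expectation over all $x_i$ with $i \neq i^*$ and all $y_j$ with $j \neq j^*$, the left-hand side becomes exactly
\[
\E_{\substack{x_1,\dots,x_k \in \Omega_1 \\ y_1,\dots,y_\ell \in \Omega_2}}\Big[T(x_{i^*},y_{j^*}) \prod_{(i,j) \in E \setminus \{(i^*,j^*)\}} f_{ij}(x_i,y_j)\Big].
\]
For the right-hand side, observe that $\E_{x_{i^*}}[A(x_{i^*})] \cdot \E_{y_{j^*}}[B(y_{j^*})] = \E_{x_{i^*}, y_{j^*}}[A(x_{i^*}) B(y_{j^*})]$ because $A$ depends only on $x_{i^*}$ (once the other $y_j$ are fixed) and $B$ only on $y_{j^*}$; integrating against $C$ then reassembles $\tau \cdot \E[\prod_{(i,j)\neq(i^*,j^*)} f_{ij}(x_i,y_j)] + \gamma \cdot \E[C]$, and we use $\E[C] \le 1$ to absorb the additive term as $\gamma$.

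I do not expect a serious obstacle here; the whole argument is a direct consequence of bipartiteness making the dependence separate and of linearity of expectation. The only mild subtlety is making sure that after fixing the other coordinates, the functions $A$ and $B$ are genuinely in $[0,1]$ and univariate, which follows because no edge other than $(i^*,j^*)$ joins $i^*$ to $j^*$, i.e., this is precisely where we use that $(i^*,j^*)$ is a single edge and $G$ is bipartite.
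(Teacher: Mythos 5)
Your proof is correct and follows essentially the same route as the paper: condition on all variables except $x_{i^*}, y_{j^*}$, split the product into the $A$, $B$, $C$ factors using bipartiteness, apply the combinatorial spreadness definition to $A$ and $B$, and absorb the $\gamma$ error using $1$-boundedness of the remaining factor.
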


\begin{proof}
We can write
\begin{align*} &\E_{\substack{x_1,\dots,x_k \in \Omega_1 \\ y_1,\dots,y_\ell \in \Omega_2}}  \Big[T(x_{i^*},y_{j^*}) \prod_{(i,j) \in E\setminus \{(i^*,j^*)\}} f_{ij}(x_{i},y_{j}) \Big] \\ =
~& \E_{\substack{x_i : i \in [k]\setminus \{i^*\} \\ y_j : j \in [\ell]\setminus\{j^*\}}} \Bigg[\prod_{\substack{(i,j) \in E\\ i\neq i^*,j\neq j^*}} f_{ij}(x_i,y_j) \E_{x_{i^*},y_{j^*}} \Big[T(x_{i^*},y_{j^*}) \prod_{j:(i^*,j)\in E} f_{i^*j}(x_{i^*},y_j) \prod_{i:(i, j^*)\in E} f_{ij^*}(x_i,y_{j^*}) \Big]\Bigg] \\
\le ~&\E_{\substack{x_i : i \in [k]\setminus \{i^*\} \\ y_j : j \in [\ell]\setminus\{j^*\}}}\Bigg[\prod_{\substack{(i,j) \in E\\ i\neq i^*,j\neq j^*}} f_{ij}(x_i,y_j) \Big(\tau \E_{x_{i^*},y_{j^*}} \Big[ \prod_{j:(i^*,j)\in E} f_{i^*j}(x_{i^*},y_j) \prod_{i:(i, j^*)\in E} f_{ij^*}(x_i,y_{j^*}) \Big] + \gamma\Big) \Bigg] \\
\le ~& \tau \E_{\substack{x_1,\dots,x_k \in \Omega_1 \\ y_1,\dots,y_\ell \in \Omega_2}}   \Big[\prod_{(i,j) \in E\setminus \{(i^*,j^*)\}} f_{ij}(x_{i},y_{j}) \Big] + \gamma,
\end{align*}
where the first inequality uses $(\tau, \gamma)$-combinatorial spreadness of $T$, and the second uses that the function $f_{ij}$ are $1$-bounded.
\end{proof}

We first require the following lemma which states that if one correlates with a bounded product function then one may extract a correlation with a pair of sets. A version of this statement appears as \cite[Claim~4.6]{KLM24}.
\begin{lemma}\label{lem:extract-cor}
Let $\tau>0$, $A:\Omega_1\times \Omega_2\to \mb{R}$, and $f_i:\Omega_i\to [0,1]$ be such that 
\[\E_{x,y}[f_1(x)f_2(y)A(x,y)]\ge \tau \cdot \E_{x,y}[f_1(x)f_2(y)].\]
Then there exist $g_i:\Omega_i\to \{0,1\}$ with 
\[\E_{x,y}[g_1(x)g_2(y)A(x,y)]\ge \tau \cdot \E_{x,y}[g_1(x)g_2(y)]\]
with $\E[g_i(x)]\ge \E[f_i(x)]/2$.
\end{lemma}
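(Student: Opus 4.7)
The plan is to use the layer-cake decomposition of $f_1$ and $f_2$. First I would write $f_i(x) = \int_0^1 \mbm{1}[f_i(x) \ge t_i]\,dt_i$ and, with $G_i^{(t)} := \mbm{1}[f_i \ge t]$, substitute into both sides of the hypothesis to obtain
\[\int_0^1\!\!\int_0^1 \big(\E[G_1^{(t_1)}(x)\,G_2^{(t_2)}(y)\,A(x,y)] - \tau\,\E[G_1^{(t_1)}(x)\,G_2^{(t_2)}(y)]\big)\,dt_1\,dt_2 \ge 0.\]
A direct pigeonhole on the integrand produces thresholds $(t_1^*, t_2^*)$ such that $g_i := G_i^{(t_i^*)}$ satisfies the correlation part $\E[g_1 g_2 A] \ge \tau\,\E[g_1 g_2]$. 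This handles half of the conclusion; the remaining task is to enforce $\E[g_i] \ge \E[f_i]/2$ simultaneously.

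For the density control, I would restrict the admissible thresholds to a \emph{safe} rectangle. Set $T_i := \sup\{t : \Pr[f_i \ge t] \ge \E[f_i]/2\}$, so that for every $t_i \le T_i$ the super-level set $G_i^{(t_i)}$ automatically has density at least $\E[f_i]/2$. A short computation $\int_{T_i}^1 \Pr[f_i \ge t]\,dt \le (1-T_i)\cdot \E[f_i]/2$ gives $\int_0^{T_i}\Pr[f_i \ge t]\,dt \ge \E[f_i]/2$, so the rectangle $[0,T_1]\times[0,T_2]$ carries at least one quarter of the total layer-cake mass $\int\!\!\int \Pr[f_1\ge t_1]\Pr[f_2\ge t_2]\,dt_1\,dt_2 = \E[f_1]\E[f_2]$.

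The hard part is that restricting the pigeonhole to this safe rectangle need not preserve the positivity of the integrated hypothesis, since the excess correlation could be entirely concentrated in the unsafe region. To resolve this I plan to use the additive split $f_i = \min(f_i, T_i) + (f_i - T_i)_+$, expanding $\E[f_1 f_2 A]$ and $\E[f_1 f_2]$ each into four matching cross-terms indexed by $\{\min,+\}^2$. Since the ratio of the sums is at least $\tau$, at least one pair of cross-terms has ratio at least $\tau$ individually (all denominators are nonnegative, and degenerate zero-denominator cases are vacuous). The $(\min,\min)$ case yields safe thresholds via the restricted pigeonhole on $[0,T_1]\times[0,T_2]$, giving both conditions directly. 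Each mixed case is dispatched by choosing $\mbm{1}[f_i \ge T_i]$ on the unsafe coordinate, which has density exactly $\E[f_i]/2$ by left-continuity of $\Pr[f_i \ge \cdot]$, and then absorbing the residual weight via the identity $(f_i - T_i)_+ = (f_i - T_i)\mbm{1}[f_i \ge T_i]$ followed by a one-dimensional layer-cake pigeonhole on the remaining coordinate, which produces the second indicator at density at least $\E[f_j]/2$.
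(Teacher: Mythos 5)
The $(\min,\min)$ branch of your argument is correct: $m_i := \min(f_i, T_i)$ has the layer-cake representation $m_i(x) = \int_0^{T_i}\mbm{1}[f_i(x)\ge t]\,dt$, every superlevel set appearing there has density at least $\E[f_i]/2$, and a pigeonhole over $[0,T_1]\times[0,T_2]$ produces a valid pair of indicators. The gap is in the mixed and $(+,+)$ cases. The phrase ``absorbing the residual weight via $(f_i - T_i)_+ = (f_i - T_i)\mbm{1}[f_i \ge T_i]$'' conceals the fact that $p_i := (f_i - T_i)_+$ is \emph{not} a constant multiple of $h_i := \mbm{1}[f_i\ge T_i]$; it carries a nonconstant weight $(f_i - T_i)$ on the support of $h_i$. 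Because $A$ is signed and you need a two-sided ratio comparison, this weight cannot be dropped or averaged away, and a layer cake of $p_i$ over $(T_i,1]$ reintroduces superlevel sets $\{f_i\ge t\}$ of density strictly below $\E[f_i]/2$ --- exactly what the truncation at $T_i$ was designed to avoid. There is no a priori relation between the ratio for $(m_1, p_2)$ and the ratio for $(g_1, h_2)$ for any $g_1$ produced by a layer cake of $m_1$. (Minor: the density of $h_i$ is $\ge \E[f_i]/2$, not exactly $\E[f_i]/2$; in a discrete space the jump at $T_i$ can be large.)

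A concrete failure: take $\Omega_1=\Omega_2 = [N]$ and $f_1 = f_2 = f$ with $f = 1$ on $\delta N$ points, $f = 1/2$ on $N/2$ points, and $f = 0$ elsewhere, for small $\delta > 0$. Then $\E[f] = 1/4 + \delta$, $T_1 = T_2 = 1/2$, $m = \tfrac12\mbm{1}[f\ge 1/2]$, and $p = \tfrac12\mbm{1}[f=1]$. Set $A(x,y) = K\,\mbm{1}[f(x)=1]\mbm{1}[f(y)=1]$ with $K\delta^2 = \tau(1/4+\delta)^2$, so the hypothesis holds with equality. For small $\delta$, the $(\min,\min)$ ratio $\E[m_1 m_2 A]/\E[m_1 m_2]$ is about $\tau/4 < \tau$, so the pigeonhole must land on a mixed or $(+,+)$ term; but your dispatch produces only $g_i \in \{h_i, \Omega_i\}$ (the sole superlevel sets of $m_i$), and each resulting pair has ratio at most about $\tau/4$. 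The lemma is of course true here --- one must take $g_i$ to be $\{f_i = 1\}$ padded up to density $\E[f]/2$, which is not a superlevel set of either $m_i$ or $p_i$.

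The paper's own proof sidesteps this obstacle by using a different decomposition: it shows by a short merging induction that any vector in $[0,1]^{|\Omega_i|}$ with coordinate sum $|\Omega_i|\E[f_i]$ is a convex combination of $\{0,1\}^{|\Omega_i|}$ vectors, \emph{each} of whose weight is $\lfloor|\Omega_i|\E[f_i]\rfloor$ or $\lceil|\Omega_i|\E[f_i]\rceil$. Since every piece has essentially the same mean as $f_i$, the density constraint holds for all pieces automatically (the factor $1/2$ only covers rounding when $|\Omega_i|\E[f_i]$ is near $1$), and a single pigeonhole finishes. Your layer cake is also a convex combination into indicators, but its pieces have uncontrolled densities; that is precisely what forces the truncation and the case split you cannot close.
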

\begin{proof}
Consider $\vec{\alpha} = (\alpha_1,\ldots,\alpha_{\ell})\in [0,1]^{\ell}$ with $\sum \alpha_i = k$. We will show that $\vec{\alpha}$ may be written as a convex combination of $\vec{v}\in \{0,1\}^{\ell}$ with $\snorm{\vec{v}}_1\in \{\lfloor k\rfloor,\lceil k\rceil\}$. Applying this claim to $\alpha = (f_i(x): x \in \Omega_i)$, we may decompose 
\begin{equation}\label{eq:f_i_decomp}
    f_i(x) = \sum_{j=1}^{t_i}w_j g_{i,j}(x),
\end{equation}
where $g_{i,j} : \Omega_i \to \{0,1\}$ with
\[ \E[g_{i,j}(x)] \in \Big\{ \frac{1}{|\Omega_i|} \cdot \Big\lfloor |\Omega_i| \cdot \E[f_i(x)] \Big\rfloor, \frac{1}{|\Omega_i|} \Big\lceil |\Omega_i| \cdot \E[f_i(x)] \Big\rceil \Big\} \]
and $w_j>0$.
If $\lfloor \E[f_i(x)] \cdot |\Omega_i|\rfloor = 0$, then we may drop terms with $\E[g_{i,j}(x)] = 0$; note $\E[g_{i,j}(x)] \ge \E[f_i(x)]$ for the remaining terms.
Otherwise $\E[f_i(x)] \ge 1/|\Omega_i|$, and we deduce $\E[g_{i,j}(x_i)]\ge \E[f_i(x)]/2$ for all $j$. Substituting \Cref{eq:f_i_decomp} into our original assumption, we find 
\[\sum_{j=1}^{t_1}\sum_{j'=1}^{t_2}w_jw_{j'}\E[(A(x,y) - \tau) g_{1,j}(x)g_{2,j'}(y)]\ge 0.\]
At least one of the summands is nonnegative and the result follows.

To prove the initial claim, we proceed by induction on the number of coordinates $\ell$ which are strictly between zero and one. The case when $\ell = 1$ is trivial. If there are coordinates $\alpha_i + \alpha_j \le 1$, then by writing 
\[(\alpha_i,\alpha_j) = \frac{\alpha_i}{\alpha_i+ \alpha_j}(\alpha_i+ \alpha_j,0) + \frac{\alpha_j}{\alpha_i+ \alpha_j}(0,\alpha_i+ \alpha_j)\]
we may proceed by induction downward. Else if $1<\alpha_i + \alpha_j\le 2$, then 
\[(\alpha_i,\alpha_j) = \frac{1-\alpha_i}{2-\alpha_i-\alpha_j}(\alpha_i + \alpha_j - 1, 1)  + \frac{1-\alpha_j}{2-\alpha_i-\alpha_j}(1, \alpha_i + \alpha_j - 1),\]
and again we may proceed by induction downward.
\end{proof}

Finally, we require the basic (non-relative) version of sifting. For completeness (and to provide a slightly better bound than is in the literature, e.g., \cite[Lemma 4.7]{KLM24}), we provide a proof.
\begin{theorem}[Sifting]
\label{thm:sift}
Let $f: \Omega_1 \times \Omega_2 \to [0, 1]$ satisfy that $\|f\|_{G(k, \ell)} \ge \alpha$. For any $\eps > 0$, there are functions $g_1: \Omega_1 \to [0, 1]$ and $g_2: \Omega_2 \to [0, 1]$ such that
\[ \E_{x \in \Omega_1, y \in \Omega_2}[f(x,y)g_1(x)g_2(y)] \ge (1-\eps)\alpha\E_{x \in \Omega_1}[g_1(x)]\E_{y \in \Omega_2}[g_2(y)], \]
and
\[ \E_{x \in \Omega_1}[g_1(x)] \E_{y \in \Omega_2}[g_2(y)] \ge \eps\alpha^{O(k+\ell)}. \]
\end{theorem}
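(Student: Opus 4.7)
The plan is to interpret the grid norm hypothesis graph-theoretically: $\|f\|_{G(k,\ell)} \ge \alpha$ unpacks to $\E_{\vec{x},\vec{y}}[\prod_{i\in [k],\,j\in[\ell]} f(x_i,y_j)] \ge \alpha^{k\ell}$, a weighted count of $K_{k,\ell}$ copies in the bipartite graph encoded by $f$. I would sift out a rectangle by designating the edge $(x_1,y_1)$: fix parameters $\vec{x}^* = (x_2^*,\dots,x_k^*) \in \Omega_1^{k-1}$ and $\vec{y}^* = (y_2^*,\dots,y_\ell^*) \in \Omega_2^{\ell-1}$ and set
\[
g_1(x) = \prod_{j=2}^\ell f(x,y_j^*), \qquad g_2(y) = \prod_{i=2}^k f(x_i^*,y),
\]
both automatically $[0,1]$-valued. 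The factorization $\prod_{i,j} f(x_i,y_j) = f(x_1,y_1)\,g_1(x_1)\,g_2(y_1)\,h(\vec{x}^*,\vec{y}^*)$ with $h = \prod_{i\ge 2,\,j\ge 2} f(x_i^*,y_j^*)$ recasts the hypothesis as the key inequality
\[
\alpha^{k\ell} \le \E_{\vec{x}^*,\vec{y}^*}\!\left[h(\vec{x}^*,\vec{y}^*) \cdot \E_{x,y}[f(x,y)\,g_1(x)\,g_2(y)]\right].
\]

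For the \emph{size} bound $\E[g_1]\E[g_2] \ge \eps\alpha^{O(k+\ell)}$, I would apply a moment Markov argument. Set $\phi'(\vec{y}_{\ge 2}) = \E_x[\prod_{j\ge 2} f(x,y_j)]$; the identity $\E_{\vec{y}_{\ge 2}}[(\phi')^k] = \E[\prod_{i\in [k],\,j\in [2,\ell]} f(x_i,y_j)]$, together with the observation that dropping the $j=1$ factors (each $\le 1$) only increases the integrand, gives $\E[(\phi')^k] \ge \alpha^{k\ell}$. Since $\phi' \le 1$, Markov yields $\vec{y}_{\ge 2}^*$ with $\E[g_1] = \phi'(\vec{y}_{\ge 2}^*) \ge \alpha^\ell$, and symmetrically $\E[g_2] \ge \alpha^k$, so $\E[g_1]\E[g_2] \ge \alpha^{k+\ell}$.

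What remains, and constitutes the main technical obstacle, is the \emph{correlation} bound $\E[fg_1g_2] \ge (1-\eps)\alpha\E[g_1]\E[g_2]$: one must select parameters that satisfy both this and the size bound. My plan is a two-stage extraction. First fix $\vec{y}_{\ge 2}^*$ satisfying $\E_{y_1}[T(y_1)^k] \ge \alpha^{k\ell}$, where $T(y_1) := \E_x[f(x,y_1)g_1(x)]$; such a choice exists by averaging the identity $\E_{y_1,\vec{y}_{\ge 2}}[T^k] = \E_{\vec{y}}[\phi(\vec{y})^k] \ge \alpha^{k\ell}$ with $\phi(\vec{y}) = \E_x[\prod_{j} f(x,y_j)]$, and it automatically enforces $\phi' \ge \alpha^\ell$ since $T \le \phi'$. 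Then, for this $\vec{y}_{\ge 2}^*$, apply a Markov argument to the normalized ratio $T/\phi' \in [0,1]$: after dyadically partitioning $\vec{y}_{\ge 2}$ by the value of $\phi'$ so that $\phi'$ is effectively constant on the chosen level, one obtains a set $S = \{y_1 : T(y_1) \ge (1-\eps)\alpha\phi'\}$ of density $\ge \eps\alpha^{O(k)}$, and setting $g_2 = \mathbf{1}_S$ then satisfies both bounds simultaneously; a symmetric argument handles $\vec{x}_{\ge 2}^*$ and $g_1$, and \cref{lem:extract-cor} can be invoked if a $\{0,1\}$-valued output is desired.
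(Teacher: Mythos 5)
There is a genuine gap in your density bound for $S$, and it traces to two ideas from the paper's proof that your plan omits: the \emph{induction} step (if any smaller grid norm $\|f\|_{G(i,j)}$ with $i+j<k+\ell$ is already $\ge \alpha$, recurse) and the \emph{telescoping} chain of graphs from $K_{k-1,\ell-1}$ up to $K_{k,\ell}$ that picks out \emph{which} edge to designate.

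Concretely: after you fix $\vec{y}_{\ge 2}^*$ so that $\E_{y_1}[T(y_1)^k]\ge\alpha^{k\ell}$, you know $T\le\phi'$ and $\phi'\ge\alpha^\ell$, but you have no \emph{upper} bound on $\phi'$. The Markov estimate
\[
\Pr_{y_1}\Big[\tfrac{T}{\phi'}\ge(1-\eps)\alpha\Big]\;\ge\;\E_{y_1}\Big[\big(\tfrac{T}{\phi'}\big)^k\Big]-\big((1-\eps)\alpha\big)^k\;\ge\;\frac{\alpha^{k\ell}}{(\phi')^k}-\big((1-\eps)\alpha\big)^k
\]
is vacuous unless $\phi'\lesssim\alpha^{\ell-1}$; if $\phi'$ is, say, a constant, the set $S$ may be empty (take $T(y_1)$ concentrated near $\alpha^\ell$, which still satisfies $\E[T^k]\ge\alpha^{k\ell}$ but lies below the threshold $(1-\eps)\alpha\phi'$). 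The dyadic partition by level of $\phi'$ does not fix this: pigeonhole only locates a level carrying $\Omega(\alpha^{k\ell})$ of the $T^k$-mass, and on that level the resulting density is $\gtrsim\alpha^{k\ell}/(\phi')^k$, which is of order $\alpha^{k\ell}$ in the worst case, not $\alpha^{O(k+\ell)}$. The needed bound $\phi'\lesssim\alpha^{\ell-1}$ is precisely what you would get from the paper's base case assumption $\|f\|_{G(k,\ell-1)}<\alpha$ — but you never reduce to that base case.

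The paper handles this differently: it first inducts so that all strictly smaller grid norms are below $\alpha$, then pigeonholes over a chain $K_{k-1,\ell-1}=G^{(0)}\subset\cdots\subset G^{(k+\ell-1)}=K_{k,\ell}$ to find a step where adding one edge $(i^*,j^*)$ increases the count by a factor $\ge\alpha$ (this uses both the hypothesis $\|f\|_{G(k,\ell)}^{k\ell}\ge\alpha^{k\ell}$ and the base case bound $\|f\|_{G(k-1,\ell-1)}^{(k-1)(\ell-1)}<\alpha^{(k-1)(\ell-1)}$). Only then does it fix the ``spectator'' variables; the base-case bound on $\|f\|_{G(k-2,\ell-2)}$ controls the probability that the remaining $K_{k-2,\ell-2}$ is nonzero, which is what delivers the $\alpha^{O(k+\ell)}$ density. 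In short, the telescoping picks the right edge to remove — fixating on the edge $(1,1)$ from the start, as you do, and then trying to repair the density by a Markov argument on $T/\phi'$, is the wrong place to try to make the argument close. Your first-paragraph setup (fixing both $\vec{x}^*_{\ge2},\vec{y}^*_{\ge2}$ and taking $g_1,g_2$ as neighborhood products) is actually the right shape; what is missing is the induction and the pigeonhole over the chain that tells you where to cut, rather than the two-stage ``$g_2=\mathbf{1}_S$'' extraction.
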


\begin{proof}
If $\|f\|_{G(i, j)} \ge \alpha$ for any positive integers $i \le k$ and $j \le \ell$ with $i+j < k+\ell$, then the result follows by induction. Assume otherwise for the remainder of the argument. Let $G^{(0)}, G^{(1)}, \dots, G^{(k+\ell-1)}$ be an increasing sequence of graphs (each adding one edge to the previous) where $G^{(0)}$ is $K_{k-1,\ell-1}$ and $G^{(k+\ell-1)}$ is $K_{k,\ell}$, each with vertex set $V = [k] \cup [\ell]$. Then there is some $0 \le t \le k+\ell-2$ such that if $(i^*, j^*)$ is the edge in $G^{(t+1)} \setminus G^{(t)}$, then
\[ \E_{\substack{x_1,\dots,x_k \in \Omega_1 \\ y_1,\dots, y_\ell \in \Omega_2}}\left[f(x_{i^*}, y_{j^*}) \prod_{(i,j) \in E(G^{(t)})} f(x_i, y_j) \right] \ge \alpha \E_{\substack{x_1,\dots,x_k \in \Omega_1 \\ y_1,\dots, y_\ell \in \Omega_2}}\left[\prod_{(i,j) \in E(G^{(t)})} f(x_i, y_j) \right], \]
which can be rearranged to get
\begin{align*}
\E_{\substack{x_1,\dots,x_k \in \Omega_1 \\ y_1,\dots, y_\ell \in \Omega_2}}\left[(f(x_{i^*}, y_{j^*}) - (1-\eps)\alpha) \prod_{(i,j) \in E(G^{(t)})} f(x_i, y_j) \right] &\ge \eps\alpha \E_{\substack{x_1,\dots,x_k \in \Omega_1 \\ y_1,\dots, y_\ell \in \Omega_2}}\left[\prod_{(i,j) \in E(G^{(t)})} f(x_i, y_j) \right] \\
&\ge \eps\alpha \|f\|_{G(k,\ell)}^{k\ell}.
\end{align*}

Let $\bar{E} \subseteq E(G^{(t)})$ be the edges not involving either of $i^*$ or $j^*$ and observe that $\bar{E}$ contains $K_{k-2,\ell-2}$. Thus we have that $\eps\alpha \|f\|_{G(k,\ell)}^{k\ell}\ge \eps \cdot \alpha^{O(k+\ell)} \cdot \mb{E}[\prod_{(i,j) \in \bar{E}} f(x_i, y_j)]$. Observe that 
\begin{align*}
\eps&\cdot \alpha^{O(k+\ell)} \cdot \mb{E}[\prod_{(i,j) \in \bar{E}} f(x_i, y_j)]\\ &\le \E_{\substack{x_1,\dots,x_k \in \Omega_1 \\ y_1,\dots, y_\ell \in \Omega_2}}\left[(f(x_{i^*}, y_{j^*}) - (1-\eps)\alpha) \prod_{(i,j) \in E(G^{(t)})} f(x_i, y_j) \right] \\
&= \E_{x_{i^*}, y_{j^*}}\Bigg[(f(x_{i^*}, y_{j^*}) - (1-\eps)\alpha) \cdot \\
& \qquad\E_{\substack{x_i : i \in [k]\setminus \{i^*\} \\ y_j : j \in [\ell]\setminus\{j^*\}}}\Big[\prod_{j : (i^*,j) \in E(G^{(t)})} f(x_{i^*}, y_j) \prod_{i : (i, j^*) \in E(G^{(t)})} f(x_i, y_{j^*}) \prod_{(i,j) \in \bar{E}} f(x_i, y_j) \Big] \Bigg].
\end{align*}

Thus we have that where exists a choice of $\{x_i : i \in [k]\setminus \{i^*\}, y_j : j \in [\ell]\setminus \{j^*\}\}$ with 
\begin{align*}
\E_{x_{i^*}, y_{j^*}}\Bigg[(f(x_{i^*}, y_{j^*}) - (1-\eps)\alpha) \prod_{j : (i^*,j) \in E(G^{(t)})} f(x_{i^*}, y_j) \prod_{i : (i, j^*) \in E(G^{(t)})} f(x_i, y_{j^*})\Bigg]
&\ge \eps\alpha^{O(k+\ell)}.
\end{align*}
Now we can define $g_1(x) = \prod_{j : (i^*,j) \in E(G^{(t)})} f(x, y_j)$ and $g_2(y) = \prod_{i : (i, j^*) \in E(G^{(t)})} f(x_i, y)$, so that the above equation gives
\[ \E_{x,y}[f(x,y)g_1(x)g_2(y)] \ge (1-\eps)\alpha \E_x[g_1(x)]\E_y[g_2(y)] + \eps\alpha^{O(k+\ell)}. \]
This implies the first conclusion. The second conclusion follows because
\[ \E_x[g_1(x)]\E_y[g_2(y)] \ge \E_{x,y}[f(x,y)g_1(x)g_2(y)] \ge \eps\alpha^{O(k+\ell)}. \qedhere\]
\end{proof}

Now we are ready to state our main relative sifting statement. There are two elements to note. The first is that the function $f$ is supported on a pseudorandom set $T$. This allows the expectations of the sifted functions to have \emph{no} dependence on the parameter $\tau$, which one should morally view as the density of $T$ in its universe. Such a conclusion would be impossible to obtain via the standard approach to sifting. Secondly, these resulting functions are asymmetric; only one of them has a dependence on a parameter $k$. Somewhat surprisingly, leveraging this asymmetry will eventually be critical in our application.

\begin{theorem}[Relative sifting]
\label{thm:quasisifting2}

Let $\alpha, \eps, \gamma, \tau \in (0, 1)$ be parameters and $k$ a positive integer, satisfying that
\[ \gamma \le (\alpha\tau)^{O(\eps^{-2}k\log(1/\alpha)^2 + \eps^{-1}k\log(1/\tau))}. \] Then the following holds.

Let $T \subseteq \Omega_1 \times \Omega_2$ be $(\tau, \gamma)$-combinatorially spread, and let $f: \Omega_1 \times \Omega_2 \to [0, 1]$ be a function supported on $T$. Suppose that
\[ \|f\|_{G(2,k)}^{2k} = \E_{\substack{x_1,x_2 \in \Omega_1 \\ y_1,\dots,y_k \in \Omega_2}}\left[\prod_{i=1}^k f(x_1,y_i)f(x_2,y_i) \right] \ge \alpha^{2k}\tau^{2k}. \]
Then there are functions $g_1: \Omega_1 \to [0, 1]$ and $g_2: \Omega_2 \to [0, 1]$ such that \[ \E_{x\in \Omega_1, y\in \Omega_2}[f(x,y)g_1(x)g_2(y)] \ge (1-\eps)\alpha\tau \E_{x\in \Omega_1}[g_1(x)] \E_{y\in \Omega_2}[g_2(y)] \] and
\[\E_{x\in \Omega_1}[g_1(x)] \ge (\eps\alpha/2)^{O(\eps^{-1}k^2 \log(1/\alpha))} \enspace \text{ and } \enspace \E_{y \in \Omega_2}[g_2(y)] \ge (\eps\alpha/2)^{O(\eps^{-1}\log(1/\alpha))}. \]
\end{theorem}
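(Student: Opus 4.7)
The plan is to follow the iterated H\"older / densification strategy from \cref{subsec:quasi-expo}, converting the grid-norm hypothesis into a genuine density increment via repeated use of the counting lemma (\cref{lemma:counting}) against the pseudorandom majorant $T$.

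Set $K = \Theta(\eps^{-1}k\log(1/\alpha))$ and distinguish the edge $(x_1, y_1)$ in $K_{2,k}$. Write the hypothesis as $\E_{x,y}[f(x,y) A(x,y)] \ge (\alpha\tau)^{2k}$, where
\[ A(x,y) \coloneqq \E_{x', y_2,\ldots,y_k}\Big[f(x',y)\prod_{i=2}^k f(x,y_i)f(x',y_i)\Big]. \]
H\"older's inequality with exponent $K$ yields
\[ \E_{x,y}[f(x,y) A(x,y)^K] \ge \frac{(\alpha\tau)^{2kK}}{\E[f]^{K-1}} \ge (1-\eps/10)\,\alpha^{2kK}\tau^{(2k-1)K + 1}, \]
where the final inequality uses $\E[f] \le \E[T] \le \tau + \gamma \le (1 + \eps/(100K))\tau$, which follows from $f \le T$ and $T$ being $(\tau, \gamma)$-spread together with the hypothesis on $\gamma$. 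Substituting $f(x,y) \le T(x,y)$ and invoking \cref{lemma:counting} with $T(x,y)$ as the distinguished edge (noting that $A(x,y)^K$ is the expectation of a product of $f$-values on the remaining edges of a specific bipartite graph), we obtain
\[ \E_{x,y}[A(x,y)^K] \ge (1-\eps/5)\,\alpha^{2kK}\tau^{(2k-1)K}. \]

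The task is now to extract a rectangle on which $f$ itself has density $(1-\eps)\alpha\tau$. Observe that $\E_{x,y}[A(x,y)^K]$ is the $f$-weighted count of a bipartite graph $H$ with $1 + K$ vertices on the $x$-side and $1 + K(k-1)$ on the $y$-side, with $(2k-1)K$ $f$-edges in total. Mimicking the telescoping argument in the proof of \cref{thm:sift}, enumerate the edges of $H$ and add them one at a time starting from the empty graph, at each step using $f \le T$ and \cref{lemma:counting} to upper-bound the baseline count by a factor close to $\tau$ per present edge. Averaging over the ordering, there must exist an edge $e^* = (u^*, v^*)$ and a subgraph $H'$ such that the $f$-weighted count of $H'$ exceeds $(1-\eps)\alpha\tau$ times the count of $H' \setminus \{e^*\}$. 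Fixing all vertices of $H'$ other than the endpoints of $e^*$ to their optimal values and setting
\[ g_1(u) = \prod_{v : (u,v) \in E(H') \setminus \{e^*\}} f(u, v), \qquad g_2(v) = \prod_{u : (u,v) \in E(H') \setminus \{e^*\}} f(u, v), \]
yields $\E_{u,v}[f(u,v) g_1(u) g_2(v)] \ge (1-\eps)\alpha\tau \cdot \E_{u,v}[g_1(u)g_2(v)]$. A final invocation of \cref{lem:extract-cor} upgrades $g_1, g_2$ to $\{0,1\}$-valued functions of density at least half as large.

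The key obstacle is ensuring that the telescoping step actually yields $(1-\eps)\alpha\tau$ rather than merely $\alpha\tau$, and that the resulting $g_1, g_2$ satisfy the asymmetric density bounds. The first requires tight upper bounds on the intermediate baseline counts, obtained by repeated use of combinatorial spreadness; the second is achieved by placing $e^*$ incident to the root $y$-vertex, so that $g_2$ depends on only $O(1)$ $f$-values (yielding $\E[g_2] \ge (\eps\alpha/2)^{O(\eps^{-1}\log(1/\alpha))}$) while $g_1$ depends on $O(kK) = O(\eps^{-1}k^2\log(1/\alpha))$ $f$-values (yielding $\E[g_1] \ge (\eps\alpha/2)^{O(\eps^{-1}k^2 \log(1/\alpha))}$). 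Finally, the accumulation of additive $\gamma$-errors across the $(2k-1)K$ telescoping steps, combined with the initial loss of $\tau^{\Theta(K)}$ in steps where the main term must dominate $\gamma$, dictates exactly the bound $\gamma \le (\alpha\tau)^{O(\eps^{-2}k\log(1/\alpha)^2 + \eps^{-1}k\log(1/\tau))}$ in the hypothesis.
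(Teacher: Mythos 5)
The proposal has a genuine gap that makes the telescoping step fail quantitatively, and a second structural gap in the claimed asymmetry.

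\textbf{The telescoping ratio is too weak.} After the H\"older step and the removal of the $f(x,y)$ edge via the counting lemma, you arrive at $\E_{x,y}[A(x,y)^K] \ge (1-O(\eps))\,\alpha^{2kK}\tau^{(2k-1)K}$, where the corresponding graph $H$ has exactly $(2k-1)K$ edges. The telescoping argument over the edges of $H$ therefore produces an edge whose ratio is at least $(\alpha^{2kK}\tau^{(2k-1)K})^{1/((2k-1)K)} = \alpha^{2k/(2k-1)}\tau$ up to lower-order corrections. Since $\alpha < 1$ and $2k/(2k-1) > 1$, this is $\alpha^{1/(2k-1)}$ \emph{smaller} than $\alpha\tau$, not a $(1-\eps)$-factor smaller. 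The shortfall $\alpha^{1/(2k-1)}$ is negligible only when $k = \Omega(\eps^{-1}\log(1/\alpha))$, but the theorem must hold for all positive $k$, in particular small constants like $k=2$ (the $K_{2,2}$ case discussed in the overview). The root cause is an imbalance introduced by dividing by $\E[f]^{K-1} \approx \tau^{K-1}$: this reduces the $\tau$-exponent to match the edge count, but leaves the $\alpha$-exponent at $2kK$, so the per-edge $\alpha$-power exceeds $1$.

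\textbf{The distinguished edge cannot be placed.} The telescoping argument produces \emph{some} edge $e^*$ whose removal causes a jump in the count; you do not get to select which one. The proposal's claim that the asymmetric bounds on $\E[g_1]$ and $\E[g_2]$ are ``achieved by placing $e^*$ incident to the root $y$-vertex'' is unjustified — if $e^*$ happens to land elsewhere, $g_2$ would be a product of $\Theta(K)$ values of $f$ and its density bound would be $(\eps\alpha)^{O(K)}$, destroying the asymmetry.

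The paper's proof takes a different and essential route: it writes $\|f\|_{G(2,k)}^{2k} = \E[F(x_1,x_2)f(x_1,y)f(x_2,y)]$ with $F(x_1,x_2) = \E_{y_1,\ldots,y_{k-1}}\prod_i f(x_1,y_i)f(x_2,y_i)$, first handles a dense-rows branch that furnishes $g_2$ once with density independent of $k$, then truncates $F$ at $M = \alpha^{-k}\tau^{2k-2}$ so that $\wt{F} = F/M$ is $[0,1]$-bounded with grid norm $\ge (1+\eps/8)\alpha^{3k-2}$, applies \cref{thm:sift} (non-relative sifting) to $\wt{F}$, and then uses positive semi-definiteness of $F$ to extract a \emph{single} subset $\Omega_1' \subseteq \Omega_1$ on which $\|f\|_{G(2,k-1)} \ge \alpha\tau$ still holds — crucially preserving the ratio $\alpha\tau$ — before recursing on $k$. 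The truncation step (so that sifting loses only powers of $\alpha$, not $\tau$), the PSD argument (which reduces two sets to one and enables the induction), and the induction on $k$ itself are all missing from your proposal and appear to be unavoidable.
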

\begin{remark}
In this work we only prove relative sifting for $(2,k)$ norms. It may be of interest to prove relative sifting for $(\ell,k)$ norms where $\gamma \le e^{-(\eps^{-1}\log(1/\alpha)k\ell)^{O(1)}}$ is sufficient quasi-randomness and the associated sides have sizes $e^{-(\eps^{-1}\log(1/\alpha)k)^{O(1)}}$, $e^{-(\eps^{-1}\log(1/\alpha)\ell)^{O(1)}}$.
\end{remark}
\begin{proof}
We proceed by induction on $k$. Define the function $F: \Omega_1 \times \Omega_1 \to [0, 1]$ as
\[ F(x_1, x_2) \coloneqq \E_{y_1, \dots, y_{k-1} \in \Omega_2}\left[\prod_{i=1}^{k-1} f(x_1, y_i)f(x_2, y_i)\right]. \]
By this definition, we can express the $(2, k)$-grid norm of $f$ as
\[ \|f\|_{G(2,k)}^{2k} = \E_{\substack{x_1, x_2 \in \Omega_1 \\ y \in \Omega_2}}[F(x_1, x_2)f(x_1, y)f(x_2, y)]. \]
If $\E_{x \in \Omega_1, y \in \Omega_2}f(x, y) \ge (1-\eps)\alpha\tau$ then we are done. Otherwise, let $\ell = 100\lceil \log(1/\alpha)\eps^{-1} \rceil$ and apply H\"{o}lder's inequality to get
\begin{align*}
    \alpha^{2\ell k}\tau^{2\ell k} &\le \left(\E_{\substack{x_1, x_2 \in \Omega_1 \\ y \in \Omega_2}}[F(x_1, x_2)f(x_1, y)f(x_2, y)] \right)^{\ell} \\
    &\le \left(\E_{x_2 \in \Omega_1, y \in \Omega_2} f(x_2, y) \right)^{\ell-1}\left(\E_{x_2 \in \Omega_1, y \in \Omega_2} f(x_2, y) \left(\E_{x_1 \in \Omega_1} F(x_1, x_2)f(x_1, y) \right)^{\ell} \right) \\
    &\le ((1-\eps)\alpha\tau)^{\ell-1} \left(\E_{x_2 \in \Omega_1, y \in \Omega_2} f(x_2, y) \left(\E_{x_1 \in \Omega_1} F(x_1, x_2)f(x_1, y) \right)^{\ell} \right).
\end{align*}

Using $f \le T$ and applying the $(\tau, \gamma)$-combinatorial spreadness of $T$ gives:
\begin{align*}
(1+\eps)^{\ell-1}\alpha^{(2k-1)\ell + 1}\tau^{(2k-1)\ell + 1} &\le \E_{x_2 \in \Omega_1, y \in \Omega_2} f(x_2, y) \left(\E_{x_1 \in \Omega_1} F(x_1, x_2)f(x_1, y) \right)^{\ell} \\
&\le \tau \E_{x_2 \in \Omega_1, y \in \Omega_2} \left(\E_{x_1 \in \Omega_1} F(x_1, x_2)f(x_1, y) \right)^{\ell} + \gamma.
\end{align*}
For our choice of $\gamma$ and $\ell$, we conclude that
\begin{align*}
(1+\eps/2)^{\ell}\alpha^{(2k-1)\ell}\tau^{(2k-1)\ell} &\le \E_{x_2 \in \Omega_1, y \in \Omega_2} \left(\E_{x_1 \in \Omega_1} F(x_1, x_2)f(x_1, y) \right)^{\ell} \\
&= \E_{x_1^{(1)}, \dots, x_1^{(\ell)} \in \Omega_1} \left(\E_{x_2 \in \Omega_1} \prod_{i=1}^{\ell} F(x_1^{(i)}, x_2) \right)\left(\E_{y \in \Omega_2} \prod_{i=1}^{\ell} f(x_1^{(i)}, y)\right) \\
&\le \left(\E_{x_1^{(1)}, \dots, x_1^{(\ell)} \in \Omega_1} \E_{y \in \Omega_2} \prod_{i=1}^{\ell} f(x_1^{(i)}, y) \right)^{\frac{\ell-1}{\ell}} \\
\cdot ~& \left(\E_{x_1^{(1)}, \dots, x_1^{(\ell)} \in \Omega_1} \left(\E_{y \in \Omega_2} \prod_{i=1}^{\ell} f(x_1^{(i)}, y)\right) \left(\E_{x_2 \in \Omega_1} \prod_{i=1}^{\ell} F(x_1^{(i)}, x_2) \right)^{\ell} \right)^{\frac{1}{\ell}} \\
&\le \|f\|_{G(\ell, 1)}^{\ell-1} \cdot \left(\tau^{\ell}\E_{x_1^{(1)}, \dots, x_1^{(\ell)} \in \Omega_1} \left(\E_{x_2 \in \Omega_1} \prod_{i=1}^{\ell} F(x_1^{(i)}, x_2) \right)^{\ell} + O(\gamma) \right)^{\frac{1}{\ell}} \\
&= \|f\|_{G(\ell, 1)}^{\ell-1} \left(\tau^{\ell} \|F\|_{G(\ell,\ell)}^{\ell^2} + O(\gamma) \right)^{\frac{1}{\ell}}.
\end{align*}
Here, the second inequality is H\"{o}lder's inequality, and the third uses that $f \le T$, $T$ is $(\tau, \gamma)$-combinatorially spread, and the counting lemma (\Cref{lemma:counting}).

We first handle the case where $\|f\|_{G(\ell,1)} \ge \alpha\tau$. Define $d(y) = \E_{x \in \Omega_1} f(x, y)$ and note that $\|f\|_{G(\ell,1)}^\ell = \E_{y \in \Omega_2} d(y)^\ell$. Define $S_1, S_2 \subseteq \Omega_2$ as $S_1 = \{y : d(y) \ge (1-\eps)\alpha\tau\}$ and $S_2 = \{y : d(y) \ge 2\tau\}$. We want to prove that $|S_1|/|\Omega_2| \ge (\alpha/2)^{O(\ell)}$, as then
\[ \E_{x \in \Omega_1, y \in \Omega_2}[f(x, y)\mbm{1}_{S_1}(y)] \ge (1-\eps)\alpha\tau \E_{y \in \Omega_2}[\mbm{1}_{S_1}(y)] \] by definition. In particular, the theorem would follow by setting $g_1=1$ and $g_2 = \mbm{1}_{S_1}$. To prove this, observe that the combinatorial spreadness of $T$ gives that $|S_2| \le \frac{\gamma}{\tau}|\Omega_2|$. Thus,
\begin{align*}
    (\alpha\tau)^\ell &\le \|f\|_{G(\ell,1)}^\ell = \E_{y \in \Omega_2} d(y)^\ell \le ((1-\eps)\alpha\tau)^\ell + \E_{y \in \Omega_2}\left[\mbm{1}_{y \in S_1 \setminus S_2} d(y)^\ell \right] + \E_{y \in \Omega_2}\left[\mbm{1}_{y \in S_2} d(y)^\ell \right] \\
    &\le ((1-\eps)\alpha\tau)^\ell + \frac{|S_1|}{|\Omega_2|} (2\tau)^\ell + \frac{\gamma}{\tau}.
\end{align*}
Using the choice of $\gamma$, this rearranges to $\frac{|S_1|}{|\Omega_2|} \ge (\alpha/2)^{O(\ell)}$ as desired.

Otherwise, we assume $\|f\|_{G(\ell,1)} \le \alpha\tau$, and this implies that
\[ \tau^\ell \|F\|_{G(\ell,\ell)}^{\ell^2} + O(\gamma) \ge (1+\eps/2)^{\ell^2} \alpha^{(2k-2)\ell^2 + \ell} \tau^{(2k-2)\ell^2 + \ell}, \] which for the choice of $\gamma$ and $\ell$ rearranges to
\[ \|F\|_{G(\ell,\ell)} \ge (1 + \eps/4)(\alpha\tau)^{2k-2}. \]
The idea is to now use non-relative sifting (\cref{thm:sift}) to get a correlation of $F$ onto a subset $\Omega_1' \subseteq \Omega_1$. However, done na\"{i}vely this will lose factors of $\tau$ in the size of $\Omega_1'$, which is unacceptable. To remedy this, we first argue that $F$ is basically bounded by $M \coloneqq \alpha^{-k}\tau^{2k-2}$. Formally, define the function $\wt{F} : \Omega_1 \times \Omega_1 \to [0, 1]$ as $\wt{F}(x_1, x_2) = M^{-1} \min\{F(x_1, x_2), M\}$. 

We will prove that
\begin{equation}
\Pr_{x_1, x_2 \in \Omega_1}\left[M \wt{F}(x_1, x_2) \neq F(x_1, x_2)\right] \le \gamma^{\Omega\left(\frac{\log(1/\alpha)}{\log(1/\tau)}\right)}. \label{eq:fvf}
\end{equation}
It suffices to bound $\Pr_{x_1, x_2 \in \Omega_1}\left[F(x_1, x_2) > M\right]$. Let $t$ be the maximum positive integer such that $\gamma \le \tau^{(2k-2)t}$. Markov's inequality tells us
\[ \Pr_{x_1, x_2 \in \Omega_1}\left[F(x_1, x_2) > M\right] \le M^{-t} \E_{x_1, x_2}\left[F(x_1, x_2)^t\right]. \]
By the definition of $F$, $f \le T$, and the counting lemma, we know that
\[ \E_{x_1, x_2}\left[F(x_1, x_2)^t\right] = \|f\|_{G(2,(k-1)t)}^{(2k-2)t} \le \tau^{(2k-2)t} + O(\gamma) \le 2\tau^{(2k-2)t}, \]
by applying combinatorial spreadness of $T$ and the choice of $\gamma$. Thus,
\[ \Pr_{x_1, x_2 \in \Omega_1}\left[F(x_1, x_2) > M\right] \le M^{-t} \cdot 2\tau^{(2k-2)t} \le 2\alpha^{kt} \le \gamma^{\Omega\left(\frac{\log(1/\alpha)}{\log(1/\tau)}\right)}. \]

By \eqref{eq:fvf} and the triangle inequality (which we may apply, since our choice of $\ell$ is even, so $\|\cdot\|_{G(\ell, \ell)}$ is a seminorm \cite{Hat10}), we get that
\begin{align*}
M\|\wt{F}\|_{G(\ell,\ell)} &\ge \|F\|_{G(\ell,\ell)} - \|F - M\wt{F}\|_{G(\ell,\ell)} \\
&\ge (1+\eps/4)(\alpha\tau)^{2k-2} - \gamma^{\Omega\left(\frac{\log(1/\alpha)}{\ell\log(1/\tau)}\right)} \ge (1+\eps/8)(\alpha\tau)^{2k-2},
\end{align*} 
for the choice of $\gamma$. Here, we have used the fact that if a function $G: \Omega_1 \times \Omega_1 \to [-1, 1]$ is supported on at most a $\gamma'$ fraction of $\Omega_1 \times \Omega_1$, then $\|G\|_{G(\ell,\ell)} \le (\gamma')^{1/\ell}$, applied for $G = F - M\wt{F}$. Indeed,
\[ \|G\|_{G(\ell,\ell)}^{\ell^2} = \left| \E_{\substack{x_1,\dots,x_{\ell} \in \Omega_1 \\ y_1, \dots, y_{\ell} \in \Omega_1}}\Big[\prod_{i=1}^{\ell} \prod_{j=1}^{\ell} G(x_i,y_j) \Big] \right| \le \E_{\substack{x_1,\dots,x_{\ell} \in \Omega_1 \\ y_1, \dots, y_{\ell} \in \Omega_1}}\Big[\prod_{i=1}^{\ell} |G(x_i,y_i)|\Big] \le (\gamma')^{\ell}. \]
Thus,
\[ \|\wt{F}\|_{G(\ell,\ell)} \ge M^{-1}(1+\eps/8)(\alpha\tau)^{2k-2} = (1+\eps/8)\alpha^{3k-2}. \]
By \cref{thm:sift} and \cref{lem:extract-cor}, we conclude that there are functions $g_1: \Omega_1 \to \{0, 1\}$ and $g_2 : \Omega_1 \to \{0, 1\}$ such that 
\[ \E_{x \in \Omega_1}[g_i(x)] \ge \left(\frac{\eps\alpha^{3k-2}}{2} \right)^{O(\ell)} \enspace \text{ for } \enspace i = 1, 2, \]
and
\[ \E_{x_1, x_2 \in \Omega_1}\left[\wt{F}(x_1, x_2)g_1(x_1)g_2(x_2) \right] \ge \alpha^{3k-2} \E_{x \in \Omega_1}[g_1(x)]\E_{x \in \Omega_1}[g_2(x)]. \]
Thus,
\begin{align*}
\E_{x_1, x_2 \in \Omega_1}\left[F(x_1, x_2)g_1(x_1)g_2(x_2) \right] &\ge M\alpha^{3k-2} \E_{x \in \Omega_1}[g_1(x)]\E_{x \in \Omega_1}[g_2(x)] \\
&= (\alpha\tau)^{2k-2} \E_{x \in \Omega_1}[g_1(x)]\E_{x \in \Omega_1}[g_2(x)].
\end{align*}
Note that $F$, viewed as a matrix on $\Omega_1 \times \Omega_1$, is symmetric and positive semi-definite. Thus if we define $h_i(x)=g_i(x)/\E[g_i]$ then
\begin{align*}
0 &\le \E_{x_1, x_2 \in \Omega_1}[F(x_1,x_2)(h_1(x_1)-h_2(x_1))(h_1(x_2)-h_2(x_2))] \\
&= \E_{x_1, x_2 \in \Omega_1}\left[F(x_1, x_2)h_1(x_1)h_1(x_2) \right] + \E_{x_1, x_2 \in \Omega_1}\left[F(x_1, x_2)h_2(x_1)h_2(x_2) \right] - 2 \E_{x_1, x_2 \in \Omega_1}\left[F(x_1, x_2)h_1(x_1)h_2(x_2) \right]
\end{align*}
gives us that either
\begin{align*} &\E_{x_1, x_2 \in \Omega_1}\left[F(x_1, x_2)g_1(x_1)g_1(x_2) \right] \ge (\alpha\tau)^{2k-2}\E_{x \in \Omega_1}[g_1(x)]^2, \enspace \text{ or } \\  ~&\E_{x_1, x_2 \in \Omega_1}\left[F(x_1, x_2)g_2(x_1)g_2(x_2) \right] \ge (\alpha\tau)^{2k-2}\E_{x \in \Omega_1}[g_2(x)]^2.
\end{align*}
Without loss of generality, assume the former holds. Let \[ \Omega_1' = \{x \in \Omega_1 : g_1(x) = 1\} \] and define $f': \Omega_1' \times \Omega_2 \to [0, 1]$ as the restriction of $f$, i.e., $f'(x, y) = f(x, y)$. Then by definition,
\[ \|f'\|_{G(2,k-1)}^{2k-2} = \E_{x_1 \in \Omega_1', x_2 \in \Omega_1'}[F(x_1,x_2)] \ge (\alpha\tau)^{2k-2}. \] Now, the result follows by induction on $f'$ and the lower bound we have on $|\Omega_1'|/|\Omega_1| = \E_{x \in \Omega_1}[g_1(x)]$.
\end{proof}

\section{Additional Tools}\label{sec:add-tool}

In this section we reproduce several combinatorial tools from the works of Kelley-Meka and Kelley-Lovett-Meka, e.g. \cite[Theorem~2.8]{KLM24}. The main result is a key spectral positivity input that we need. We remark that there is a minor issue in the spectral positivity argument as given in \cite[Theorem~2.8]{KLM24} and hence we provide complete proofs here. These give graph theoretic interpretations of two of the key steps in breakthrough work of Kelley and Meka on $3$-term arithmetic progressions \cite{KM23}.

We now give the key spectral positivity argument. The first step is the following unbalancing inequality (see \cite[Proposition~D.1]{KM23}). We include the proof for completeness.
\begin{lemma}\label{lem:nonnegative-real}
Let $\eps\in (0,1/10)$, $k$ be a positive integer, and $p = 6\lceil k/\eps\rceil$. Let $X$ be a real random variable such that $\E[X^{k}] \ge \eps^{k}$ and $\E[X^{r}]\ge 0$ for all $r\in \mb{Z}^{\ge 0}$.
Then 
\[\E[(X+1)^p]\ge (1+\eps/2)^{p}.\]  
\end{lemma}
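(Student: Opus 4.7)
The plan is to expand $\E[(X+1)^p]$ by the binomial theorem, use nonnegativity of the moments $\mu_r \coloneqq \E[X^r]$ to discard terms freely, and lower bound the remaining sum via a log-convexity argument. Cauchy--Schwarz gives $\mu_{a+b}^2 \le \mu_{2a}\mu_{2b}$ for all $a,b \in \mb{Z}_{\ge 0}$. Taking $a=0,\ b=k$ yields the bootstrap $\mu_{2k} \ge \mu_k^2 \ge \eps^{2k}$, while $a = (r-2)/2,\ b = (r+2)/2$ gives $\mu_r^2 \le \mu_{r-2}\mu_{r+2}$ for every even $r \ge 2$. Thus $(\log \mu_{2n})_{n\ge 0}$ is a convex sequence, and its values $0$ at $n=0$ and $\ge 2k\log\eps$ at $n=k$ imply, via nondecreasing slopes, that $\mu_r \ge \eps^r$ for all even $r \ge 2k$.

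Using $\mu_r \ge 0$ on the dropped terms,
\[
\E[(X+1)^p] \ge \sum_{\substack{r\ge 2k\\ r\text{ even}}} \binom{p}{r}\eps^r.
\]
Interpreting $\binom{p}{r}\eps^r/(1+\eps)^p$ as the pmf of $B \sim \mr{Bin}(p,\eps/(1+\eps))$, the full even sum equals $(1+\eps)^p\,\Pr[B\text{ even}] \ge (1+\eps)^p/2$, while the complementary contribution is $(1+\eps)^p\,\Pr[B < 2k]$. Since $\E[B] = p\eps/(1+\eps) \ge 60k/11$ by the choice of $p$ and $\eps \le 1/10$, the Chernoff bound yields $\Pr[B < 2k] \le e^{-\Omega(k)}$. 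Hence the target sum is at least $(1+\eps)^p\bigl(\tfrac{1}{2} - e^{-\Omega(k)}\bigr)$, and a routine comparison using $\bigl((1+\eps)/(1+\eps/2)\bigr)^p \ge e^{\Omega(\eps p)} = e^{\Omega(k)}$ (via $\eps p \ge 6k$) shows this exceeds $(1+\eps/2)^p$ for every $k\ge 1$.

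The main subtlety is that log-convexity at consecutive integer moments can fail: a Rademacher random variable has $\mu_1 = 0$ but $\mu_2 = 1$, breaking any naive step-$1$ version. This forces me to restrict the log-convexity argument to the even parity class and to derive $\mu_{2k}\ge\mu_k^2$ as a preliminary Cauchy--Schwarz step. The specific choice $p = 6\lceil k/\eps\rceil$ is calibrated so that $\eps p \ge 6k$, making the Chernoff tail bound dominate the multiplicative penalty $\bigl((1+\eps/2)/(1+\eps)\bigr)^p$.
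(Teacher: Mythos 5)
Your proposal is correct and follows the same blueprint as the paper's proof: expand $\E[(X+1)^p]$ by the binomial theorem, discard terms using $\E[X^r]\ge 0$, lower bound the surviving high-order even moments by $\eps^r$, reinterpret the truncated sum as a tail event for $B\sim\on{Bin}(p,\eps/(1+\eps))$, and close with a concentration bound. The local tools differ in two places. For the moment lower bound, the paper applies the power-mean inequality to $|X|$ (together with $\E[|X|^k]\ge\E[X^k]$) to get $\E[X^\ell]\ge\eps^\ell$ for every even $\ell\ge k$ in a single step, whereas you bootstrap $\mu_{2k}\ge\mu_k^2\ge\eps^{2k}$ by Cauchy--Schwarz and then propagate forward via log-convexity of the even-moment sequence, landing on the slightly narrower range $r\ge 2k$ and truncating the sum there accordingly; both are valid, the paper's route is shorter. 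One point worth adding to yours: the convexity argument implicitly requires $\mu_{2n}>0$ for $0\le n\le k$, which holds because $\mu_{2n}=0$ for some $n\ge 1$ would force $X=0$ almost surely, contradicting $\mu_k\ge\eps^k>0$. For the tail, the paper uses Chebyshev while you use Chernoff; Chernoff is genuinely stronger here and is the more robust choice for small $k$, since the paper's own Chebyshev estimate $\Pr[X<k]\le 7/(9k)$ only falls below the claimed $1/6$ once $k\ge 5$. Your concluding comparison is hand-waved where the constants are in fact tight near $k=1$; I would spell it out with explicit numbers (as the paper does in its last display, via $(1+\eps)/(1+\eps/2)\ge 1+\eps/3$ and $p\ge 6/\eps$) rather than leaving it at $e^{\Omega(k)}$ versus $1/2-e^{-\Omega(k)}$.
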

\begin{proof}
Using $\E[X^{r}]\ge 0$ for all $r\in \mb{Z}^{\ge 0}$ and H\"{o}lder's inequality, we have
\begin{align*}
\E[(X + 1)^p] &=\sum_{\ell = 0}^{p}\binom{p}{\ell}\E[X^{\ell}]\ge \sum_{\ell\equiv 0\imod 2}\binom{p}{\ell} \cdot \E[X^{\ell}]\\
&\ge \sum_{\substack{\ell\equiv 0\imod 2\\\ell\ge k}}\binom{p}{\ell}\cdot (\E[X^k])^{\ell/k}\ge \sum_{\substack{\ell\equiv 0\imod 2\\\ell\ge k}}\binom{p}{\ell}\cdot \eps^{\ell}\\
&=(1+\eps)^{p} \cdot \sum_{\substack{\ell\equiv 0\imod 2\\\ell\ge k}}\binom{p}{\ell}\cdot \Big(\frac{\eps}{1+\eps}\Big)^{\ell} \cdot \Big(\frac{1}{1+\eps}\Big)^{p-\ell}.
\end{align*}
Let $\eps' = \eps/(1+\eps)$ and $\on{Bin}(n,q)$ denote a binomial random variable with $n$ independent trials where each has success probability $q$. Via chasing definitions, we have that 
\[\sum_{\substack{\ell\equiv 0\imod 2\\\ell\ge k}} \binom{p}{\ell} \cdot \Big(\frac{\eps}{1+\eps}\Big)^{\ell}\Big(\frac{1}{1+\eps}\Big)^{p-\ell} = \Pr_{X\sim \on{Bin}(p,\eps')}[X\ge k\wedge X\equiv 0\imod 2].\]
For the sake of simplicity, we omit that $X\sim  \on{Bin}(p,\eps')$ below. Note that 
\begin{align*}
\Pr[X\equiv 0\imod 2] &= \frac{1}{2} + \frac{\Pr[X\equiv 0\imod 2] - \Pr[X\equiv 1\imod 2]}{2} = \frac{1}{2} + \frac{\sum_{\ell=0}^{p}(-1)^\ell \binom{p}{\ell} (\eps')^\ell(1-\eps')^{p-\ell}}{2}\\
&=\frac{1}{2} +\frac{(1-2\eps')^{p}}{2}\ge \frac{1}{2}.
\end{align*}
Thus 
\[\Pr[X\ge k\wedge X\equiv 0\imod 2] \ge \frac{1}{2} - \Pr[X<k].\]
Observe that $\E[X] = p \cdot \eps'\ge 4k$. Furthermore we have that 
\[\on{Var}[X]= p \cdot \eps' \cdot (1-\eps') \le \frac{7k}{(1+\eps)^2}\le 7k.\]
Thus by Chebyshev's inequality, we have that 
\[\Pr[X<k]\le \frac{\on{Var}[X]}{(\E[X]-k)^2}\le \frac{7k}{(3k)^2}\le \frac{1}{k} \le \frac{1}{6}.\]
Thus we have that
\begin{align*}
\E[(X + 1)^p] &\ge\frac{1}{3} \cdot (1+\eps)^{p}\ge(1+\eps/2)^{p} \cdot \frac{(1+\eps/3)^{6/\eps}}{3}\ge (1+\eps/2)^{p}.\qedhere
\end{align*}
\end{proof}

We now complete the proof of the necessary spectral positivity argument. 
\begin{lemma}\label{lem:spectral-pos}
Let $\eps\in (0,1/10)$, $k$ be an even positive integer, and $p = 36\lceil k/\eps^4 \rceil$. Consider $A:\Omega_1\times \Omega_2\to \mb{R}^{+}$, and define $\alpha \coloneqq \|A\|_{G(1,1)} = \E_{x\in\Omega_1, y \in \Omega_2}[A(x, y)]$. Suppose that
\[\snorm{A-\alpha}_{G(2,k)}\ge \eps \alpha \quad\text{and}\quad\inf_{x}\E_{y}[A(x,y)-\alpha]\ge -\eps^2 \alpha/36.\]
Then 
\[\snorm{A}_{G(2,p)}\ge (1+\eps^2/36) \alpha.\]  
\end{lemma}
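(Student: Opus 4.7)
The plan is to reduce to the unbalancing lemma (\cref{lem:nonnegative-real}) via a random variable with nonnegative moments whose $k$-th moment is controlled by the grid-norm hypothesis, and then transfer the resulting $L^p$ bound to $\snorm{A}_{G(2,p)}$.

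First I would set $B(x_1,x_2) \coloneqq \E_y[A(x_1,y)A(x_2,y)]$, so that $\snorm{A}_{G(2,p)}^{2p} = \E_{x_1,x_2}[B^p]$, and decompose $B = \alpha^2 + \alpha(h(x_1)+h(x_2)) + g(x_1,x_2)$, where $h(x) \coloneqq \E_y[A(x,y)] - \alpha$ (which satisfies $h \ge -\eps^2\alpha/36$ by hypothesis) and $g(x_1,x_2) \coloneqq \E_y[(A(x_1,y)-\alpha)(A(x_2,y)-\alpha)]$. The crucial positivity observation is
\[ \E_{x_1,x_2}\big[g(x_1,x_2)^r\big] = \E_{y_1,\ldots,y_r}\Big[\Big(\E_x \prod_{i=1}^r (A(x,y_i)-\alpha)\Big)^2\Big] \ge 0 \quad \text{for every } r \ge 1, \]
and $\E[g^k] = \snorm{A-\alpha}_{G(2,k)}^{2k} \ge \eps^{2k}\alpha^{2k}$. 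Applying \cref{lem:nonnegative-real} to $X \coloneqq g/\alpha^2$ with effective parameter $\eps^2$ yields, for $p_0 = 6\lceil k/\eps^2\rceil \le p$, the bound $\E[(1+g/\alpha^2)^{p_0}] \ge (1+\eps^2/2)^{p_0}$, which then extends to $\E[Y^p] \ge (1+\eps^2/2)^p$ for $Y \coloneqq 1+g/\alpha^2$ via Lyapunov's inequality (using that $p_0, p$ are even).

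The remaining step is to transfer this bound to $\tilde B \coloneqq B/\alpha^2 = Y+V$, where $V \coloneqq (h(x_1)+h(x_2))/\alpha$ satisfies $V \ge -\eps^2/18$, $\E[V] = 0$, and the pointwise non-negativity $Y+V \ge 0$. The target $(1+\eps^2/36)^2 \approx 1+\eps^2/18$ is considerably weaker than the bound $1+\eps^2/2$ on $\snorm{Y}_p$, so there is slack to absorb a transfer loss. I would split the analysis into $\{Y \ge \eps^2/18\}$ and its complement: on the former, $\tilde B \ge Y - \eps^2/18$ yields a pointwise lower bound and thus a good contribution to $\E[\tilde B^p]$; on the latter, and especially where $Y < -\eps^2/18$, the constraint $\tilde B \ge 0$ forces $V > \eps^2/18$, and the $L^1$ bound $\E[V_+] \le \eps^2/18$ (from $V \ge -\eps^2/18$ and $\E[V]=0$) yields a Markov-type tail estimate on the probability of such a region, which the slack in $p$ allows us to absorb.

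The hard part is this bridging step. A direct Minkowski bound $\snorm{\tilde B}_p \ge \snorm{Y}_p - \snorm{V}_p$ fails because $V$, having only a one-sided lower bound, need not be controlled in $L^p$: its positive part can be arbitrarily large. The argument must instead exploit the joint constraint $Y+V \ge 0$ together with the sharp one-sided bound on $V$, using the substantial slack between $p = 36\lceil k/\eps^4\rceil$ and $p_0 = 6\lceil k/\eps^2\rceil$ to absorb higher-order moment errors. This delicate interplay between the pointwise nonnegativity of $\tilde B$ and the $L^p$ moment bound on $Y$ appears to be the source of the ``minor issue'' in the KLM proof that the present work corrects.
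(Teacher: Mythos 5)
Your decomposition is correct and you have identified the two key structural facts: the nonnegativity of all moments of $g(x_1,x_2)=\E_y[(A(x_1,y)-\alpha)(A(x_2,y)-\alpha)]$, which permits invoking \cref{lem:nonnegative-real}, and the one-sided pointwise bound on $h$. The application of \cref{lem:nonnegative-real} to $g/\alpha^2$ with effective parameter $\eps^2$ and the Lyapunov step to $p$ are both fine. The problem is precisely where you flag it: the transfer from a lower bound on $\E[Y^p]$ to one on $\E[\tilde B^p]$, where $\tilde B = Y+V$, is not completed, and the device you sketch does not close the gap. Concretely, from $V\ge -\eps^2/18$ and $\E[V]=0$ one indeed gets $\E[V_+]\le \eps^2/18$, but Markov at the threshold $t=\eps^2/18$ (which is the threshold forced by the event $\{Y<-\eps^2/18\}$ via $\tilde B\ge 0$) gives $\Pr[V>\eps^2/18]\le 1$, a vacuous bound. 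There is no $L^p$ control on $V_+$ available, and the positivity constraint on the moments of $g$ alone does not rule out scenarios in which a substantial fraction of $\E[Y^p]$ is carried by the region where $Y$ is negative and $\max(0,Y-\eps^2/18)^p$ vanishes. So as written the argument has a real hole, not just an unverified technicality.

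The paper closes exactly this hole by arguing the contrapositive and splitting on the size of $\|F\|_q$, where $F(x)=\E_y[A(x,y)]-1$ (your $h$, normalized) and $q=6\lceil k/\eps^2\rceil$. If $\|F\|_q$ is large, one ignores $g$ entirely: write $F=F^++F^-$, use the one-sided bound $|F^-|\le\eps^2$ to deduce $\|F^+\|_q$ is also large, apply \cref{lem:nonnegative-real} directly to the nonnegative function $F^+$, and conclude via monotonicity of grid norms that $\|A\|_{G(2,p)}$ is too large. If instead $\|F\|_q$ is small, then the cross-term $V$ you were struggling with is $L^q$-small, and the triangle inequality in $\|\cdot\|_q$ (applied to the identity $\E_y[A(x,y)A(x',y)] = B(x,x')+1+F(x)+F(x')$ with $B$ the paper's version of your $g$) works with only a manageable loss. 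This two-case dichotomy on $\|F\|_q$ is the missing idea: it converts the uncontrolled two-sided behavior of $V$ into either a win on its own (Case 1) or a small perturbation (Case 2), rather than trying to handle it pointwise.
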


\begin{proof}
By normalizing, we may assume that $\alpha = 1$. We prove that if
\[\snorm{A}_{G(2,p)}\le 1+\eps^2\quad\text{and}\quad\inf_{x}\E_{y}[(A(x,y)-1)]\ge -\eps^2\]
then 
\[\snorm{A-1}_{G(2,k)}\le 6\eps.\]

Let $q = 6\lceil k /\eps^2 \rceil$. We first handle the case that
\[\snorm{A-1}_{G(1,q)}\ge 7 \eps^2.\]
In this case, observe that
\[\snorm{A-1}_{G(1,q)}^{q} = \E_{x}(\E_{y}A(x,y)-1)^{q};\]
therefore if we define $F(x) = \E_{y}A(x,y)-1$ then $\snorm{F}_{q}\ge 7 \eps^2$. Let 
\[F(x) = \max(F(x), 0) + \min(F(x), 0) \reflectbox{ $\coloneqq$ } F^+(x) + F^-(x)\]
and observe that $|F^{-}(x)|\le \eps^2$ by our earlier assumption. Hence by the triangle inequality $\snorm{F^{+}}_{q}\ge 6 \eps^2$ and by definition $F^{+}\ge 0$. Thus via \cref{lem:nonnegative-real}, we have that 
\[\snorm{F^{+} + 1}_{p}\ge 1+3\eps^2\]
and therefore by the triangle inequality again, \[\snorm{F + 1}_{p}\ge 1+2\eps^2.\]
However by the monotonicity of grid norms (see, e.g., \cite[Claim 4.2]{KLM24})
\[1+2\eps^2 \le \snorm{F + 1}_{p} = \snorm{A}_{G(1,p)}\le \snorm{A}_{G(2,p)},\]
yielding the desired contradiction.

Thus we may assume for the remainder of the proof that
\[\snorm{A-1}_{G(1,q)}\le 7 \eps^2.\]
Observe that 
\begin{equation}\label{eq:double_A_expanded}
    \E_{y}A(x,y)A(x',y) = \E_{y} \left[ (A(x,y)-1)(A(x',y)-1) + 1 + (A(x,y)-1) + (A(x',y)-1)\right].
\end{equation}
Define
\[B(x,x') \coloneqq \E_{y}\left[(A(x,y)-1)(A(x',y)-1)\right] \]
and observe that for any integer $r \ge 1$ it satisfies $\E[B^r] \ge 0$ since
\[\E_{x,x'}[B(x,x')^{r}] = 
\E_{y_1,\ldots,y_r} \E_{x,x'} \prod_{j=1}^r (A(x,y_j)-1) (A(x',y_j)-1) = 
\E_{y_1,\ldots,y_r}\Big(\E_{x}\prod_{j=1}^{r}(A(x,y_j)-1)\Big)^2\ge 0.\]
Now assume by contradiction that 
\[\snorm{A-1}_{G(2,k)} = \E_{x,x'}[B(x,x')^{k}]^{1/(2k)} > 6\eps.\]
That is, we have $\|B\|_k \ge 36 \eps^2$.
We may apply \cref{lem:nonnegative-real} to obtain that 
\[\|B+1\|_q = \E_{x,x'}[(B(x,x') + 1)^{q}]^{1/q}\ge 1 + 18\eps^2.\]
Via \Cref{eq:double_A_expanded} and the triangle inequality (as $\snorm{A-1}_{G(1,q)}\le 7 \eps^2$), we obtain that
\begin{align*}
    \snorm{A}_{G(2,q)}^2 &= \E_{x,x'}[(\E_{y}A(x,y)A(x',y))^{q}]^{1/q} \\
    &= \E_{x,x'}[(\E_{y}(A(x,y)-1)(A(x',y)-1) + 1 + (A(x,y)-1) + (A(x',y)-1))^{q}]^{1/q} \\
    &\ge \|B+1\|_q - 2 \|A-1\|_{G(1,q)} \ge 1 + 18\eps^2 - 2(7\eps^2) \ge 1 + 4\eps^2.
\end{align*}
By monotonicity of grid norms, we get
\[\snorm{A}_{G(2,p)}\ge \snorm{A}_{G(2,q)} \ge (1+4\eps^2)^{1/2} > 1+\eps^2\]
which gives the desired contradiction. 
\end{proof}

A fact we use several times throughout the work is the ``reverse Markov inequality''.
\begin{fact}\label{fct:rev-mark}
Let $\rho,\gamma\in (0,1)$. Let $V$ be a random variable with $V\le (1+\rho) \E[V]$. Then 
\[\Pr[V\le (1-\gamma)\E[V]]\le \frac{\rho}{\gamma + \rho}.\]
\end{fact}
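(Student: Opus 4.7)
The plan is to reduce the claim to the ordinary Markov inequality by passing to the nonnegative variable obtained by subtracting $V$ from its upper bound. Concretely, set $\mu = \E[V]$ and define $W \coloneqq (1+\rho)\mu - V$. The hypothesis $V \le (1+\rho)\mu$ ensures $W \ge 0$ pointwise, and linearity of expectation gives
$$\E[W] = (1+\rho)\mu - \mu = \rho\mu.$$

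Next I would translate the event of interest into a tail event for $W$. The inequality $V \le (1-\gamma)\mu$ is equivalent to
$$W = (1+\rho)\mu - V \;\ge\; (1+\rho)\mu - (1-\gamma)\mu \;=\; (\gamma + \rho)\mu.$$
Applying Markov's inequality to the nonnegative random variable $W$ then yields
$$\Pr\bigl[V \le (1-\gamma)\mu\bigr] = \Pr\bigl[W \ge (\gamma+\rho)\mu\bigr] \le \frac{\E[W]}{(\gamma+\rho)\mu} = \frac{\rho\mu}{(\gamma+\rho)\mu} = \frac{\rho}{\gamma+\rho},$$
which is exactly the desired bound.

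There is essentially no obstacle here — the statement is a one-line consequence of Markov applied to the upper deficit $W$, and the only thing to keep in mind is that the hypothesis $V \le (1+\rho)\E[V]$ is used precisely to guarantee nonnegativity of $W$, so that Markov applies. No additional assumption on integrability beyond the existence of $\E[V]$ is needed, and the bound is tight in general (achieved by a two-point distribution placing mass $\rho/(\gamma+\rho)$ at $(1-\gamma)\mu$ and the complementary mass at $(1+\rho)\mu$, after a small adjustment to match the mean).
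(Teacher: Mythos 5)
Your proof is correct and matches the paper's argument exactly: both define the nonnegative deficit $W = (1+\rho)\E[V] - V$ (the paper calls it $V'$), compute $\E[W] = \rho\E[V]$, and apply Markov to the event $W \ge (\gamma+\rho)\E[V]$.
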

\begin{proof}
Define $V' = (1+\rho)\E[V] - V$. Observe that $V'\ge 0$ and $\E[V'] = \rho\E[V]$. Thus 
\[\Pr[V\le (1-\gamma)\E[V]] = \Pr[V'\ge (\gamma + \rho)\E[V]] \le \frac{\rho}{\gamma + \rho}.\qedhere\]
\end{proof}

\section{The Finite Field Case}\label{sec:finite-field}

\subsection{General setup}
\label{sec:setup}
Throughout our analysis we will consider a linear subspace $W \subseteq \mb{F}_2^{n}$. (Although in occasional convenient contexts, we will let $W$ be an affine subspace.) We will additionally consider subsets $X, Y, D \subseteq W$, and define our \emph{container set} to be
\[ S(X, Y, D) \coloneqq \{(x,y) \in W \times W : x\in X,~y\in Y,~x+y\in D\}\subseteq W\times W.\]
The sets $X, Y$ will sometimes be pseudorandom in the analysis, according to the definition of \emph{algebraically spread} in \cref{def:algpseudo}. At the start of the density increment phase, both $X$ and $Y$ will be $(r, \eps_s)$-algebraically spread for well-chosen parameters $r$ and $\eps_s$. At the start of the pseudorandomization phase, we will only guarantee that at least one of $X, Y$ is $(r, \eps_s)$-algebraically spread, and after the pseudorandomization, both $X, Y$ will be $(r, \eps_s)$-algebraically spread.

\subsection{Algebraic spreadness}

We start by defining a notion of algebraic pseudorandomness.
\begin{definition}[Algebraic spreadness]
\label{def:algpseudo}
Let $W \subseteq \mb{F}_2^n$ be an affine subspace. We say that a subset $X \subseteq W$ is \emph{$(r, \eps)$-algebraically spread} within $W$ if for all affine subspaces $W' \subseteq W$ satisfying $\dim(W') \ge \dim(W) - r$, it holds that
\[ \frac{|X \cap W'|}{|W'|} \le (1+\eps) \frac{|X|}{|W|}. \]
\end{definition}

The following results of Kelley and Meka \cite{KM23} will prove useful throughout the subsection.
\begin{restatable}[{\cite[Theorem 4.10]{KM23}}]{lemma}{KMFourTen}\label{lem:KM_4.10}
    Let $\eps, \tau, \gamma \in (0,1/2)$, $r \ge \Omega(\eps^{-7}\log(1/\tau)\log(1/\gamma)^7)$ a large enough integer, and $A \subseteq \mb{F}_2^n$ of size $|A| = \tau|\mb{F}_2^n|$. If $A$ is $(r,\eps/8)$-algebraically spread, then for all $B,C \subseteq \mb{F}_2^n$ of size at least $\gamma |\mb{F}_2^n|$, we have
    \[
        \langle \mbm{1}_B \ast \mbm{1}_C, \mbm{1}_A \rangle \le (1+\eps)\E[\mbm{1}_A] \E[\mbm{1}_B]\E[\mbm{1}_C].
    \]
\end{restatable}

\begin{restatable}[{\cite[Proposition 2.16]{KM23}}]{lemma}{KMTwoSixteen}\label{lem:KM_2.16}
    Let $\eps,\tau \in (0,1/2), k \ge 1$ an integer, $r \ge \Omega(\eps^{-7}\log(1/\tau)^4 k^4)$ a sufficiently large integer, and $A,B \subseteq \mb{F}_2^n$ of size at least $\tau|\mb{F}_2^n|$.
    If $A$ and $B$ are both $(r, \eps/8)$-algebraically spread, then 
    \[
        \left\|\frac{\mbm{1}_A}{\E[\mbm{1}_A]} \ast \frac{\mbm{1}_B}{\E[\mbm{1}_B]} - 1\right\|_k \le \eps.
    \]
\end{restatable}

We require a lemma which relates algebraic spreadness to combinatorial spreadness in the case of corners.
\begin{lemma}
\label{lem:relate}
Let $\eps, \tau, \gamma \in (0,1/2)$, and $r \ge \Omega(\eps^{-7}\log(1/\tau)\log(1/\gamma)^7)$ a large enough integer.
Let $W \subseteq \mb{F}_2^n$ be a linear subspace, and let $D \subseteq W$ with $\tau = \frac{|D|}{|W|}$. Define $T \subseteq W \times W$ as $T \coloneqq \{(x, y) : x+y \in D\}$. If $D$ is $(r, \eps/8)$-algebraically spread within $W$ for $r \ge \Omega(\eps^{-7}\log(1/\tau)\log(1/\gamma)^7)$, then T is $((1+\eps)\tau, \gamma)$-combinatorially spread.
\end{lemma}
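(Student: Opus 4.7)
The plan is to reduce to the case of indicator functions via a level-set decomposition and then invoke \Cref{lem:KM_4.10} applied inside $W$. First, observe that $\mbm{1}_T(x,y) = \mbm{1}_D(x+y)$, so the quantity to bound is
\[ \E_{x,y \in W}[f(x)g(y)\mbm{1}_D(x+y)] = \langle \mbm{1}_B \ast \mbm{1}_C, \mbm{1}_D\rangle \]
in the special case $f = \mbm{1}_B$, $g = \mbm{1}_C$, where convolution and inner product are taken inside $W$ (which is itself an $\mb{F}_2$-vector space so that \Cref{lem:KM_4.10} applies internally, using the hypothesis that $D$ is $(r,\eps/8)$-algebraically spread within $W$).

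For general $[0,1]$-valued $f,g$, I would write $f(x) = \int_0^1 \mbm{1}_{B_s}(x)\,ds$ and $g(y) = \int_0^1 \mbm{1}_{C_t}(y)\,dt$ for $B_s \coloneqq \{x \in W: f(x) \ge s\}$ and $C_t \coloneqq \{y \in W: g(y) \ge t\}$, and expand via Fubini:
\[ \E_{x,y \in W}[f(x)g(y)\mbm{1}_D(x+y)] = \int_0^1 \int_0^1 \E_{x,y \in W}[\mbm{1}_{B_s}(x)\mbm{1}_{C_t}(y)\mbm{1}_D(x+y)]\,ds\,dt. \]
Split the integration region into the ``large'' part $L = \{(s,t): |B_s|/|W| \ge \gamma/2 \text{ and } |C_t|/|W| \ge \gamma/2\}$ and its complement $L^c$. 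On $L$, apply \Cref{lem:KM_4.10} (with pseudorandomness parameter $\gamma/2$, which is absorbed into the hypothesis on $r$ up to constants) to bound the integrand by $(1+\eps)\tau \cdot (|B_s|/|W|)(|C_t|/|W|)$; integrating yields at most $(1+\eps)\tau \E[f]\E[g]$. On $L^c$, use the trivial bound $\mbm{1}_{B_s}\mbm{1}_{C_t}\mbm{1}_D \le \mbm{1}_{B_s}\mbm{1}_{C_t}$ and the fact that either $|B_s|/|W|$ or $|C_t|/|W|$ is $\le \gamma/2$ to bound the integrand by at most $(\gamma/2)\bigl(|B_s|/|W| + |C_t|/|W|\bigr)$, which integrates to at most $(\gamma/2)(\E[f] + \E[g]) \le \gamma$.

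There is no real obstacle here beyond bookkeeping: the content is entirely in \Cref{lem:KM_4.10}, and the level-set decomposition is the standard device for passing from indicator functions to bounded functions while exploiting the bilinearity of both sides in $(f,g)$. The only mild subtlety is choosing the threshold $\gamma/2$ (rather than $\gamma$) so that the error contributions from the two ``small'' regions sum to at most $\gamma$ rather than $2\gamma$; the $2$ in $\gamma/2$ is harmless since $\log(2/\gamma) = \Theta(\log(1/\gamma))$, so the given lower bound on $r$ still suffices to invoke \Cref{lem:KM_4.10}.
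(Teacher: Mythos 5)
Your proof is correct, and it takes a genuinely different (and more direct) route than the paper. The paper argues by contrapositive: assuming combinatorial spreadness fails, it first observes that $\E[f], \E[g] > \gamma$, then invokes \Cref{lem:extract-cor} to replace $f,g$ by Boolean indicator functions of density at least $\gamma/2$ without destroying the violating correlation, and finally applies \Cref{lem:KM_4.10} to reach a contradiction. You instead prove the conclusion directly via a level-set decomposition $f(x) = \int_0^1 \mbm{1}_{B_s}(x)\,ds$, $g(y) = \int_0^1 \mbm{1}_{C_t}(y)\,dt$, splitting the $(s,t)$-integration region by whether both level sets are dense enough for \Cref{lem:KM_4.10} to apply; the ``small'' region is absorbed into the additive $\gamma$ error. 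Both approaches reduce to the same core input (\Cref{lem:KM_4.10}) and both need to move from $[0,1]$-valued functions to indicators, but they discretize differently: the paper's route via \Cref{lem:extract-cor} produces Boolean functions of carefully chosen density close to $\E[f], \E[g]$, whereas your route integrates over all superlevel sets and handles the small ones by a crude bound. Your approach avoids \Cref{lem:extract-cor} entirely, which makes it slightly more elementary and self-contained for this particular lemma; the paper's route has the small advantage of reusing \Cref{lem:extract-cor}, which it needs elsewhere anyway. Your threshold bookkeeping (using $\gamma/2$ so the two error contributions sum to at most $\gamma$, and absorbing the $\log(2/\gamma)$ into the $\Omega(\cdot)$ on $r$) is exactly right.
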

\begin{proof}
Suppose that $T$ is not $((1+\eps)\tau, \gamma)$-combinatorially spread. Then there exist functions $f,g: W\to [0,1]$ such that 
\[\E_{x,y\in W}[f(x)g(y)\mbm{1}_{T}(x,y)]>(1+\eps)\tau \cdot \E_{x\in W}[f(x)] \cdot \E_{x\in W}[g(x)] + \gamma>(1+\eps)\tau \cdot \E_{x\in W}[f(x)] \cdot \E_{x\in W}[g(x)].\]
Note that $f$ and $g$ are $1$-bounded, and therefore for the first inequality to hold we must have that $\E[f]\ge \gamma$ and $\E[g]\ge \gamma$. 

By applying \cref{lem:extract-cor}, there exist boolean functions $F,G: W\to \{0,1\}$ such that 
\[\E_{x,y\in W}[F(x)G(y)\mbm{1}_{T}(x,y)]>(1+\eps)\tau \cdot \E_{x\in W}[F(x)] \cdot \E_{x\in W}[G(x)]\]
where $\E[F]\ge \gamma/2$ and $\E[G]\ge \gamma/2$. Recalling that $\mbm{1}_{T}(x,y) = \mbm{1}_D(x+y)$, we have that 
\[\E_{x,y\in W}[F(x)G(y)\mbm{1}_{D}(x+y)]>(1+\eps)\tau \cdot \E_{x\in W}[F(x)] \cdot \E_{x\in W}[G(x)].\]
This however contradicts \cref{lem:KM_4.10}.
\end{proof}

An upper bound on the size of $S(X, Y, D)$ follows from one of $X$ or $Y$ being algebraically spread.
\begin{lemma}
\label{lem:relatesxyd}
    Let $X, Y, D \subseteq W$ for a linear subspace $W \subseteq \mb{F}_2^n$, and let $\delta = |W|^{-3}|X||Y||D|$. If $X$ or $Y$ is $(r, \eps/8)$-algebraically spread for $r \ge \Omega(\eps^{-7}\log(1/\delta)^8)$, then $|S(X, Y, D)| \le (1+\eps)\delta |W|^2$.
\end{lemma}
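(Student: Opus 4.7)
The plan is to express the normalized size $|S(X,Y,D)|/|W|^2$ as a convolution inner product on $W$ (viewed as a finite abelian group by identifying it with $\mb{F}_2^{\dim W}$), and then to invoke \cref{lem:KM_4.10}. Writing $\delta_X = |X|/|W|$, $\delta_Y = |Y|/|W|$, $\delta_D = |D|/|W|$, so $\delta = \delta_X\delta_Y\delta_D$, the change of variables $z = x+y$ (available since $W$ is a linear subspace of characteristic $2$) gives
\[
\frac{|S(X,Y,D)|}{|W|^2} \;=\; \E_{x,y \in W}\!\left[\mbm{1}_X(x)\mbm{1}_Y(y)\mbm{1}_D(x+y)\right] \;=\; \langle \mbm{1}_Y * \mbm{1}_D,\, \mbm{1}_X \rangle_W,
\]
where both the convolution and the inner product are taken on the group $W$. (By commutativity of convolution one may equivalently write this as $\langle \mbm{1}_X * \mbm{1}_D,\, \mbm{1}_Y \rangle_W$.)

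Next I would reduce to the case that $X$ is the algebraically spread set, noting that the case where $Y$ is spread is handled by the symmetric rewriting above. Setting $\tau \coloneqq \delta_X$ and $\gamma \coloneqq \min(\delta_Y,\delta_D)$, I would apply \cref{lem:KM_4.10} within $W$ with $A = X$, $B = Y$, $C = D$, which yields
\[
\langle \mbm{1}_Y * \mbm{1}_D,\, \mbm{1}_X \rangle_W \;\le\; (1+\eps)\,\delta_X\delta_Y\delta_D \;=\; (1+\eps)\delta.
\]
Multiplying by $|W|^2$ then gives the stated bound on $|S(X,Y,D)|$.

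The only point requiring attention is the parameter bookkeeping: \cref{lem:KM_4.10} needs $r \ge \Omega(\eps^{-7}\log(1/\tau)\log(1/\gamma)^7)$. Since $\delta = \delta_X\delta_Y\delta_D$ is at most each of $\delta_X$, $\delta_Y$, $\delta_D$, both $\log(1/\tau)$ and $\log(1/\gamma)$ are bounded by $\log(1/\delta)$, so the hypothesis $r \ge \Omega(\eps^{-7}\log(1/\delta)^8)$ is more than enough to invoke the lemma. There is no genuine obstacle here; in particular, note that algebraic spreadness of a \emph{single} side is enough to control the triple inner product, so no almost-periodicity or structural decomposition of $Y$ or $D$ is required.
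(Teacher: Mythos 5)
Your proof is correct and follows essentially the same route as the paper: rewrite $|S(X,Y,D)|/|W|^2$ as the inner product $\langle \mbm{1}_X, \mbm{1}_Y \ast \mbm{1}_D\rangle$ on $W$ and apply \cref{lem:KM_4.10}. Your explicit parameter check ($\tau = \delta_X$, $\gamma = \min(\delta_Y,\delta_D)$, both bounded below by $\delta$) is a detail the paper leaves implicit, but it is the correct bookkeeping.
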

\begin{proof}
    We focus on the case where $X$ is spread; the $Y$ case is identical. Observe that
    \begin{align*}
        |S(X,Y,D)| = \sum_{x,y \in W} \mbm{1}_X(x) \mbm{1}_Y(y) \mbm{1}_D(x+y) &= |W|^2\mathop{\E}_{x\in W}\left[\mbm{1}_X(x)\mathop{\E}_{y\in W} \left[\mbm{1}_Y(y) \mbm{1}_D(x+y)\right]\right] \\
        &= |W|^2 \langle \mbm{1}_X, \mbm{1}_Y\ast \mbm{1}_D \rangle.
    \end{align*}
    We conclude by observing that \Cref{lem:KM_4.10} yields $\langle \mbm{1}_X, \mbm{1}_Y\ast\mbm{1}_ D \rangle \le (1+\eps) \E[\mbm{1}_X] \E[\mbm{1}_Y] \E[\mbm{1}_D] = (1+\eps)\delta$.
\end{proof}

If additionally both $X$ and $Y$ are algebraically spread, then the size of $S(X, Y, D)$ is also lower--bounded.

\begin{lemma}
\label{lem:2side}
Let $X, Y, D \subseteq W$ for a linear subspace $W \subseteq \mb{F}_2^n$, and let $\delta = |W|^{-3}|X||Y||D|$. If both $X$ and $Y$ are $(r, \eps/16)$-algebraically spread for $r \ge \Omega(\eps^{-7}\log(1/\delta)^8)$,
then it holds that
\[ (1-\eps)\delta|W|^2 \le |S(X, Y, D)| \le (1+\eps)\delta|W|^2. \]
\end{lemma}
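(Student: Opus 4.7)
The upper bound is immediate from \cref{lem:relatesxyd}, since $(r,\eps/16)$-algebraic spreadness implies $(r,\eps/8)$-algebraic spreadness. The substantive task is the matching lower bound, and for this the plan is to reprise the convolution rewriting from the proof of \cref{lem:relatesxyd} but exploit both-sided spreadness through \cref{lem:KM_2.16}. Explicitly, write
\[ |S(X,Y,D)| = |W|^2 \cdot \langle \mbm{1}_X \ast \mbm{1}_Y,\mbm{1}_D\rangle = |W|^2 \cdot \E[\mbm{1}_X]\E[\mbm{1}_Y] \cdot \langle h,\mbm{1}_D\rangle, \]
where $h \coloneqq (\mbm{1}_X/\E[\mbm{1}_X]) \ast (\mbm{1}_Y/\E[\mbm{1}_Y])$ satisfies $\E[h]=1$ (using that $x+y=x-y$ in characteristic $2$). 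The task then reduces to showing $\langle h,\mbm{1}_D\rangle \ge (1-\eps)\E[\mbm{1}_D]$.

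Set $k = \lceil \log_2(1/\delta)\rceil$. Since $\delta = \E[\mbm{1}_X]\E[\mbm{1}_Y]\E[\mbm{1}_D] \le \min(\E[\mbm{1}_X],\E[\mbm{1}_Y])$, both densities are at least $\delta$, and our hypothesis $r \ge \Omega(\eps^{-7}\log(1/\delta)^8) \ge \Omega((\eps/2)^{-7}\log(1/\delta)^4 k^4)$ is strong enough to invoke \cref{lem:KM_2.16} at scale $\eps/2$, yielding $\|h-1\|_k \le \eps/2$. Splitting $\langle h,\mbm{1}_D\rangle = \E[\mbm{1}_D] + \langle h-1,\mbm{1}_D\rangle$ and applying Hölder's inequality,
\[ |\langle h-1,\mbm{1}_D\rangle| \le \|h-1\|_k \cdot \|\mbm{1}_D\|_{k/(k-1)} \le (\eps/2) \cdot \E[\mbm{1}_D]^{(k-1)/k}. \]
Since $\E[\mbm{1}_D]\ge \delta$ and $k \ge \log_2(1/\delta)$, we have $\E[\mbm{1}_D]^{1/k}\ge 1/2$, so $\E[\mbm{1}_D]^{(k-1)/k} \le 2\E[\mbm{1}_D]$. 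Hence $|\langle h-1,\mbm{1}_D\rangle| \le \eps \E[\mbm{1}_D]$, and the claimed lower bound follows after multiplying through by $|W|^2 \E[\mbm{1}_X]\E[\mbm{1}_Y]$.

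The only nontrivial input is the approximate convolution identity (\cref{lem:KM_2.16}); the rest is a short Hölder computation. There is no real obstacle here, and the main bookkeeping task is merely to verify that the single hypothesis $r \ge \Omega(\eps^{-7}\log(1/\delta)^8)$ simultaneously supports \cref{lem:KM_2.16} at scale $\eps/2$ with our choice $k = \lceil \log_2(1/\delta)\rceil$ and \cref{lem:relatesxyd} for the upper bound; both requirements coincide up to absolute constants hidden in the $\Omega(\cdot)$.
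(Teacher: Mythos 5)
Your proof is correct and follows essentially the same path as the paper: rewrite $|S(X,Y,D)| = |W|^2 \langle \mbm{1}_X \ast \mbm{1}_Y, \mbm{1}_D\rangle$, then apply H\"older with exponent $p \approx \log(1/\delta)$ together with \cref{lem:KM_2.16} to bound the deviation from $\delta|W|^2$. The only cosmetic difference is that you invoke \cref{lem:relatesxyd} separately for the upper bound and normalize by $\E[\mbm{1}_X]\E[\mbm{1}_Y]$ before applying H\"older, whereas the paper obtains both bounds simultaneously from the single inequality $|\langle \mbm{1}_X \ast \mbm{1}_Y, \mbm{1}_D\rangle - \delta| \le \|\mbm{1}_X \ast \mbm{1}_Y - \E[\mbm{1}_X]\E[\mbm{1}_Y]\|_p (\E[\mbm{1}_D])^{1-1/p}$.
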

\begin{proof}
    As above, we have $|S(X,Y,D)| = |W|^2 \langle \mbm{1}_X \ast \mbm{1}_Y, \mbm{1}_D \rangle.$ Then H\"{o}lder's inequality with $p = \lceil \log(1/\delta) \rceil$ and \Cref{lem:KM_2.16} give 
    \begin{align*}
        \left||S(X,Y,D)| - \delta |W|^2 \right| &= |W|^2 \left|\left\langle \mbm{1}_X \ast \mbm{1}_Y, \mbm{1}_D \right \rangle - \delta \right|\\
        &\le |W|^2 \left\| \mbm{1}_X \ast \mbm{1}_Y - \E[\mbm{1}_X]\E[\mbm{1}_Y] \right\|_p \cdot (\E[\mbm{1}_D])^{1-1/p} \le \eps \delta |W|^2. \qedhere
\end{align*}
\end{proof}

\subsection{Algebraic pseudorandomization}

In the subsequent subsections, we will show that under certain conditions, which include the algebraic spreadness of $X$ and $Y$, $A$ admits a density increment onto either some $S(X', Y, D')$ or $S(X, Y', D')$. However, the density increment may spoil the algebraic spreadness of (say) $X'$.
Thus, we require a procedure to pseudorandomize $X'$. This is precisely the goal of this section. Now we state the algebraic pseudorandomization theorem.

\begin{theorem}
\label{thm:algopseudo}
Let $X, Y, D \subseteq W$ for a linear subspace $W \subseteq \mb{F}_2^n$. Define $\delta_X = |X|/|W|$, $\delta_Y = |Y|/|W|$, $\delta_D = |D|/|W|$, and $\delta = \delta_X\delta_Y\delta_D$. Let $\eps>0$ and $r \ge \Omega(\eps^{-8}\log(1/(\alpha\delta))^8 + \eps^{-8}\log(1/\delta_D)^{16})$ be an integer, such that at least one of $X$ or $Y$ is $(r, \eps/8)$-algebraically spread. Let $A \subseteq S(X, Y, D)$ with $|A| \ge \alpha\delta|W|^2$. Then there is a linear subspace $W' \subseteq W$ with shifts $x,y \in W$ and sets $X'\subseteq W' + x, Y' \subseteq W' + y, D' \subseteq W' + x + y$, and $A' \coloneqq A \cap S(X', Y', D')$ satisfying:
\begin{enumerate}
\item $\dim(W') \ge \dim(W) - O\left(r\eps^{-2}\log(1/(\eps \alpha\delta))^2\log(1/(\eps\alpha\delta_D)) + r\eps^{-2}\log(1/(\eps\alpha\delta_D))^5\right)$, 
\item $\frac{|D'|}{|W'|} \ge (\eps\alpha/2)\delta_D$,
\item $\frac{|X'||Y'|}{|W'|^2} \geq  2^{-O(\log(1/(\eps\alpha\delta_D))^2)}\delta_X\delta_Y$,
\item $X'$ and $Y'$ are $(r, \eps)$-algebraically spread in $W' + x$ and $W' + y$, respectively, and
\item $|A'| \ge (1-5\eps)\alpha\delta_{X'}\delta_{Y'}\delta_{D'}|W'|^2$ where $\delta_{X'} = \frac{|X'|}{|W'|}$, $\delta_{Y'} = \frac{|Y'|}{|W'|}$, and $\delta_{D'} = \frac{|D'|}{|W'|}$.
\end{enumerate}
\end{theorem}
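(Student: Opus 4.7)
The plan is an iterative pseudorandomization procedure that alternates between $X$ and $Y$, driven by failures of algebraic spreadness. By symmetry assume that $Y$ is the input set that is $(r,\eps/8)$-algebraically spread. We inductively maintain a nested chain of linear subspaces $W = W^{(0)} \supseteq W^{(1)} \supseteq \cdots$ together with shifts $x^{(i)}, y^{(i)} \in W$, defining $X^{(i)} = X \cap (W^{(i)}+x^{(i)})$, $Y^{(i)} = Y \cap (W^{(i)}+y^{(i)})$, $D^{(i)} = D \cap (W^{(i)}+x^{(i)}+y^{(i)})$, and $A^{(i)} = A \cap S(X^{(i)},Y^{(i)},D^{(i)})$. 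At each step we check whether both $X^{(i)}$ and $Y^{(i)}$ are $(r,\eps)$-algebraically spread in their respective affine domains; if so, we terminate and output them. Otherwise one side fails, say $X^{(i)}$, and \Cref{def:algpseudo} furnishes a linear subspace $W^{(i+1)} \subseteq W^{(i)}$ of codimension $\le r$ together with a shift $x^{(i+1)}$ for which $|X \cap (W^{(i+1)}+x^{(i+1)})|/|W^{(i+1)}| \ge (1+\eps/8)\delta_{X^{(i)}}$.

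The shift $y^{(i+1)}$ is then chosen by a multi-constraint averaging argument combined with reverse Markov (\Cref{fct:rev-mark}). Averaged over $y^{(i+1)}$, the relative densities $\delta_{Y^{(i+1)}}$, $\delta_{D^{(i+1)}}$, and the ratio $\alpha^{(i+1)} \coloneqq |A^{(i+1)}|/(\delta_{X^{(i+1)}}\delta_{Y^{(i+1)}}\delta_{D^{(i+1)}}|W^{(i+1)}|^2)$ equal their values at iteration $i$. The algebraic spreadness of $Y^{(i)}$ upper bounds $\delta_{Y^{(i+1)}}$ by $(1+\eps/8)\delta_{Y^{(i)}}$ for every $y^{(i+1)}$, so reverse Markov gives $\delta_{Y^{(i+1)}} \ge (1-\eps)\delta_{Y^{(i)}}$ for almost all $y^{(i+1)}$. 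The analogous estimates on $D$ and $A$ -- with the caveat that $D$ has no assumed pseudorandomness, so only a constant fraction of $y^{(i+1)}$ satisfy $\delta_{D^{(i+1)}} \ge \Omega(\eps\alpha)\delta_{D^{(i)}}$ -- permit a union bound producing a single $y^{(i+1)}$ meeting all three constraints simultaneously, at the cost of a possible one-step drop of $\Omega(\eps\alpha)$ in $\delta_D$.

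Since each iteration multiplies either $\delta_X$ or $\delta_Y$ by at least $1+\eps/8$ and these densities are bounded above by $1$, the procedure terminates in at most $T = O(\eps^{-1}\log(1/(\delta_X\delta_Y)))$ iterations, which gives the claimed codimension drop after accounting for the additional $\eps^{-1}$ and log factors coming from the reverse-Markov slack. Telescoping the per-step losses yields the bound $\alpha^{(T)} \ge (1-5\eps)\alpha$, while the factor $\eps\alpha/2$ on $\delta_D$ (rather than per-iteration) follows by amortization: if $\delta_D$ truly drops by a large multiplicative factor at iteration $i$, then the log-potential $-\log\delta_D$ increases by a commensurate amount, bounding the number of such iterations by $O(\log(1/(\eps\alpha\delta_D)))$. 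In each of those iterations we pay the $\Omega(\eps\alpha\delta_D)$ factor on $\delta_X\delta_Y$ instead of on $\delta_D$, producing the quasipolynomial loss $2^{-O(\log^2(1/(\eps\alpha\delta_D)))}$ on the product $\delta_X\delta_Y$. The main obstacle will be disentangling these $X, Y, D, A$ constraints so that the log-potential argument for $D$ functions correctly -- in particular, this is the recursive "ping-pong" mechanism by which we ensure that restoring spreadness of one of $X, Y$ does not force the other to lose spreadness catastrophically, relying on the fact that prior iterations oversqueeze $X$ or $Y$ into being spread at parameters substantially stronger than $(r,\eps)$, so a subsequent codim-$r$ restriction preserves the weaker $(r,\eps)$-spread property of the untouched side.
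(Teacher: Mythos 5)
Your proposal replaces the paper's partition-based decomposition (\cref{lemma:2dim}, built from \cref{lemma:oneround2dim} and \cref{claim:spread}) with a single nested chain $W \supseteq W^{(1)} \supseteq \cdots$, incrementing whichever of $X,Y$ fails spreadness and re-choosing the other shift by averaging. This approach has a genuine gap: \emph{termination}. When $X^{(i)}$ fails spreadness and you pass to the codimension-$\le r$ subspace that increments $\delta_X$ by $(1+\eps/8)$, the $(r,\eps/8)$-spreadness of $Y^{(i)}$ only gives you, via \cref{fct:rev-mark}, that a constant fraction of $y$-cosets have $\delta_Y$ dropping by no more than $(1-\gamma)$ for $\gamma = \Theta(\eps)$. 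Your potential $\delta_X\delta_Y$ then changes by $(1+\eps/8)(1-\Theta(\eps))$, which is $\le 1$ unless you shrink $\gamma$ to $o(\eps)$ -- but then the reverse-Markov failure fraction $\frac{\eps/8}{\gamma + \eps/8}$ goes to $1$, and the union bound with the $D$ and $A$ constraints collapses. More fundamentally, nothing in the described procedure prevents the restriction incurred by fixing $X$ from destroying $Y$'s spreadness, after which a restriction for $Y$ destroys $X$'s, with $\delta_X\delta_Y$ cycling indefinitely.

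You flag this issue yourself in the last sentence -- the need to ``oversqueeze'' $X$ or $Y$ into being spread at parameters substantially stronger than $(r,\eps)$ -- but your procedure as written never produces such oversqueezing: it halts the moment both sides are $(r,\eps)$-spread. The paper's actual fix is structural and is the heart of \cref{lemma:oneround2dim}: it first pseudorandomizes $X$ to be $(r',\eps/5)$-spread with $r' = \Omega(r\eps^{-1}\log(1/(\delta_Y\eta)))$, \emph{deliberately far stronger} than needed; then partitions $Y$ inside the $X$-subspace with a budget that only consumes $O(r\eps^{-1}\log(1/(\delta_Y\eta)))$ codimension; this is exactly within the slack that the stronger $(r',\eps/5)$-spreadness of $X$ can absorb, so the final sub-pieces of $X$ remain $(r,\eps)$-spread. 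There is also a non-cosmetic difference between working with a full partition of $X \times Y$ (as the paper does, so the averaging in the proof of \cref{thm:algopseudo} is exact up to a controlled $\eta|X||Y|$ error) versus a single chain of cosets (as you do, so every step discards all but one coset and the $D$/$A$ mass must be re-traced per step). Your amortization claim for the $D$-loss via a log-potential -- that per-iteration drops in $\delta_D$ can be charged to $\delta_X\delta_Y$ instead -- is stated but has no mechanism: once you have fixed the shift $y^{(i+1)}$ and the subspace $W^{(i+1)}$, the density $\delta_{D^{(i+1)}}$ is what it is, and a poor choice cannot retroactively be rerouted into an $X$ or $Y$ loss. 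The paper sidesteps this entirely: $D$ is never restricted during the partitioning phase; only at the very end, after the partition is complete, does the averaging argument over pieces (with the single threshold $\eps\alpha\delta_D|X_i||Y_i|$) pin down one piece where $D_i$ is simultaneously large enough, producing the one-shot $(\eps\alpha/2)$ loss claimed in item (2).
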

The first step towards proving \cref{thm:algopseudo} is to note that we can almost completely partition $X \times Y$ into subrectangles $X' \times Y'$ which are all algebraically spread (inside their respective subspaces) and cover all but an $\eta$-fraction of $X \times Y$. For this, it is useful to observe that for any set $X \subseteq W$, we can find a relatively large subset within an affine subspace that is algebraically spread.

\begin{claim}
\label{claim:spread}
Let $r$ be an integer, $W \subseteq \mb{F}_2^n$ be a linear subspace, and $\eps > 0$. For $X \subseteq W$ with $\delta_X \coloneqq |X|/|W|$, there is an affine subspace $W' \subseteq W$ such that $X' \coloneqq X \cap W'$ is $(r, \eps)$-algebraically spread within $W'$, $\dim(W') \ge \dim(W) - O(r\eps^{-1}\log(1/\delta_X))$, and $\frac{|X'|}{|W'|} \ge \delta_X$. 
\end{claim}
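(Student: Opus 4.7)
The plan is a standard iterative density increment argument. I would define a sequence of nested affine subspaces $W = W_0 \supseteq W_1 \supseteq W_2 \supseteq \cdots$ together with the sets $X_i \coloneqq X \cap W_i$ of densities $\delta_i \coloneqq |X_i|/|W_i|$. At each stage, I would check whether $X_i$ is $(r,\eps)$-algebraically spread within $W_i$. If yes, terminate and output $(W_i, X_i)$. If no, then by the definition of algebraic spreadness there is an affine subspace $W_{i+1} \subseteq W_i$ with $\dim(W_{i+1}) \ge \dim(W_i) - r$ and $|X \cap W_{i+1}|/|W_{i+1}| > (1+\eps)\delta_i$, so I set $X_{i+1} = X \cap W_{i+1}$ and continue.

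Two invariants control the process. First, the sequence of densities is strictly increasing: $\delta_{i+1} > (1+\eps)\delta_i$, hence $\delta_i \ge (1+\eps)^i \delta_X$. Since $\delta_i \le 1$, this forces termination after at most $t = \lceil \log(1/\delta_X)/\log(1+\eps)\rceil = O(\eps^{-1}\log(1/\delta_X))$ iterations. Second, each step decreases the dimension by at most $r$, so the final subspace $W' \coloneqq W_t$ satisfies $\dim(W') \ge \dim(W) - rt = \dim(W) - O(r\eps^{-1}\log(1/\delta_X))$, as required.

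Finally, set $X' \coloneqq X \cap W'$. By construction $X'$ is $(r,\eps)$-algebraically spread within $W'$ (this is exactly the termination condition), and $|X'|/|W'| = \delta_t \ge \delta_X$ since densities only grew throughout the procedure. There is no real obstacle here; the one minor point to verify is that each intermediate object $W_i$ is indeed an affine subspace of $W$ (affine subspaces of affine subspaces are affine subspaces), which is immediate. This gives all three conclusions of the claim.
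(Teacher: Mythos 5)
Your proof is correct and follows essentially the same iterative density-increment argument as the paper: repeatedly pass to an affine subspace witnessing the failure of $(r,\eps)$-algebraic spreadness, track the density growth $\delta_i \ge (1+\eps)^i\delta_X$ to bound the iteration count by $O(\eps^{-1}\log(1/\delta_X))$, and account for the codimension loss of at most $r$ per step.
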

\begin{proof}
We proceed iteratively. Initialize $W^{(0)} = W$. If $X \cap W^{(0)}$ is $(r,\eps)$-algebraically spread within $W^{(0)}$, we are done. Otherwise, there must exist an affine subspace $W^{(1)} \subseteq W^{(0)}$ with $\dim(W^{(1)}) \ge \dim(W^{(0)}) - r$ satisfying
\[
    \frac{|X \cap W^{(1)}|}{|W^{(1)}|} > (1+\eps)\frac{|X \cap W^{(0)}|}{|W^{(0)}|}.
\]
We now repeat this process with $W^{(1)}$. After $i$ iterations, the density of $X \cap W^{(i)}$ within $W^{(i)}$ is at least $(1+\eps)^i\delta_X \ge \delta_X$ and $\dim(W^{(i)}) \ge \dim(W^{(0)}) - ri$. As the density may not exceed 1, we are guaranteed to obtain our desired affine subspace after $O(\eps^{-1}\log(1/\delta_X))$ iterations.
\end{proof}

Now we state and prove the key partitioning lemma over finite fields. 
\begin{lemma}
\label{lemma:2dim}
Let $r$ be an integer, $W \subseteq \mb{F}_2^n$ be a linear subspace, and $\eps, \eta \in (0,1/10)$. Additionally, let $X, Y \subseteq W$, and define $\delta_X = |X|/|W|$ and $\delta_Y = |Y|/|W|$. Then there is a positive integer $T$ and for all $i = 1, \dots, T$, a subspace $V_i \subseteq W$, points $x_i, y_i \in W / V_i$, and subsets $X_i \subseteq V_i + x_i, Y_i \subseteq V_i+y_i$ satisfying:
\begin{enumerate}
    \item $\dim(V_i) \ge \dim(W) - O(r \eps^{-2} \log(1/(\delta_X\delta_Y))^2\log(1/\eta) + r\eps^{-2}\log(1/\eta)^5)$ for $i = 1, \dots, T$.
    \item $X_i \times Y_i$ and $X_j \times Y_j$ are disjoint for $1 \le i \neq j \le T$.
    \item $|X \times Y \setminus \bigcup_{i=1}^T X_i \times Y_i| \le \eta|X||Y|$.
    \item $\frac{|X_i||Y_i|}{|V_i|^2} \ge 2^{-O(\log(1/\eta)^2)}\delta_X\delta_Y$ for all $i = 1, 2, \dots, T$. 
    \item $X_i$ and $Y_i$ are $(r, \eps)$-algebraically spread within $V_i+x_i$ and  $V_i+y_i$, respectively, for $i = 1, \dots, T$.
\end{enumerate}
\end{lemma}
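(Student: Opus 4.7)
The plan is to prove \Cref{lemma:2dim} by a recursive partitioning algorithm that follows the ``ping-pong'' strategy sketched in the proof overview. We maintain a list of pending rectangles of the form $(X', Y', V')$ (initialized with $(X, Y, W)$) and process each at one of $L = O(\log(1/\eta))$ recursion levels. At each step, we apply \Cref{claim:spread} to $X'$ inside $V'$ with \emph{over-tuned} parameters $r^* \gg r$ and $\eps' \ll \eps$ to obtain an affine subspace $V'_X \subseteq V'$ on which $X' \cap V'_X$ is $(r^*, \eps')$-algebraically spread; we then apply \Cref{claim:spread} again to $Y' \cap V'_X$ inside $V'_X$ to get $V'_Y \subseteq V'_X$ on which $Y' \cap V'_Y$ is similarly over-spread. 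We then partition $V'_X$ into cosets of $V'_Y$ and form the candidate rectangles $R_{u,v} := (X' \cap (V'_Y + u)) \times (Y' \cap (V'_Y + v))$ indexed by $(u,v) \in V'_X/V'_Y \times V'_X/V'_Y$.

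The critical step is to show that most coset-pairs are ``good'' (both sides have near-average density and are $(r,\eps)$-algebraically spread in their respective cosets). Because $X'$ is over-spread in $V'_X$ with small parameter $\eps'$, \Cref{fct:rev-mark} bounds the fraction of cosets $V'_Y + u$ with $X'$-density below $(1-\eps/8)\delta^{V'_X}_{X'}$ by $O(\eps'/\eps)$; on the remaining good cosets, spreadness transfers because any affine subspace $W \subseteq V'_Y + u$ of codimension at most $r$ has codimension at most $r + c_Y$ in $V'_X$, where $c_Y := \dim V'_X - \dim V'_Y$, so the $(r^*,\eps')$-spread of $X'$ with $r^* \ge r + c_Y$ yields $|X' \cap W|/|W| \le (1+\eps')\delta^{V'_X}_{X'} \le (1+\eps)\delta^u_{X'}$. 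A symmetric argument applies to $Y'$ provided we first insert an over-spreading step for $Y'$ inside $V'_X$ before the final refinement, so that $(r,\eps)$-spreadness of $Y'$ propagates to most cosets $V'_Y + v$ as well. We output the good rectangles and recurse on the bad rectangles (as well as on coset pairs lying outside $V'_X \times V'_X$ in the ambient partition).

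By choosing $\eps' = \eps/C$ for a large constant $C$, the fraction of bad mass at each level is a constant strictly less than one, so after $L = O(\log(1/\eta))$ levels the uncovered mass is at most $\eta|X||Y|$, giving the required coverage. At each level, each application of \Cref{claim:spread} contributes codimension loss $O(r^*(\eps')^{-1}\log(1/\delta))$; choosing $r^*$ recursively to absorb the telescoping codimension drops of all future levels---which themselves depend on $r^*$---is what produces the $\eps^{-2}$ factor. Summing over $L$ levels yields the dimension bound $O(r \eps^{-2}\log(1/(\delta_X\delta_Y))^2 \log(1/\eta) + r\eps^{-2}\log(1/\eta)^5)$, and the size bound $|X_i||Y_i|/|V_i|^2 \ge 2^{-O(\log(1/\eta)^2)}\delta_X\delta_Y$ follows since densities decay by at most a controlled factor per level.

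The principal obstacle is the simultaneous pseudorandomization of $X$ and $Y$: \Cref{claim:spread} controls only one set at a time, and naively refining for one set can destroy the spreadness of the other. The over-spreading trick (using $\eps' \ll \eps$) furnishes enough slack that spreadness transfers cleanly down to sub-cosets without being degraded past $\eps$. A secondary difficulty is that \Cref{claim:spread} provides spreadness of $Y'$ only inside the single coset $V'_Y$, not across the ambient $V'_X$; the remedy is to perform an intermediate over-spreading of $Y'$ in $V'_X$ and manage the ping-pong so that $r^*$ at the current level accommodates the accumulated codimension losses of all remaining levels. Tuning $r^*$ and $\eps'$ to handle these telescoping future drops, rather than only the immediate one, is what produces the $\eps^{-2}$ and $\log(1/\eta)^5$ factors in the final bound.
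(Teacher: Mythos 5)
There is a genuine gap. The proposal applies \Cref{claim:spread} exactly once to $X'$ (obtaining a single affine subspace $V'_X$) and exactly once to $Y' \cap V'_X$ (obtaining a single affine subspace $V'_Y$). But \Cref{claim:spread} does not partition a set into spread pieces; it only extracts one spread piece living inside one subspace of codimension roughly $r\eps^{-1}\log(1/\delta)$. Consequently, the candidate rectangles $R_{u,v}$ collectively cover at most $(X' \cap V'_X) \times (Y' \cap V'_X)$, whose measure is $2^{-\Omega(r\eps^{-1}\log(1/\delta_X))}$ times that of $X' \times Y'$, not a constant fraction. Worse, the only coset pairs on which you can hope to certify spreadness of the $Y$-side are those with $v$ equal to the single privileged coset $V'_Y$: for $v \neq 0$, the set $Y' \cap (V'_Y + v)$ has no spreadness guarantee whatsoever, because the ``over-spreading step for $Y'$ inside $V'_X$'' you propose is not a thing \Cref{claim:spread} can do---it finds a smaller subspace where a subset of $Y'$ is spread, it cannot make $Y'$ spread across all of $V'_X$. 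Your claim that ``the fraction of bad mass at each level is a constant strictly less than one'' is therefore false, and the geometric decay argument does not give the $O(\log(1/\eta))$ recursion depth you need.

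The paper's proof (\Cref{lemma:oneround2dim}) avoids both issues by iterating \Cref{claim:spread} to produce an honest \emph{partition}: $X = X^{(T)} \cup X_0 \cup \dots \cup X_{T-1}$ with each $X_t$ over-spread in its own $W_t + x_t$, and then for each $t$ and each coset $x$, a further partition $Y_{t,x} = Y_{t,x}^{(T')} \cup Y_{t,x,0} \cup \dots$ into $(r,\eps)$-spread pieces. This way, essentially all of $X \times Y$ is covered in one round, so a constant fraction becomes ``good.'' The structure is also deliberately asymmetric: $X$ is over-spread once per $t$ (with $r'$ large enough to absorb the future cut into cosets of $W_{t,x,t'}$) while $Y$ is fully partitioned into spread pieces, which sidesteps the ping-pong problem your symmetric approach runs into. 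To repair your argument, you would need to replace each single application of \Cref{claim:spread} with the iterative partitioning that claim's proof already provides, and account explicitly for which side gets over-spread versus which side gets partitioned.
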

Towards proving \cref{lemma:2dim}, we first provide a simpler one round partitioning result.
\begin{lemma}
\label{lemma:oneround2dim}
Let $r$ be an integer, $W \subseteq \mb{F}_2^n$ be a linear subspace, and $\eps, \eta \in (0,1/10)$. Additionally, let $X, Y \subseteq W$, and define $\delta_X = |X|/|W|$ and $\delta_Y = |Y|/|W|$. Then there is a positive integer $T$, subset $\mc{G} \subseteq [T]$, and for all $i = 1, \dots, T$, a subspace $V_i \subseteq W$, points $x_i, y_i \in W / V_i$, and subsets $X_i \subseteq V_i + x_i, Y_i \subseteq V_i+y_i$ satisfying:
\begin{enumerate}
    \item $\dim(V_i) \ge \dim(W) - O(r \eps^{-2} \log(1/(\delta_X\delta_Y\eta))^2)$ for $i = 1, \dots, T$. 
    \item $X_i \times Y_i$ and $X_j \times Y_j$ are disjoint for $1 \le i \neq j \le T$.
    \item $|X \times Y \setminus \bigcup_{i=1}^T X_i \times Y_i| \le \eta^2|X||Y|$.
    \item $\frac{|X_i|}{|V_i|} \ge \frac{\eta^4}{100}\delta_X$ and $\frac{|Y_i|}{|V_i|} \ge \frac{\eta^2}{10}\delta_Y$ for all $i = 1, 2, \dots, T$. 
    \item $X_i$ and $Y_i$ are $(r,\eps)$-algebraically spread within $V_i+x_i$ and $V_i+y_i$, respectively, for all $i \in \mc{G}$.
    \item $\sum_{i \in \mc{G}} |X_i||Y_i| \ge \frac12|X||Y|$.
\end{enumerate}
\end{lemma}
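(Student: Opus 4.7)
My plan is a three-phase decomposition. First I would iteratively decompose $X$ within $W$ into $(r_1, \eps/4)$-algebraically spread pieces on affine subspaces $V^X_j + c^X_j$, where $r_1 \coloneqq r + Cr\eps^{-1}\log(1/(\delta_Y\eta))$ for a large constant $C$, chosen to exceed the codimension incurred by Phase~2. Next, for each $X$-piece, I would decompose $Y$ into $(r, \eps/4)$-spread pieces on cosets $V^{jdk} + y^{jdk}$ of sub-subspaces $V^{jdk} \subseteq V^X_j$. Finally, I would partition $V^X_j + c^X_j$ into cosets of $V^{jdk}$ to obtain the output rectangles, using \cref{fct:rev-mark} to argue that most refined cosets inherit $X^{(j)}$'s spreadness. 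The central tension is that refining the ambient subspace to make $Y$ spread risks spoiling $X$'s spreadness; I would resolve it by over-engineering $r_1$ in Phase~1 to absorb Phase~2's codim cost and then invoking a reverse-Markov argument in Phase~3.

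For Phase~1 I would iteratively apply \cref{claim:spread}: maintain a queue of pending affine subspaces (initially $\{W\}$); for each $U = V + c$, drop if $|X \cap U|/|U|$ is below a small constant multiple of $\eta^4 \delta_X$, accept if $X \cap U$ is already $(r_1, \eps/4)$-spread in $U$, or apply \cref{claim:spread} with parameters $(r_1, \eps/4)$ to find a spread sub-subspace $W' \subseteq U$ of codim $O(r_1 \eps^{-1}\log(1/(|X \cap U|/|U|)))$ and density at least $|X \cap U|/|U|$, accept $(W', X \cap W')$, and recurse on the other cosets of $W'$'s linear part in $U$. A standard entropy-type potential argument (tracking $-\sum_U |X \cap U|\log(|X \cap U|/|U|)$, which decreases by at least $\Omega(\eps^2)|X \cap U|$ per split that achieves a $(1+\eps/4)$-increment) shows that each accepted $V^X_j$ has codim $r_1 \cdot O(\eps^{-1}\log(1/(\delta_X\eta)))$ in $W$ and that total $X$-mass dropped is at most $(\eta^4/100)|X|$. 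Phase~2 is analogous, applied to $Y \cap (V^X_j + d)$ within each coset $V^X_j + d$ for $d \in W/V^X_j$ with parameters $(r, \eps/4)$; this yields pieces $Y^{(j,d,k)} \subseteq V^{jdk} + y^{jdk}$ with codim of $V^{jdk}$ in $V^X_j$ at most $O(r\eps^{-1}\log(1/(\delta_Y\eta)))$, density at least $(\eta^2/10)\delta_Y$ in $V^{jdk}$, and at most $(\eta^2/10)|Y|$ of $Y$ dropped per $X$-piece.

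For Phase~3, for each $Y$-piece $(V^{jdk}, y^{jdk}, Y^{(j,d,k)})$ and each coset $V^{jdk} + c^X_j + a$ of $V^{jdk}$ in $V^X_j + c^X_j$ with $a \in V^X_j/V^{jdk}$, I would form the rectangle with $V_i = V^{jdk}$, $X_i = X^{(j)} \cap (V^{jdk} + c^X_j + a)$, $Y_i = Y^{(j,d,k)}$, $x_i = c^X_j + a$, $y_i = y^{jdk}$, discarding pieces with $|X_i|/|V^{jdk}| < (\eta^4/100)\delta_X$. Writing $\bar\delta \coloneqq |X^{(j)}|/|V^X_j|$, the choice of $r_1$ (exceeding $\on{codim}(V^{jdk}, V^X_j) + r$) and the $(r_1, \eps/4)$-spreadness of $X^{(j)}$ in $V^X_j + c^X_j$ together yield $\delta_a \coloneqq |X_i|/|V^{jdk}| \le (1+\eps/4)\bar\delta$ and also bound the density of $X^{(j)}$ on any sub-subspace of $V^{jdk}$ of codim $\le r$ by $(1+\eps/4)\bar\delta$. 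Since $\E_a \delta_a = \bar\delta$, \cref{fct:rev-mark} with $\rho = \eps/4$ and $\gamma = 1 - (1+\eps/4)/(1+\eps)$ gives $\Pr_a[\delta_a < (1+\eps/4)\bar\delta/(1+\eps)] \le (1+\eps)/(4+\eps) \le 1/3$; for the remaining \emph{good} $a$'s, $(1+\eps/4)\bar\delta \le (1+\eps)\delta_a$, so $X_i$ is $(r, \eps)$-algebraically spread in $V^{jdk} + c^X_j + a$. I would declare these good pieces to constitute $\mc{G}$.

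Condition~1 then follows from $r_1 \cdot O(\eps^{-1}\log(1/(\delta_X\eta))) + O(r\eps^{-1}\log(1/(\delta_Y\eta))) = O(r\eps^{-2}\log(1/(\delta_X\delta_Y\eta))^2)$; disjointness~(2), density thresholds~(4), and spreadness of good pieces~(5) are immediate by construction; the leftover mass is at most $((\eta^4/100) + (\eta^2/10) + (\eta^4/100))|X||Y| \le \eta^2|X||Y|$, giving~(3); and $\sum_{i \in \mc{G}}|X_i||Y_i| \ge (2/3)(1 - \eta^4/100)(1 - \eta^2/10)|X||Y| \ge |X||Y|/2$ for sufficiently small $\eta$, giving~(6). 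The hardest parts will be making the Phase~1/Phase~2 entropy-potential depth bound precise (in particular, handling ``side'' branches where density drops rather than grows), and setting the density thresholds slightly above the target values so that the $(1+\eps/4)/(1+\eps)$ factor loss in Phase~3 does not cause good pieces to violate condition~4.
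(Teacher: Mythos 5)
Your Phase 1 and Phase 2 depart from the paper in a way that introduces a genuine gap. The paper's algorithm iteratively peels off spread subsets of $X$ \emph{as a set}: at step $t$ it applies \cref{claim:spread} to $X^{(t)} = X \setminus (X_0 \cup \cdots \cup X_{t-1})$ inside the \emph{original} ambient $W$, so each accepted subspace $W_t$ is produced by a \emph{single} invocation of \cref{claim:spread} with input density $\delta_{X^{(t)}} \ge \frac{\eta^2}{10}\delta_X$, giving codimension $O(r'\eps^{-1}\log(1/(\eta\delta_X)))$ unconditionally. The pieces $X_t$ are disjoint as subsets of $X$ but are free to live on wildly overlapping affine subspaces; the paper never needs the $W_t$'s to partition $W$. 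Your approach instead recursively partitions $W$: after accepting $W' \subseteq U$ you recurse on the remaining $2^{d_0}-1$ cosets of $W'$'s linear part inside $U$, so accepted pieces sit at the leaves of a tree of nested subspaces, and each level costs an additional $\Theta(r_1\eps^{-1}\log(1/(\eta\delta_X)))$ of codimension. You need the tree depth to be $O(1)$ for condition (1), and the entropy potential you propose does not deliver this. The potential $\Phi = \sum_U |X\cap U|\log(|U|/|X\cap U|)$ indeed decreases on a split by $|X\cap U|\,D(p\|q) + |X\cap W'|\log(1/\delta_{W'})$, where $p$ is the $X$-mass distribution over the $2^{d_0}$ cosets and $q$ is uniform; but if only the accepted coset deviates from uniform, $D(p\|q) = O(2^{-d_0}\eps^2)$, so the per-split decrease can be a $2^{-d_0}$ fraction of $|X\cap U|$. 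The global budget $\Phi_0 = |X|\log(1/\delta_X)$ therefore does not force short \emph{paths}. Concretely, one can build a hierarchical $X$ in which each pending coset contains a fresh codimension-$r_1$ spike of relative height $1+\eps/4$ with the deficit diluted over the $2^{r_1}-1$ sibling cosets (so their densities remain $\approx \delta_X$, well above any drop threshold); such a configuration keeps the recursion alive for $\Omega(\dim(W)/r_1)$ levels, violating your claimed codimension bound by a polynomial-in-$n$ factor.

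Your remaining structure is sound and closely mirrors the paper's verification of items (2)--(6): passing to cosets of $V^{jdk}$ in $V^X_j$, using that $X^{(j)}$ is $(r_1,\eps/4)$-spread with $r_1$ large enough to absorb Phase~2's codimension drop plus $r$, invoking \cref{fct:rev-mark} to show a $\ge 2/3$ fraction of cosets $a$ retain spreadness, and accounting the dropped mass (your constant-factor bookkeeping for $(\eta^4/100 + \eta^2/10 + \eta^4/100) \le \eta^2$ is fine). But the lemma does not follow until Phases 1--2 are replaced by something whose output subspaces have codimension bounded by a single application of \cref{claim:spread} --- which in practice means abandoning the ``partition $W$'' framing and, as the paper does, peeling spread subsets of $X$ directly.
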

\begin{proof}
Let $X^{(0)} = X$ and perform the following algorithm. For $t = 0, 1, \dots$: if $|X^{(t)}| \le \frac{\eta^2}{10}\delta_X |W|$, then terminate. Otherwise, let $X_t \subseteq X^{(t)}$ be $(r',\eps/5)$-algebraically spread within an affine subspace $W_t+x_t$ for $r' = \Omega(r\eps^{-1}\log(1/(\delta_Y\eta)))$, as given by \cref{claim:spread}. Note that $|X_t| \ge \frac{\eta^2}{10}\delta_X |W_t|$ and
\[\dim(W_t) \ge \dim(W) - O(r'\eps^{-1}\log(1/(\delta_X \eta))). \]
Now let $X^{(t+1)} = X^{(t)} \setminus X_t$. Let $T$ be the total number of iterations, so that
\[ X = X^{(T)} \cup X_0 \cup X_1 \cup \dots \cup X_{T-1}, \]
where $|X^{(T)}| \le \frac{\eta^2}{10}|X|$.

Next, fix some $t \in \{0, 1, \dots, T-1\}$. For $x \in W / W_t$ define $Y_{t,x} \coloneqq Y \cap (W_t+x)$. Note that over $x \in W / W_t$, the $Y_{t,x}$ form a partition of $Y$. Now for each $x \in W / W_t$ further partition
\[ Y_{t,x} = Y_{t,x}^{(T'_{t,x})} \cup Y_{t,x,0} \cup \dots \cup Y_{t,x,T'_{t,x}-1}, \]
using the algorithm in the first paragraph, where $|Y_{t,x}^{(T'_{t,x})}| \le \frac{\eta^2}{10}\delta_Y |W_t|$ and each $Y_{t,x,t'}$ for $0 \le t' \le T'_{t,x}-1$ is $(r, \eps)$-algebraically spread within some affine subspace $W_{t,x,t'}+y_{t,x,t'}$ where
\[ \dim(W_{t,x,t'}) \ge \dim(W_t) - O(r\eps^{-1}\log(1/(\delta_Y\eta))). \]
In particular, the $Y_{t,x,t'}$ satisfy $\frac{|Y_{t,x,t'}|}{|W_{t,x,t'}|} \geq \frac{\eta^2}{10} \delta_Y$. Finally, for each $y \in W_t / W_{t,x,t'}$ define $X_{t,x,t',y} \coloneqq X_t \cap (W_{t,x,t'}+x_t + y)$, and note that for every fixed $t, x, t'$ that $X_{t,x,t',y}$ partition $X_t$ over $y \in W_t / W_{t,x,t'}$.

Now we define the pieces $X_i \times Y_i$ in the lemma statement and $\mc{G}$. The pieces $X_i \times Y_i$ are all pieces \[ X_{t,x,t',y} \times Y_{t,x,t'} \subseteq (W_{t,x,t'}+x_t + y) \times (W_{t,x,t'} + y_{t,x,t'}) \] where $\frac{|X_{t,x,t',y}|}{|W_{t,x,t'}|} \ge \frac{\eta^4}{100}\delta_X$. This combined with the density lower bound $\frac{|Y_{t,x,t'}|}{|W_{t,x,t'}|} \geq \frac{\eta^2}{10} \delta_Y$ gives guarantee (4). Finally, $\mc{G}$ consists of those pieces with $\frac{|X_{t,x,t',y}|}{|W_{t,x,t'}|} \ge (1-3\eps/5)\frac{|X_t|}{|W_t|}$.

Let us now verify the remaining conclusions. (1) follows because
\begin{align*}
\dim(W_{t,x,t'}) &\ge \dim(W_t) - O(r\eps^{-1}\log(1/(\delta_Y\eta))) \\
&\ge \dim(W) - O(r\eps^{-2}\log(1/(\delta_X\delta_Y\eta))^2) - O(r\eps^{-1}\log(1/(\delta_Y\eta))),
\end{align*}
as desired. (2) follows by construction. To check (3), first define $\mc{I}$ to be the set of tuples $(t,x,t',y)$ with $\frac{|X_{t,x,t',y}|}{|W_{t,x,t'}|} < \frac{\eta^4}{100}\delta_X$. Note that by the construction 
\begin{align*}
|X \times Y| - \sum_i |X_i \times Y_i| = |X^{(T)}||Y| + \sum_{t,x} |X_t||Y_{t,x}^{(T'_{t,x})}| + \sum_{(t,x,t',y) \in \mc{I}} |X_{t,x,t',y}||Y_{t,x,t'}|.
\end{align*}
We bound this term by term. By construction, $|X^{(T)}||Y| \le \frac{\eta^2}{10}|X||Y|$. Also, $|Y_{t,x}^{(T'_{t,x})}| \le \frac{\eta^2}{10}\delta_Y|W_t|$, so $\sum_{x \in W/W_t} |Y_{t,x}^{(T'_{t,x})}| \le \frac{\eta^2}{10}\delta_Y|W|$. Overall,
\begin{equation}
\sum_{t,x} |X_t||Y_{t,x}^{(T'_{t,x})}| \le \sum_t |X_t| \cdot \frac{\eta^2}{10}\delta_Y|W| \le \frac{\eta^2}{10}|X||Y|. \label{eq:txbound}
\end{equation}
For the final term, we can write
\begin{align*}
    \sum_{(t,x,t',y) \in \mc{I}} |X_{t,x,t',y}||Y_{t,x,t'}| &\le \sum_{\substack{t,x,t' \\ y \in W_t / W_{t,x,t'}}} |Y_{t,x,t'}| \cdot \frac{\eta^4}{100}\delta_X |W_{t,x,t'}| \\
    &= \sum_{t,x,t'} |Y_{t,x,t'}| \cdot \frac{\eta^4}{100}\delta_X |W_t| \le \sum_t |Y| \cdot \frac{\eta^4}{100}\delta_X |W_t| \\
    &\le \sum_t |Y| \cdot \frac{\eta^4}{100} \cdot \frac{10}{\eta^2}|X_t| \le \frac{\eta^2}{10}|X||Y|.
\end{align*}
Here, the second to last inequality follows because $\frac{|X_t|}{|W_t|} \ge \frac{\eta^2}{10}\delta_X$. Combining these verifies item (3).

To verify (5), note that each $Y_{t,x,t'} \subseteq W_{t,x,t'} + y_{t,x,t'}$ is $(r,\eps)$-algebraically spread by construction. Additionally, note that for any affine subspace $V \subseteq W_{t,x,t'}$ with $\dim(V) \ge \dim(W_{t,x,t'}) - r$ it holds that
\[ \frac{|X_{t,x,t',y} \cap V|}{|V|} \le \frac{|X_t \cap V|}{|V|} \le (1+\eps/5)\frac{|X_t|}{|W_t|}, \]
because $\dim(V) \ge \dim(W_t) - O(r\eps^{-1}\log(1/(\delta_Y\eta)))$ and $X_t$ is $(r', \eps/5)$-algebraically spread inside $W_t + x_t$ for $r'=\Omega(r\eps^{-1}\log(1/(\delta_Y\eta)))$ by construction. Thus if $\frac{|X_{t,x,t',y}|}{|W_{t,x,t'}|} \ge (1-3\eps/5)\frac{|X_t|}{|W_t|}$ (as is true for the tuples in $\mc{G}$), then $X_{t,x,t',y}$ is $(r, \eps)$-algebraically spread.

For item (6), first consider fixing some $t, x, t'$, and let $\delta_t = \frac{|X_t|}{|W_t|}$ to simplify notation. Note that $\E_{y \in W_t/W_{t,x,t'}}\left[\frac{|X_{t,x,t',y}|}{|W_{t,x,t'}|} \right] = \delta_t$, and that $\frac{|X_{t,x,t',y}|}{|W_{t,x,t'}|} \le (1+\eps/5)\delta_t$ for all $y$, as argued above. Thus by the reverse Markov inequality (\cref{fct:rev-mark}), $\Pr_{y \in W_t/W_{t,x,t'}}\left[\frac{|X_{t,x,t',y}|}{|W_{t,x,t'}|} \ge (1-3\eps/5)\delta_t \right] \ge \frac34.$
Hence,
\[ \sum_{\substack{y \in W_t/W_{t,x,t'} \\ \frac{|X_{t,x,t',y}|}{|W_{t,x,t'}|} \ge (1-3\eps/5)\delta_t}} |X_{t,x,t',y}| \ge \frac34\frac{|W_t|}{|W_{t,x,t'}|} (1-3\eps/5)\delta_t|W_{t,x,t'}| \ge \frac23|X_t|.\]
We conclude the total size of pieces from $\mc{G}$ is at least
\[ \sum_{t,x,t'} \frac23|X_t||Y_{t,x,t'}| = \sum_t \frac23|X_t||Y| - \sum_{t,x} \frac23|X_t||Y_{t,x}^{(T'_{t,x})}| \ge \frac12|X||Y|, \]
where we have used that $\sum_t |X_t| \ge (1-\eta^2)|X|$ and $\sum_{t,x} |X_t||Y_{t,x}^{(T'_{t,x})}| \le \frac{\eta^2}{10}|X||Y|$ from \eqref{eq:txbound}.
\end{proof}
Iterating \cref{lemma:oneround2dim} proves \cref{lemma:2dim}.
\begin{proof}[Proof of \cref{lemma:2dim}]
Perform the following algorithm given $X \times Y \subseteq W \times W$. Consider $X_i \times Y_i \subseteq (V_i+x_i) \times (V_i+y_i)$ for $i = 1, \dots, T$ as given in \cref{lemma:oneround2dim}. Now, for each $i \in [T] \setminus \mc{G}$ invoke \cref{lemma:oneround2dim} recursively. Note that if $X \subseteq W$ is spread, then $X+x \subseteq W+x$ is spread with the same parameters. Thus, even though \cref{lemma:oneround2dim} is stated in terms of linear subspaces $W$, by shifting $X,Y$ we can apply the lemma to $(X_i+x_i) \times (Y_i+y_i) \subseteq V_i \times V_i$ to obtain a partition, then shift all of the sets back by $x_i, y_i$, respectively.
Terminate when the recursion depth is $O(\log(1/\eta))$, and remove any remaining pieces from the partition.

Let's check that all the conclusions of \cref{lemma:2dim} hold.  By conclusion (4) of \cref{lemma:oneround2dim} and iterating for $O(\log(1/\eta))$ recursive layers gives that $|X_i|/|V_i| \ge 2^{-O(\log(1/\eta)^2)}\delta_X$ and $|Y_i|/|V_i| \ge 2^{-O(\log(1/\eta)^2)}\delta_Y$. Thus (4) holds.
(1) holds because the recursion depth is $O(\log(1/\eta))$ combined with conclusion (1) of \cref{lemma:oneround2dim} -- the additional $O(r\eps^{-2}\log(1/\eta)^5)$ terms is due to the fact that $\delta_X$ and $\delta_Y$ may drop by $2^{-O(\log(1/\eta)^2)}$ during the algorithm. (2) and (5) hold by construction.  (3) holds because conclusion (3) of \cref{lemma:oneround2dim} implies that at most $\eta^2|X||Y|$ total size of pieces is thrown out as each recursive layer, so $O(\eta^2\log(1/\eta)|X||Y|)$ total. Additionally, by conclusion (5) of \cref{lemma:oneround2dim} the total size of pieces being recursed on after $O(\log(1/\eta))$ layers is only $2^{-O(\log(1/\eta))}|X||Y| \le \eta^2|X||Y|$. So the total amount size of pieces that are either thrown out or not partitioned is at most $O(\eta^2\log(1/\eta)|X||Y|) \le \eta|X||Y|$ as desired.
\end{proof}

Now we apply \cref{lemma:2dim} to prove \cref{thm:algopseudo}.
\begin{proof}[Proof of \cref{thm:algopseudo}]
Set $\eta = \eps\alpha\delta_D/16$ and let $\{V_i, X_i, x_i, Y_i, y_i\}_{i\in [T]}$ be as given by \cref{lemma:2dim} with the choice $\eps \to \eps/16$.
We will show some choice of $i$ satisfies the conclusions of the theorem; thus, (1), (3), (4) follow by construction. 
In order to verify (2) and (5), we want to compare the sizes of $A \cap S(X_i, Y_i, D)$ and $S(X_i, Y_i, D)$, while also comparing these to the ``original'' densities $\delta_X, \delta_Y$, and most importantly $\delta_D$. This motivates us to consider the quantity
\begin{align*}
    &\sum_{i \in [T]} \left(|A \cap S(X_i, Y_i, D)| - (1-4\eps)\alpha|S(X_i, Y_i, D)| - \eps\alpha\delta_D|X_i||Y_i|\right)
    \\ &\qquad\qquad\qquad\ge  |A| - \eta|X||Y| - (1-4\eps)\alpha|S(X, Y, D)| - \eps\alpha\delta_D|X||Y| \\
    &\qquad\qquad\qquad\ge \alpha\delta|W|^2 - \eps\alpha\delta_D |X||Y| - (1-4\eps)(1+\eps)\alpha\delta|W|^2 - \eps\alpha\delta_D|X||Y| \ge 0,
\end{align*}
where we have used that $\delta = \delta_X\delta_Y\delta_D$, and $|S(X, Y, D)| \le (1+\eps)\delta|W|^2$ by \cref{lem:relatesxyd} because one of $X$ or $Y$ is $(r, \eps/8)$-algebraically spread. By averaging, there exists an index $i$ for which
\[
|A \cap S(X_i, Y_i, D)| \ge (1-4\eps)\alpha|S(X_i, Y_i, D)| + \eps\alpha\delta_D|X_i||Y_i|.
\]
Note that if $X_i \subseteq V_i + x_i$ and $Y_i \subseteq V_i+y_i$ then $S(X_i, Y_i, D) = S(X_i, Y_i, D_i)$ for $D_i \coloneqq D \cap (V_i+x_i+y_i)$. Thus 
\[ |A \cap S(X_i, Y_i, D_i)| \ge (1-4\eps)\alpha|S(X_i, Y_i, D_i)| \ge (1-5\eps)\alpha \frac{|X_i||Y_i||D_i|}{|V_i|}, \]
where the final inequality follows from \cref{lem:2side} as both $X_i, Y_i$ are $(r, \eps/16)$-algebraically spread, which gives (5). Additionally  
\[ \eps\alpha\delta_D|X_i||Y_i| \le |A \cap S(X_i, Y_i, D_i)| \le |S(X_i, Y_i, D_i)| \le 2|X_i||Y_i| \frac{|D_i|}{|V_i|}, \]
because $X_i, Y_i$ are $(r, \eps/16)$-algebraically spread. Thus, $\frac{|D_i|}{|V_i|} \ge \eps\alpha\delta_D/2$, and so (2) is satisfied.
\end{proof}

\subsection{Von Neumann lemma}

In this subsection, we give suitable conditions under which a set $A \subseteq S(X,Y,Z)$ contains roughly as many corners as a random set of the same density. More specifically, let $\delta_X$ denote the density of some set $X$ within a subspace $W$, and similarly for $\delta_Y, \delta_D$. Suppose $A \subseteq S(X,Y,D)$ has size $|A| = \alpha \delta_X\delta_Y\delta_D|W|^2$. For a typical choice of $X,Y,D$, the size of $S(X,Y,D)$ will be roughly $\delta_X\delta_Y\delta_D|W|^2$ (see \cref{lem:2side}), so one should morally view $\alpha$ as the density of $A$ within its container.

Define the trilinear form $\Phi(f_1, f_2, f_3) \coloneqq \E_{x,y,z \in W}\left[f_1(x, y)f_2(y+z, y)f_3(x, x+z) \right]$. 
Observe that by the change of variable $z \to x+y+z$, $\Phi(f_1, f_2, f_3) = \E_{x,y,z \in W}\left[f_1(x, y)f_2(x+z, y)f_3(x, y+z) \right]$, so that $\Phi(\mbm{1}_A,\mbm{1}_A,\mbm{1}_A)$ counts the number of corners in $A$ (up to normalization). 
We will show that if $A$ is sufficiently pseudorandom, then $\Phi(\mbm{1}_A,\mbm{1}_A,\mbm{1}_A)$ is approximately as large as $\alpha^3 \delta_X^2 \delta_Y^2 \delta_D^2$, which is the number of corners in randomly chosen $A,X,Y,Z$ of the same densities. Concretely, we will require two grid norms related to $\mbm{1}_A$ be bounded, as well as requiring that $A$ has no columns which are too sparse.

\begin{lemma}\label{lem:VNL}
    Let $W \subseteq \mb{F}_2^n$ be a linear subspace, and let $X, Y, D \subseteq W$ be subsets of size $|X| = \delta_X |W|, |Y| = \delta_Y|W|, |D| = \delta_D|W|$. Additionally, let $A \subseteq S(X, Y, D)$ be a subset of size $|A| = \alpha \delta_X\delta_Y\delta_D|W|^2$. Let $\eps \in (0,1/10)$ and let $p = \Omega(\log(1/(\alpha \delta_D)) / \eps^4)$ be a positive integer. Define the functions $F_1 : Y \times D \to \{0, 1\}$ where $F_1(y, z) = \mbm{1}_A(y+z, y)$ and $F_2 : X \times D \to \{0, 1\}$ where $F_2(x, z) = \mbm{1}_A(x, x+z)$. Suppose the following conditions hold:
    \begin{enumerate}
        \item $\snorm{F_1}_{G(2,p)} < (1+\eps^2/36)\alpha \delta_X$,
        \item $\snorm{F_2}_{G(2,p)} < 2\alpha \delta_Y$,
        \item For all $y \in Y$, we have $\E_{x \in W}[\mbm{1}_A(x,y)] \geq (1-\eps^2/36)\alpha \delta_X \delta_D.$
    \end{enumerate}
    Then,
    $$
        \Phi(\mbm{1}_A,\mbm{1}_A,\mbm{1}_A) \geq (1 - 4\eps)\alpha^3 \delta_X^2 \delta_Y^2 \delta_D^2.
    $$
\end{lemma}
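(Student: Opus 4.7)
The plan is as follows. Write $\nu_1 := \alpha \delta_X$, $\nu_2 := \alpha \delta_Y$, and $\nu_g := \alpha \delta_D$, which are respectively the averages of $F_1$ on $Y \times D$, of $F_2$ on $X \times D$, and of $\mbm{1}_A$ on $X \times Y$. Since $\mbm{1}_A$ is supported in $S(X,Y,D) \subseteq X \times Y$ and $F_1, F_2$ are supported in $Y \times D$ and $X \times D$ respectively, the integrand in $\Phi$ vanishes off $X \times Y \times D$, so
\[ \Phi(\mbm{1}_A, \mbm{1}_A, \mbm{1}_A) = \delta_X \delta_Y \delta_D \cdot \E_{x \in X, y \in Y, z \in D}\left[\mbm{1}_A(x, y) F_1(y, z) F_2(x, z)\right]. \]
Writing $F_1 = \nu_1 + f_1$ with $f_1 := F_1 - \nu_1$, the right-hand side splits as $\Phi_{\mathrm{main}} + \Phi_{\mathrm{err}}$, where $\Phi_{\mathrm{main}} := \nu_1 \delta_X \delta_Y \delta_D \cdot \E[\mbm{1}_A(x,y) F_2(x,z)]$ and $\Phi_{\mathrm{err}} := \delta_X \delta_Y \delta_D \cdot \E[\mbm{1}_A(x,y) f_1(y,z) F_2(x,z)]$. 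I would lower-bound $\Phi_{\mathrm{main}}$ by an elementary Cauchy--Schwarz and upper-bound $|\Phi_{\mathrm{err}}|$ by combining \cref{lem:spectral-pos} with a carefully weighted Jensen/Cauchy--Schwarz chain that preserves $\|\mbm{1}_A\|_1 = \nu_g$ rather than $\|\mbm{1}_A\|_2 = \sqrt{\nu_g}$.

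For the main term, set $\tau(x) := |\{y \in Y : (x, y) \in A\}|$ for $x \in X$. Because $A \subseteq S(X,Y,D)$ forces $x + y \in D$ whenever $(x,y) \in A$, a direct computation gives $\E_{y \in Y} \mbm{1}_A(x, y) = \tau(x)/|Y|$ and $\E_{z \in D} F_2(x, z) = \tau(x)/|D|$. Therefore $\E[\mbm{1}_A(x, y) F_2(x, z)] = \E_{x \in X}[\tau(x)^2]/(|Y||D|)$, and Cauchy--Schwarz together with $\sum_{x \in X} \tau(x) = |A| = \alpha \delta_X \delta_Y \delta_D |W|^2$ yields the clean lower bound $\Phi_{\mathrm{main}} \geq \alpha^3 \delta_X^2 \delta_Y^2 \delta_D^2$. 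No pseudorandomness is used here.

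For the error term, first observe that a change of variable converts Condition~(3) into $\inf_{y \in Y} \E_{z \in D}[F_1(y, z) - \nu_1] \geq -\eps^2 \nu_1/36$. Combined with Condition~(1), the contrapositive of \cref{lem:spectral-pos}, applied with any even $2k$ such that $p \geq 36\lceil 2k/\eps^4 \rceil$, yields $\snorm{f_1}_{G(2, 2k)} < \eps \nu_1$; the quantitative hypothesis on $p$ lets us take $2k \geq \log(1/\nu_g)$. Now set $u(x, y) := \E_{z \in D}[f_1(y, z) F_2(x, z)]$. Applying Jensen's inequality to the probability measure $\mbm{1}_A/\nu_g$ (valid since $\mbm{1}_A \geq 0$) gives $|\E[\mbm{1}_A u]|^{2k} \leq \nu_g^{2k-1} \E[u^{2k}]$. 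Expanding $u^{2k}$, factoring the $X \times Y$ average into $\E_X \cdot \E_Y$ (possible since $u^{2k}$ depends on $x$ and $y$ only through independent product factors after expansion), and applying Cauchy--Schwarz in $(z_1, \ldots, z_{2k}) \in D^{2k}$ then bounds $\E[u^{2k}] \leq \snorm{f_1}_{G(2, 2k)}^{2k} \snorm{F_2}_{G(2, 2k)}^{2k}$. Taking the $2k$-th root, using monotonicity of grid norms and Condition~(2) to get $\snorm{F_2}_{G(2, 2k)} < 2 \nu_2$, and $\nu_g^{-1/(2k)} \leq 2$, I would conclude $|\E[\mbm{1}_A f_1 F_2]| \leq 4\eps \nu_g \nu_1 \nu_2 = 4\eps \alpha^3 \delta_X \delta_Y \delta_D$, hence $|\Phi_{\mathrm{err}}| \leq 4\eps \alpha^3 \delta_X^2 \delta_Y^2 \delta_D^2$. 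Combined with the main-term bound this gives $\Phi \geq (1 - 4\eps) \alpha^3 \delta_X^2 \delta_Y^2 \delta_D^2$.

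The main obstacle is the error estimate. A naive Cauchy--Schwarz would produce $\snorm{\mbm{1}_A}_2 = \sqrt{\nu_g}$, forcing a forbidden $(\alpha \delta_D)^{-1/2}$-loss in the final inequality and rendering the theorem false at the stated strength. The remedy is the Jensen-at-exponent-$2k$ trick, which exploits $\mbm{1}_A \geq 0$ to keep a full $\nu_g$ at the price of $\nu_g^{-1/(2k)}$; the latter is $O(1)$ precisely when $2k \gtrsim \log(1/\nu_g)$, which is the quantitative origin of the hypothesis $p = \Omega(\log(1/(\alpha \delta_D))/\eps^4)$. The subsequent grid-norm factorization is then a standard Cauchy--Schwarz in the duplicated $z$-variables, but correctly pairing up the $\mbm{1}_A$, $f_1$, and $F_2$ contributions is the step where all the hypotheses of the lemma must be balanced simultaneously.
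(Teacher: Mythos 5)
Your proof is correct and coincides with the paper's up to notation: your $f_1 = F_1 - \nu_1$ restricted to $Y\times D$ is exactly the paper's $g = \mbm{1}_A - \alpha\delta_X\mbm{1}_{S(W,Y,D)}$ composed with $(y,z)\mapsto(y+z,y)$, your Jensen step against the probability measure $\mbm{1}_A/\nu_g$ is the paper's H\"older with exponents $(\tfrac{k}{k-1},k)$, and the subsequent Cauchy--Schwarz-into-grid-norms and spectral-positivity steps are identical. The resulting $(1-4\eps)$ constant matches the paper's computation as well.
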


\begin{proof}
    For clarity, let $f \coloneqq \mbm{1}_A$ denote the indicator function for $A$. Additionally, let $S = S(W, Y, D) = \{(y+z, y) \in W \times W : y\in Y, z\in D\}$, and let $g = f - \alpha \delta_X \mbm{1}_S$. Then, we have
    \begin{align*}
        \Phi(f,f,f) = \Phi(f,g,f) + \alpha \delta_X \cdot \Phi(f, \mbm{1}_S, f).
    \end{align*}
    We will proceed by lower bounding the second term. Afterwards, we will upper bound the magnitude of the first term, showing it is ultimately dominated by the second.
    
    Using the observation that whenever $f(x,y) = 1$ and $f(x,x+z)=1$ we must have $x \in X, y\in Y, z\in D$, we can lower bound 
    \begin{align*}
        \Phi(f, \mbm{1}_S, f) &= \E_{x,y,z \in W}\left[f(x, y)\mbm{1}_S(y+z, y)f(x, x+z) \right] \\
        &= \E_{x,y,z \in W}\left[f(x, y)\mbm{1}_Y(y)\mbm{1}_D(z)f(x, x+z) \right] \\
        &= \E_{x,y,z \in W}\left[f(x, y)f(x, x+z) \right] \\ 
        &= \E_{x \in W} \left(\E_{y \in W} f(x, y)\right)^2 \ge \delta_X^{-1}\left(\E_{x,y \in W} f(x, y)\right)^2 = \alpha^2\delta_X \delta_Y^2\delta_D^2,
    \end{align*}
    where we used the Cauchy-Schwarz inequality in the last line. In particular, 
$$
    \alpha \delta_X \cdot \Phi(f, \mbm{1}_S, f)  \geq \alpha^3 \delta_X^2 \delta_Y^2 \delta_D^2.
$$
Thus, it remains to bound the magnitude of $\Phi(f,g,f)$. Let $k = 2\ceil{\log(1/(\alpha\delta_D))}$.
H\"{o}lder's inequality gives 
\begin{align*}
\left|\E_{x,y,z \in W} f(x,y)g(y+z,y)f(x,x+z) \right| &= \delta_X \delta_Y \delta_D \left|\E_{x \in X, y \in Y, z \in D} f(x,y)g(y+z,y)f(x,x+z) \right| \\
&\le \delta_X \delta_Y \delta_D \left(\E_{x \in X, y \in Y} f(x,y) \right)^{\frac{k-1}{k}} \\
&\qquad \cdot \left(\E_{x \in X, y \in Y}\Big|\E_{z \in D} g(y+z, y)f(x, x+z) \Big|^k \right)^{1/k} \\
&= \delta_X \delta_Y \delta_D(\alpha \delta_D)^{1-\frac{1}{k}} \cdot \left(\E_{\substack{x \in X, y \in Y \\ z_1, \dots, z_k \in D}} \prod_{i=1}^k g(y+z_i,y)f(x,x+z_i) \right)^{1/k}.
\end{align*}
By our choice of $k$, we have $(\alpha \delta_D)^{-1/k} \le 2$, so it suffices to bound the second factor by $2 \eps \alpha^2 \delta_X \delta_Y$.
Define the function $G : Y \times D \to [-1, 1]$ as $G(y, z) = g(y+z, y)$.
Now, apply the Cauchy-Schwarz inequality to get
\begin{align*} \left(\E_{\substack{x \in X, y \in Y \\ z_1, \dots, z_k \in D}} \prod_{i=1}^k g(y+z_i,y)f(x,x+z_i) \right)^{1/k}  &\le 
\left(\E_{\substack{y_1,y_2 \in Y \\ z_1,\dots,z_k \in D }}\prod_{i=1}^k g(y_1+z_i, y_1)g(y_2 + z_i, y_2)\right)^{1/(2k)}\\
&\qquad\cdot \left(\E_{\substack{x_1,x_2 \in X \\ z_1,\dots,z_k \in D }} \prod_{i=1}^k f(x_1, x_1+z_i)f(x_2, x_2+z_i) \right)^{1/(2k)} \\
&= 
\left(\E_{\substack{y_1,y_2 \in Y \\ z_1,\dots,z_k \in D }}\prod_{i=1}^k G(y_1, z_i)G(y_2, z_i)\right)^{1/(2k)}\\
&\qquad\cdot \left(\E_{\substack{x_1,x_2 \in X \\ z_1,\dots,z_k \in D }} \prod_{i=1}^k F_2(x_1, z_i)F_2(x_2,z_i) \right)^{1/(2k)} \\
&= \|G\|_{G(2,k)} \cdot \|F_2\|_{G(2,k)}. 
\end{align*}
The second hypothesis, along with the fact that grid norms are monotonic, bounds the second factor by $2\alpha \delta_Y$.

We will finish the proof by bounding the first factor by $\eps \alpha \delta_X$. 
To do this, we will use the third item of the hypothesis as well as \Cref{lem:spectral-pos} to reduce this quantity to the first hypothesis. 
First, note that
\[ \|F_1\|_{G(1,1)} = (\delta_{Y}\delta_D)^{-1}\E_{y, z \in W}[f(y+z,y)] = \alpha\delta_X \] by the definition of $\alpha$. Thus $G = F_1 - \|F_1\|_{G(1,1)}$ by the definition of $g$. 
Observe the third item of the hypothesis guarantees lower--boundedness on the rows of $G$; namely for all $y \in Y$, we have that
$$
    \E_{z \in D}[G(y,z)] = \delta_D^{-1} \E_{z \in W}[g(y+z, y)] \geq (1-\eps^2/36)\alpha \delta_X - \alpha \delta_X = -\eps^2 \alpha \delta_X/36. 
$$
Combining  with the first item of the hypothesis, the contrapositive of \cref{lem:spectral-pos} gives that
\[
    \|G\|_{G(2,k)} < \eps\alpha \delta_X,
\]
as desired.
\end{proof}

\subsection{Density increment}

Now we prove that if the assumptions of \Cref{lem:VNL} are not satisfied, we can pass to a subset $X' \times Y' \times Z'$ where (essentially) the density of $A$ in $S(X', Y', Z')$ increases by a constant factor. There are two ways to obtain this density increment: an appropriate grid norm is large, or there are too many sparse rows. 

Let $\alpha$ be such that $|A| = \alpha\delta_X\delta_Y\delta_D|W|^2,$ where as before we morally view $\alpha$ as the density of $A$ within $S(X,Y,D)$. To start,
let $\wte>0$ be a parameter and define $L = \{ y \in Y : \E_x[\mbm{1}_A(x,y)] < (1-\wte)\alpha\delta_X\delta_D\} \subseteq Y$ to be the set of sparse rows of $A$. We will later set $\wte = \eps^2/36$, motivated by \cref{lem:VNL}. First we obtain a density increment in the case where $L$ is large.
\begin{lemma}
\label{lemma:llarge}
Let $X, Y, D \subseteq W$ and $\eps_L \in (0,1/2)$. If $|L| \ge \eps_L|Y|$, then there is some subset $Y' \subseteq Y$ of density $\delta_{Y'} \geq \delta_Y/2$ such that
\[ \frac{|A \cap S(X,Y',D)|}{\delta_X\delta_{Y'}\delta_D|W|^2} \ge (1+\wte\eps_L/2)\alpha. \]
\end{lemma}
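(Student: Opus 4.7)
The plan is to take $Y'$ of the form $Y \setminus L^*$ for a carefully sized subset $L^* \subseteq L$ of sparse rows, balancing the density gain against the requirement $|Y'| \ge |Y|/2$. Intuitively, removing an $L^*$ that is too small gives too small a density increment, while removing all of $L$ could leave $Y'$ below the $|Y|/2$ threshold when $L$ is a majority of $Y$.

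I would first record the row-averaging identity. Since $A \subseteq S(X,Y,D)$ vanishes for $y \notin Y$,
\[
\sum_{y \in Y} \E_{x \in W}[\mbm{1}_A(x,y)] = |W| \cdot \E_{x,y \in W}[\mbm{1}_A(x,y)] = |Y| \alpha \delta_X \delta_D,
\]
so the mean row density (over $y \in Y$) is $m \coloneqq \alpha \delta_X \delta_D$, while every $y \in L$ satisfies $\E_x[\mbm{1}_A(x,y)] < (1-\wte) m$ by definition.

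Next I would set $k \coloneqq \min\{|L|,\, \lfloor |Y|/2 \rfloor\}$, let $L^*$ be any $k$-element subset of $L$, and define $Y' \coloneqq Y \setminus L^*$. By construction $|Y'| = |Y| - k \ge \lceil |Y|/2 \rceil$, giving $\delta_{Y'} \ge \delta_Y/2$. Because $\eps_L \le 1/2$ and $|L| \ge \eps_L |Y|$, one checks that $k \ge \eps_L |Y|/2$ (with a trivial separate case if $|Y|$ is so small that floors bite). Subtracting the contribution of the deleted rows,
\[
\sum_{y \in Y'} \E_x[\mbm{1}_A(x,y)] = |Y| m - \sum_{y \in L^*} \E_x[\mbm{1}_A(x,y)] > m(|Y| - k + \wte k),
\]
and since $|A \cap S(X,Y',D)| = |W| \sum_{y \in Y'} \E_x[\mbm{1}_A(x,y)]$ while $|W|^2 \delta_X \delta_{Y'} \delta_D = |W| \cdot |Y'| \cdot \delta_X \delta_D$, dividing yields
\[
\frac{|A \cap S(X,Y',D)|}{\delta_X \delta_{Y'} \delta_D |W|^2} > \alpha \left( 1 + \wte \cdot \frac{k}{|Y| - k} \right).
\]
The map $k \mapsto k/(|Y|-k)$ is increasing, so $k \ge \eps_L |Y|/2$ forces $k/(|Y|-k) \ge \eps_L/2$, delivering the claimed $(1 + \wte \eps_L / 2)\alpha$ increment.

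I do not foresee any real obstacle; the argument is direct averaging. The only delicate design choice is capping $k$ at $\lfloor |Y|/2 \rfloor$: taking $k = |L|$ would produce a stronger density increment but could violate the size constraint $|Y'| \ge |Y|/2$ exactly in the regime where $L$ is unusually large, which the minimum in the definition of $k$ is meant to guard against.
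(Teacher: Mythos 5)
Your argument is correct and follows essentially the same route as the paper: you remove a subset of the sparse rows $L$, track the resulting row sums, and divide through by the new side size. The only difference is the size of the removed set — the paper simply takes $L' \subseteq L$ with $|L'| = \eps_L|Y|$ (which is $\le |Y|/2$ automatically since $\eps_L \le 1/2$), whereas you take $k = \min\{|L|, \lfloor |Y|/2 \rfloor\}$. Your choice removes more rows when $L$ is large and in principle gives a stronger increment, but it forces the floor/ceiling bookkeeping and the small-$|Y|$ caveat; the paper's cap at $\eps_L|Y|$ sidesteps both and lands on the same $(1+\wte\eps_L/2)\alpha$ bound, so there is no real gain from the extra generality here.
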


\begin{proof}
Let $L'$ be an arbitrary subset of $L$ of size $|L'|=\eps_L |Y|$ and set $Y' = Y \setminus L'$. We have
\begin{align*}
|A \cap S(X, Y', D)| \ge |A| - |L'||W|(1-\wte)\alpha\delta_X\delta_D &= \alpha\delta_X\delta_Y\delta_D|W|^2 - (1-\wte)\eps_L\alpha\delta_X\delta_Y\delta_D|W|^2 \\
&= (1-(1-\wte)\eps_L)\alpha\delta_X\delta_Y\delta_D|W|^2.
\end{align*}
Thus,
\[ \frac{|A \cap S(X, Y', D)|}{\delta_X\delta_{Y'}\delta_D|W|^2} \ge \frac{1-(1-\wte)\eps_L}{1-\eps_L}\alpha \ge (1+\wte\eps_L/2)\alpha. \]
This completes the proof.
\end{proof}

The next lemma says that if one of the grid norms considered in \cref{lem:VNL} is large, then $A$ admits a density increment.

\begin{lemma}
\label{lemma:densincr}
Let $k$ be a positive integer, $\eps \in (0,1)$, and set $\eps_s = \Theta(\eps)$ sufficiently small. Let $X, Y, D\subseteq W$ and $A \subseteq S(X, Y, D)$. Define the functions $F_1 : Y \times D \to \{0, 1\}$ where $F_1(y, z) = \mbm{1}_A(y+z, y)$ and $F_2 : X \times D \to \{0, 1\}$ where $F_2(x, z) = \mbm{1}_A(x, x+z)$. Suppose $X$ and $Y$ are both $(r, \eps_s)$-algebraically spread for 
\[ r \ge \Omega(\eps^{-7}\log(1/(\delta_X\delta_Y))\log(1/(\delta_X \delta_Y \delta_D\gamma))^7),
\]
where 
\[ \gamma \le (\alpha\delta_X\delta_Y)^{O(\eps^{-2}k\log(1/\alpha)^2 + \eps^{-1}k\log(1/(\delta_X\delta_Y)))}. \]
Then if $\snorm{F_1}_{G(2,k)} \ge (1+\eps/32)\alpha\delta_X,$
then there are $Y' \subseteq Y$ and $D' \subseteq D$ with $|Y'| \ge (\eps\alpha/2)^{O(\eps^{-1}k^2\log(1/\alpha))}|Y|$ and $|D'| \ge (\eps\alpha/2)^{O(\eps^{-1}\log(1/\alpha))}|D|$, and 
\[ |A \cap S(X, Y', D')| \ge (1+\Omega(\eps))\alpha\delta_X|Y'||D'|. \]
Similarly, if $\snorm{F_2}_{G(2,k)} \ge 2\alpha\delta_Y,$
then there are $X' \subseteq X$ and $D' \subseteq D$ with $|X'| \ge (\eps\alpha/2)^{O(\eps^{-1}k^2\log(1/\alpha))}|X|$ and $|D'| \ge (\eps \alpha/2)^{O(\eps^{-1}\log(1/\alpha))}|D|$, and
\[ |A \cap S(X', Y, D')| \ge (1+\Omega(\eps))\alpha\delta_Y|X'||D'|. \]
\end{lemma}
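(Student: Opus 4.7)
The plan is to invoke the relative sifting theorem (\cref{thm:quasisifting2}) on $F_1$ (resp.~$F_2$) using as combinatorially spread majorant the indicator $T(y,z) \coloneqq \mbm{1}_X(y+z)$ on $Y \times D$ (resp.~$T_2(x,z) \coloneqq \mbm{1}_Y(x+z)$ on $X \times D$), and then translate the resulting rectangle-correlation into the claimed density increment. I focus on the case $\|F_1\|_{G(2,k)} \ge (1+\eps/32)\alpha\delta_X$; the case for $F_2$ is analogous with $Y$ playing the role of $X$, and in fact easier because the hypothesis $\|F_2\|_{G(2,k)} \ge 2\alpha\delta_Y$ is much stronger than $(1+\eps/32)\alpha\delta_Y$ and any $(1+\Omega(1))$-factor increment suffices.

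First I would verify the two basic properties of $T$. Because $A \subseteq S(X,Y,D)$, any pair $(y,z)$ with $F_1(y,z)=1$ has $y+z \in X$, so $F_1 \le T$ pointwise; and the bijection $(x,y) \leftrightarrow (y,x+y)$ identifies $\sum_{(y,z) \in Y \times D} F_1(y,z) = |A|$, hence $\E_{Y \times D}[F_1] = \alpha\delta_X$. To establish combinatorial spreadness, I would rewrite, for $f : Y \to [0,1]$ and $g: D \to [0,1]$,
\[
\E_{(y,z) \in Y \times D}\big[f(y)g(z)T(y,z)\big] = \frac{|W|^2}{|Y||D|}\,\big\langle (f\mbm{1}_Y) \ast (g\mbm{1}_D),\; \mbm{1}_X \big\rangle,
\]
and then apply \cref{lem:KM_4.10} to the algebraically spread set $X$, after a convex-decomposition reduction to boolean $f,g$ as in the proof of \cref{lem:extract-cor}. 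The resulting estimate is that $T$ is $((1+\eps_s)\delta_X, \gamma)$-combinatorially spread on $Y \times D$ for any $\eps_s \ll \eps$; the hypothesis on $r$ in the lemma statement is exactly what is needed to reach this pseudorandomness scale.

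With $\tau \coloneqq (1+\eps_s)\delta_X$, the grid-norm hypothesis becomes $\|F_1\|_{G(2,k)} \ge \alpha'\tau$ with $\alpha' \ge (1+\eps/64)\alpha$ (for $\eps_s$ sufficiently small). The quantitative $\gamma$-constraint required by \cref{thm:quasisifting2} reduces to $\gamma \le (\alpha\delta_X)^{O(\eps^{-2}k\log(1/\alpha)^2 + \eps^{-1}k\log(1/\delta_X))}$, which is implied by the lemma's assumption since $\alpha\delta_X\delta_Y \le \alpha\delta_X$ and $\log(1/\delta_X) \le \log(1/(\delta_X\delta_Y))$. I would then invoke \cref{thm:quasisifting2} with internal slack $\eps_T = \eps/256$ to produce $g_1: Y \to [0,1]$ and $g_2: D \to [0,1]$ satisfying
\[
\E[F_1 g_1 g_2] \ge (1-\eps_T)(1+\eps/64)\alpha\delta_X \E[g_1]\E[g_2] \ge (1+\Omega(\eps))\alpha\delta_X \E[g_1]\E[g_2],
\]
with $\E[g_1] \ge (\eps\alpha)^{O(\eps^{-1}k^2\log(1/\alpha))}$ and $\E[g_2] \ge (\eps\alpha)^{O(\eps^{-1}\log(1/\alpha))}$. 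Applying \cref{lem:extract-cor} replaces $g_1, g_2$ by indicators of sets $Y' \subseteq Y$ and $D' \subseteq D$, losing only a factor of two in their densities (absorbed into the $O(\cdot)$ in the exponent). Unwinding the definition of $F_1$ identifies $\sum_{(y,z) \in Y' \times D'} F_1(y,z) = |A \cap S(X,Y',D')|$, so the correlation inequality reads exactly $|A \cap S(X,Y',D')| \ge (1+\Omega(\eps))\alpha\delta_X|Y'||D'|$, as desired.

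The main obstacle is the construction of the combinatorially spread majorant $T$ with additive error as small as $\gamma$: this is the step that bridges algebraic spreadness (a subspace-based property of $X$) and combinatorial spreadness (the rectangle-based property required by relative sifting), and it is the reason the lemma's lower bound on $r$ has the particular polynomial-in-logs form displayed. Beyond that, the argument is essentially a careful accounting of numerical constants to ensure that the $(1+\eps/32)$ slack in the grid-norm hypothesis survives both the slight inflation of $\tau$ above $\delta_X$ and the $(1-\eps_T)$ loss inside \cref{thm:quasisifting2}, yielding a genuine $(1+\Omega(\eps))$ density increment.
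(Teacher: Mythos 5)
Your proposal is correct and follows essentially the same route as the paper's proof: both reduce the hypothesis to combinatorial spreadness of the majorant $T(y,z)=\mbm{1}_X(y+z)$ on $Y\times D$ via \cref{lem:KM_4.10} (the paper routes this through \cref{lem:relate}, which is exactly the convolution identity you write out), convert the grid-norm bound from $\delta_X$ to the slightly larger $\tau=(1+\Theta(\eps_s))\delta_X$, and then apply \cref{thm:quasisifting2} followed by \cref{lem:extract-cor} to obtain the boolean increment sets. Your convolution-identity presentation of the spreadness step is, if anything, slightly more transparent than the paper's intermediate set $T'$, but the underlying argument and all quantitative bookkeeping (the $\gamma$ constraint, the $r$ lower bound, the $(1+\eps/64)$ slack surviving to a $(1+\Omega(\eps))$ increment) are identical.
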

\begin{proof}
We only prove the former assertion, as the latter one has an identical proof. Set $\tau = (1+8\eps_s)\delta_X$. We would like to argue that the set $T \coloneqq \{(y,z) \in Y \times D : y + z \in X\}$ is $(\tau, \gamma)$-combinatorially spread. We will start with the set $T' \subseteq W \times W$, which we define to be $T' \coloneqq \{(y+z,y) : y,z \in W, y+z \in X\} = S(X, W, W)$. By \Cref{lem:relate} as well as the $(r,\eps_s)$-algebraic spreadness of $X$ for $r$ large enough, we know that $T'$ is $(\tau, \delta_Y \delta_D \gamma)$-combinatorially spread. This is enough to imply the desired combinatorial spreadness of $T$, since
\begin{align*}
    \E_{y \in Y, z \in D}[\mbm{1}_T(y,z)g_1(y)g_2(z)] &= (\delta_Y \delta_D)^{-1}\E_{y,z \in W}[\mbm{1}_{T'}(y+z,y)g_1(y)g_2(z)] \\
    &\leq (\delta_Y \delta_D)^{-1}\left(\tau \E_{y \in W}[g_1]\E_{z \in W}[g_2] + \delta_Y \delta_D\gamma \right) \\
    &= \tau \E_{y \in Y}[g_1]\E_{z \in D}[g_2] + \gamma.
\end{align*}
If we write our grid norm assumption in terms of $\tau$, we have
$$
    \|F_1\|_{G(2,k)} \geq (1+\eps/32)\alpha \delta_X = \left( \frac{1+\eps/32}{1+8\eps_s} \right) \cdot  \alpha \tau \geq (1+\eps/64) \alpha \tau.
$$
Thus applying \cref{thm:quasisifting2} and \cref{lem:extract-cor} with sufficiently small $\eps_s$. gives that there are functions $g_1: Y \to \{0, 1\}$ and $g_2: D \to \{0, 1\}$ satisfying:
\begin{align*}
    \E_{y \in Y, d \in D}[F_1(y,d)g_1(y)g_2(d)] &\geq (1-\eps_s)(1+\eps/64)\alpha \tau \E_{y \in Y}[g_1(y)] \E_{d \in D}[g_2(d)] \\
    &\geq (1+\eps/128) \alpha \tau \E_{y \in Y}[g_1(y)] \E_{d \in D}[g_2(d)] \\
    &\geq (1+\eps/128) \alpha \delta_X \E_{y \in Y}[g_1(y)] \E_{d \in D}[g_2(d)]
\end{align*}
and
\[ \E_{y \in Y}[g_1(y)] \ge (\eps\alpha/2)^{O(\eps^{-1}k^2\log(1/\alpha))} \enspace \quad\text{and}\quad \enspace \E_{d \in D}[g_2(d)] \ge (\eps\alpha/2)^{O(\eps^{-1}\log(1/\alpha))}. \]
Letting $Y'$ and $D'$ be the indicator functions of $g_1$ and $g_2$ respectively completes the proof.
\end{proof}

\subsection{Obtaining spreadness}
In this section we will use a density increment algorithm to reach a state $A \subseteq S(X, Y, D)$ where the conditions of the von Neumann Lemma (\cref{lem:VNL}) are satisfied, and both $X, Y$ are algebraically spread. A useful definition is the notion of asymmetric combinatorial spreadness. Intuitively, this says that the function $f$ does not admit a density increment onto somewhat larger subrectangles, where we control the densities of the rows and columns separately.

\begin{definition}[Asymmetric combinatorial spreadness]
\label{def:asymspread}
    We say that a function $f : \Omega_1 \times \Omega_2 \to [0,1]$ is \emph{$(s,t,\eps)$-combinatorially spread} if for all functions $g_1 : \Omega_1 \to \{0,1\}$ and $g_2 : \Omega_2 \to \{0,1\}$ with
    $$
        \E[g_1(x)] \geq 2^{-s} \quad\text{and}\quad \E[g_2(y)] \geq 2^{-t},
    $$
    it holds that
    $$ 
        \E[f(x,y)g_1(x)g_2(y)] \leq (1+\eps)\E[f]\E[g_1]\E[g_2].
    $$
\end{definition}

We will apply this definition with $s$ much larger than $t$. This corresponds to the fact that in our proof $\delta_D$ is much larger than $\delta_X$ and $\delta_Y$ throughout. Note also that unlike \Cref{def:combpseudo}, the $g_i$ are simply subsets of $\Omega_i$. Now, we specialize the above definition to define what it means for a set $A$ to be combinatorially spread within the container $S(X, Y, D)$.

\begin{definition}
\label{def:aspread}
    Let $A \subseteq \mb{F}_2^n \times \mb{F}_2^n$, $W \subseteq \mb{F}_2^n$ be a linear subspace, and $X, Y, D \subseteq W$. Let $f$ be the indicator function of $A \cap S(X, Y, D)$. We say that $A$ is \emph{$(s,t,\eps)$-combinatorially spread} in a container $S(X, Y, D)$ if the functions $F_1 : Y \times D \to \{0,1\}$, $F_2 : X \times D \to \{0,1\}$ defined as
    $$
        F_1(y,d) = f(y+d, y) \quad\text{and}\quad F_2(x,d) = f(x, x+d)
    $$
    are $(s,t,\eps)$-combinatorially spread. 
\end{definition}

Our main lemma says that if one repeatedly does a density increment followed by pseudorandomization, then we reach a state where $A$ is combinatorially spread, both $X, Y$ are algebraically spread, and $A$ has lower--bounded rows. Additionally, the densities of $X, Y, D$ have not dropped too much, and the dimension of the subspace we are working in has not decreased significantly.

\begin{lemma}\label{lem:obtaining_spreadness}
    Let $r,s,t \geq 1$ and $\eps \in (0,1/64)$. Suppose that $A \subseteq \mb{F}_2^n \times \mb{F}_2^n$ has size $|A| = \alpha 4^n$. Then, there exists a subspace $W \subseteq \mb{F}_2^n$ along with points $x,y \in \mb{F}_2^n / W$ and sets $X \subseteq W+x, Y \subseteq  W+y, D \subseteq W+x+y$ with sizes $|X| = \delta_X |W|, |Y| = \delta_Y |W|$, and $|D| = \delta_D |W|$ satisfying: 
    \begin{enumerate}
        \item $X, Y$ are $(r,5\eps^{1/2})$-algebraically spread in $W + x, W+ y$, respectively,
        \item $\delta_D \geq \bar{\delta}_D \coloneqq \exp\left(-O(t\eps^{-1}\log(1/\alpha) + \eps^{-2}\log(1/\alpha)^2 ) \right)$,
        \item $\min\{\delta_X, \delta_Y\} \geq \bar{\delta} \coloneqq \exp\left(-O\left(s\eps^{-1}\log(1/\alpha) + \eps^{-1}\log(1/(\eps \alpha \bar{\delta}_D))^2\log(1/\alpha)\right)\right)$,
        \item $\dim(W) \geq n - O(r\eps^{-3}\log(1/(\eps \alpha\bar{\delta}_D\bar{\delta}))^2\log(1/(\eps\alpha\bar{\delta}_D))\log(1/\alpha) + r\eps^{-3}\log(1/(\eps\alpha\bar{\delta}_D))^5\log(1/\alpha))$,
        \item $A$ is $(s, t, 5\eps^{1/2})$-combinatorially spread within the container $S(X, Y, D)$ (see \cref{def:aspread}),
        \item The rows of $A \cap S(X, Y, D)$ are lower bounded; namely, for all $y \in Y$, 
        $$
            \E_{x \in W}[\mbm{1}_{A \cap S(X, Y, D)}(x,y)] \geq (1-2\eps^{1/2})\alpha^* \delta_X \delta_D
        $$
        where $\alpha^* = \frac{|A \cap S(X, Y, D)|}{\delta_X \delta_Y \delta_D |W|^2}$ and $\alpha^* \geq \alpha$,
    \end{enumerate}
    as long as $r \geq \Omega(\eps^{-8}\log(1/\bar{\delta})^8)$ large enough.
\end{lemma}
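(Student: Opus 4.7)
The plan is a density increment algorithm on a quadruple $(W, X, Y, D)$, initialized with $W = \mb{F}_2^n$ and $X = Y = D = W$, so that $S(X, Y, D) = W \times W$ and the relative density $\alpha^* \coloneqq |A \cap S(X,Y,D)|/(\delta_X\delta_Y\delta_D|W|^2)$ equals $\alpha$. Throughout the algorithm we maintain the invariant that both $X$ and $Y$ are $(r, \eps)$-algebraically spread in their respective ambient affine subspaces (so that \cref{lem:2side} applies and $\alpha^*$ genuinely measures the density of $A$ within its container). At each iteration we check two conditions in order: \textbf{(A)} whether the function $F_1$ or $F_2$ of \cref{def:aspread} fails to be $(s{+}1, t, 4\eps^{1/2})$-combinatorially spread, and \textbf{(B)} whether the set $L = \{y \in Y : \E_{x \in W}[\mbm{1}_{A \cap S(X,Y,D)}(x,y)] < (1{-}\eps^{1/2})\alpha^*\delta_X\delta_D\}$ has size at least $\eps^{1/2}|Y|$.

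If \textbf{(A)} fires (say on the $F_1$ side, the $F_2$ case being symmetric), the witnessing indicators $g_1, g_2$ produce $Y' \subseteq Y$ with $\delta_{Y'} \ge 2^{-s-1}\delta_Y$ and $D' \subseteq D$ with $\delta_{D'} \ge 2^{-t}\delta_D$ on which the relative density of $A$ is at least $(1 + 4\eps^{1/2})\alpha^*$. If \textbf{(B)} fires, \cref{lemma:llarge} with $\wte = \eps_L = \eps^{1/2}$ gives $Y'$ with $\delta_{Y'} \ge (1-\eps^{1/2})\delta_Y$ and relative density $(1+\eps/2)\alpha^*$. In either case we then invoke \cref{thm:algopseudo} (with its $\eps$-parameter on the order of $\eps^{3/2}$) to restore $(r,\eps)$-algebraic spreadness of both $X$ and $Y$ in a new subspace $W'$, at a cost of only a $(1 - O(\eps^{3/2}))$ factor in $\alpha^*$. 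Each iteration thus yields a net multiplicative increment of $(1 + \Omega(\eps^{1/2}))$ from \textbf{(A)} or $(1 + \Omega(\eps))$ from \textbf{(B)}. Since $\alpha^* \le 1$, the totals satisfy $T_A \le O(\eps^{-1/2}\log(1/\alpha))$ and $T_B \le O(\eps^{-1}\log(1/\alpha))$, so the total iteration count is $T = T_A + T_B \le O(\eps^{-1}\log(1/\alpha))$.

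The algorithm terminates when neither \textbf{(A)} nor \textbf{(B)} fires; at this point we perform a one-shot cleanup by replacing $Y$ with $Y \setminus L$, removing at most an $\eps^{1/2}$ fraction. A short calculation using $(1-\eps^{1/2})^2 \ge 1 - 2\eps^{1/2}$ shows that this restriction (i) preserves $(r, 5\eps^{1/2})$-algebraic spreadness of $Y$ (the upper bound on densities in subspaces degrades only by $(1-\eps^{1/2})^{-1}$), (ii) preserves combinatorial spreadness of $F_1$ and $F_2$ with parameter weakened from $(s{+}1, t, 4\eps^{1/2})$ to $(s, t, 5\eps^{1/2})$ (since $\E[F_1]$ only grows under restriction, compensating the $|Y|/|Y'|$ rescaling in the spread inequality and relaxing the threshold on $\E[g_1]$ from $2^{-s-1}$ to $2^{-s}$), and (iii) enforces the row lower bound condition (6) with threshold $(1-2\eps^{1/2})\alpha^*\delta_X\delta_D$ against the new $\alpha^*$, because the $\eps^{1/2}$ gap between the algorithmic threshold $(1-\eps^{1/2})$ and the statement threshold $(1-2\eps^{1/2})$ precisely absorbs the inflation $\alpha^* \to \alpha^*(1-\eps^{1/2})^{-1}$ coming from removing $L$.

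For the parameter accounting, each iteration of type \textbf{(A)} multiplies $\delta_X\delta_Y$ by at most $2^{-s} \cdot 2^{-O(\log(1/(\eps\alpha\bar{\delta}_D))^2)}$ and $\delta_D$ by at most $2^{-t}(\eps\alpha/2)$; each iteration of type \textbf{(B)} pays only the pseudorandomization cost $2^{-O(\log(1/(\eps\alpha\bar{\delta}_D))^2)}$ on $\delta_X\delta_Y$ and $(\eps\alpha/2)$ on $\delta_D$. Summing the log-losses over $T_A + T_B$ iterations, using $T_A \le O(\eps^{-1/2}\log(1/\alpha))$ to bound the $s$- and $t$-heavy contributions, yields exactly the stated bounds $\bar{\delta}_D$ and $\bar{\delta}$; the dimension loss per iteration from \cref{thm:algopseudo}(1) sums analogously to the claimed lower bound on $\dim(W)$. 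The main technical obstacle is the cleanup step — the slight offsets between algorithmic thresholds $(s{+}1, 4\eps^{1/2})$ and statement thresholds $(s, 5\eps^{1/2})$ must be calibrated so that a single-shot removal of sparse rows simultaneously preserves algebraic and combinatorial spreadness of the restricted configuration while installing the row-lower-bound condition against the freshly inflated $\alpha^*$.
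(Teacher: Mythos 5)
Your overall architecture is essentially the paper's: iterate a combinatorial density increment coupled with the algebraic pseudorandomization of \cref{thm:algopseudo}, and defer the removal of sparse rows to a single one-shot cleanup at the end. (One cosmetic difference: the paper does not treat condition (B) as a separate increment step. It only terminates when $(s+1,t,\eps)$-spreadness holds, and then observes that if $|L|$ were large, \cref{lemma:llarge} would produce a rectangle contradicting spreadness — so (B) is subsumed into (A).)

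The genuine gap is in the cleanup calibration, and it is exactly where you flag the ``main technical obstacle.'' You run the loop against a $(s{+}1,t,4\eps^{1/2})$-spreadness test and ask the final cleanup to lose only $\eps^{1/2}$ in the parameter. That budget is not large enough, for two reasons. First, your justification ``$\E[F_1]$ only grows under restriction'' is true for $F_1 : Y\times D\to\{0,1\}$ (removing sparse rows of $Y$ raises the average) but \emph{false} for $F_2 : X\times D\to\{0,1\}$: since $F_2$ is the indicator of $A\cap S(X,Y^+,D)$ in the $(x,d)$-coordinates and $Y^+\subseteq Y$, we have $\E[F_2^+]\le\E[F_2]$, with $\E[F_2^+]\ge(1-\eps^{1/2})\E[F_2]$. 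Combining this with the $(1+4\eps^{1/2})$ from the spreadness hypothesis yields a final bound of $\frac{1+4\eps^{1/2}}{1-\eps^{1/2}} > 1+5\eps^{1/2}$, overshooting the target. Second, your claim (iii) also fails: the row bound $(1-\eps^{1/2})\alpha^*\delta_X\delta_D$ must be re-expressed against $\alpha^+\le(1+4\eps^{1/2})\alpha^*$, and $\frac{1-\eps^{1/2}}{1+4\eps^{1/2}} < 1-2\eps^{1/2}$ for all $\eps \in (0,1/64)$, so the stated conclusion (6) is not established. Both failures have the same source — the ratio $\alpha^+/\alpha^*$ can be as large as $1+O(\text{loop spreadness parameter})$, and an $\eps^{1/2}$-order parameter leaves no room. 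The paper runs the loop against $(s{+}1,t,\eps)$-spreadness (order $\eps$, not $\eps^{1/2}$); that way $\alpha^+\le(1+\eps)\alpha^*$, and the $\eps^{1/2}$-scale margins in items (1), (5), (6) comfortably absorb both the $(1-4\eps^{1/2})^{-1}$ density-ratio loss and the $(1+\eps)$ factor. If you replace your $4\eps^{1/2}$ with $\eps$ (and re-run the iteration count as $T=O(\eps^{-1}\log(1/\alpha))$, which is what the stated $\bar\delta_D,\bar\delta$ bounds are tuned to) the argument closes.
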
 

\begin{proof}
    Perform the following algorithm, beginning with $X, Y, D = \mb{F}_2^n$. As long as $A$ is not $(s+1,t,\eps)$-combinatorially spread in its container $S(X, Y, D)$, iteratively restrict to containers $S(X_1, Y, D_1)$ or $S(X, Y_1, D_1)$ to obtain a $(1+\eps)$ density increment (as is guaranteed by \cref{def:aspread}), followed by applying \Cref{thm:algopseudo} with $\eps \to 8\eps_s$ for sufficiently small $\eps_s \coloneqq \Theta(\eps)$ at each step to reestablish algebraic pseudorandomness of $X, Y$. At the end, remove all rows from $A$ that violate (6).

    We start by analyzing how $X, Y, D$ change in one iteration of the algorithm, before the row removal phase. If $A$ is not $(s+1, t, \eps)$-combinatorially spread, without loss of generality we can find subsets $X_1 \subseteq X$ and $D_1 \subseteq D$ with $|X_1| \geq 2^{-s-1}|X|$ and $|D_1| \geq 2^{-t}|D|$ where 
    $$
        \E_{x \in X, d \in D}[\mbm{1}_{X_1}(x)\mbm{1}_{D_1}(d)f(x,x+d)] \geq (1+\eps)\E_{x \in X, d \in D}[f(x,x+d)] \cdot \E_{x \in X}[\mbm{1}_{X_1}] \cdot \E_{d \in D}[\mbm{1}_{D_1}],
    $$
    or equivalently after normalizing,
    $$
        \E_{x \in X_1, d \in D_1}[f(x,x+d)] \geq (1+\eps)\E_{x \in X, d \in D}[f(x,x+d)].
    $$
    In other words, 
    $$
        \frac{|A \cap S(X_1, Y, D_1)|}{|X_1||D_1|} \geq (1+\eps)\frac{|A \cap S(X, Y, D)|}{|X||D|}.
    $$
    Let $\alpha_1 = \frac{|A \cap S(X_1, Y, D_1)|}{\delta_{X_1} \delta_Y \delta_{D_1} |W|^2}$, and note that $\alpha_1 \geq (1+\eps)\frac{|A \cap S(X, Y, D)|}{\delta_X \delta_Y \delta_D |W|^2}$ by the above inequality. Set $\delta = \delta_{X_1}\delta_Y \delta_{D_1}$, and recall that $\eps_s = \Theta(\eps)$ small enough. At this point, we will also require that $r \geq \Omega(\eps_s^{-8}(\log(1/(\eps \alpha_1 \delta_{D_1}))^2 + \log(1/\delta))^8)$ large enough. Now, we apply \Cref{thm:algopseudo} with $A \cap S(X_1, Y, D_1)$\footnote{In general, if working in a container $S(X, Y, D)$ with $X \subseteq W+x, Y \subseteq W+y, D \subseteq W+x + y$, we can shift $A$ by $(x,y)$ to work in a container which is contained in $W \times W$ as prescribed by \Cref{thm:algopseudo}, followed by shifting the new container back by $(x,y)$. The only conclusion which could be affected is (4), but this is not an issue since if $X \subseteq W$ is algebraically spread, then $X + x \subseteq W + x$ is algebraically spread with the same parameters.}, $r$, and $\eps_s$ to obtain sets 
    $X', Y', D'$ along with shifts $x, y$ so that
    \begin{enumerate}
        \item $\dim(W') \ge n - O(r\eps_s^{-2} \log(1/(\eps_s \alpha\delta))^2\log(1/(\eps_s\alpha_1\delta_{D_1})) + r\eps_s^{-2}\log(1/(\eps_s\alpha_1\delta_{D_1}))^5)$,
        \item $\frac{|D'|}{|W'|} \ge \eps_s\alpha_1\delta_{D_1}/2$,
        \item $\frac{|X'||Y'|}{|W'|^2} \geq e^{-O(\log(1/(\eps \alpha_1 \delta_{D_1}))^2)}\delta_{X_1} \delta_Y$,
        \item $X'$ and $Y'$ are $(r, \eps_s/16)$-algebraically spread in $W' + x, W' + y$, respectively, and
        \item $|A \cap S(X', Y', D')| \ge (1-5\eps_s)\alpha_1\delta_{X'}\delta_{Y'}\delta_{D'}|W'|^2$ where $\delta_{X'} = \frac{|X'|}{|W'|}$, $\delta_{Y'} = \frac{|Y'|}{|W'|}$, and $\delta_{D'} = \frac{|D'|}{|W'|}$.
    \end{enumerate} 
    
    For $\eps_s \leq O(\eps)$ small enough, (5) and our earlier lower bound on $\alpha_1$ give
    $$
        \frac{|A \cap S(X', Y', D')|}{\delta_{X'}\delta_{Y'}\delta_{D'}|W'|^2} \geq (1+\eps/2) \frac{|A \cap S(X, Y, D)|}{\delta_X \delta_Y \delta_D |W|^2}.
    $$
    If we can show that $|S(X', Y', D')|$ is roughly $\delta_{X'}\delta_{Y'}\delta_{D'}|W'|^2$, then we can use the left-hand side as a measure of progress, since the density of $A$ in $S(X', Y', D')$ cannot exceed 1. Thus, it will follow that this process ends in at most $O(\eps^{-1}\log(1/\alpha))$ many iterations, since $\alpha = \frac{|A \cap S(X, Y, D)|}{\delta_X \delta_Y \delta_D |W|^2}$. By (2) and (3), we know that 
    $$
        \delta_{X'}\delta_{Y'}\delta_{D'} \geq \eps_s \alpha_1 e^{-O(\log(1/(\eps \alpha_1 \delta_{D_1}))^2)} \delta_{X_1} \delta_Y \delta_{D_1}.
    $$
    Thus, our choice of $r$ guarantees that $r \geq \Omega(\eps_s^{-8} \log(1/(\delta_{X'}\delta_{Y'}\delta_{D'}))^8)$. In general, we have chosen $r$ sufficiently large to satisfy such an inequality for all iterations. Hence, \Cref{lem:2side} implies that $|S(X', Y', D')| \leq (1+\eps_s)\delta_{X'}\delta_{Y'}\delta_{D'}|W'|^2$. In particular,
    \[
        1 \ge \frac{|A \cap S(X', Y', D')|}{|S(X', Y', D')|} = \frac{|A \cap S(X', Y', D')|}{\delta_{X'}\delta_{Y'}\delta_{D'}|W'|^2}\cdot \frac{\delta_{X'}\delta_{Y'}\delta_{D'}|W'|^2}{|S(X', Y', D')|} \ge \frac{|A \cap S(X', Y', D')|}{\delta_{X'}\delta_{Y'}\delta_{D'}|W'|^2} \cdot \frac{1}{1+\eps_s},
    \]
    so $|A \cap S(X', Y', D')| / \delta_{X'}\delta_{Y'}\delta_{D'}|W'|^2$ cannot exceed $1+\eps_s$, and at each iteration this quantity increases by a factor of $1+\eps/2$. Thus, if we iterate this process for $O(\eps^{-1} \log(1/\alpha))$ many iterations, we obtain sets $X^*, Y^*, D^*$ along with a subspace $W^*$ and shifts $x^*, y^*$ so that $A$ is $(s+1, t, \eps)$-combinatorially spread in the container $S(X^*, Y^*, D^*)$. Additionally, $X^*, Y^*$ are $(r,\eps_s)$-algebraically spread in $W^* + x^*, W^* + y^*$ by construction, which verifies (1).
    
    We now verify the size lower bounds on $X^*, Y^*, D^*$ as well as the dimension bound on $W^*$. First, we lower bound $\delta_{D^*}$. At every iteration, the density of $D$ is decreasing by a factor of at most  $2^{-t}$ due to the combinatorial density increment, and then by an additionally factor of at most $\eps_s \alpha /2$ due to the application of \Cref{thm:algopseudo}, which gives
    $$
        \frac{|D^*|}{|W^*|} \geq (\eps_s \alpha2^{-t})^{O(\eps^{-1} \log(1/\alpha))} \geq \exp\left(-O(t\eps^{-1}\log(1/\alpha) + \eps^{-2}\log(1/\alpha)^2)\right) \reflectbox{~$\coloneqq$~} \bar{\delta}_D
    $$
    for our choice of $\eps_s$. Similarly, $X, Y$ decrease in density by a factor of at most $2^{-s-1}$ due to the combinatorial density increment, and then by an additionally factor of $e^{-O(\log(1/(\eps \alpha \bar{\delta}_D))^2)}$ due to the application of \Cref{thm:algopseudo}. Note that $\log(1/(\eps \alpha \bar{\delta}_D)) \leq O(t\eps^{-1}\log(1/\alpha) + \eps^{-2}\log(1/\alpha)^2)$, yielding
    \begin{align*}
        \frac{|X^*|}{|W^*|}, \frac{|Y^*|}{|W^*|} &\geq \left(2^{-O(s + \log(1/(\eps \alpha \bar{\delta}_D))^2)}\right)^{O(\eps^{-1}\log(1/\alpha))} \\
        &\geq \exp\left(s\eps^{-1}\log(1/\alpha) + \eps^{-1}\log(1/(\eps \alpha \bar{\delta}_D))^2\log(1/\alpha)\right) \reflectbox{~$\coloneqq$~} \bar{\delta}.
    \end{align*}      
    Finally, we can verify the dimension bound on $W^*$. At each iteration, the application of \Cref{thm:algopseudo} decreases the dimension of $W$ by \[ O(r\eps_s^{-2}\log(1/(\eps_s\alpha\delta^*))^2\log(1/\eps\alpha\bar{\delta}_D) + r\eps_s^{-2}\log(1/\eps\alpha\bar{\delta}_D)^5) \] for $\delta^* = \bar{\delta}_D\bar{\delta}^2$. This gives
    \begin{align*}
        \dim(W^*) &\geq n - O(r\eps^{-3}\log(1/\alpha)\log(1/(\eps \alpha\delta^*))^2\log(1/(\eps\alpha\bar{\delta}_D)) + r\eps^{-3}\log(1/\alpha)\log(1/(\eps\alpha\bar{\delta}_D))^5).
    \end{align*}
    Notice also that on the last iteration of the algorithm, we need spreadness for $r \geq \eps^{-8}\log(1/\delta^*)^8$ in order to control the size of $S(X^*, Y^*, D^*)$, which accounts for the lower bound on $r$ in the theorem statement.
    
    To complete the proof of the lemma, we must guarantee that $A \cap S(X^*, Y^*, D^*)$ has lower--bounded rows. To achieve this, we will simply remove all of the rows which violate (6). More formally, let $f$ be the indicator of $A \cap S(X^*, Y^*, D^*)$ and $\alpha^* = |A \cap S(X^*, Y^*, D^*)| /(\delta_{X^*} \delta_{Y^*} \delta_{D^*}|W^*|^2)$, and let $L = \{ y \in Y^* : \E_{x \in W^*}[f(x,y) < (1-\eps^{1/2})\alpha^* \delta_{X^*} \delta_{D^*}\}$. We may assume $|L| \leq 4\eps^{1/2}|Y^*|$, as otherwise \Cref{lemma:llarge} implies there exists a subset $Y^+ \subseteq Y^*$ with $\delta_{Y^+} \geq \delta_{Y^*}/2$ such that
    $$
        \frac{|A \cap S(X^*, Y^+, D^*)|}{\delta_{X^*} \delta_{Y^+} \delta_{Z^*}|W^*|^2} \geq (1+\eps) \alpha^*,
    $$ 
    which contradicts the $(s,t,\eps)$-combinatorial spreadness of $A$ in $S(X^*, Y^*, D^*)$. Thus, we define $Y^+ \coloneqq Y \setminus L$ so that $|Y^+| \geq (1-4\eps^{1/2})|Y^*|$. Define
    $$
        \alpha^+ \coloneqq \frac{|A \cap S(X^*, Y^+, D^*)|}{\delta_{X^*} \delta_{Y^+} \delta_{D^*}|W^*|^2}.
    $$
    Note that $\alpha^+ \geq \alpha^*$ since we only deleted sparse rows. Additionally, we have $\alpha^+ \leq (1+\eps)\alpha^+$ by $(s+1, t, \eps)$-combinatorial spreadness. Thus, in terms of the new density $\alpha^+$, all of the columns with $y \in Y^+$ satisfy
    $$
        \E_{x \in G}[\mbm{1}_{S(X^*, Y^+, D^*)}(x,y)] \geq (1-\eps^{1/2})\alpha^* \delta_X \delta_Y \geq (1-2\eps^{1/2})\alpha^+ \delta_X \delta_D.
    $$
    
    Consider the container $S(X^*, Y^+, D^*)$; the lemma will follow if we can show that $Y^+$ is $(r,\eps)$-algebraically spread in $W^* + y^*$, and that $A$ is $(s,t,\eps)$-combinatorially spread in this new container. Firstly, we have $\delta_{Y^+} \geq (1-4\eps^{1/2})\delta_{Y^*}$. The algebraic spreadness of $Y^*$ implies that its density on any codimension $r$ subspace of $W^*$ is bounded by $(1+\eps_s)\delta_{Y^*}$. Thus, it follows that $Y^+$ is $(r, \eps_s + 4\eps^{1/2})$-algebraically spread in $W^* + y^*$. We conclude by showing that $A$ is $(s, t, 5\eps^{1/2})$-combinatorially spread in the container $S(X^*, Y^+, D^*)$. For clarity, define $F_1 : Y^* \times D^* \to \{0,1\}$ and $F_2 : X^* \times D^* \to \{0,1\}$ with $F_1(y,d) = f(y+d,y)$ and $F_2(x,d) = f(x,x+d)$. The $(s,t,\eps)$-combinatorial spreadness of $F_2$ follows easily, since for any $X' \subseteq X^*$ with size $|X'| \geq 2^{-(s+1)}|X^*|$ and $D' \subseteq D^*$ with size $|D'| \geq 2^{-t}|D^*|$, we have
    $$
        |A \cap S(X', Y^+, D')| \leq |A \cap S(X', Y^*, D')| \leq (1+\eps) \alpha^* \delta_{X'}\delta_{Y^*} \delta_{D'} |W^*|^2.
    $$
    This combined with the fact that $\delta_{Y^+} \geq (1-4\eps^{1/2})\delta_{Y^*}$ and $\alpha^+ \geq \alpha^*$ gives
    $$
        |A \cap S(X', Y^+, D')| \leq (1+5\eps^{1/2})\alpha^+ \delta_{X'}\delta_{Y^+} \delta_{D'} |W^*|^2,
    $$
    which is equivalent to $F_2$ being $(s+1, t, 5\eps^{1/2})$-combinatorially spread. Showing spreadness of $F_1$ is only slightly more subtle. For $Y' \subseteq Y^+$ with size $|Y'| \geq 2^{-(s+1)}|Y^*|$ and $D' \subseteq D^*$ with size $|D'| \geq 2^{-t}|D^*|$, we again have
    $$
        |A \cap S(X^*, Y', D')| \leq (1+\eps) \alpha^* \delta_{X^*}\delta_{Y'} \delta_{D'} |W^*|^2.
    $$
    Since $|Y^+| \geq |Y^*|/2$, this means that $Y'$ must have density at least $2 \cdot 2^{-s-1} = 2^{-s}$ in $Y^+$, which means $F_1$ is $(s,t,5\eps^{1/2})$-combinatorially spread.
\end{proof}

\subsection{Completing the proof.} In this short section, we combine the previous pieces we have developed to establish \cref{thm:mainff}.

\begin{proof}[Proof of \cref{thm:mainff}]
Let $A \subseteq \mb{F}_2^n \times \mb{F}_2^n$ be a set of size $|A| = \alpha 4^n$. The proof proceeds by restricting $A$ to a large container $S(X, Y, D)$ where $A$ is combinatorially spread using \Cref{lem:obtaining_spreadness}. Then, we will argue that combinatorial spreadness is enough to ensure bounded grid norms using \Cref{lemma:densincr}. At that point, we can apply \Cref{lem:VNL} to show that $A$ contains many corners.

Let $\eps$ be a sufficiently small constant, and let 
$$
    r = O(\log(1/\alpha)^{156}),\quad\quad s = O(\log(1/\alpha)^{8}),\quad\quad t = O(\log(1/\alpha)^2)
$$
for large enough implicit constants. By applying \Cref{lem:obtaining_spreadness} with $r,s,t, \eps_s = O(\eps^4)$ small enough, there exists a subspace $W \subseteq \mb{F}_2^n$ along with shifts $x, y \in \mb{F}_2^n$ and sets $X \subseteq W + x$, $Y \subseteq W + y, D \subseteq W + x + y$ satisfying the following properties: 
\begin{enumerate}
    \item $X, Y$ are $(r, \eps)$-algebraically spread in $W + x, W + y$, respectively.
    \item $|D| \geq 2^{-O(\log(1/\alpha)^3)}|W|$
    \item $|X|, |Y| \geq 2^{-O(\log(1/\alpha)^{9})}|W|$
    \item $\dim(W) \geq n - O(r\log(1/\alpha)^{22}) \geq n - O(\log(1/\alpha)^{178})$.
    \item The set $A$ is $(s, t, O(\eps^2))$-combinatorially spread in the container $S(X, Y, D)$.
    \item The rows of $A \cap S(X, Y, D)$ are lower bounded; namely for all $y \in Y$,
    $$
        \E_{x \in W} [\mbm{1}_{A \cap S(X, Y, D)}(x,y)] \geq (1-O(\eps^2))\alpha^* \delta_X \delta_D
    $$
    where $\alpha^* = \frac{|A \cap S(X, Y, D)|}{\delta_X \delta_Y \delta_D |W|^2}$ and $\alpha^* \geq \alpha$.
\end{enumerate}

Let $f$ denote the indicator of $A \cap S(X, Y, D)$, and define the functions $F_1 \colon Y \times D \to \{0,1\}$ with $F_1(y,d) = f(y,y+d)$ and $F_2 \colon X \times D$ with $F_2(x,d) = f(x,x+d)$. We will argue that the combinatorial spreadness of $A$ in the container $S(X, Y, D)$ implies certain grid norms are bounded. In particular, we will show that
\[ \|F_1\|_{G(2,k)} \leq (1+\eps^2/36)\alpha^* \delta_X \quad\text{and}\quad \|F_2\|_{G(2,k)} \leq 2 \alpha^* \delta_Y\]
for $k = O(\log(\alpha^* \delta_D)/\eps^4) = O(\log(1/\alpha)^3)$ large enough. Without loss of generality, assume the first assumption does not hold. If we can verify that $X$ is sufficiently algebraically spread, then \Cref{lemma:densincr} implies that there are $Y' \subseteq Y$ and $D' \subseteq D$ with $|Y'| \ge (\eps\alpha/2)^{O(k^2\log(1/\alpha))}|Y|$ and $|D'| \ge (\eps\alpha/2)^{O(\log(1/\alpha))}|D|$, and
\[ |A \cap S(X, Y', D')| \ge (1+\Omega(\eps^2))\alpha\delta_X|Y'||D'|. \]
This, however contradicts the $(O(\log(1/\alpha)^{8}),O(\log(1/\alpha)^2),O(\eps^2))$-combinatorial spreadness of $A$ in the container $S(X, Y, D)$. 
To see why $X$ is sufficiently spread, note that we need
\[r \geq \Omega(\log(1/(\delta_X \delta_Y)) \log(1/(\delta_X \delta_Y \delta_D\gamma))^7) \geq \Omega(\log(1/\alpha)^{9} \log(1/\gamma)^7)\]
for
\[\gamma \leq (\alpha \delta_X \delta_Y)^{O(k \log(1/\alpha)^2 + k \log(1/(\alpha \delta_X \delta_Y)))} \leq (\alpha \delta_X \delta_Y)^{O(\log(1/\alpha)^{12})} \leq 2^{-O(\log(1/\alpha)^{21})}.\]
Our choice of $r$ suffices, since
\[r \geq \Omega(\log(1/\alpha)^{9}\log(1/\gamma)^7) \geq \Omega(\log(1/\alpha)^{156}).\]
Thus, the conditions of \Cref{lem:VNL} are met, which implies that
\[\Phi(f, f, f) \geq (1-4\eps)\alpha^3\delta_X^2\delta_Y^2\delta_D^2.\]
This implies that $A$ contains at least 
\[(1-4\eps)\alpha^3\delta_X^2\delta_Y^2\delta_D^2 |W|^3 \geq 2^{-O(\log(1/\alpha)^9)}|W|^3 \geq 2^{-O(\log(1/\alpha)^{178})} |\mb{F}_2^n|^3\]
many corners. 
\end{proof}

\section{Bohr Sets, Algebraic Spreadness, and Pseudorandomization}
\label{sec:bohr-prelim}

The vast majority of the remainder of the body of the paper is devoted to establishing \cref{thm:main}, an improved corners bound over general abelian groups. The proof in many regards closely follows that in the finite field model setting, but as is standard one is forced to work with Bohr sets throughout the analysis. 
\subsection{Bohr sets}
We now recall various standard material regarding Bohr sets. The influence of Bohr sets in additive combinatorics stems from seminal work of Bourgain \cite{Bou99}. For a textbook treatment, we refer the reader to \cite[Section~4.4]{TV10}. We first define a Bohr set of a finite abelian group $G$.

\begin{definition}[Bohr set]\label{def:Bohr-set}
Let $\eps \in \mb{R}^{+}$, $G$ be a finite abelian group, and $\Theta = (\Theta_1, \dots, \Theta_d)$ where $\Theta_i \in \wh{G}$ are additive homomorphisms from $G$ to $\mb{R}/\mb{Z}$. We define the \emph{Bohr set}
\[\Lambda = \Lambda_{\Theta,\eps} = \bigcap_{i=1}^{d}\Big\{x \in G:\norm{\Theta_i(x)}_{\mb{R}/\mb{Z}}\le \eps \Big\}, \]
where $\|x\|_{\mb{R}/\mb{Z}} = \min_{z \in \mb{Z}} |x-z|$.

For any real number $c>0$, we define the dilated Bohr set 
\[c\Lambda_{\Theta,\eps} = \Lambda_{\Theta,c\eps}.\]
\end{definition}

We will refer to $d$ as the dimension of the Bohr set and $\eps$ as the radius. We denote the radius of a Bohr set $B$ as $\nu(B)$.

We first require that the size of a Bohr set is lower bounded in terms of its radius and dimension. This appears as \cite[Lemma~4.20]{TV10}.
\begin{lemma}\label{lem:size-bounded}
Let $\Lambda = \Lambda_{\Theta,\eps}$ be a Bohr set of dimension $d$ and radius $\eps$. Then $|\Lambda|\ge \eps^{d}|G|$.
\end{lemma}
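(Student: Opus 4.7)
The plan is to prove this via an averaging argument over shifts of a box in the torus, which yields the bound $\eps^d |G|$ cleanly without rounding losses. First I would introduce the group homomorphism $\Phi \colon G \to (\mb{R}/\mb{Z})^d$ defined by $\Phi(x) = (\Theta_1(x), \dots, \Theta_d(x))$, and observe that by definition $\Lambda = \Phi^{-1}\bigl([-\eps,\eps]^d\bigr)$, where coordinates are interpreted in $\mb{R}/\mb{Z}$. The bound is trivial if $\eps \ge 1$ (then $\Lambda = G$), so we may assume $\eps < 1$.

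The key step is to find a shift of the half-box $[0,\eps]^d$ whose preimage under $\Phi$ is large. For $t \in (\mb{R}/\mb{Z})^d$ set
\[
F(t) = \bigl|\{x \in G : \Phi(x) - t \in [0,\eps]^d\}\bigr|.
\]
Integrating $F$ against the normalized Haar measure on the torus and swapping sums gives
\[
\int_{(\mb{R}/\mb{Z})^d} F(t)\, dt \;=\; \sum_{x \in G} \int_{(\mb{R}/\mb{Z})^d} \mbm{1}\bigl[\Phi(x) - t \in [0,\eps]^d\bigr]\, dt \;=\; |G| \cdot \eps^d,
\]
since for each fixed $x$ the set $\{t : \Phi(x) - t \in [0,\eps]^d\}$ is a translate of $[0,\eps]^d$, hence has Haar measure $\eps^d$. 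Therefore some $t_0 \in (\mb{R}/\mb{Z})^d$ satisfies $F(t_0) \ge \eps^d |G|$.

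Finally, I would fix any $y_0 \in G$ with $\Phi(y_0) \in t_0 + [0,\eps]^d$ and observe that for every other $x$ with $\Phi(x) \in t_0 + [0,\eps]^d$, each coordinate of $\Phi(x - y_0) = \Phi(x) - \Phi(y_0)$ lies in the difference set $[0,\eps] - [0,\eps] \subseteq [-\eps,\eps]$ modulo $1$, so $x - y_0 \in \Lambda$. Since $x \mapsto x - y_0$ is injective on $G$, we conclude $|\Lambda| \ge F(t_0) \ge \eps^d |G|$. There is no real obstacle; the averaging over shifts (as opposed to a fixed partition of the torus) is precisely what avoids the $\lceil 1/\eps\rceil^d$ versus $(1/\eps)^d$ rounding loss that a naive pigeonhole would incur.
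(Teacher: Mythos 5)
Your proof is correct, and it is essentially the standard averaging-over-translates argument for this fact; the paper itself does not reprove the lemma but cites it directly from the literature (it appears as \cite[Lemma~4.20]{TV10}), where the same idea—integrate the count of $\Phi$-preimages of a translated box over the torus, pigeonhole, then difference—is used. One cosmetic point: the bound $|\Lambda|\ge\eps^d|G|$ only makes sense for $\eps\le 1$ (for $\eps>1$ one has $\eps^d|G|>|G|\ge|\Lambda|$), so the reduction should read ``assume $\eps\le 1$; the case $\eps=1$ is trivial since then $\Lambda=G$,'' after which your argument for $\eps<1$ goes through exactly as written.
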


For the vast majority of our analysis we will operate with regular Bohr sets as introduced by Bourgain \cite{Bou99}. Heuristically, regular Bohr sets are those such that altering the radius parameter causes the size of the underlying Bohr set to vary in a predictable manner. 
\begin{definition}[Regular]\label{def:reg-Bohr}
A Bohr set $\Lambda = \Lambda_{\Theta,\eps}$ of dimension $d$ is \emph{regular} if for all $|c|\le 1/(100d)$, we have that 
\[1 - 100d|c|\le \frac{|(1+c)\Lambda|}{|\Lambda|}\le 1 + 100d|c|.\]
\end{definition}

A crucial feature of regular Bohr sets is that they may be constructed easily (at worst at the cost of passing to radius half the size). This was established by Bourgain \cite{Bou99} (see also \cite[Lemma~4.25]{TV10}).
\begin{lemma}\label{lem:regular}
For any Bohr set $\Lambda$, there exists $\alpha\in [1/2,1]$ such that $\alpha \Lambda$ is regular.
\end{lemma}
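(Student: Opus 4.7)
The strategy is to find $\alpha \in [1/2, 1]$ at which the nondecreasing function $h(t) \coloneqq \log_2 |t\Lambda|$ is locally $O(d)$-Lipschitz; exponentiating such a Lipschitz bound yields $1 - 100d|c| \leq |(1+c)\alpha\Lambda|/|\alpha\Lambda| \leq 1 + 100d|c|$ for all $|c| \leq 1/(100d)$, which is exactly the regularity of $\alpha\Lambda$.

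First I would bound the total variation of $h$ on $[1/4, 2]$, showing $h(2) - h(1/4) = O(d)$. This follows from a direct covering: partition each torus coordinate $\mb{R}/\mb{Z}$ into arcs of length $\eps/10$ and group the elements of $2\Lambda$ according to which arc they fall into under each of the $d$ frequencies $\Theta_i$. Any two elements in the same bucket differ by a point of $(1/4)\Lambda$, so $2\Lambda$ is covered by at most $40^d$ translates of $(1/4)\Lambda$. This gives $|2\Lambda|/|(1/4)\Lambda| \le 40^d$, hence $h(2) - h(1/4) = O(d)$.

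Second I would extract a ``good'' $\alpha \in [1/2, 1]$ via a Vitali covering argument. Call $\alpha$ \emph{bad} if there is some nonzero $c_\alpha$ with $|c_\alpha| \leq 1/(100d)$ witnessing $|h(\alpha(1+c_\alpha)) - h(\alpha)| > 100d|c_\alpha|$, and associate to each bad $\alpha$ the interval between $\alpha$ and $\alpha(1+c_\alpha)$. Vitali's covering theorem then extracts a countable disjoint subfamily whose $5$-fold dilates still cover the bad set. The $h$-increases along these disjoint intervals sum to at most $h(2) - h(1/4) = O(d)$, while each individual increase exceeds $100d|c_{\alpha_j}|$; this forces $\sum_j |c_{\alpha_j}| \lesssim 1/100$. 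Each interval has length at most $2|c_{\alpha_j}|$, and multiplying by the Vitali factor $5$ bounds the measure of the bad set by an absolute constant which one can arrange to be strictly below $1/2$. Hence a good $\alpha \in [1/2, 1]$ exists, and its local Lipschitz bound exponentiates to the desired regularity statement.

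The main subtlety is the multi-scale nature of the regularity condition: because $|(1+c)\alpha\Lambda|/|\alpha\Lambda|$ must be close to $1$ \emph{linearly} in $|c|$ for all $|c|\le 1/(100d)$ (not just at the endpoint scale $|c| = 1/(100d)$), a single-scale pigeonhole controlling only $|(1+1/(100d))\alpha\Lambda|/|\alpha\Lambda|$ cannot suffice; one genuinely needs to rule out large multiplicative jumps at every sub-scale simultaneously. Once this multi-scale step is handled via Vitali, the remaining constant-tracking to ensure the bad set has measure strictly below $1/2$ is routine.
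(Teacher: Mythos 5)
Your proof is correct, and it is essentially the standard argument for Bourgain's regularity lemma. The paper does not prove this statement — it cites Bourgain \cite{Bou99} and Tao--Vu \cite[Lemma~4.25]{TV10} — so there is no internal proof to compare against, but the structure you use (bound the total variation of $h(t) = \log_2|t\Lambda|$ on $[1/4,2]$ by $O(d)$ via a covering of $2\Lambda$ by translates of $(1/4)\Lambda$, then bound the measure of the bad set by a covering-lemma argument over the associated intervals $I_\alpha$) is the one found in those references and their descendants. Two small points worth flagging. First, what you invoke is the Vitali covering \emph{lemma} (extract a countable disjoint subfamily whose $5$-dilates cover the union), not the Vitali covering \emph{theorem}: the latter requires a fine cover at arbitrarily small scales, which a single interval $I_\alpha$ per bad $\alpha$ does not supply, whereas the lemma applies directly and is exactly what is needed. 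Second, it is worth confirming that the constants actually close, since the measure bound must come out strictly below $1/2$: with arcs of length $\eps/10$ you cover $2\Lambda$ by at most $40^d$ translates of $(1/10)\Lambda \subseteq (1/4)\Lambda$, so $h(2)-h(1/4) \le d\log_2 40 < 6d$; the disjoint intervals then give $\sum_j |c_{\alpha_j}| < 6/100$, each $|I_{\alpha_j}| \le |c_{\alpha_j}|$, and the $5$-dilate factor yields outer measure of the bad set at most $30/100 < 1/2$, so a good $\alpha \in [1/2,1]$ exists. (Also note the translation back from the Lipschitz bound on $h$ uses $2^{x} \le 1+x$ and $2^{-x}\ge 1-x$ for $x\in[0,1]$, valid since $100d|c|\le 1$; this is the reason $\log_2$ rather than $\ln$ is a convenient normalization.) Your diagnosis that a single-scale pigeonhole controlling only the endpoint ratio $|(1+1/(100d))\alpha\Lambda|/|\alpha\Lambda|$ is insufficient is also correct: $h$ can have a jump at $\alpha$ that is invisible at the endpoint scale but violates the linear-in-$|c|$ requirement at small sub-scales, which is precisely why a multi-scale covering argument is necessary.
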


Furthermore we additionally have that regular Bohr sets are essentially ``shift--invariant'' when shifted by elements in a smaller Bohr set. We will use such inequalities repeatedly and without substantial comment. 
\begin{lemma}\label{lem:shift-invar}
Let $f$ be a $1$-bounded function and $\Lambda$ be a regular Bohr set of dimension $d$. If $|c|\le 1/(100d)$ and $n'\in c\Lambda$, then 
\[\E_{n\in \Lambda}f(n) = \E_{n\in \Lambda}f(n + n') + O(cd).\]
\end{lemma}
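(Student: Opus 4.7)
The plan is to reduce this to a symmetric difference estimate between $\Lambda$ and its translate $\Lambda + n'$, and then control that symmetric difference using the regularity hypothesis. First I would show the containments
\[
(1 - c)\Lambda \subseteq \Lambda + n' \subseteq (1 + c)\Lambda.
\]
For the upper containment, if $x \in \Lambda$ and $n' \in c\Lambda$, then for each frequency $\Theta_i$ we have $\|\Theta_i(x)\|_{\mathbb{R}/\mathbb{Z}} \le \nu(\Lambda)$ and $\|\Theta_i(n')\|_{\mathbb{R}/\mathbb{Z}} \le c\,\nu(\Lambda)$, hence by the triangle inequality $\|\Theta_i(x + n')\|_{\mathbb{R}/\mathbb{Z}} \le (1+c)\nu(\Lambda)$, i.e.\ $x + n' \in (1+c)\Lambda$. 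The lower containment follows symmetrically by writing any $y \in (1-c)\Lambda$ as $(y - n') + n'$ and checking that $y - n' \in \Lambda$.

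Next I would use regularity of $\Lambda$ (\Cref{def:reg-Bohr}), which since $|c| \le 1/(100 d)$ gives
\[
|(1+c)\Lambda| \le (1 + 100 d |c|)|\Lambda|, \qquad |(1-c)\Lambda| \ge (1 - 100 d |c|)|\Lambda|.
\]
Combined with the sandwich above this yields
\[
|\Lambda \,\triangle\, (\Lambda + n')| \le |(1+c)\Lambda \setminus (1-c)\Lambda| \le 200 d |c|\,|\Lambda| = O(cd)\,|\Lambda|.
\]

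Finally I would convert this into the expectation bound. Using that $|\Lambda| = |\Lambda + n'|$ (translation is a bijection) and that $f$ is $1$-bounded,
\[
\left|\E_{n\in\Lambda} f(n) - \E_{n\in\Lambda} f(n+n')\right| = \frac{1}{|\Lambda|}\left|\sum_{m \in \Lambda} f(m) - \sum_{m \in \Lambda + n'} f(m)\right| \le \frac{|\Lambda \,\triangle\, (\Lambda + n')|}{|\Lambda|} = O(cd),
\]
which is the desired conclusion. There is no real obstacle here; the only subtlety is being careful with the two-sided sandwich to make the symmetric difference fit inside the annulus $(1+c)\Lambda \setminus (1-c)\Lambda$ that regularity directly controls.
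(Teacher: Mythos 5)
Your proof is correct and follows essentially the same route as the paper's one-line argument: both reduce the difference of expectations to the normalized symmetric difference $|\Lambda \triangle (\Lambda + n')|/|\Lambda|$ and control it by fitting the translate into an annulus $(1\pm c)\Lambda$ whose size is bounded by regularity. The only cosmetic difference is that you bound the symmetric difference using the full two-sided sandwich $(1-c)\Lambda \subseteq \Lambda + n' \subseteq (1+c)\Lambda$, whereas the paper equivalently uses $2\,|(\Lambda+n')\setminus\Lambda| \le 2\,|(1+c)\Lambda \setminus \Lambda|$, exploiting $|\Lambda| = |\Lambda + n'|$ to get the factor of $2$.
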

\begin{proof}
The above claim follows as
\[
\big|\E_{n\in \Lambda}f(n) - \E_{n\in \Lambda}f(n + n')\big|\le 2\E_{n\in \Lambda}\mbm{1}[n+n'\in (1+c)\Lambda \setminus \Lambda] = O(cd). \qedhere\]
\end{proof}

To simplify the presentation in the remainder of the paper, we define a $(d, \eta)$-small sequence of Bohr sets.
\begin{definition}[Small \& exact sequences]
\label{def:etasmall}
We say that $B_1, B_2, \dots$ is a \emph{$(d, \eta)$-small} sequence of Bohr sets if all $B_i$ have the same set of $d$ frequencies (so that $\rank(B_i) = d$), are all regular, and $\nu(B_{i+1})/\nu(B_i) \le \eta$ for $i = 1, 2, \dots$.
We say that the sequence is \emph{$(d, \eta)$-exact} if additionally $\nu(B_{i+1})/\nu(B_i) \in [\eta/2, \eta]$ for all $i = 1, 2, \dots$.
\end{definition}
Most of our lemma statements will involve a $(d, \eta)$-small sequence of Bohr sets as input hypotheses, and may output a $(d', \eta)$-exact sequence of Bohr sets. By \cref{lem:regular}, any Bohr set $B_1$ with $\rank(B_1) = d$ can be extended to a $(d, \eta)$-exact sequence of Bohr sets of arbitrary length.

We next require certain specialized Gowers grid norms which will be used throughout the analysis. These were introduced in the work of Mili\'cevi\'c \cite{Mil24}.
\begin{definition}[Gowers grid norm]
\label{def:gow-high}
Fix integers $k,\ell\ge 1$. Consider a triplet of Bohr sets $B_1$, $B_2$, and $B_3$ and a function $f:G\to \mb{R}$. We define the \emph{$(B_1, B_2, B_3, k, \ell)$-Gowers grid norm} to be
\[\snorm{f}_{(B_1,B_2,B_3, k,\ell)}^{k\ell} = \E_{\substack{x\sim B_1\\y_1,\ldots,y_k\sim B_2\\z_1,\ldots,z_\ell\sim B_3}}\prod_{\substack{i\in [k]\\j\in [\ell]}}f(x+y_i+z_j).\]
When the Bohr sets are clear from context, we will refer to this quantity more simply as the $(k,\ell)$-Gowers grid norm.
\end{definition}

Often, we will abuse terminology and refer to the Gowers grid norm of a set $X$ to mean the Gowers grid norm of its indicator function $\mbm{1}_X$.

We will require that the norms increase under passing to finer Bohr sets. The key inequality for this result will be the following Gowers--H\"{o}lder inequality which appears as \cite[Lemma~2.2]{FHHK24}.
\begin{lemma}\label{lem:gow-hold}
Fix integers $k,\ell\ge 1$ and finite sets $X,Y$. Consider functions $f_{ij}:X\times Y\to \mb{R}^{\ge 0}$. Then we have that
\[\E_{\substack{x_1,\dots,x_k\in X\\y_1,\dots,y_{\ell}\in Y}} \prod_{\substack{i\in [k]\\j\in [\ell]}}f_{ij}(x_i,y_j)\le \prod_{\substack{i \in [k] \\ j \in [\ell]}} \|f_{ij}\|_{G(k,\ell)} = \prod_{\substack{i\in [k]\\j\in [\ell]}}\Bigg(\E_{\substack{x_1,\dots,x_k\in X\\y_1,\dots,y_{\ell}\in Y}}\prod_{\substack{i'\in [k]\\j'\in [\ell]}}f_{ij}(x_{i'},y_{j'})\Bigg)^{1/(k\ell)}.\]
\end{lemma}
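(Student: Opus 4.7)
The inequality is a bipartite analog of the classical Gowers--Cauchy--Schwarz estimate. My plan is to derive it from two nested applications of H\"older's inequality (with exponents $k$ and $\ell$, respectively) together with the standard variable duplication trick for the resulting $k$-th and $\ell$-th powers.

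First, I would rewrite the left hand side by conditioning on $(y_1,\dots,y_\ell)$ and observing that the average over $(x_1,\dots,x_k)$ factors along rows:
\[
\E_{x_1,\dots,x_k, y_1,\dots,y_\ell} \prod_{i,j} f_{ij}(x_i, y_j) = \E_{y_1,\dots,y_\ell} \prod_{i=1}^{k} F_i(y_1,\dots,y_\ell),
\]
where $F_i(y_1,\dots,y_\ell) \coloneqq \E_{x \in X} \prod_{j=1}^{\ell} f_{ij}(x, y_j)$. Applying H\"older's inequality in $(y_1,\dots,y_\ell)$ with $k$ factors and common exponent $k$ gives
\[
\E_{y_1,\dots,y_\ell} \prod_{i=1}^{k} F_i \le \prod_{i=1}^{k} \bigl( \E_{y_1,\dots,y_\ell} F_i^{\,k} \bigr)^{1/k}.
\]

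For a fixed $i$, expanding the $k$-th power by introducing fresh variables $u_1,\dots,u_k \in X$ yields
\[
\E_{y_1,\dots,y_\ell} F_i^{\,k} = \E_{u_1,\dots,u_k, y_1,\dots,y_\ell} \prod_{p=1}^{k}\prod_{j=1}^{\ell} f_{ij}(u_p, y_j).
\]
Since distinct $y_j$'s appear only in disjoint factors, this rearranges to $\E_{u_1,\dots,u_k} \prod_{j=1}^{\ell} G_{ij}(u_1,\dots,u_k)$ with $G_{ij}(u_1,\dots,u_k) \coloneqq \E_{y \in Y}\prod_{p=1}^{k} f_{ij}(u_p, y)$. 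A second application of H\"older, this time in $(u_1,\dots,u_k)$ with $\ell$ factors and exponent $\ell$, bounds this by $\prod_{j=1}^{\ell} \bigl(\E_{u_1,\dots,u_k} G_{ij}^{\,\ell}\bigr)^{1/\ell}$, and a second duplication via fresh $v_1,\dots,v_\ell \in Y$ identifies
\[
\E_{u_1,\dots,u_k} G_{ij}^{\,\ell} = \E_{u_1,\dots,u_k,\, v_1,\dots,v_\ell}\prod_{p,q} f_{ij}(u_p, v_q) = \|f_{ij}\|_{G(k,\ell)}^{k\ell}.
\]

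Unwinding the two H\"older steps shows $(\E F_i^{\,k})^{1/k} \le \prod_j \|f_{ij}\|_{G(k,\ell)}$ for each row $i$, and multiplying over $i$ delivers the claimed bound $\prod_{i,j}\|f_{ij}\|_{G(k,\ell)}$. The argument is purely formal bookkeeping, so there is no real obstacle; the only thing to watch is that the H\"older exponents ($k$ and $\ell$) exactly match the number of factors so that duplication regenerates precisely the complete $k \times \ell$ grid defining $\|f_{ij}\|_{G(k,\ell)}$.
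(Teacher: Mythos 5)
Your proof is correct and follows essentially the same strategy as the paper's: two nested applications of H\"older's inequality combined with variable duplication to regenerate the full $k\times\ell$ grid. The only difference is the order — you first factor along rows and apply H\"older with exponent $k$, then with exponent $\ell$, while the paper does the column-H\"older (exponent $\ell$) first and then the row-H\"older (exponent $k$); by the $k\leftrightarrow\ell$ symmetry of the statement these are mirror images of each other.
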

\begin{proof}
Observe via repeated application of rearranging and H\"{o}lder's inequality that
\begin{align*}
\E_{\substack{x_1,\dots,x_k\in X\\y_1,\dots,y_{\ell}\in Y}} \prod_{\substack{i\in [k]\\j\in [\ell]}}f_{ij}(x_i,y_j) &= \E_{\substack{x_1,\dots,x_k\in X}}\prod_{j\in [\ell]}\Big(\E_{y_1,\dots,y_{\ell}\in Y}\prod_{i\in [k]}f_{ij}(x_i,y_j)\Big) \\
&\le \prod_{j\in [\ell]}\Big(\E_{\substack{x_1,\dots,x_k\in X}}\Big(\E_{y_1,\dots,y_{\ell}\in Y}\prod_{i\in [k]}f_{ij}(x_i,y_j)\Big)^{\ell}\Big)^{1/\ell}\\
&=\prod_{j\in [\ell]}\Big(\E_{\substack{x_1,\dots,x_k\in X\\y_1,\dots,y_{\ell}\in Y}}\prod_{\substack{i\in [k]\\j'\in [\ell]}}f_{ij}(x_i,y_{j'})\Big)^{1/\ell} \\
&= \prod_{j\in [\ell]}\Big(\E_{y_1,\dots,y_{\ell}\in Y}\prod_{i\in [k]}\E_{x_1,\dots,x_k\in X}\prod_{j'\in [\ell]}f_{ij}(x_i,y_{j'})\Big)^{1/\ell}\\
&\le \prod_{\substack{i\in [k]\\j\in [\ell]}}\Big(\E_{y_1,\dots,y_{\ell}\in Y}\Big(\E_{x_1,\dots,x_k\in X}\prod_{j'\in [\ell]}f_{ij}(x_i,y_{j'})\Big)^{k}\Big)^{1/(k\ell)} \\
&= \prod_{\substack{i\in [k]\\j\in [\ell]}}\Bigg(\E_{\substack{x_1,\dots,x_k\in X\\y_1,\dots,y_{\ell}\in Y}}\prod_{\substack{i'\in [k]\\j'\in [\ell]}}f_{ij}(x_{i'},y_{j'})\Bigg)^{1/(k\ell)}. \qedhere
\end{align*}
\end{proof}

The following approximate monotonicity of Gowers grid norms in \cref{def:gow-high} will be crucial in our analysis.
\begin{lemma}\label{lem:monotone}
Let $\eta\in (0,1/(100d))$ and fix integers $k,\ell\ge 1$. Let $B_1, B_2, \dots$ be a $(d,\eta)$-small sequence of Bohr sets. Then for a function $f:G\to [0,1]$ we have that 
\[\snorm{f}_{(B_1,B_2,B_3,k,\ell)}\le \snorm{f}_{(B_1,B_4,B_5,k,\ell)} + O((\eta d)^{1/(k\ell)}).\]
\end{lemma}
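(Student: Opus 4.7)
The plan is to first establish the inequality
\[
\snorm{f}_{(B_1,B_2,B_3,k,\ell)}^{k\ell} \le \snorm{f}_{(B_1,B_4,B_5,k,\ell)}^{k\ell} + O((k+\ell)\eta d),
\]
and then take $(k\ell)$-th roots using $(a+b)^{1/(k\ell)} \le a^{1/(k\ell)} + b^{1/(k\ell)}$, absorbing the harmless factor $(k+\ell)^{1/(k\ell)} = O(1)$ into the implicit constant of the stated error.

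The first step is to use regularity to introduce auxiliary inner shifts. Because $\nu(B_4) \le \eta^2 \nu(B_2)$ and $\eta^2 \le \eta \le 1/(100d)$, applying \cref{lem:shift-invar} coordinate-by-coordinate to the $1$-bounded function $y_i \mapsto \prod_j f(x+y_i+z_j)$ (other variables held fixed) and averaging over $y_i' \sim B_4$ shows that replacing each $y_i$ by $y_i + y_i'$ costs only $O(\eta^2 d)$. The analogous replacement of each $z_j$ by $z_j + z_j'$ with $z_j' \sim B_5 \subseteq \eta^2 B_3$ gives
\[
\snorm{f}_{(B_1,B_2,B_3,k,\ell)}^{k\ell} = \E_{x,y,z,y',z'}\prod_{i,j}f(x+y_i+y_i'+z_j+z_j') + O((k+\ell)\eta^2 d),
\]
where $x \sim B_1$, $y \in B_2^k$, $y' \in B_4^k$, $z \in B_3^\ell$, $z' \in B_5^\ell$.

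Second, with $x,y,z$ held fixed, I apply the Gowers--H\"{o}lder inequality (\cref{lem:gow-hold}) to the inner expectation over $y', z'$, using the functions $g_{ij}(y_i', z_j') \coloneqq f((x+y_i+z_j) + y_i' + z_j')$ on $B_4 \times B_5$. A direct calculation gives $\snorm{g_{ij}}_{G(k,\ell)}^{k\ell} = G(x+y_i+z_j)$, where
\[
G(w) \coloneqq \E_{u_1,\ldots,u_k \sim B_4,\, v_1,\ldots,v_\ell \sim B_5}\prod_{i',j'} f(w+u_{i'}+v_{j'}),
\]
so the inner expectation is at most $\prod_{i,j} G(x+y_i+z_j)^{1/(k\ell)}$. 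Applying AM--GM bounds this geometric mean by $\frac{1}{k\ell}\sum_{i,j} G(x+y_i+z_j)$, and then taking $\E$ over $y \in B_2^k$, $z \in B_3^\ell$ collapses each of the identically distributed summands to a single $\E_{y\sim B_2,\, z\sim B_3}[G(x+y+z)]$.

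Finally, since $B_2 \subseteq \eta B_1$ and $B_3 \subseteq \eta^2 B_1$, I apply \cref{lem:shift-invar} twice on the variable $x \sim B_1$: first removing the shift by $y$ (with $c = \eta \le 1/(100d)$, cost $O(\eta d)$), then removing the shift by $z$ (with $c = \eta^2$, cost $O(\eta^2 d)$). The combined error is $O(\eta d)$, and
\[
\E_{x\sim B_1,\, y\sim B_2,\, z\sim B_3}[G(x+y+z)] = \E_{x \sim B_1}[G(x)] + O(\eta d) = \snorm{f}_{(B_1,B_4,B_5,k,\ell)}^{k\ell} + O(\eta d),
\]
where the last equality is the definition of $G$. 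Chaining the three steps yields the claimed $k\ell$-th power inequality. The main bookkeeping hazard is ensuring every applied shift lies within the regularity window $|c| \le 1/(100d)$ of \cref{def:reg-Bohr}; a single combined shift by $y+z$ could exceed this window, so the argument must separate the shifts as above, which is the main point where the hypothesis $\eta < 1/(100d)$ is used.
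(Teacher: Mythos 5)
Your proof is correct and follows essentially the same route as the paper's: introduce auxiliary shifts $y_i', z_j'$ by regularity, apply the Gowers--H\"older inequality (\cref{lem:gow-hold}) to the inner expectation to obtain the geometric mean of the $G(x+y_i+z_j)$, collapse the geometric mean, and finally shift-invariance removes $y$ and $z$. The only differences are cosmetic: you use AM--GM to pass from the geometric mean $\prod_{ij}G(x+y_i+z_j)^{1/(k\ell)}$ to a single average while the paper applies H\"older's inequality once more to pull the product of $(k\ell)$-th powers outside the outer expectation; after noting the summands are identically distributed both land at the same quantity $\E_{y\sim B_2,z\sim B_3}[G(x+y+z)]$, so this is a stylistic rather than structural choice. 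You are also slightly more careful than the paper in the last shift step, peeling off the $y\in B_2\subseteq\eta B_1$ shift and the $z\in B_3\subseteq\eta^2 B_1$ shift separately rather than shifting by $y+z$ at once, which guarantees the dilation parameter stays within the $|c|\le 1/(100d)$ window of \cref{def:reg-Bohr} for any $\eta<1/(100d)$; the paper's one-shot shift by $y+z$ implicitly needs the harmless observation that the regularity window can be taken a constant factor larger, so your version is the tighter bookkeeping. Both versions yield the claimed error after taking $(k\ell)$-th roots because $(k\ell)^{1/(k\ell)}$ and $(k+\ell)^{1/(k\ell)}$ are both $O(1)$.
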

\begin{proof}
Observe that it suffices to establish that 
\[\snorm{f}_{(B_1,B_2,B_3,k,\ell)}^{k\ell}\le \snorm{f}_{(B_1,B_4,B_5,k,\ell)}^{k\ell} + O(k\ell \cdot \eta d).\]
Next using \cref{lem:shift-invar}, we have that 
\begin{align*}
\snorm{f}_{(B_1,B_2,B_3,k,\ell)}^{k\ell} &= \E_{\substack{x\sim B_1\\y_1,\ldots,y_k\sim B_2\\z_1,\ldots,z_{\ell}\sim B_3}}\prod_{\substack{i\in [k]\\j\in [\ell]}}f(x + y_i + z_j) \\
&\le \E_{\substack{x\sim B_1\\y_1,\ldots,y_k\sim B_2\\z_1,\ldots,z_{\ell}\sim B_3\\y_1',\ldots,y_k'\sim B_4\\z_1',\ldots,z_{\ell}'\sim B_5}}\prod_{\substack{i\in [k]\\j\in [\ell]}}f(x + y_i + z_j + y_{i}' + z_{j}') + O(k\ell \cdot \eta d).
\end{align*}
Taking the expectation on $x,y_i,z_j$ outside and defining $f_{i,j}(y,z) = f(x+y_i + z_j + y + z)$ we observe that the final quantity is exactly as in \cref{lem:gow-hold}. Therefore we have that 
\begin{align*}
\E_{\substack{x\sim B_1\\y_1,\ldots,y_k\sim B_2\\z_1,\ldots,z_{\ell}\sim B_3\\y_1',\ldots,y_k'\sim B_4\\z_1',\ldots,z_{\ell}'\sim B_5}}&\prod_{\substack{i\in [k]\\j\in [\ell]}}f(x + y_i + z_j + y_{i}' + z_{j}')\le \E_{\substack{x\sim B_1\\y_1,\ldots,y_k\sim B_2\\z_1,\ldots,z_{\ell}\sim B_3}} \prod_{\substack{i\in [k]\\j\in [\ell]}}\Big(\E_{\substack{y_1',\ldots,y_k'\sim B_4\\z_1',\ldots,z_{\ell}'\sim B_5}}\prod_{\substack{i'\in [k]\\j'\in [\ell]}}f(x+y_i+z_j + y_{i'}' + z_{j'}')\Big)^{1/(k\ell)}\\
&\le \prod_{\substack{i\in [k]\\j\in [\ell]}}\Big(\E_{\substack{x\sim B_1\\y_1,\ldots,y_k\sim B_2\\z_1,\ldots,z_{\ell}\sim B_3}} \E_{\substack{y_1',\ldots,y_k'\sim B_4\\z_1',\ldots,z_{\ell}'\sim B_5}}\prod_{\substack{i'\in [k]\\j'\in [\ell]}}f(x+y_i+z_j + y_{i'}' + z_{j'}')\Big)^{1/(k\ell)}\\
&\le \prod_{\substack{i\in [k]\\j\in [\ell]}}\Big(\E_{x\sim B_1} \E_{\substack{y_1',\ldots,y_k'\sim B_4\\z_1',\ldots,z_{\ell}'\sim B_5}}\prod_{\substack{i'\in [k]\\j'\in [\ell]}}f(x+ y_{i'}' + z_{j'}') + O(k\ell \cdot \eta d )\Big)^{1/(k\ell)} \\
&= \E_{x\sim B_1} \E_{\substack{y_1',\ldots,y_k'\sim B_4\\z_1',\ldots,z_{\ell}'\sim B_5}}\prod_{\substack{i'\in [k]\\j'\in [\ell]}}f(x+ y_{i'}' + z_{j'}') + O(k\ell \cdot \eta d).
\end{align*}
Here we first applied \cref{lem:gow-hold}, then H\"{o}lder's inequality, and then \cref{lem:shift-invar}.
\end{proof}

The next several lemmas are somewhat analogous to the ones in the finite field section bounding the size of the container set $S(X, Y, D) = \{(x,y): x \in X, y \in Y, x+y \in D\}$ under various spreadness conditions on $X, Y, D$ (see \cref{lem:2side,lem:relatesxyd}). These lemmas are substantially more complicated in the setting of general abelian groups. To explain why, we first briefly describe the setup. We have Bohr sets $B_1, B_2$ with the same frequencies and $\nu(B_2)/\nu(B_1) \le \eta$. We will assume that $X, D \subseteq B_1$ and $Y \subseteq B_2$.

We first prove that if one of $X$ or $D$ has a bounded Gowers grid norm, then the size of the container $S(X, Y, D)$ is upper bounded. Below, one should think of $g$ as the indicator function of either $X$ or $D$.

\begin{lemma}\label{lem:upper-bound}
Fix an even integer $K\ge 1$. Let $B_1, B_2, \dots$ be a $(d,\eta)$-small sequence of Bohr sets. Consider a triplet of functions $f_1:B_1\to [0,1]$, $f_2:B_2\to [0,1]$, and $g:B_1\to [0,1]$, and suppose that $\snorm{g}_{(B_1,B_3,B_4,K,K)}\le \tau$, where $\tau \in [0, 1]$.

Then we have that 
\[\E_{\substack{x\sim B_1\\y\sim B_2}}f_1(x)f_2(y)g(x+y) \le \tau \cdot (1 + 3\eps)\cdot \E_{\substack{x\sim B_1\\y\sim B_2}}f_1(x)f_2(y) + O(e^{-\Omega(\eps K)} + \tau^{-K^2} \eta d).\]
\end{lemma}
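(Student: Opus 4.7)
My plan is to prove the lemma by a level-set decomposition of a smoothed version of $g$, combined with shift-invariance of the regular Bohr sets in the sequence.

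\textbf{Step 1 (smoothing $g$ and Markov via the grid norm).} Define the smoothed function $\bar{g}(u) \coloneqq \E_{y' \sim B_3,\, z' \sim B_4}[g(u+y'+z')]$. Iterated applications of Jensen's inequality to the integrand defining the Gowers grid norm (pushing the $z_j$-averages through first, then the $y_i$-averages, using $g \ge 0$) yield the pointwise bound
\[\bar{g}(u)^{K^2} \le \E_{y_1,\ldots,y_K \sim B_3,\, z_1,\ldots,z_K \sim B_4}\Big[\prod_{i,j \in [K]} g(u + y_i + z_j)\Big].\]
Averaging over $u \sim B_1$ gives $\E_u[\bar{g}(u)^{K^2}] \le \snorm{g}_{(B_1,B_3,B_4,K,K)}^{K^2} \le \tau^{K^2}$, so Markov's inequality controls the exceptional set $U \coloneqq \{u \in B_1 : \bar{g}(u) > \tau(1+\eps)\}$ by $|U|/|B_1| \le (1+\eps)^{-K^2} \le e^{-\Omega(\eps K)}$ (using $K$ even and $\log(1+\eps) \ge \eps/2$).

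\textbf{Step 2 (shift-invariance to $\bar g$ and level-set split).} Using that $B_3, B_4$ lie in $\eta B_1$ (by $(d,\eta)$-smallness) together with regularity of $B_1$ (\Cref{lem:shift-invar}), I insert dummy averages over $B_3, B_4$ and rewrite
\[\E_{x \sim B_1,\, y \sim B_2}[f_1(x) f_2(y)\, g(x+y)] = \E_{x,y,y',z'}[f_1(x)\, f_2(y)\, g(x+y)] + O(\eta d),\]
where the inner average brings $g(x+y)$ close to $\bar g(x+y)$. I then split the expectation according to the event $\{x+y \in U\}$ versus its complement: on the complement the pointwise inequality $\bar g(x+y) \le \tau(1+\eps)$ combined with the bound $f_1, f_2 \le 1$ and the $(d,\eta)$-smallness produces the main term $\tau(1+\eps)\E f_1\E f_2$, while the probability of the event $\{x+y \in U\}$ is $\le |U|/|B_1| + O(\eta d) \le e^{-\Omega(\eps K)} + O(\eta d)$, using shift-invariance to transfer from the uniform distribution on $B_1$ to that of $x+y \sim B_1 + B_2$.

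\textbf{Step 3 (transfer error from $\bar g$ back to $g$).} The remaining discrepancy $|S - \E_{x,y}[f_1 f_2 \bar g(x+y)]|$ arises because shifting $x$ by $y'+z'$ simultaneously shifts $f_1$; I control it by a Markov-type estimate using the $L^{K^2}$ bound on $\bar g$ from Step~1 together with shift-invariance on the $(x,y,y',z')$ expectation. The resulting error is $O(\tau^{-K^2}\eta d)$: the $\tau^{-K^2}$ factor arises from applying Markov at level $\tau(1+\eps)$ in the $K^2$-th moment, and each $O(\eta d)$ shift-invariance error is multiplied by this normalization. Together with Step~2, this gives the claimed bound with $(1+3\eps)$ absorbing the small multiplicative slack between the $(1+\eps)$ factor and the auxiliary shift-invariance constants.

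\textbf{Main obstacle.} The subtlest step is Step~3: controlling the fluctuation between $g$ and $\bar g$ against arbitrary $1$-bounded test functions $f_1, f_2$. Because $f_1$ need not be smooth under Bohr shifts, a naive Cauchy--Schwarz bound loses too much, and the correct error scaling $O(\tau^{-K^2}\eta d)$ only emerges after combining the pointwise $L^{K^2}$ control on $\bar g$ from the grid-norm hypothesis with a careful shift-invariance manipulation that tracks the $\tau^{-K^2}$ normalization from Markov.
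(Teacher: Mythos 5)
Your Step 1 is correct: the iterated Jensen argument does give $\bar{g}(u)^{K^2} \le \E_{y_1,\ldots,y_K,z_1,\ldots,z_K}\big[\prod_{i,j} g(u+y_i+z_j)\big]$ pointwise, so $\E_u[\bar g^{K^2}] \le \tau^{K^2}$ and the exceptional set $U$ is controlled as you claim. However, there is a genuine gap in Steps 2 and 3. On the complement of $U$ you only know that $\bar g(x+y) \le (1+\eps)\tau$, not that $g(x+y)$ itself is bounded, so the level-set bound does not give a pointwise majorant for the integrand $f_1(x)f_2(y)g(x+y)$. You therefore need the transfer $\E_{x,y}[f_1 f_2\, g(x+y)] \lesssim \E_{x,y}[f_1 f_2\, \bar g(x+y)]$, and this is exactly where the argument breaks: the only way to exchange $g$ for $\bar g$ via shift-invariance is to shift $x$ by $y'+z'$, but that simultaneously turns $f_1(x)$ into $f_1(x-y'-z')$. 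Since $f_1$ is an arbitrary $[0,1]$-valued function with no regularity under Bohr shifts, $f_1(x-y'-z') - f_1(x)$ is uncontrolled, and the ``Markov at level $\tau(1+\eps)$'' heuristic in Step 3 does not supply a bound for this correlation term --- it only rescales shift-invariance errors attached to $g$, not the discrepancy introduced into $f_1$.

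The paper's proof avoids exactly this trap: after the shift-invariance step it holds $x,y$ fixed and applies H\"older's inequality twice on the inner expectation over $(x',y') \sim B_3 \times B_4$, splitting
\[
\E_{x',y'}\big[f_1(x+x')f_2(y+y')g(x+y+x'+y')\big]
\le \big(\E_{x',y'}f_1(x+x')f_2(y+y')\big)^{(K-1)/K}\cdot\Big(\E_{\substack{x_1',\dots,x_K'\\ y_1',\dots,y_K'}}\prod_{i,j}g(x+y+x_i'+y_j')\Big)^{1/K^2}.
\]
This decouples the $f$-mass from the $g$-mass; the second factor is precisely what your Markov/level-set event controls, and the first factor telescopes back to $\E[f_1]\E[f_2]$ after integrating over $x,y$. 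Without some such decoupling (H\"older is the natural tool), one cannot produce a clean $\E[f_1]\E[f_2]$ on the right. So the Markov and shift-invariance ingredients in your proposal are the right ones, but the decomposition needs to be reorganized around an inner-expectation H\"older step before the level-set argument can close.
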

\begin{proof}
Observe by \cref{lem:shift-invar} that
\begin{align*}
\E_{\substack{x\sim B_1\\y\sim B_2}}f_1(x)f_2(y)g(x+y) &\le \E_{\substack{x\sim B_1\\y\sim B_2}}\E_{\substack{x'\sim B_3\\y'\sim B_4}}f_1(x+x')f_2(y+y')g(x+y + x'+y') +O(\eta d).
\end{align*}
For fixed $x \in B_1, y \in B_2$, we can apply H\"{o}lder's inequality twice to get that 
\begin{align*}
\E_{\substack{x'\sim B_3\\y'\sim B_4}}&f_1(x+x')f_2(y+y')g(x+y + x'+y') \\
&\le \Big(\E_{\substack{x'\sim B_3}}f_1(x+x')\Big)^{(K-1)/K} \cdot \Big(\E_{x'\sim B_3}\Big(\E_{y'\sim B_4}f_2(y+y')g(x+y + x'+y')\Big)^{K}\Big)^{1/K}\\
&\le \Big(\E_{\substack{x'\sim B_3\\y'\sim B_4}}f_1(x+x')f_2(y+y')\Big)^{(K-1)/K} \cdot \Big(\E_{\substack{x_1',\ldots,x_K'\sim B_3\\y_1',\cdots,y_K'\sim B_4}}\prod_{\substack{i\in [K]\\j\in [K]}}g(x+y+x_i' + y_j')\Big)^{1/K^2}\\
&\le (1+\eps)\cdot \E_{\substack{x'\sim B_3\\y'\sim B_4}}f_1(x+x')f_2(y+y')\cdot \Big(\E_{\substack{x_1',\ldots,x_K'\sim B_3\\y_1',\cdots,y_K'\sim B_4}}\prod_{\substack{i\in [K]\\j\in [K]}}g(x+y+x_i' + y_j')\Big)^{1/K^2} + e^{-\Omega(\eps K)},
\end{align*}
where the bound in the last line is because either $\E_{\substack{x'\sim B_3\\y'\sim B_4}}f_1(x+x')f_2(y+y') \le e^{-\Omega(\eps K)}$, which makes the bound trivial, or $\Big(\E_{\substack{x'\sim B_3\\y'\sim B_4}}f_1(x+x')f_2(y+y')\Big)^{-1/K} \le 1+\eps$.

For $x \in B_1, y \in B_2$ define the event
\[\mc{E}(x,y) = \mbm{1}\Big[\Big(\E_{\substack{x_1',\ldots,x_K'\sim B_3\\y_1',\cdots,y_K'\sim B_4}}\prod_{\substack{i\in [K]\\j\in [K]}}g(x+y+x_i' + y_j')\Big)^{1/K^2}\ge \tau \cdot (1+\eps)\Big]\]
and observe that 
\begin{align*}
\E[\mc{E}(x,y)]&\le (\tau \cdot (1+\eps))^{-K^2} \cdot \E_{\substack{x \sim B_1, y \sim B_2 \\ x_1',\ldots,x_K'\sim B_3\\y_1',\cdots,y_K'\sim B_4}}\prod_{\substack{i\in [K]\\j\in [K]}}g(x+y+x_i' + y_j') \\
&\le e^{-\Omega(\eps K^2)}\tau^{-K^2}(\|g\|_{(B_1,B_3,B_4,K,K)}^{K^2} + O(\eta d)) \le e^{-\Omega(\eps K^2)} + O(\tau^{-K^2} \cdot \eta\cdot d).
\end{align*}
Combining this exceptional set bound with the earlier H\"{o}lder argument we have
\begin{align*}
\E_{\substack{x\sim B_1\\y\sim B_2}}&\E_{\substack{x'\sim B_3\\y'\sim B_4}}f_1(x+x')f_2(y+y')g(x+y + x'+y')\\
&\le \E_{\substack{x\sim B_1\\y\sim B_2}}(1-\mc{E}(x,y))\E_{\substack{x'\sim B_3\\y'\sim B_4}}f_1(x+x')f_2(y+y')g(x+y + x'+y') + e^{-\Omega(\eps K^2)} + O(\tau^{-K^2} \cdot \eta\cdot d)\\
&\le \E_{\substack{x\sim B_1\\y\sim B_2}}\tau \cdot (1+3\eps)\cdot\E_{\substack{x'\sim B_3\\y'\sim B_4}}f_1(x+x')f_2(y+y') + e^{-\Omega(\eps K)} + O(\tau^{-K^2} \cdot \eta\cdot d)\\
&=\tau \cdot (1+3\eps)\cdot \E_{\substack{x\sim B_1\\y\sim B_2}}f_1(x)f_2(y) + e^{-\Omega(\eps K)} + O(\tau^{-K^2} \cdot \eta\cdot d)
\end{align*}
as desired.
\end{proof}

Our next lemma gives an estimate of the container size in the setting when $X, Y$ have bounded Gowers grid norms. We note that this estimate is \emph{not} $\E[\mbm{1}_X]\E[\mbm{1}_Y]\E[\mbm{1}_D]$, and is instead $\E[\mbm{1}_X \mbm{1}_D]\E[\mbm{1}_Y]$. Later, we give more conditions under which $\E[\mbm{1}_X\mbm{1}_D] \approx \E[\mbm{1}_X]\E[\mbm{1}_D]$. Below, we encourage the reader to think of $f_1, f_2, g$ as the indicator functions of $X, Y, D$ respectively. 

\begin{lemma}\label{lem:conv-lower-bound}
Let $\eps\in (0,1/100)$ and fix an even integer $K\ge 2$. Let $B_1, B_2, \dots$ be a $(d,\eta)$-small sequence of Bohr sets. Consider a triplet of functions $f_1:B_1\to [0,1]$, $f_2:B_2\to [0,1]$, and $g:B_1\to [0,1]$. Suppose that $\snorm{f_1}_{(B_1,B_4,B_5,K,K)}\le (1+\eps) \cdot \E[f_1]$, $\snorm{f_2}_{(B_2,B_4,B_5,K,K)}\le (1+\eps) \cdot \E[f_2]$, $\eta \le \eps^3 \cdot (\E[f_1]\cdot \E[f_2] \cdot \E[g])^{O(K^2)}$, and $K\ge 100\eps^{-8}\log(2/(\E[f_1]\cdot \E[f_2] \cdot \E[g]))$.

Then we have that
\begin{multline*}
    \Big|\E_{\substack{x\sim B_1\\y\sim B_2}}f_1(x)f_2(y)g(x+y) - \E_{\substack{x\sim B_1\\y\sim B_2}}[f_1(x)g(x+y)]\cdot \E_{y\sim B_2}[f_2(y)]\Big| \\
    = O(\eps^{1/2}) \cdot \E[f_1]\E[f_2]\E[g] + O((\eta d)^{1/(2K)}+ e^{-\Omega(\eps^8K)}).
\end{multline*}
\end{lemma}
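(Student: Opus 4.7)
The plan is to analyze $\Delta \coloneqq A - B$, where $A = \E_{x \sim B_1, y \sim B_2}[f_1(x) f_2(y) g(x+y)]$ and $B = \E_{y \sim B_2}[f_2(y)] \cdot \E_{x \sim B_1, y \sim B_2}[f_1(x) g(x+y)]$, by symmetrizing in two independent copies of $y$ and reducing to a variance estimate controlled by the grid-norm hypothesis on $f_1$.

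\emph{Step 1 (Symmetrization and Cauchy--Schwarz).} Writing $B = \E_{x, y, y'}[f_1(x) g(x+y) f_2(y')]$ with $y' \sim B_2$ an independent copy, then antisymmetrizing in $y, y'$ yields
\[ 2\Delta = \E_{x, y, y'}\bigl[f_1(x)\,(f_2(y) - f_2(y'))\,(g(x+y) - g(x+y'))\bigr]. \]
Applying Cauchy--Schwarz in $(y, y')$ decouples $f_2$ from the rest:
\[ 4\Delta^2 \le \E_{y, y'}(f_2(y) - f_2(y'))^2 \cdot \E_{y, y'}\bigl(\E_x f_1(x)(g(x+y) - g(x+y'))\bigr)^2. \]
The first factor is $2(\E[f_2^2] - \E[f_2]^2) \le 2\E[f_2]$. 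Expanding the second factor and simplifying identifies it with $2\operatorname{Var}_{y \sim B_2}(h)$, where $h(y) \coloneqq \E_x f_1(x) g(x+y)$. Thus $\Delta^2 \le \E[f_2] \cdot \operatorname{Var}_y(h)$, and the problem reduces to showing $\operatorname{Var}_y(h) \le O(\eps)(\E[f_1]\E[g])^2$ up to the stated additive errors.

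\emph{Step 2 (Bounding the variance via the grid norm of $f_1$).} Write
\[ \operatorname{Var}_y(h) = \E_{x_1, x_2} f_1(x_1) f_1(x_2)\bigl[R(x_1, x_2) - \psi(x_1)\psi(x_2)\bigr], \]
where $R(x_1, x_2) = \E_{y \sim B_2}[g(x_1+y)g(x_2+y)]$ and $\psi(x) = \E_{y \sim B_2}[g(x+y)]$. If $f_1$ were exactly the constant $\E[f_1]$, then $\operatorname{Var}_y(h)$ would collapse to $(\E[f_1])^2 \operatorname{Var}_y(\E_x g(x+y))$, which is $O((\eta d)^2)$ by \cref{lem:shift-invar} (since $\E_x g(x+y) = \E[g] + O(\eta d)$ uniformly over $y \in B_2$). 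The grid-norm hypothesis $\|f_1\|_{(B_1, B_4, B_5, K, K)} \le (1+\eps)\E[f_1]$ asserts exactly that $f_1$ is close to constant in the sense relevant here, and a variant of \cref{lem:upper-bound} with the roles of $f_1$ and $g$ swapped --- provable by the same H\"older iteration as in the proof of \cref{lem:upper-bound} --- shows that both $\E_{x_1, x_2} f_1(x_1) f_1(x_2) R(x_1, x_2)$ and $\E_{x_1, x_2} f_1(x_1) f_1(x_2) \psi(x_1)\psi(x_2) = (\E_{x, y} f_1(x) g(x+y))^2$ agree with their ``constant-$f_1$'' values up to $O(\eps)(\E[f_1])^2(\E[g])^2$; these constant-$f_1$ values in turn differ from each other by at most $O((\eta d)^2)$. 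The triangle inequality then yields $\operatorname{Var}_y(h) \le O(\eps)(\E[f_1]\E[g])^2 + O(\eta d \cdot (\E[f_1]\E[f_2]\E[g])^{-O(K^2)} + e^{-\Omega(\eps^8 K)})$, and combining with Step 1 and taking square roots produces the claimed bound.

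\emph{Main obstacle.} The principal technical difficulty is the absence of any pseudorandomness hypothesis on $g$: the decoupling of $f_2$ must be driven by the grid norms of $f_1$ and $f_2$ alone. This is precisely why the Cauchy--Schwarz step in Step 1 pairs two copies of $g$ to produce the autocorrelation $R(x_1, x_2)$, which can then be handled by the grid norm on $f_1$ (invoked twice, once for each of $x_1$ and $x_2$). The $\eps^{1/2}$ exponent in the conclusion is inherited from the square root in this Cauchy--Schwarz step, and is essentially optimal for any argument of this shape. The error terms $O((\eta d)^{1/(2K)})$ and $O(e^{-\Omega(\eps^8 K)})$ track the accumulated errors from \cref{lem:upper-bound} and \cref{lem:shift-invar}; the hypotheses $\eta \le \eps^3(\E[f_1]\E[f_2]\E[g])^{O(K^2)}$ and $K \ge 100\eps^{-8}\log(2/(\E[f_1]\E[f_2]\E[g]))$ are calibrated exactly so that these errors are dominated by the main $\eps^{1/2}\E[f_1]\E[f_2]\E[g]$ contribution.
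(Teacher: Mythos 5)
Your symmetrization in Step 1 loses a crucial factor of $\E[f_2]^{1/2}$, and the argument cannot reach the stated bound as written. Tracing through: Step 1 gives $\Delta^2 \le \E[f_2]\cdot\operatorname{Var}_y(h)$, and Step 2 (taking your claim at face value) gives $\operatorname{Var}_y(h)\le O(\eps)(\E[f_1]\E[g])^2$ plus errors. Combining and taking square roots yields $|\Delta|\le O(\eps^{1/2})\E[f_1]\E[g]\,\E[f_2]^{1/2}$, whereas the lemma requires $|\Delta|\le O(\eps^{1/2})\E[f_1]\E[f_2]\E[g]$. Since $\E[f_2]$ can be as small as $\exp(-\log(1/\alpha)^{O(1)})$ in the intended application, the factor $\E[f_2]^{-1/2}$ is unacceptable. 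The loss is baked into the Cauchy--Schwarz: you pair $(f_2(y)-f_2(y'))$ against the rest and can only extract $\operatorname{Var}(f_2)^{1/2}\le\E[f_2]^{1/2}$, while $\operatorname{Var}_y(h)$ depends on $f_1$ and $g$ only and carries no additional factor of $\E[f_2]$. Notably, your argument never uses the grid-norm hypothesis on $f_2$ at all --- the only information about $f_2$ you invoke is $f_2\le 1$ --- which is a strong signal that something essential is missing, since the hypothesis on $f_2$ is given and the bound is tight enough to need it.

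The paper's proof is organized differently precisely to avoid this loss. After changing variables so the integrand reads $f_1(x-y)f_2(y)g(x)$ and centering $f_2$ by its mean, it applies H\"older with a large exponent $L\approx K\eps^4$ to peel off $g$ --- keeping a factor $(\E[g])^{(L-1)/L}\approx(1-\eps)\E[g]$, not $\E[g]^{1/2}$ --- and only then uses Cauchy--Schwarz to decouple $f_1$ from $\widetilde{f_2}$. The $f_1$ factor is bounded by roughly $\E[f_1]$ via the grid-norm hypothesis on $f_1$; the $\widetilde{f_2}$ factor is where the $O(\eps^{1/2})$ savings comes from, and it is obtained via the spectral positivity lemma (\cref{lem:spectral-pos}), which critically uses the grid-norm hypothesis $\|f_2\|_{(B_2,B_4,B_5,K,K)}\le(1+\eps)\E[f_2]$ together with the fact that $\widetilde{f_2}$ has mean essentially zero. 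So the $\eps^{1/2}$ in the conclusion is not an artifact of a single Cauchy--Schwarz; it is a consequence of spectral positivity applied to the centered $f_2$ with nearly a full power of $\E[f_2]$ retained. Your Step 2 is also underdeveloped (the ``variant of \cref{lem:upper-bound} with $f_1$ and $g$ swapped'' is not a routine modification, because the Bohr-set indices and the variable being averaged over differ), but the decisive obstruction is the $\E[f_2]^{1/2}$ loss in Step 1: no amount of sharpening Step 2 can recover it.
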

\begin{proof}
Observe that
\begin{align*}
\E_{\substack{x\sim B_1\\y\sim B_2}}f_1(x)f_2(y)g(x+y) &= \E_{\substack{x\sim B_1\\y\sim B_2}}f_1(x-y)f_2(y)g(x) + O(\eta d)\\
&= \E_{\substack{x\sim B_1\\y\sim B_2}}f_1(x-y)\wt{f_2}(y)g(x)+ \E_{\substack{x\sim B_1\\y\sim B_2}}[f_1(x)g(x+y)]\cdot \E_{y\sim B_2}[f_2(y)] + O(\eta d)\\
&= \E_{\substack{x\sim B_1\\y\sim B_2\\z\sim B_3}}f_1(x-y)\wt{f_2}(y + z)g(x + z) + \E_{\substack{x\sim B_1\\y\sim B_2}}[f_1(x)g(x+y)]\cdot \E_{y\sim B_2}[f_2(y)]+ O(\eta d)
\end{align*}
where $\wt{f_2}(y) = f_2(y) - \E_{\substack{y'\sim B_2}}[f_2(y')]$.

We let $L$ be a positive integer to be chosen later. Now observe that 
\begin{align*}
\Big|\E_{\substack{x\sim B_1\\y\sim B_2\\z\sim B_3}}&f_1(x-y)\wt{f_2}(y + z)g(x + z)\Big|\le \E_{\substack{x\sim B_1\\z\sim B_3}}g(x + z)\Big|\E_{y\sim B_2}f_1(x-y)\wt{f_2}(y + z)\Big|\\
&\le (\E_{\substack{x\sim B_1\\z\sim B_3}}g(x+z))^{(L-1)/L} \cdot \Big(\E_{\substack{x\sim B_1\\z\sim B_3}}\E_{y_1,\ldots,y_{L}\sim B_2}\prod_{j=1}^{L}f_1(x-y_j)\wt{f_2}(y_j+z)\Big)^{1/L}\\
&\le (1 + \eps) \cdot \E_{y\sim B_2}g(y) \cdot \Big(\E_{\substack{x_1,x_2\sim B_1\\y_1,\ldots,y_{L}\sim B_2}}\prod_{j=1}^{L}f_1(x_1 - y_j)f_1(x_2-y_j)\Big)^{1/(2L)} \\
&\qquad\qquad\qquad\cdot \Big(\E_{\substack{z_1,z_2\sim B_3\\y_1,\ldots,y_{L}\sim B_2}}\prod_{j=1}^{L}\wt{f_2}(y_j + z_1)\wt{f_2}(y_j+z_2)\Big)^{1/(2L)} + O(\eta d + e^{-\Omega(\eps L)}),
\end{align*}
where the final inequality follows from $\delta^{\frac{L-1}{L}} \le (1+\eps)\delta + e^{-\Omega(\eps L)}$, and an application of Cauchy-Schwarz.
In order to complete our analysis it suffices to bound the last two terms: 
\[
    \Big(\E_{\substack{x_1,x_2\sim B_1\\y_1,\ldots,y_{L}\sim B_2}}\prod_{j=1}^{L}f_1(x_1 - y_j)f_1(x_2-y_j)\Big)^{1/(2L)} \text{ and } \Big(\E_{\substack{z_1,z_2\sim B_3\\y_1,\ldots,y_{L}\sim B_2}}\prod_{j=1}^{L}\wt{f_2}(y_j + z_1)\wt{f_2}(y_j+z_2)\Big)^{1/(2L)}.
\]

We now derive a bound on the second term involving $\wt{f}_2$  -- the bound on the first term involving $f_1$ is similar. For the second term, observe that 
\[\min_{z\in B_3}\E_{y\sim B_2}[\wt{f_2}(y+z)]\ge -O(\eta d).\]
Furthermore observe that by H\"{o}lder's inequality that $\snorm{f_2}_{(B_2,B_4,B_5,K,2)}\le \snorm{f_2}_{(B_2,B_4,B_5,K,K)} \le (1+\eps)\cdot \E_{y\sim B_2,z\sim B_3}[f_2(y+z)] + O(\eta d)$.
Moreover,
\begin{align*}
\E_{\substack{z_1,z_2\sim B_3\\y_1,\ldots,y_{K}\sim B_2}}&\prod_{j=1}^{K}f_2(y_j + z_1)f_2(y_j+z_2) \\
&\le \E_{\substack{z_1,z_2\sim B_3\\y_1,\ldots,y_{K}\sim B_2\\ y'\sim B_4\\z'\sim B_5}}\prod_{j=1}^{K}f_2(y_j + z_1 + y'+z')f_2(y_j+z_2 + y'+z') + O(K^2\eta d)\\
&\le \E_{\substack{z_1,z_2\sim B_3\\y_1,\ldots,y_{K}\sim B_2}}\prod_{\substack{i\in [2]\\j\in [K]}}\Big(\E_{\substack{z_1',z_2'\sim B_5\\y_1',\ldots,y_K'\sim B_4}}\prod_{\substack{\ell\in [2]\\r\in [K]}}f_2(y_j+z_i + y_r'+z_{\ell}')\Big)^{1/(2K)} + O(K^2\eta d)\\
&\le \Big(\prod_{\substack{i\in [2]\\j\in [K]}}\E_{\substack{z_1,z_2\sim B_3\\y_1,\ldots,y_{K}\sim B_2}}\E_{\substack{z_1',z_2'\sim B_5\\y_1',\ldots,y_K'\sim B_4}}\prod_{\substack{\ell\in [2]\\r\in [K]}}f_2(y_j+z_i + y_r'+z_{\ell}')\Big)^{1/(2K)} + O(K^2\eta d)\\
&\le \E_{\substack{y\sim B_2\\z\sim B_3\\z_1',z_2'\sim B_5\\y_1',\ldots,y_K'\sim B_4}}\prod_{\substack{\ell\in [2]\\r\in [K]}}f_2(y+z + y_r'+z_{\ell}') + O(K^2\eta d) \\
&\le \snorm{f_2}_{(B_2,B_4,B_5,K,2)}^{2K} + O(K^2\eta d).
\end{align*}

Recall that we assumed that $\snorm{f_2}_{(B_2,B_4,B_5,K,2)} \le (1+\eps) \E[f_2]$. Also, note that \[ \E_{y \in B_3} \E_{x \in B_2} \wt{f_2}(x+y) \ge -O(\eta d) \ge -\eps^2/1000 \cdot \E[f_2]. \]
Therefore \cref{lem:spectral-pos} applies, and for $L = 10^{-2}\lfloor K \cdot \eps^4 \rfloor$ we get
\[\Big(\E_{\substack{z_1,z_2\sim B_3\\y_1,\ldots,y_{L}\sim B_2}}\prod_{j=1}^{L}\wt{f_2}(y_j + z_1)\wt{f_2}(y_j+z_2)\Big)^{1/(2L)} = O(\eps^{1/2}) \cdot \E[f_2] + O(K^2\eta d).\]

An identical analysis for $f_1$ yields that 
\[\Big(\E_{\substack{x_1,x_2\sim B_1\\y_1,\ldots,y_K\sim B_2}}\prod_{j=1}^K f_1(x_1 - y_j)f_1(x_2-y_j)\Big)\le \snorm{f_1}_{(B_1,B_4,B_5,K,2)}^{2K} + O(K^2 \eta d).\]
Combining these two bounds gives the desired result. 
\end{proof}

We will also consider the case where the both $X$ and $D$ have bounded Gowers grid norms. In this case, the size of the container is indeed as is expected. Below one should view $f_1,f_2$ as the indicator functions of $X,D$.

\begin{lemma}\label{lem:conv-lower-bound-2}
Let $\eps\in (0,1/100)$ and fix an even integer $K\ge 2$. Let $B_1, B_2, \dots$ be a $(d,\eta)$-small sequence of Bohr sets. Consider a triplet of functions $f_1:B_1\to [0,1]$, $f_2:B_1\to [0,1]$, and $g:B_2\to [0,1]$. Suppose that $\snorm{f_1}_{(B_1,B_4,B_5,K,K)}\le (1+\eps) \cdot \E[f_1]$, $\snorm{f_2}_{(B_1,B_4,B_5,K,K)}\le (1+\eps) \cdot \E[f_2]$, $\eta \le \eps^3 \cdot (\E[f_1]\cdot \E[f_2] \cdot \E[g])^{O(K^2)}$, and $K\ge 100\eps^{-8}\log(2/(\E[f_1]\cdot \E[f_2] \cdot \E[g]))$.

Then we have that
\[\Big|\E_{\substack{x\sim B_1\\y\sim B_2}}f_1(x)g(y)f_2(x+y) - \E[f_1]\E[f_2]\E[g]\Big| = O(\eps^{1/2}) \cdot \E[f_1]\E[f_2]\E[g] + O((\eta d)^{1/(2K)}+ e^{-\Omega(\eps^8 K)}).\]
\end{lemma}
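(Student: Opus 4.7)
My plan is to adapt the proof of \Cref{lem:conv-lower-bound}, now exploiting the additional symmetry that both $f_1$ and $f_2$ have bounded Gowers grid norms on $B_1$. First, by the change of variable $x \mapsto x - y$ (with $O(\eta d)$ error from \Cref{lem:shift-invar}), I rewrite
\[\E_{x \sim B_1, y \sim B_2}[f_1(x)g(y)f_2(x+y)] = \E_{x \sim B_1, y \sim B_2}[f_1(x-y)g(y)f_2(x)] + O(\eta d).\]
Decomposing $f_1 = \wt{f_1} + \E[f_1]$ and then $f_2 = \wt{f_2} + \E[f_2]$, the main term becomes $\E[f_1]\E[f_2]\E[g]$; the cross term $\E[f_2]\cdot \E_{x,y}[\wt{f_1}(x-y)g(y)]$ vanishes up to $O(\eta d)$ since $\E_{x \sim B_1}[\wt{f_1}(x-y)] = O(\eta d)$ uniformly over $y \in B_2$ by shift-invariance. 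So it suffices to bound the residual $\tilde E \coloneqq \E_{x,y}[\wt{f_1}(x-y)g(y)\wt{f_2}(x)]$.

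Mirroring \Cref{lem:conv-lower-bound}, I next shift $x$ and $y$ simultaneously by $z \sim B_3$: this preserves $x-y$ so that $\wt{f_1}(x-y)$ is invariant while $g(y)$ and $\wt{f_2}(x)$ pick up the shift:
\[\tilde E = \E_{x,y,z}[\wt{f_1}(x-y)\,g(y+z)\,\wt{f_2}(x+z)] + O(\eta d).\]
Taking an even integer $L \coloneqq 10^{-2}\lfloor K\eps^4 \rfloor$ and applying H\"older's inequality with exponents $L/(L-1),L$ to extract the nonnegative factor $g \le 1$, then expanding the $L$-th power as an integral over independent copies $x_1,\dots,x_L \sim B_1$,
\[|\tilde E|^L \le \E[g]^{L-1} \cdot \E_{x_1,\dots,x_L}\Big[\,\E_y\big[\prod_i \wt{f_1}(x_i - y)\big]\cdot \E_z\big[\prod_i \wt{f_2}(x_i + z)\big]\Big] + O(\text{error}).\]
Cauchy--Schwarz on the $\E_{x_i}$-integral then splits the right-hand side into a product of two $(L,2)$-Gowers-grid-norm-type quantities, one of $\wt{f_1}$ on grids $(B_1, B_2)$ and one of $\wt{f_2}$ on grids $(B_1, B_3)$.

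Finally, I invoke \Cref{lem:spectral-pos} (spectral positivity) applied to both $f_1$ and $f_2$. Combining the hypothesis $\snorm{f_i}_{(B_1,B_4,B_5,K,K)} \le (1+\eps)\E[f_i]$ with \Cref{lem:monotone} to pass to the $(2,L)$-norm, together with the shift-invariance lower bound $\min_x \E_y[f_i(x+y)-\E[f_i]] \ge -O(\eta d)$, the contrapositive of \Cref{lem:spectral-pos} yields $\snorm{\wt{f_i}}_{(B_1,B_4,B_5,2,L)} \le O(\eps^{1/2})\E[f_i]$ for each $i\in\{1,2\}$. Combined with $\E[g]^{(L-1)/L} = (1+O(\eps))\E[g]$ from our choice of $L$, this yields $|\tilde E| \le O(\eps)\cdot \E[f_1]\E[f_2]\E[g] + O((\eta d)^{1/(2K)} + e^{-\Omega(\eps^8 K)})$, which since $\eps \le \eps^{1/2}$ is within the claimed bound. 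The main technical obstacle will be precisely matching the centers and grid Bohr sets of the $(L,2)$-grid norms emerging from the Cauchy--Schwarz step with those appearing in the hypothesis of \Cref{lem:spectral-pos}: this requires careful use of \Cref{lem:monotone} (to shrink the grid Bohr sets at the cost of $O((\eta d)^{1/(2L)})$ error) and of shift-invariance of regular Bohr sets (to adjust the \emph{center} of each grid norm), analogous to the bookkeeping already handled in the proof of \Cref{lem:conv-lower-bound}.
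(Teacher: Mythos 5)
Your proposal is correct and would prove the lemma, but it takes a genuinely different (symmetric) route from the paper, which I'll compare here.

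The paper's proof decomposes only $f_2 = \wt{f_2} + \E[f_2]$ and keeps $f_1$ uncentered. After the same Hölder-then-Cauchy--Schwarz manipulation, it arrives at two factors: an $(L,2)$-grid quantity in $f_1$ and one in $\wt{f_2}$. For the $f_1$-factor it only needs the raw grid-norm bound $\snorm{f_1}_{(B_1,B_4,B_5,L,2)}\le\snorm{f_1}_{(B_1,B_4,B_5,K,K)}\le(1+\eps)\E[f_1]$ via monotonicity; only the $\wt{f_2}$-factor requires \Cref{lem:spectral-pos}. In contrast, you decompose both $f_1$ and $f_2$. This forces you to check that the two cross terms vanish (one is exactly zero because $\E[\wt{f_2}]=0$ and $x,y$ are independent after your change of variables; the other is $O(\eta d)$ by shift-invariance), and then you apply \Cref{lem:spectral-pos} to both $\wt{f_1}$ and $\wt{f_2}$. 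Your approach buys a marginally sharper bound --- $O(\eps)$ rather than $O(\eps^{1/2})$ --- at the cost of a second spectral-positivity application plus the cross-term bookkeeping; the paper's asymmetric decomposition is slightly more economical since it only centers once. Both fit within the stated $O(\eps^{1/2})$ and $O((\eta d)^{1/(2K)}+e^{-\Omega(\eps^8 K)})$ error budgets. You correctly flag the remaining bookkeeping: to feed the contrapositive of \Cref{lem:spectral-pos} you need to bound $\snorm{f_i(\cdot-\cdot)}_{G(2,p')}$ (with $p'=36\lceil L/\eps'^4\rceil$ for your choice of $\eps'=\Theta(\eps^{1/2})$) by the hypothesized $(B_1,B_4,B_5,K,K)$-norm; one must check that $p'\le K$, which holds for $L=10^{-2}\lfloor K\eps^4\rfloor$, and that the base Bohr sets and signs can be reconciled via \Cref{lem:shift-invar} and symmetry of Bohr sets, exactly as in the analogous computations in the paper's proof of \Cref{lem:conv-lower-bound}.
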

\begin{proof}
Observe that 
\begin{align*}
\E_{\substack{x\sim B_1\\y\sim B_2}}f_1(x)g(y)f_2(x+y) &= \E_{\substack{x\sim B_1\\y\sim B_2}}f_1(x)g(y)(f_2(x+y) -\E[f_2]) + \E[f_1]\E[f_2]\E[g]\\
&= \E_{\substack{x\sim B_1\\y\sim B_2\\z\sim B_3}}f_1(x+z)g(y-z)\wt{f_2}(x+y)+ \E[f_1]\E[f_2]\E[g]+ O(\eta d)
\end{align*}
where $\wt{f_2}(z) = f_2(z) - \E_{\substack{z'\sim B_1}}[f_2(z')]$.

We let $L$ be a positive integer to be chosen later. Now observe that 
\begin{align*}
\Big|\E_{\substack{x\sim B_1\\y\sim B_2\\z\sim B_3}}&f_1(x+z)g(y-z)\wt{f_2}(x+y)\Big|\le \E_{\substack{y\sim B_2\\z\sim B_3}}g(y-z)\Big|\E_{x\sim B_1}f_1(x+z)\wt{f_2}(x+y)\Big|\\
&\le (\E_{\substack{y\sim B_2\\z\sim B_3}}g(y-z))^{(L-1)/L} \cdot \Big(\E_{\substack{y\sim B_2\\z\sim B_3}}\E_{x_1,\ldots,x_{L}\sim B_2}\prod_{j=1}^{L}f_1(x_j+z)\wt{f_2}(x_j+y)\Big)^{1/L}\\
&\le (1 + \eps) \cdot \E_{y\sim B_2}g(y) \cdot \Big(\E_{\substack{z_1,z_2\sim B_3\\x_1,\ldots,x_{L}\sim B_1}}\prod_{j=1}^{L}f_1(x_j+z_1)f_1(x_j+z_2)\Big)^{1/(2L)} \\
&\qquad\qquad\qquad\cdot \Big(\E_{\substack{y_1,y_2\sim B_2\\x_1,\ldots,x_{L}\sim B_1}}\prod_{j=1}^{L}\wt{f_2}(x_j + y_1)\wt{f_2}(x_j+y_2)\Big)^{1/(2L)} + O(\eta d + e^{-\Omega(\eps L)}).
\end{align*}
In order to complete our analysis it suffices to bound the last two terms: 
\[
    \Big(\E_{\substack{z_1,z_2\sim B_3\\x_1,\ldots,x_{L}\sim B_1}}\prod_{j=1}^{L}f_1(x_j+z_1)f_1(x_j+z_2)\Big)^{1/(2L)} \text{ and } \Big(\E_{\substack{y_1,y_2\sim B_2\\x_1,\ldots,x_{L}\sim B_1}}\prod_{j=1}^{L}\wt{f_2}(x_j + y_1)\wt{f_2}(x_j+y_2)\Big)^{1/(2L)}.
\] 

We now consider the second term involving $\wt{f}_2$ where we will need to invoke spectral positivity. For this term, observe that 
\[\min_{y\in B_2}\E_{x\sim B_1}[\wt{f_2}(x+y)]\ge -O(\eta d).\]
Furthermore observe that by H\"{o}lder's inequality that $\snorm{f_2}_{(B_2,B_4,B_5,K,2)}\le \snorm{f_2}_{(B_2,B_4,B_5,K,K)} \le (1+\eps)\cdot \E_{y\sim B_2\\z\sim B_3}[f_2(y+z)] + O(\eta d)$. 
Moreover,
\begin{align*}
\E_{\substack{y_1,y_2\sim B_2\\x_1,\ldots,x_{K}\sim B_1}}&\prod_{j=1}^{K}f_2(x_j + y_1)f_2(x_j+y_2) \\
&\le \E_{\substack{y_1,y_2\sim B_2\\x_1,\ldots,x_{K}\sim B_1\\ x'\sim B_4\\y'\sim B_5}}\prod_{j=1}^{K}f_2(x_j + y_1 + x'+y')f_2(x_j+y_2 + x'+y') + O(K^2\eta d)\\
&\le \E_{\substack{y_1,y_2\sim B_2\\x_1,\ldots,x_{K}\sim B_1}}\prod_{\substack{i\in [2]\\j\in [K]}}\Big(\E_{\substack{y_1',y_2'\sim B_5\\x_1',\ldots,x_K'\sim B_4}}\prod_{\substack{\ell\in [2]\\r\in [K]}}f_2(y_i+x_j + y_r'+x_{\ell}')\Big)^{1/(2K)} + O(K^2\eta d)\\
&\le \Big(\prod_{\substack{i\in [2]\\j\in [K]}}\E_{\substack{y_1,y_2\sim B_2\\x_1,\ldots,x_{K}\sim B_1}}\E_{\substack{y_1',y_2'\sim B_5\\x_1',\ldots,x_K'\sim B_4}}\prod_{\substack{\ell\in [2]\\r\in [K]}}f_2(y_i+x_j + y_r'+x_{\ell}')\Big)^{1/(2K)} + O(K^2\eta d)\\
&\le \E_{\substack{y\sim B_2\\x\sim B_1\\z_1',z_2'\sim B_5\\x_1',\ldots,x_K'\sim B_4}}\prod_{\substack{\ell\in [2]\\r\in [K]}}f_2(y+x + y_r'+x_{\ell}') + O(K^2 \eta d) \\
&\le \snorm{f_2}_{(B_2,B_4,B_5,K,2)}^{2K} + O(K^2\eta d).
\end{align*}

Therefore by \cref{lem:spectral-pos}, if we set $L = 10^{-2}\lfloor K \cdot \eps^4 \rfloor$ then 
\[\Big(\E_{\substack{y_1,y_2\sim B_2\\x_1,\ldots,x_{L}\sim B_1}}\prod_{j=1}^{L}\wt{f_2}(x_j + y_1)\wt{f_2}(x_j+y_2)\Big)^{1/(2L)} = O(\eps^{1/2}) \cdot \E[f_2] + O((\eta d)^{1/(2L)}).\]

An identical analysis for $f_1$ yields that 
\[\Big(\E_{\substack{z_1,z_2\sim B_3\\x_1,\ldots,x_K\sim B_1}}\prod_{j=1}^{K}f_1(x_j+z_1)f_1(x_j+z_2)\Big)\le \snorm{f_1}_{(B_1,B_4,B_5,K,2)}^{2K} + O(K^2 \eta d).\]
Combining these two bounds gives the desired result. 
\end{proof}

In this setting we also require a weaker notion of spreadness which we call $(B_1, B_2, \eps)$ $\ell_1$-spreadness. Informally, a set $D \subseteq B_1$ satisfies the $\ell_1$-spreadness property if, when $B_1$ is averaged on $B_2$ shifts, then for almost all the shifts, the density of $D$ doesn't deviate too far from its global density.
\begin{definition}[$\ell_1$-spreadness]
\label{def:l1spread}
Let $B_1, B_2$ be regular Bohr sets with the same frequencies and $\nu(B_2) \le \nu(B_1)$. We say that a function $f: B_1 \to [0, 1]$ is \emph{$(B_1, B_2, \eps)$ $\ell_1$-spread} if
\[ \E_{x \sim B_1} \left|\E_{y \sim B_2}[f(x+y)] - \E[f]\right| \le \eps \cdot \E[f]. \]
\end{definition}
We say that a set $D \subseteq B_1$ is $\ell_1$-spread if its indicator function is, according to \cref{def:l1spread}. The next lemma proves that $\E[f_1g] \approx \E[f_1]\E[g]$, which is useful given \cref{lem:conv-lower-bound}, holds as long as $f_1$ has small Gowers grid norm and $g$ is $\ell_1$-spread, which is a much weaker hypothesis on $g$ than having small Gowers grid norm.

\begin{lemma}\label{lem:product-spread}
Let $\eps\in (0,1/100)$ and fix an even integer $K\ge 2$. Let $B_1, B_2, \dots$ be a $(d,\eta)$-small sequence of Bohr sets. Consider a pair of functions $f:B_1\to [0,1]$ and $g:B_1\to [0,1]$. Suppose that $\snorm{f}_{(B_1,B_3,B_4,K,K)}\le (1+\eps) \cdot \E[f]$, $g$ is $(B_1,B_4,\eps)$ $\ell_1$-spread, $\eta \le \eps^3 \cdot (\E[f]\cdot \E[g])^{O(K^2)} \cdot d^{-O(1)}$, and $K\ge 100\eps^{-8}\log(2/(d \cdot \E[f]\cdot \E[g]))$.

Then we have that
\[\Big|\E_{\substack{x\sim B_1\\y\sim B_2}}f(x)g(x+y) - \E_{x\sim B_1}[f(x)]\cdot \E_{x\sim B_1}[g(x)]\Big| = O(\eps^{1/2}) \cdot \E[f]\E[g] + O((\eta d)^{1/(2K)}+ e^{-\Omega(\eps^8 K)}).\]
\end{lemma}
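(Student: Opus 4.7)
The plan is to mirror the proof of \cref{lem:conv-lower-bound-2}, using the weaker $(B_1,B_4,\eps)$ $\ell_1$-spread hypothesis on $g$ in place of a Gowers grid norm bound. Writing $\wt{g} = g - \E[g]$ and $H(x) \coloneqq \E_{y \sim B_2}[\wt{g}(x+y)] = (g \ast \mbm{1}_{B_2})(x) - \E[g]$, the starting identity is
\[\E_{\substack{x\sim B_1\\y\sim B_2}}[f(x)g(x+y)] - \E[f]\E[g] = \E_{x\sim B_1}[f(x) H(x)],\]
so the task reduces to bounding $\E_x[f(x)H(x)]$ by $O(\eps^{1/2})\E[f]\E[g]$ plus the stated errors. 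I plan to do this via H\"older, combining an $L^1$ bound on $H$ from $\ell_1$-spreadness with an $L^L$ bound on a smoothed version of $f$ coming from its grid-norm bound.

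For the $H$ factor, I would first establish $\|H\|_1 \le \eps\E[g] + O(\eta d)$. Since the $\ell_1$-spread hypothesis only controls $g \ast \mbm{1}_{B_4}$, I would use the pointwise identity $g \ast \mbm{1}_{B_2} = (g \ast \mbm{1}_{B_4}) \ast \mbm{1}_{B_2} + O(\eta d)$ (valid by $B_4$-shift invariance of $B_2$) together with the fact that convolution against the probability measure $\mbm{1}_{B_2}$ is an $L^1$ contraction, which propagates the hypothesis $\|(g \ast \mbm{1}_{B_4}) - \E[g]\|_1 \le \eps\E[g]$ to $H$.

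Next, I would transfer the $B_4$-smoothness from $H$ onto $f$: for $w \in B_4$, regularity of $B_2$ gives $|H(x+w) - H(x)| = O(\eta d)$ pointwise, so inserting a $B_4$-average and shifting $x\to x+w$ yields
\[\E_x[f(x)H(x)] = \E_x[f_4(x)H(x)] + O(\eta d),\qquad f_4 \coloneqq f \ast \mbm{1}_{B_4}.\]
Then I apply H\"older at an exponent $L$ chosen of order $K$: $|\E_x[f_4 H]| \le \|f_4\|_L \cdot \|H\|_{L/(L-1)}$. Since $\|H\|_\infty \le 2$ and $\|H\|_1 \le \eps\E[g]$, taking $L \ge \log(1/(\eps\E[g]))$ gives $\|H\|_{L/(L-1)} = O(\eps\E[g])$. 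For $\|f_4\|_L$, expanding the $L$-th power and inserting a $B_5$-shift of $x$ yields
\[\|f_4\|_L^L = \E_{x,\, y_1,\ldots,y_L\sim B_4}\Big[\prod_{i=1}^L f(x+y_i)\Big] = \|f\|_{(B_1, B_4, B_5, L, 1)}^L + O((\eta d)^{1/L}),\]
and then standard grid-norm monotonicity in the dimension parameters (going from $(L,1)$ up to $(K,K)$) together with \cref{lem:monotone} (going from the shift Bohr sets $(B_4,B_5)$ back to $(B_3,B_4)$) bounds this by $((1+\eps)\E[f])^L$ plus tiny error. Combining the two estimates produces $|\E_x[f_4 H]| \le O(\eps)\E[f]\E[g]$, which is even stronger than the claimed $O(\eps^{1/2})\E[f]\E[g]$.

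The main technical obstacle will be coordinating the parameter choices and carefully tracking error terms through the various shift-invariance steps and the grid-norm monotonicity chain. In particular, $L$ must be simultaneously large enough that $\|H\|_{L/(L-1)}$ is essentially $\|H\|_1$, yet small enough (at most $K$) that $\|f_4\|_L$ remains controlled by the assumed $(K,K)$-grid norm of $f$; the hypotheses $K\ge 100\eps^{-8}\log(2/(d\E[f]\E[g]))$ and $\eta \le \eps^3(\E[f]\E[g])^{O(K^2)}d^{-O(1)}$ are precisely calibrated to allow such a coordinated choice and to make all error terms either $O((\eta d)^{1/(2K)})$ or $e^{-\Omega(\eps^8 K)}$.
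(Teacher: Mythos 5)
Your approach is genuinely different from the paper's, and arguably cleaner: the paper handles the upper and lower bounds separately, invoking \cref{lem:upper-bound} for one direction and a probabilistic exceptional-event argument with the reverse Markov inequality (\cref{fct:rev-mark}) for the other (which is where the $O(\eps^{1/2})$ in the conclusion comes from), whereas your H\"older interpolation treats both directions at once and would yield $O(\eps)$. The core idea --- pairing $L^1$ control on $H$ from $\ell_1$-spreadness against $L^L$ control on a smoothed $f$ from the Gowers grid norm bound, and transferring smoothness from $H$ to $f$ via shift-invariance before H\"oldering --- is sound. However, there is a concrete directional gap at the final step. You end up needing an upper bound on $\snorm{f}_{(B_1,B_4,B_5,L,1)}$ in terms of the hypothesis $\snorm{f}_{(B_1,B_3,B_4,K,K)}\le(1+\eps)\E[f]$, and you invoke \cref{lem:monotone} to ``go back'' from $(B_4,B_5)$ to $(B_3,B_4)$. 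But \cref{lem:monotone} only states
\[ \snorm{f}_{(B_1,B_2,B_3,k,\ell)}\le \snorm{f}_{(B_1,B_4,B_5,k,\ell)} + O((\eta d)^{1/(k\ell)}), \]
i.e.\ shrinking the inner Bohr sets can only \emph{increase} the Gowers grid norm (intuitively, less averaging makes the function look more like a Boolean function, which has larger grid norm). There is no upper bound in the other direction, so \cref{lem:monotone} cannot be used to control $\snorm{f}_{(B_1,B_4,B_5,L,1)}$ by the assumed quantity.

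The fix is simple: smooth $f$ against $B_3$ rather than $B_4$. Your shift-invariance step $|H(x+w)-H(x)|=O(\eta d)$ holds equally well for $w\in B_3$ (since $\nu(B_3)/\nu(B_2)\le\eta$), giving $\E_x[fH]=\E_x[f_3H]+O(\eta d)$ with $f_3\coloneqq f\ast\mbm{1}_{B_3}$. Then
\[ \|f_3\|_L^L = \E_{\substack{x\sim B_1\\ y_1,\dots,y_L\sim B_3}}\Big[\prod_{i=1}^L f(x+y_i)\Big] = \snorm{f}_{(B_1,B_3,B_4,L,1)}^L + O(\eta d), \]
where the $O(\eta d)$ comes from inserting a $B_4$-shift of $x$ (using $\nu(B_4)/\nu(B_1)\le\eta$). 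Now monotonicity of Gowers grid norms in the dimension indices alone (passing $(L,1)\le(K,K)$ for $L\le K$, via Jensen applied pointwise in $x$) bounds this by $((1+\eps)\E[f])^L+O(\eta d)$ using only the assumed $(B_1,B_3,B_4,K,K)$ norm, and no application of \cref{lem:monotone} is needed. With that substitution, and checking that your choice $L\asymp\log(1/(\eps\E[g]))$ indeed satisfies $L\le K$ under the stated hypothesis on $K$, the rest of your argument goes through and even improves the constant $\eps^{1/2}$ to $\eps$.
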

\begin{proof}
For the upper bound, first write
\[ \E_{\substack{x\sim B_1\\y\sim B_2}}f(x)g(x+y) \le \E_{\substack{x\sim B_1\\y\sim B_2}}f(x+y)g(x) + O(\eta d). \]
Now apply \cref{lem:upper-bound} with $f_1$ as $g$, $f_2 = 1$, and the $g$ in \cref{lem:upper-bound} as $f_1$ with $\tau = \snorm{f}_{(B_1,B_3,B_4,K,K)}$ (and the same $\eta$ and $K$), which gives the desired result.

For the lower bound, define
\begin{align*}
\mc{E}_1(x, y) &= \mbm{1}\Big[\Big(\E_{\substack{x_1',\ldots,x_K'\sim B_3\\y_1',\ldots,y_K'\sim B_4}}\prod_{\substack{i\in [K]\\j\in [K]}}f(x+y+x_i' + y_i') \ge ((1+2\eps) \cdot \E[f]\Big)^{K^2}\Big].
\end{align*}
To bound $\E[\mc{E}_1(x,y)]$, note that
\begin{align*}
&\mbm{1}\Big[\E_{\substack{x_1',\ldots,x_K'\sim B_3\\y_1',\ldots,y_K'\sim B_4}}\prod_{\substack{i\in [K]\\j\in [K]}}f(x+y+x_i' + y_i') \ge ((1+2\eps) \cdot \E[f])^{K^2}\Big] \\
& \qquad \qquad \qquad \le ((1+2\eps)\E[f])^{-K^2} \E_{\substack{x \in B_1 \\y \in B_2}} \E_{\substack{x_1',\ldots,x_K'\sim B_3\\y_1',\ldots,y_K'\sim B_4}} \prod_{\substack{i\in [K]\\j\in [K]}}f(x+y+x_i' + y_i') \\
& \qquad \qquad \qquad \le ((1+2\eps)\E[f])^{-K^2}\Big(\|f\|_{B_1,B_3,B_4,K,K} + O(\eta d) \Big) \le e^{-\Omega(\eps K^2)} + O(\E[f]^{-K^2} \eta d),
\end{align*}
because $\|f\|_{B_1,B_3,B_4,K,K} \le (1+\eps)\E[f]$.

Furthermore observe that if $\mc{E}(x,y) = 0$, then monotonicity of the $(K,K)$-Gowers grid norms (\cref{lem:monotone}) gives that $\E_{z\sim B_3}f(x+y + z)\le (1+3\eps) \cdot \E[f] + O(\eta d)$.
Now define
\[\mc{E}_2(x,y,y') =\mbm{1}\Big[\E_{\substack{z\sim B_3}}f(x+y+z) \le (1-\eps^{1/2}) \cdot \E[f]\Big] + \mbm{1}\Big[\E_{\substack{z\sim B_3}}g(x+y'+z)\le (1-\eps^{1/2}) \cdot \E[g]\Big]. \]
\cref{fct:rev-mark} (applied to $\min(\E_{z\sim B_3}f(x+y' + z),(1+3\eps)\cdot \E[f])$ implies that
\[ \E_{x \sim B_1, y \sim B_2}\Big[\mbm{1}\Big[\E_{\substack{z\sim B_3}}f(x+y+z) \le (1-\eps^{1/2}) \cdot \E[f]\Big]\Big] \le O(\eps^{1/2} + \eta d), \]
and the $(B_1, B_4, \eps)$ $\ell_1$-spreadness of $g$ and Markov's inequality imply that
\[ \E_{x \sim B_1, y' \sim B_2}\Big[\mbm{1}\Big[\E_{\substack{z\sim B_3}}g(x+y'+z)\le (1-\eps^{1/2}) \cdot \E[g]\Big]\Big] \le O(\eps^{1/2} + \eta d). \]
In total, we have concluded that
\[\E[\mc{E}_2(x,y,y')]\le O(\eps^{1/2} + \eta d).\]
Thus 
\begin{align*}
\E_{\substack{x\sim B_1\\y\sim B_2}}f(x)g(x+y) &\ge \E_{\substack{x\sim B_1\\y,y'\sim B_2\\z,z'\sim B_3}}f(x+y'+z)g(x+y+z') - O(\eta d)\\
&\ge \E_{\substack{x\sim B_1\\y,y'\sim B_2}}(1-\mc{E}_2(x,y,y')) (\E_{z\sim B_3}f(x+y'+z))(\E_{z'\sim B_4}g(x+y+z')) - O(\eta d)\\
&\ge (1-O(\eps^{1/2})) \cdot \E[f] \cdot \E[g] - O(\eta d)
\end{align*}
which implies the desired result. 
\end{proof}

\subsection{Algebraic spreadness and Gowers grid norms}
We now give the required analog of ``algebraic'' spreadness in the case of Bohr sets. 
\begin{definition}[Algebraic spreadness]
\label{def:bohrspread}
Let $B \subseteq G$ be a regular Bohr set. We say that $X \subseteq B$ is \emph{$(r, \eta_s, \eps)$-algebraically spread} within $B$ if for all regular Bohr sets $B' \subseteq B$ with $\rank(B') \le \rank(B) + r$ and radius $\nu(B') \ge \eta_s \nu(B)$, and for all $x \in G$ it holds that
\[ \frac{|(X - x) \cap B'|}{|B'|} \le (1+\eps)\frac{|X|}{|B|}. \]
\end{definition}

The key technical input from the work of Kelley and Meka \cite{KM23} is converting between algebraic spreadness and the guarantee that a function $f$ does not correlate with ``convolutions''. The statement we need appears with a careful proof in the work of Filmus, Hatami, Hosseini, and Kelman \cite{FHHK24} but not as a standalone statement. We reproduce a proof (assuming some known almost periodicity results) in \cref{sec:almost-period}.
\begin{theorem}\label{thm:ap+KM}
Let $\eta,\eps\in (0,1/2)$ and $k\ge 1$. Let $B_1, B_2, \dots$ be a $(d,\eta)$-small sequence of Bohr sets. Let $f_i:B_i\to [0,1]$ and $g:B_1\to [0,1]$.

Suppose that $\E_{x\sim B_i}[f_i(x)]\ge 2^{-k}$ and $\E_{x\sim B_1}[g(x)]\ge 2^{-k}$ and $\eta \le O(d^{-2} \cdot 2^{-O(k^2\log(1/\eps)/\eps)})$. Furthermore suppose that 
\[\E_{\substack{x\sim B_1\\y\sim B_2}}[f_1(x)f_2(y)g(x+y)]\ge (1+\eps) \cdot \E[g] \cdot \E[f_1] \cdot \E[f_2].\]

Then there exists a regular Bohr set $B'\subseteq B_1$ with radius $r'$ and dimension $d+d'$ such that 
\[r' \ge r_2 \cdot \eps \cdot d^{-4}\cdot \eta \cdot 2^{-O(k^2\log(1/\eps)/\eps)}\text{ and }d' = O(k^{8} \cdot \eps^{-9}) \]
and $x^{\ast}\in B_1$ such that 
\[\E_{x\sim B'}[g(x+x^{\ast})]\ge (1+\eps/2) \cdot \E_{x\sim B_1}[g(x)].\]
\end{theorem}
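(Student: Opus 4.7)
The plan is to deduce \cref{thm:ap+KM} from an $L^p$-almost periodicity statement of Croot--Sisask / Kelley--Meka type. Define the non-negative function $F: G \to \mb{R}_{\ge 0}$ by
\[ F(z) \coloneqq \E_{x \sim B_1,\, y \sim B_2}\bigl[f_1(x) f_2(y)\, \mbm{1}[x+y=z]\bigr], \]
so that $\sum_z F(z) = \E[f_1]\E[f_2]$ and the hypothesis reads $\sum_z g(z) F(z) \ge (1+\eps)\E[g]\E[f_1]\E[f_2]$. The goal is to produce a regular Bohr set $B'$ and a point $x^*$ with $\E_{t \sim B'}[g(x^* + t)] \ge (1+\eps/2)\E[g]$, which is equivalent to $(g * \mu_{-B'})(x^*) \ge (1+\eps/2)\E[g]$.

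The engine, to be supplied by \cref{sec:almost-period}, is the following almost periodicity statement: for $p = \Theta(k/\eps)$, there exists a regular Bohr set $B'$ of rank $d + O(k^8 \eps^{-9})$ and radius $r' \ge r_2 \cdot \eps \cdot d^{-4} \cdot \eta \cdot 2^{-O(k^2 \log(1/\eps)/\eps)}$ such that the $\ell^p$-norm $\|F - F*\mu_{B'}\|_p$ is small enough that Hölder's inequality with conjugate exponents $(p, p/(p-1))$ applied to $\langle g, F - F*\mu_{B'}\rangle$ yields
\[ \bigl|\langle g, F - F*\mu_{B'} \rangle\bigr| \le \tfrac{\eps}{2}\,\E[g]\E[f_1]\E[f_2]. \]
The quantitative form needed here is precisely what iterated Croot--Sisask applied to the averaging in the smaller variable $y \sim B_2$ produces (hence the radius scales with $r_2$ rather than $r_1$), followed by a Chang/Bogolyubov-type step converting a large set of approximate translation symmetries into Bohr structure; this conversion is responsible for the rank overhead $O(k^8\eps^{-9})$ and the factors $d^{-4}$ and $2^{-O(k^2\log(1/\eps)/\eps)}$ in the radius.

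Given the almost periodicity, the hypothesis becomes $\langle g, F*\mu_{B'}\rangle \ge (1+\eps/2)\E[g]\E[f_1]\E[f_2]$, and moving the convolution onto $g$ gives $\langle g*\mu_{-B'}, F\rangle \ge (1+\eps/2)\E[g]\E[f_1]\E[f_2]$. Since $F \ge 0$ has total mass $\E[f_1]\E[f_2]$, a direct averaging produces $x^* \in G$ with $(g*\mu_{-B'})(x^*) \ge (1+\eps/2)\E[g]$; the support of $F$ lies in $B_1 + B_2$, so up to absorbing the shift of $B_2$ into $B_1$ (using regularity of $B_1$ and $\nu(B_2) \le \eta \nu(B_1)$), one may take $x^* \in B_1$, yielding the claimed density increment.

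The main obstacle is establishing the almost periodicity input with the quantitative parameters stated above. The dominant tension is the choice $p \asymp k/\eps$: one needs $p$ large enough that Hölder against the density-$2^{-k}$ function $g$ only loses a factor $\eps$, yet in Croot--Sisask a larger $p$ inflates both the rank and the reciprocal radius of $B'$. Balancing these two competing constraints, combined with careful tracking of the $B_1, B_2$ asymmetry and the regularity of the resulting Bohr set, is the technically delicate core of the proof (deferred to \cref{sec:almost-period}); by comparison, the passage above from almost periodicity to the density increment is routine.
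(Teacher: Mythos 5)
Your proposal takes a genuinely different route from the paper's, and unfortunately the route does not achieve the stated quantitative parameters.

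Where you diverge: the paper's proof does \emph{not} apply almost periodicity directly to the convolution-type object $F$. Instead it first performs a dependent-random-choice (``sifting'') step in the style of Kelley--Meka: after introducing an auxiliary Bohr set $B_3$ and raising to the $p$-th power with $p = \Theta(k\log(1/\eps)/\eps)$, it fixes a good tuple $y_1,\dots,y_p$ and forms the derived sets $H_1(x)=\prod_j g(x+y_j)$, $H_2(z)=\prod_j f_2(y_j+z)$, together with the level set $\{x : \mc{E}_3(x)=0\}$. It is \emph{these} indicator functions (after \cref{lem:extract-cor}) that are fed into \cref{thm:AP-input}, which is an $L^1$-type almost-periodicity statement. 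Your proposal skips the dependent-random-choice step and instead asks for an $L^p$-almost-periodicity bound on $F$ itself, which is the older Sanders/Schoen--Sisask paradigm.

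The gap is the claim that $\|F - F*\mu_{B'}\|_p$ can be made $\le \frac{\eps}{2}\delta^{1/p}\E[f_1]\E[f_2]$ with Bohr-set parameters polynomial in $k$ and $1/\eps$. Croot--Sisask type bounds for $F \approx f_1*f_2$ naturally control the error relative to a norm of $f_1$ (or $F$), not relative to $\E[f_1]\E[f_2]$. Concretely, the standard conclusion is $\|\mu_A*f_1 - (\mu_A*f_1)*\mu_{B'}\|_p \lesssim \eps'\|f_1\|_p$ on a Bohr set of rank increment $O((\eps')^{-2}p\log(1/\alpha))$. Since $\|f_1\|_p$ can be $\Theta(1)$ while $\E[f_1]\E[f_2]$ is as small as $2^{-2k}$, achieving your target error forces $\eps' \lesssim \eps \cdot 2^{-2k}$, and the rank increment then blows up to $O(\eps^{-2}2^{4k}pk)$ --- exponential in $k$, not the $O(k^8\eps^{-9})$ claimed. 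A Chang/Bogolyubov postprocessing step cannot repair this because the exponential loss is already present in the error tolerance you must demand of the almost-periodicity input. Avoiding exactly this exponential loss is the entire point of the dependent-random-choice step: by fixing $y_1,\dots,y_p$ and passing to indicator functions of derived sets, one only needs \emph{additive} almost periodicity (\cref{thm:AP-input}) at error $\eps$ rather than $\eps\cdot 2^{-O(k)}$, at the cost of the derived sets having density $2^{-O(kp)}$, which enters the Bohr-set parameters only logarithmically. The surrounding steps of your proposal (Hölder against $g$, moving the convolution, localizing $x^*$ into $B_1$ using regularity and $\nu(B_2)\le\eta\nu(B_1)$) are fine, but they sit on top of an almost-periodicity input that is not available with the stated strength.
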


The last remaining technical issue for this section is connecting algebraic spreadness (\cref{def:bohrspread}) with the Gowers grid norms (\cref{def:gow-high}). This is essentially an immediate consequence of sifting. 
\begin{theorem}\label{thm:bohrkk}
Fix an integer $K\ge 1$, $\eta\in (0,1/2)$, and $\tau\in (0,1)$. Let $B_1, B_2, \dots$ be a $(d,\eta)$-small sequence of Bohr sets. Define $f:G\to [0,1]$ such that 
\[\snorm{f}_{(B_1,B_2,B_3,K,K)}\ge \tau.\]

Then for any $\eps > 0$ there exists $g_1:B_2\to [0,1]$, $g_2:B_3\to [0,1]$, and $x\in B_1$ such that
\[\E_{\substack{y\sim B_2\\z\sim B_3}}[g_1(y)g_2(z)f(x + y + z)]\ge (1-\eps) \cdot \tau \cdot \E_{\substack{y\sim B_2\\z\sim B_3}}[g_1(y)g_2(z)]\]
and $\E[g_i]\ge \eps \cdot \tau^{O(K)}$.
\end{theorem}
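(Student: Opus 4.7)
The plan is to reduce Theorem \ref{thm:bohrkk} to the (non-relative) sifting theorem, Theorem \ref{thm:sift}, by using an averaging argument over the shift parameter $x \in B_1$. The key observation is that the Gowers grid norm of Definition \ref{def:gow-high} is, up to a $K^2$-th power, an average over $x \sim B_1$ of the ordinary grid norm (Definition \ref{def:grid}) of the shifted function $f_x: B_2 \times B_3 \to [0,1]$ defined by $f_x(y,z) := f(x+y+z)$. Indeed, unfolding definitions gives
\[
\snorm{f}_{(B_1,B_2,B_3,K,K)}^{K^2} \;=\; \E_{x \sim B_1} \E_{\substack{y_1,\ldots,y_K \sim B_2 \\ z_1,\ldots,z_K \sim B_3}} \prod_{i,j \in [K]} f_x(y_i, z_j) \;=\; \E_{x \sim B_1}\bigl[\snorm{f_x}_{G(K,K)}^{K^2}\bigr],
\]
where on the right-hand side the grid norm is computed with $\Omega_1 = B_2$ and $\Omega_2 = B_3$ equipped with uniform measure.

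Given the hypothesis $\snorm{f}_{(B_1,B_2,B_3,K,K)} \ge \tau$, pigeonhole then produces some $x \in B_1$ such that $\snorm{f_x}_{G(K,K)} \ge \tau$. I would then apply Theorem \ref{thm:sift} to $f_x$ with parameters $k = \ell = K$ and the given $\eps$. This yields functions $g_1: B_2 \to [0,1]$ and $g_2: B_3 \to [0,1]$ such that
\[
\E_{y \sim B_2, z \sim B_3}\bigl[f_x(y,z)\, g_1(y)\, g_2(z)\bigr] \;\ge\; (1-\eps)\,\tau \, \E[g_1]\,\E[g_2]
\]
and $\E[g_1]\,\E[g_2] \ge \eps \cdot \tau^{O(K)}$. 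Translating $f_x(y,z) = f(x+y+z)$ gives the first conclusion of the theorem verbatim.

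For the individual lower bounds on $\E[g_i]$, I would use the trivial observation that since each $g_i$ takes values in $[0,1]$ we have $\E[g_{3-i}] \le 1$, so
\[
\E[g_i] \;\ge\; \E[g_1]\,\E[g_2] \;\ge\; \eps \cdot \tau^{O(K)}
\]
for $i \in \{1,2\}$, as required. Since no relative/pseudorandomness hypothesis enters here, I do not anticipate any serious obstacle: the entire content of the proof is the reduction step, and both the averaging identity and the application of Theorem \ref{thm:sift} are straightforward. The only mild care needed is in verifying that the $O(k+\ell) = O(K)$ exponent bound from Theorem \ref{thm:sift} matches the $\tau^{O(K)}$ bound in the statement, which it does since $k + \ell = 2K$.
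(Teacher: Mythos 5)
Your proposal is correct and follows exactly the same route as the paper: average over $x \sim B_1$ to reduce the $(K,K)$-Gowers grid norm to the ordinary grid norm of the shifted function $f_x(y,z)=f(x+y+z)$ for a well-chosen $x$, then apply \cref{thm:sift}. The only additional detail you supply beyond the paper's one-paragraph proof is the explicit observation that $\E[g_1]\E[g_2]\ge\eps\tau^{O(K)}$ together with $\E[g_i]\le 1$ yields the individual bounds on each $\E[g_i]$, which the paper leaves implicit.
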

\begin{proof}
From our assumption, we immediately have 
\[\sup_{x\in B_1}\E_{\substack{y_1,\ldots,y_K\sim B_2\\z_1,\ldots,z_K\sim B_3}}\prod_{\substack{i\in [K]\\j\in [K]}}f(x+y_i+z_j)\ge \E_{\substack{x\sim B_1\\y_1,\ldots,y_K\sim B_2\\z_1,\ldots,z_K\sim B_3}}\prod_{\substack{i\in [K]\\j\in [K]}}f(x+y_i+z_j) \ge \tau^{K^2}.\]
Fix $x$ achieving the supremum and define $F(y,z) = f(x+y+z)$. We may apply \cref{thm:sift} and obtain $g_1:B_2\to [0,1]$ and $g_2:B_3\to [0,1]$ with $\E[g_i]\ge \eps \cdot \tau^{O(K)}$ and 
\[\E_{\substack{y\sim B_2\\z\sim B_3}}[g_1(y)g_2(z)f(x + y + z)]\ge (1-\eps) \cdot \tau \cdot \E_{\substack{y\sim B_2\\z\sim B_3}}[g_1(y)g_2(z)]. \qedhere\]
\end{proof}

\subsection{Pseudorandomization}

We note the easy claim that every set contains a somewhat large subset that is spread within a smaller Bohr set. This is a bit more complicated than the analogous \cref{claim:spread} because we need to work to make the sets that we increment onto also regular Bohr sets.

\begin{claim}
\label{claim:spreadbohr}
Let $r$ be an integer, $B \subseteq G$ be a regular Bohr set with $d = \rank(B)$, and $\eps, \eta_s \in (0, 1)$. For $X \subseteq B$ with $\delta \coloneqq |X|/|B|$ there is a regular Bohr set $B' \subseteq B$ and $x \in G$ such that $X' \coloneqq X \cap (x+B')$ is $(r, \eta_s, \eps)$-algebraically spread within $x+B'$, $\rank(B') \le d + O(r\eps^{-1}\log(1/\delta))$, $\frac{|X'|}{|B'|} \ge \delta$, and $\nu(B') \ge \nu(B) \cdot (\eps\delta\eta_s/(2d'))^{O(\eps^{-1}\log(1/\delta))}$, where $d' \coloneqq d + O(r\eps^{-1}\log(1/\delta))$.
\end{claim}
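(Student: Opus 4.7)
The plan is to run the natural density-increment iteration analogous to the finite-field version (\cref{claim:spread}), exploiting the fact that the witness to a failure of algebraic spreadness in \cref{def:bohrspread} is itself automatically a regular Bohr set. Initialize $B^{(0)} = B$ and $x^{(0)} = 0$, and at step $i$ set $X_i \coloneqq X \cap (x^{(i)} + B^{(i)})$. Test whether the translate $X_i - x^{(i)} \subseteq B^{(i)}$ is $(r,\eta_s,\eps)$-algebraically spread within $B^{(i)}$; if so, terminate. Otherwise, \cref{def:bohrspread} produces a regular Bohr set $B^{(i+1)} \subseteq B^{(i)}$ with $\rank(B^{(i+1)}) \le \rank(B^{(i)}) + r$ and $\nu(B^{(i+1)}) \ge \eta_s\,\nu(B^{(i)})$, together with a shift $y \in G$, such that
\[ \frac{|(X_i - x^{(i)} - y) \cap B^{(i+1)}|}{|B^{(i+1)}|} > (1+\eps)\,\frac{|X_i|}{|B^{(i)}|}. \]
Put $x^{(i+1)} \coloneqq x^{(i)} + y$, and note that $(X_i - x^{(i)} - y)\cap B^{(i+1)} \subseteq (X - x^{(i+1)}) \cap B^{(i+1)}$, so the new density satisfies
\[ \frac{|X_{i+1}|}{|B^{(i+1)}|} \;\ge\; \frac{|(X_i - x^{(i)} - y) \cap B^{(i+1)}|}{|B^{(i+1)}|} \;>\; (1+\eps)\,\frac{|X_i|}{|B^{(i)}|}. \]

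Because the density increases by a factor of $1+\eps$ at each step while remaining bounded above by $1$, the procedure halts after $T \le O(\eps^{-1}\log(1/\delta))$ iterations. Output $B' \coloneqq B^{(T)}$, $x \coloneqq x^{(T)}$, and $X' \coloneqq X \cap (x+B')$; by construction $X' - x$ is $(r,\eta_s,\eps)$-algebraically spread in $B'$, and $|X'|/|B'| \ge (1+\eps)^T \delta \ge \delta$. The rank increases by at most $r$ in each of the $T$ steps, so $\rank(B') \le d + rT \le d + O(r\eps^{-1}\log(1/\delta))$, matching the advertised $d'$. The radius decreases by a factor of at most $\eta_s$ per step, so
\[ \nu(B') \;\ge\; \eta_s^T\,\nu(B) \;\ge\; \nu(B)\cdot \eta_s^{O(\eps^{-1}\log(1/\delta))}, \]
which is stronger than (and therefore implies) the claimed bound $\nu(B') \ge \nu(B)\cdot (\eps\delta\eta_s/(2d'))^{O(\eps^{-1}\log(1/\delta))}$.

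There is no serious obstacle; the main subtlety is that no separate regularization step (via \cref{lem:regular}) is needed along the iteration, because \cref{def:bohrspread} only compares against regular Bohr sets, so each witness $B^{(i+1)}$ is automatically regular. The additional factors of $\eps\delta/(2d')$ in the stated radius lower bound are mere slack that the iteration comfortably accommodates, presumably included so that the claim composes cleanly with later regularity-based estimates.
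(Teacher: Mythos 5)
You follow the paper's iteration exactly, except you drop the re-regularization step: the paper dilates the witness $B^{(i)'}$ by a factor $\eta' \approx \eps\delta/d'$ (to make it regular) and then averages over shifts to recover a good translate and density increment. Your justification for dropping this --- that \cref{def:bohrspread} only compares against \emph{regular} Bohr sets, so any witness of a failure is itself regular --- is correct if the definition is read literally, and under that reading your argument gives the cleaner bounds $\nu(B') \ge \eta_s^T\nu(B)$ and per-step increment $(1+\eps)$ rather than $(1+\eps/2)$.

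However, the paper's own proof explicitly states that one ``must be a bit careful because $B^{(0)'}$ is not necessarily regular,'' directly contradicting your reading. This is a red flag that the authors likely intended \cref{def:bohrspread} to quantify over \emph{all} nested Bohr sets, a stronger spreadness notion whose negation yields a potentially non-regular witness. If that is the intended definition, your iteration has a real gap: without re-regularizing, $B^{(i+1)}$ may be irregular, so the step-$(i+1)$ spreadness test (and the final output, whose container must be regular per \cref{def:bohrspread}) is not well-posed, and regularity-based estimates such as \cref{lem:shift-invar} that the surrounding machinery relies on no longer apply. The $\eps\delta/(2d')$ slack in the claimed radius bound exists precisely to absorb the paper's dilation step. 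Your argument is valid under \cref{def:bohrspread} as literally written, but given the internal inconsistency in the paper you should not treat the omission of re-regularization as a safe simplification without resolving which reading was intended.
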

\begin{proof}
We proceed iteratively. Initialize $B^{(0)} = B$. If $X \cap B^{(0)}$ is $(r,\eta_s,\eps)$-algebraically spread within $B^{(0)}$, we are done. Otherwise, there must exist $x \in G$ and a Bohr set $B^{(0)'} \subseteq B^{(0)}$ with $\rank(B^{(0)'}) \le \rank(B^{(0)}) + r$ and $\nu(B^{(0)'}) \ge \eta_s \cdot \nu(B^{(0)})$ satisfying
\[
    \frac{|X \cap (x+B^{(0)'})|}{|B^{(0)'}|} \ge (1+\eps)\frac{|X \cap B^{(0)}|}{|B^{(0)}|}.
\]
Now, we want to simply iterate the argument, but we must be a bit careful because $B^{(0)'}$ is not necessarily regular (and we wish to output a regular Bohr set). Instead let $B^{(1)} = \eta'B^{(0)'}$ for some $\eta' \in [\frac{\eps\delta}{2Cd'}, \frac{\eps\delta}{Cd'}]$ for sufficiently large $C$ so that $B^{(1)}$ is regular. Let $X^{(0)'} = X \cap (x + B^{(0)'})$. Note that
\[ \E_{y \in B^{(0)'}} |X^{(0)'} \cap (y + x + B^{(1)})| = \frac{|B^{(1)}|}{|B^{(0)'}|} |X^{(0)'}| \pm O(\eta'd'|B^{(1)}|). \]
Thus there is some $y \in B^{(0)'}$ so that
\[ \frac{|X^{(0)'} \cap (y + x + B^{(1)})|}{|B^{(1)}|} \ge \frac{|X^{(0)'}|}{|B^{(0)'}|} - O(\eta'd') \ge (1+\eps/2)\delta. \]
Now set $X^{(1)} \coloneqq X^{(0)'} \cap (y + x + B^{(1)})$ and iterate the same process starting with $X^{(1)} \subseteq y+x+B^{(1)}$. The number of iterations is at most $O(\eps^{-1}\log(1/\delta))$. The conclusions follow because after $i$ iterations we know that $\rank(B^{(i)}) \le d + ri$, $\nu(B^{(i)}) \ge (\eps \delta \eta_s / (2 C d'))^{i} \nu(B)$, and $\frac{|X^{(i)}|}{|B^{(i)}|} \ge (1+\eps/2)^i \delta$.
\end{proof}

In the setting of general abelian groups we will pseudorandomize all three sets $X$, $Y$, and $D$. $X$ and $Y$ will be made to be algebraically spread (see \cref{def:bohrspread}), while $D$ will be made to be $\ell_1$-spread (see \cref{def:l1spread}) -- we do not see how to guarantee that $D$ is also algebraically spread.
This contrasts with the situation in the finite field setting where we only made $X$ and $Y$ algebraically spread. We make $D$ $\ell_1$-spread in order to establish that the size of $S(X, Y, D)$ is close to expected (see \cref{lem:conv-lower-bound,lem:product-spread}).

Our proof proceeds in two phases. First, we will prove a lemma which allows us to pseudorandomize both $X$ and $Y$, analogous to how \cref{thm:algopseudo} is proven. Then we will do a very particular partitioning algorithm to $D$ to ensure $\ell_1$-spreadness. This might ruin the algebraic spreadness of $X$ and $Y$ on a small fraction of pieces, which we then fix with recursion.

To start, we prove the analogue of \cref{lemma:2dim}, which said that we can partition $X \times Y$ into $X_i \times Y_i$, all of which are spread and not too small, except for a very small fraction. In this setting, we will not obtain a true partition, but instead a distribution over $X_i \times Y_i$ that on average covers each element of $X \times Y$ once (again, minus some small fraction). We also note that to interact with the density increment argument in \cref{sec:dsbohr} that the initial conditions for the lemma are a bit different. While in \cref{lemma:2dim} we started with $X, Y, D$ such that at least one of $X$ or $Y$ was algebraically spread, in this new setting we instead just have that $X$ is upper bounded in $(K, K)$-Gowers grid norm. The argument of \cref{lemma:2dim} still carries over even with these changes.

\begin{lemma}
\label{lemma:2dimbohr}
Let $r$ be a positive integer and $\eps, \eta, \eta_s, \beta > 0$. Let $B_1, B_2, \dots$ be a $(d,\eta)$-small sequence of Bohr sets. Let $X \subseteq B_1$, $Y \subseteq B_2$, and let $\delta_X = |X|/|B_1|$ and $\delta_Y = |Y|/|B_2|$.
Define \[ g \coloneqq O(\eps^{-1}\log(1/(\delta_X\delta_Y)) + \eps^{-1}\log(1/\beta)^2), \]
$d' = \rank(B_1) + O(rg^2\log(1/\beta))$, and assume that $\eta  \le (\delta_X\delta_Y/d')^{O(1)}2^{-O(\log(1/\beta)^2)}$.
Then there is a measure $\mu$ over rectangles $X_i \times Y_i \subseteq X \times Y$ such that:
\begin{enumerate}
    \item Each $X_i \times Y_i \in \supp(\mu)$ satisfies that $(X_i - x_i)$ is $(r,\eta_s,\eps)$-spread in $B_1^{(i)}$ for some $x_i$ and $(Y_i - y_i)$ is $(r,\eta_s,\eps)$-spread in $B_2^{(i)}$ for some $y_i$, where $B_2^{(i)} \subseteq B_1^{(i)}$ are regular Bohr sets with the same frequencies and $\nu(B_2^{(i)})/\nu(B_1^{(i)}) \in [\eta/2, \eta]$.
    \item Additionally, $\rank(B_1^{(i)}) \le \rank(B_1) + O(rg^2\log(1/\beta))$.
    \item Also, $\nu(B_1^{(i)}) \ge \eta_s^{O(g^2\log(1/\beta))}\nu(B_1)$.
    \item For any $1$-bounded function $f: X \times Y \to [-1,1]$ it holds that
    \[ \E_{x \in X, y \in Y} f(x, y) = \E_{X_i \times Y_i \sim \mu} \E_{x \in X_i, y \in Y_i} f(x, y) \pm O(\beta). \]
    \item For any $X_i \times Y_i \in \supp(\mu)$ it holds that $\frac{|X_i||Y_i|}{|B_1^{(i)}||B_2^{(i)}|} \ge e^{-O(\log(1/\beta)^2)}\delta_X\delta_Y$.
\end{enumerate}
\end{lemma}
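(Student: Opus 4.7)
The proof follows the two-phase structure of \cref{lemma:2dim}: first a one-round partitioning analog of \cref{lemma:oneround2dim}, then an iteration for $O(\log(1/\beta))$ rounds on the uncovered pieces. Affine subspaces are replaced by regular Bohr sets, and each invocation of \cref{claim:spread} is replaced by \cref{claim:spreadbohr}. The essential new feature is that Bohr sets do not admit exact coset partitions, so the algorithm cannot output a true partition of $X\times Y$ into disjoint rectangles; instead it outputs a measure $\mu$ over (possibly overlapping) rectangles, and conclusion (4) becomes the averaging statement that the induced measure on $X\times Y$ approximates the uniform measure to total error $O(\beta)$.

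For the one-round step, iteratively apply \cref{claim:spreadbohr} to $X$ to produce spread pieces $X_t\subseteq x_t+B_1^{(t)}$ with $|X_t|/|B_1^{(t)}|\geq\delta_X$, stopping when the residual mass of $X$ drops below $\beta^2\delta_X|B_1|$. For each $t$, choose a regularized sub-Bohr set $B_2^{(t)}$ with the same frequencies as $B_1^{(t)}$ and radius ratio in $[\eta/2,\eta]$. To decompose $Y$ compatibly, sample a uniformly random shift $v\in B_2$ (replacing the deterministic coset partition of the finite-field argument), form the localization $Y\cap(v+B_2^{(t)})$, and apply \cref{claim:spreadbohr} to it to obtain spread pieces $Y_{t,v,s}\subseteq y_{t,v,s}+B_2^{(t,s)}$ inside further sub-Bohr sets. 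Finally, split $X_t$ along shifts of $B_1^{(t,s)}$ (the $\eta^{-1}$-dilation of $B_2^{(t,s)}$) and discard the cosets whose restricted $X$-piece loses spreadness, just as in the set $\mc G$ of \cref{lemma:oneround2dim}; the reverse Markov bound of \cref{fct:rev-mark} shows that at least a constant fraction of $X_t$ is retained. Define $\mu$ by sampling $t$ with weight $|X_t|$, then $v\in B_2$ uniformly, then $s$ with weight $|Y_{t,v,s}|$, and output the resulting aligned rectangle.

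Conclusions (1)--(3) and (5) are read off from the construction together with the quantitative bounds of \cref{claim:spreadbohr}, which control the per-step rank and radius loss. Conclusion (4) follows from a Fubini computation: averaging over $v$ recovers $\E_{y\in Y}$ up to the Bohr-set regularity error $O(\eta d')$ via \cref{lem:shift-invar}, while the residual of $X$ and the pieces discarded in the $\mc G$-step contribute at most $O(\beta^2)$ of total-variation error per layer. Iterating the one-round step for $O(\log(1/\beta))$ layers on the uncovered pieces drives the uncovered mass below $\beta$ and accumulates the stated rank bound $\rank(B_1^{(i)})\leq\rank(B_1)+O(rg^2\log(1/\beta))$ and radius bound $\nu(B_1^{(i)})\geq\eta_s^{O(g^2\log(1/\beta))}\nu(B_1)$.

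The main obstacle is rigorously verifying (4) with error $O(\beta)$: since the rectangles of $\mu$ overlap, every application of Bohr-set averaging incurs a regularity error of order $\eta\cdot\rank(B_1^{(i)})$, and these errors must be controlled uniformly across all random shifts, all intermediate sub-Bohr-set passages, and all $O(\log(1/\beta))$ recursive layers. This is precisely what forces the hypothesis $\eta\leq(\delta_X\delta_Y/d')^{O(1)}2^{-O(\log(1/\beta)^2)}$: the super-polynomial $2^{-O(\log(1/\beta)^2)}$ factor is the budget that pays for both the recursion depth and the iterated spread decompositions.
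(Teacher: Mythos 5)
Your proposal is correct and follows essentially the same route as the paper: the paper abstracts your one-round step into \cref{lemma:oneroundbohr} (iterated spread decomposition of $X$ into $X_t\subseteq x_t+\mc{B}_t$, localization of $Y$ at a random shift, spread decomposition of that localization, then a compatible re-localization of $X_t$, with a ``good'' event $\mc{E}$ controlled by \cref{fct:rev-mark}), and then proves \cref{lemma:2dimbohr} by recursing on the pieces outside $\mc{E}$ for depth $O(\log(1/\beta))$. Your sketch captures the same construction, the same measure-over-rectangles replacement for the finite-field partition, and the same accounting for conclusion (4) via \cref{lem:shift-invar} and the hypothesis on $\eta$.
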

Again, we prove this by first establishing a one round partitioning statement which is then used recursively.
\begin{lemma}
\label{lemma:oneroundbohr}
Let $r$ be a positive integer and $\eps, \eta, \eta_s, \beta > 0$. Let $B_1, B_2, \dots$ be a $(d,\eta)$-small sequence of Bohr sets. Let $X \subseteq B_1$, $Y \subseteq B_2$, and let $\delta_X = |X|/|B_1|$ and $\delta_Y = |Y|/|B_2|$. Define $g \coloneqq O(\eps^{-1}\log(1/(\delta_X\delta_Y\beta)))$, $d' = \rank(B_1) + O(rg^2)$, and assume that $\eta  \le (\beta\delta_X\delta_Y/d')^{O(1)}$. Then there is a measure $\mu$ over rectangles $X_i \times Y_i \subseteq X \times Y$, and an event $\mc{E}(X_i, Y_i)$ such that:
\begin{enumerate}
    \item Each $X_i \times Y_i \in \supp(\mu)$ satisfies that $(X_i - x_i) \subseteq B_1^{(i)}$ for some $x_i$ and $(Y_i - y_i) \subseteq B_2^{(i)}$ for some $y_i$, where $B_2^{(i)} \subseteq B_1^{(i)}$ are regular Bohr sets with the same frequencies and $\nu(B_2^{(i)})/\nu(B_1^{(i)}) \in [\eta/2, \eta]$.
    \item Additionally, $\rank(B_1^{(i)}) \le \rank(B_1) + O(rg^2)$.
    \item Also, $\nu(B_1^{(i)}) \ge \eta_s^{O(g^2)}\nu(B_1)$.
    \item For any $1$-bounded function $f: X \times Y \to [-1, 1]$ it holds that
    \[ \E_{x \in X, y \in Y} f(x, y) = \E_{X_i \times Y_i \sim \mu} \E_{x \in X_i, y \in Y_i} f(x, y) \pm O(\beta^2). \]
    \item For any $X_i \times Y_i \in \supp(\mu)$ it holds that $\frac{|X_i||Y_i|}{|B_1^{(i)}||B_2^{(i)}|} \ge \beta^{O(1)}\delta_X\delta_Y$.
    \item If $\mc{E}$ holds then $X_i - x_i \subseteq B_1^{(i)}$ and $Y_i - y_i \subseteq B_2^{(i)}$ are $(r, \eta_s, \eps)$-algebraically spread.
    \item $\E_{X_i \times Y_i \sim \mu}[\mc{E}(X_i, Y_i)] \ge 1/2$.
\end{enumerate}
\end{lemma}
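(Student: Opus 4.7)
The plan is to mirror the finite-field one-round partitioning in \cref{lemma:oneround2dim}, with Bohr sets replacing subspaces: \cref{claim:spreadbohr} takes over the role of \cref{claim:spread}, and random shifts of nested Bohr sets (with appeals to regularity via \cref{lem:shift-invar}) replace coset partitions. Because Bohr sets do not partition cleanly, $\mu$ will be a genuine random distribution over rectangles rather than a uniform distribution over a partition.

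\textbf{Construction of rectangles.} Phase 1 greedily decomposes $X$: starting from $X^{(0)}=X$, while $|X^{(t)}|\ge(\beta^2/d')^{O(1)}\delta_X|B_1|$, apply \cref{claim:spreadbohr} with parameters $(R,H,\eps/10)$ to extract $X_t\subseteq X^{(t)}$ that lives in a shifted regular Bohr set $x_t+B_{1,t}\subseteq B_1$ in which it is $(R,H,\eps/10)$-spread and satisfies $|X_t|/|B_{1,t}|\ge\delta_X$, then set $X^{(t+1)}=X^{(t)}\setminus X_t$. The parameters $R=\Theta(rg)$ and $H=\eta_s\cdot\eta^{\Theta(1)}$ are chosen aggressively so that spreadness survives the Phase 2 refinement. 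Phase 2 then operates on $Y$: for each $t$, pick a regular $\wh B_{2,t}\subseteq B_{1,t}$ via \cref{lem:regular} with the same frequencies and radius ratio in $[\eta/2,\eta]$; for each $y'\in B_2$, iteratively apply \cref{claim:spreadbohr} with parameters $(r,\eta_s,\eps/10)$ to $(Y\cap(y'+\wh B_{2,t}))-y'$, yielding spread pieces $Y_{t,y',s}\subseteq y_{t,y',s}+y'+B_{2,t,y',s}$. For each output index $i=(t,y',s,x_i)$, set $B_2^{(i)}=B_{2,t,y',s}$, let $B_1^{(i)}$ share its frequencies but have radius $\nu(B_2^{(i)})/\eta'$ for a regular $\eta'\in[\eta/2,\eta]$ (so $B_2^{(i)}\subseteq B_1^{(i)}\subseteq B_{1,t}$), and set $X_i=X_t\cap(x_i+B_1^{(i)})$, $Y_i=Y_{t,y',s}$.

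\textbf{Measure and verification.} Define $\mu$ by sampling $(t,y',s,x_i)$ as follows: $t$ with weight $|X_t|/|X|$; $y'\in B_2$ uniformly (rejecting if $|Y\cap(y'+\wh B_{2,t})|$ is below threshold); $s$ with weight $|Y_{t,y',s}|/|Y\cap(y'+\wh B_{2,t})|$; $x_i$ uniformly in $x_t+B_{1,t}$. Conclusion (4) follows because regularity of $B_2$ yields $\E_{y'\in B_2}[\mbm{1}_{y'+\wh B_{2,t}}(y)]=|\wh B_{2,t}|/|B_2|+O(\eta d)$ for $y\in B_2$, and similarly for the $x_i$-coordinate, while rejection and Phase 1 leftover each contribute $O(\beta^2)$. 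Conclusion (5) is immediate from the density lower bounds arranged in each phase. The event $\mc{E}$ asks that $|X_i|/|B_1^{(i)}|\ge(1-\eps/5)\delta_X$; this holds with probability $\ge 1/2$ by \cref{fct:rev-mark} applied to the $x_i$-average (using that $X_t-x_t$ is spread in $B_{1,t}$). On $\mc{E}$, the chosen $(R,H)$ are large enough that $X_i-x_i\subseteq B_1^{(i)}$ is $(r,\eta_s,\eps)$-spread, since any admissible sub-Bohr set of $B_1^{(i)}$ is also an admissible sub-Bohr set of $B_{1,t}$ at the Phase 1 parameters; the $Y_i$-spreadness is built in by Phase 2.

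\textbf{Main obstacle.} The principal difficulty is a dual bookkeeping: (i) $(R,H)$ in Phase 1 must be chosen \emph{a priori} to survive the Phase 2 refinement, which adds up to $O(rg)$ extra frequencies and shrinks the effective radius by a factor as small as $\eta\cdot\eta_s^{O(g)}$; and (ii) the covering error in (4) aggregates regularity errors of size $O(\eta\cdot\mathrm{rank})$ from each shift sampling and each rejection, all of which must fit into $O(\beta^2)$, forcing the hypothesis $\eta\le(\beta\delta_X\delta_Y/d')^{O(1)}$. Both issues are delicate but reduce to explicit parameter choices once the above skeleton is fixed.
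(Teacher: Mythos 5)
Your construction mirrors the paper's proof almost step for step: Phase-1 greedy spread decomposition of $X$ via \cref{claim:spreadbohr} with amplified parameters $(R,H)$, restriction to shifted cosets of a nested regular Bohr set, Phase-2 greedy spread decomposition of $Y$, a final sub-partition of $X_t$ aligned to $B_1^{(i)}$, and the reverse Markov inequality (\cref{fct:rev-mark}) to lower-bound $\Pr[\mc{E}]$. Two parameter choices need tightening, though both are local fixes.

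First, you take $\wh B_{2,t}$ at radius ratio $[\eta/2,\eta]$ relative to $B_{1,t}$, but the paper deliberately takes $\mc{B}_t'$ at ratio $\Theta(\eta^2)$: after re-inflating $B_2^{(i)}$ by $1/\eta'$ with $\eta'\in[\eta/2,\eta]$, one needs $\nu(B_1^{(i)})\le\nu(B_{1,t})$ so that $B_1^{(i)}\subseteq B_{1,t}$, and with your choice this can fail in the boundary case where Phase~2 performs zero refinement steps. Second, and more substantively, your event $\mc{E}$ requires $|X_i|/|B_1^{(i)}|\ge(1-\eps/5)\delta_X$, but the spreadness upper bound you inherit from $X_t$ on admissible sub-Bohr sets of $B_1^{(i)}$ is $(1+\eps/5)|X_t|/|\mc{B}_t|$, and $|X_t|/|\mc{B}_t|$ may be much larger than $\delta_X$ because \cref{claim:spreadbohr} increments density. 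For a piece on your $\mc{E}$ with, say, $|X_i|/|B_1^{(i)}|\approx\delta_X$ while $|X_t|/|\mc{B}_t|\approx 10\delta_X$, the transferred upper bound is $\approx 10\delta_X$, far exceeding $(1+\eps)|X_i|/|B_1^{(i)}|$, so conclusion (6) fails. The event must be defined relative to $|X_t|/|\mc{B}_t|$, as in the paper's $\frac{|X_{t,x,t',y}|}{|\mc{B}_{t,x,t'}|}\ge(1-3\eps/5)\frac{|X_t|}{|\mc{B}_t|}$; this is also exactly what the reverse Markov bound applied to $\E_{y}[|X_{t,x,t',y}|/|\mc{B}_{t,x,t'}|]\approx|X_t|/|\mc{B}_t|$ produces. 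With those two corrections the argument goes through as you outline.
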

\begin{proof}
Let $X^{(0)} = X$ and perform the following algorithm. For $t = 0, 1, \dots$: if $|X^{(t)}| \le \frac{\beta^2}{10}|X|$, then terminate. Otherwise, let $X_t \subseteq X^{(t)}$ be $(r', \eta_s', \eps/5)$-algebraically spread within $x_t + \mc{B}_t$ for $r' = \Omega(r\eps^{-1}\log(1/(\delta_Y\beta)))$ and $\eta_s^{O(\eps^{-1}\log(1/(\delta_Y\beta)))}$, as given by \cref{claim:spreadbohr}. Note that
\[\rank(\mc{B}_t) \le \rank(B_1) + O(r'\eps^{-1}\log(1/(\delta_X \beta))). \]
Now let $X^{(t+1)} = X^{(t)} \setminus X_t$. Let $T$ be the total number of iterations, so that
\[ X = X^{(T)} \cup X_0 \cup X_1 \cup \dots \cup X_{T-1}. \]
Now fix a $t \in \{0, 1, \dots, T-1\}$. Let $\mc{B}_t' = \eta'\mc{B}_t$ be regular for some $\eta' \in [\eta^2/2,\eta^2]$. Now for $x \in B_2$ define $Y_{t,x} \coloneqq Y \cap (x + \mc{B}_t')$, and partition
\[ Y_{t,x} = Y_{t,x}^{(T'_{t,x})} \cup Y_{t,x,0} \cup \dots \cup Y_{t,x,T'_{t,x}-1}, \]
using the algorithm in the first paragraph, where $|Y_{t,x}^{(T'_{t,x})}| \le \frac{\beta^2}{10}\delta_Y |\mc{B}_t'|$ and each $Y_{t,x,t'}$ for $0 \le t' \le T'_{t,x}-1$ is $(r, \eta_s, \eps)$-algebraically spread within some shifted regular Bohr set $y_{t,x,t'} + \mc{B}_{t,x,t'}'$ where
\[ \rank(\mc{B}_{t,x,t'}') \le \rank(\mc{B}_t) + O(r\eps^{-1}\log(1/(\delta_Y\beta))). \]
Let $\mc{B}_{t,x,t'}$ be a regular Bohr set with $\mc{B}_{t,x,t'}' = \eta''\mc{B}_{t,x,t'}$ for $\eta'' \in [\eta/2,\eta]$. Finally, for each $y \in \mc{B}_t$ define $X_{t,x,t',y} \coloneqq X_t \cap (x_t + y + \mc{B}_{t,x,t'})$ -- note that for every fixed $t, x, t'$ that the average of the indicator functions of $X_{t,x,t',y}$ is approximately the indicator function of $X_t$ in $\ell_1$ error, by \cref{lem:shift-invar} up to scaling.

Now we define the distribution $\mu$ over the pieces $X_i \times Y_i$ and the event $\mc{E}$. The pieces $X_i \times Y_i$ are all pieces \[ X_{t,x,t',y} \times Y_{t,x,t'} \subseteq (x_t + y + \mc{B}_{t,x,t'}) \times (y_{t,x,t'} + \mc{B}_{t,x,t'}') \] where $\frac{|X_{t,x,t',y}|}{|\mc{B}_{t,x,t'}|} \ge \frac{\beta^4}{100}\delta_X$. The probability mass in $\mu$ of such a piece is defined as: $\frac{|X_{t,x,t,y'}||Y_{t,x,t'}|}{|X||Y||\mc{B}_t'||\mc{B}_{t,x,t'}|}.$ $\mu$ may not have total probability mass $1$, so scale it appropriately so that it does -- later we argue that the scaling is only on the order of $1 \pm O(\beta^2)$.
Finally, $\mc{E}$ consists of the pieces with $\frac{|X_{t,x,t',y}|}{|\mc{B}_{t,x,t'}|} \ge (1-3\eps/5)\frac{|X_t|}{|\mc{B}_t|}$.

Let us now verify all the conclusions. (1) follows because by construction. (2) follows because
\begin{align*}
\rank(\mc{B}_{t,x,t'}) &\le \rank(\mc{B}_t) + O(r\eps^{-1}\log(1/(\delta_Y\beta))) \\
&\le \rank(\mc{B}_1) + O(r'\eps^{-1}\log(1/(\delta_X\beta))) + O(r\eps^{-1}\log(1/(\delta_Y\beta))),
\end{align*}
as desired. (3) follows similarly. Towards checking (4), first define $\mc{I}$ to be the set of tuples $(t,x,t',y)$ with $\frac{|X_{t,x,t',y}|}{|\mc{B}_{t,x,t'}|} \le \frac{\beta^4}{100}\delta_X$. 
For a $1$-bounded function $f: X \times Y \to \mb{R}$ we first write
\[ \E_{x \in X, y \in Y}f(x,y) = \sum_{t=0}^{T-1} \frac{|X_t|}{|X|} \E_{x \in X_t, y \in Y} f(x, y) \pm \frac{|X^{(T)}|}{|X|}. \] Also, $|X^{(T)}|/|X| \le \beta^2/10$ by construction. By \cref{lem:shift-invar} we know that
\begin{align*}
    \E_{x \in X_t, y \in Y} f(x, y) = \E_{x \in B_2} \delta_Y^{-1}\frac{|Y_{t,x}|}{|\mc{B}_t'|} \E_{x_1 \in X_t, y_1 \in Y_{t,x}} f(x_1, y_1) \pm \eta d' \delta_Y^{-1}.
\end{align*}
Continuing, we can bound
\[ \delta^{-1}\frac{|Y_{t,x}|}{|\mc{B}_t'|} \E_{x_1 \in X_t, y_1 \in Y_{t,x}} f(x_1, y_1) = \sum_{t'=0}^{T'_{t,x}-1} \delta_Y^{-1}\frac{|Y_{t,x,t'}|}{|\mc{B}_t'|} \E_{x_1 \in X_t, y_1 \in Y_{t,x,t'}} f(x_1, y_1) \pm \delta_Y^{-1}\frac{|Y_{t,x}^{(T')}|}{|\mc{B}_t'|}. \]
Now $\delta_Y^{-1}\frac{|Y_{t,x}^{(T')}|}{|\mc{B}_t'|} \le \frac{\beta^2}{10}$ by definition, so that error is acceptable. Finally, we can write
\[ \E_{x_1 \in X_t, y_1 \in Y_{t,x,t'}} f(x_1, y_1) = \E_{y \in \mc{B}_t} \frac{|\mc{B}_t|}{|X_t|}\frac{|X_{t,x,t',y}|}{|\mc{B}_{t,x,t'}|}\E_{x_1 \in X_{t,x,t',y}, y_1 \in Y_{t,x,t'}} f(x_1, y_1) + O\left(\eta d' \frac{|\mc{B}_t|}{|X_t|} \right). \]
Note that $\frac{|\mc{B}_t|}{|X_t|} \le O(\beta^{-2}\delta_X^{-1})$ by definition, so this error is acceptable. Finally, we have to bound the error contribution from tuples $(t,x,t',y) \notin \mc{I}$. Because $\frac{|\mc{B}_t|}{|X_t|} \le \frac{10}{\beta^2\delta_X}$, the total contribution is bounded by
\[ \E_y \left[\mbm{1}_{(t,x,t',y) \notin \mc{I}} \frac{|\mc{B}_t|}{|X_t|}\frac{|X_{t,x,t',y}|}{|\mc{B}_{t,x,t'}|} \right] \le \frac{10}{\beta^2\delta_X} \frac{\beta^4\delta_X}{100} \le \beta^2/10. \]
Thus the total accumulated error is $O(\beta^2 + \eta d' \delta_X^{-1}\delta_Y^{-1}\beta^{-2}) = O(\beta^2)$ as desired. This also argues that the scaling of $\mu$ that needs to be done is $1 + O(\beta^2)$ as claimed earlier.

(5) follows by by construction.

To verify (6), note that each $Y_{t,x,t'} \subseteq y_{t,x,t'} + \mc{B}_{t,x,t'}'$ is $(r, \eta_s, \eps)$-algebraically spread by construction. Note that for any shifted Bohr set $b + B$ with $\rank(B) \le \rank(\mc{B}_{t,x,t'}') + r$ it holds that
\[ \frac{|X_{t,x,t',y} \cap (b + B)|}{|B|} \le (1+\eps/5)\frac{|X_t|}{|\mc{B}_t|}, \]
because $\rank(B) \le \rank(\mc{B}_t) + r'$ and $X_t$ was $(r', \eta, \eps)$-algebraically spread by construction. Thus if $\frac{|X_{t,x,t',y}|}{|\mc{B}_{t,x,t'}|} \ge (1-3\eps/5)\frac{|X_t|}{|\mc{B}_t|}$ (as is true for the tuples in $\mc{E}$), then $X_{t,x,t',y} \subseteq x_t + y + \mc{B}_{t,x,t'}$ is $(r, \eta_s, \eps)$-algebraically spread.

For item (7), first consider fixing some $t, x, t'$, and let $\delta_t = \frac{|X_t|}{|\mc{B}_t|}$ to simplify notation. Note that
\[ \E_{y \in \mc{B}_t}\left[\frac{|X_{t,x,t',y}|}{|\mc{B}_{t,x,t'}|} \right] = \delta_t \pm O(\eta d' \delta_X^{-1}\delta_Y^{-1}\beta^{-2}) \ge (1-\eps/10)\delta_t \] as above, and that $\frac{|X_{t,x,t',y}|}{|\mc{B}_{t,x,t'}|} \le (1+\eps/5)\delta_t$ for all $y$, as argued above. Thus, by the reverse Markov inequality (\cref{fct:rev-mark}) we have
\[ \Pr_{y \in \mc{B}_t}\left[\frac{|X_{t,x,t',y}|}{|\mc{B}_{t,x,t'}|} \ge (1-3\eps/5)\delta_t \right] \ge 5/8. \]
Thus,
\[ \E_{y \in \mc{B}_t : \frac{|X_{t,x,t',y}|}{|\mc{B}_{t,x,t'}|} \ge (1-3\eps/5)\delta_t} |X_{t,x,t',y}| \ge \frac58(1-3\eps/5)\delta_t|\mc{B}_{t,x,t'}| \ge \frac35|X_t| \cdot \frac{|\mc{B}_{t,x,t'}|}{|\mc{B}_t|}. \]
Combining this with the fact with item (4) (that at most $O(\beta^2 + \eta d' \delta_X^{-1}\delta_Y^{-1}\beta^{-2})$ mass is thrown out) completes the proof.
\end{proof}

Iterating \cref{lemma:oneroundbohr} proves \cref{lemma:2dimbohr}.

\begin{proof}[Proof of \cref{lemma:2dimbohr}]
Perform the following algorithm given $X \times Y \subseteq B_1 \times B_2$. Return $X_i \times Y_i \subseteq (x_i + B_1^{(i)}) \times (y_i + B_2^{(i)})$ for $i = 1, \dots, T$ as given in \cref{lemma:oneroundbohr}. Now, for each $i \in [T]$ such that $\mc{E}(X_i, Y_i)$ does not hold, invoke \cref{lemma:oneroundbohr} recursively. Terminate when the recursion depth is $O(\log(1/\beta))$, and remove any remaining pieces from the partition.

Let's check that all the conclusions of \cref{lemma:2dimbohr} hold. First note that for all pieces $X' \times Y'$ in the partition that $\delta_{X'}\delta_{Y'} \ge 2^{-O(\log(1/\beta)^2)}\delta_X\delta_Y$. So these are the parameters $\delta_X$ and $\delta_Y$ that one applies \cref{lemma:oneroundbohr} with. Now, (1) holds by construction. (2), (3), (5) hold by applying (2), (3), (5) of \cref{lemma:oneroundbohr} respectively for recursion depth $O(\log(1/\beta))$ -- note that
\[ g \le O(\eps^{-1}\log(1/(\delta_{X'}\delta_{Y'}))) \le O(\eps^{-1}\log(1/(\delta_X\delta_Y)) + \eps^{-1}\log(1/\beta)^2). \] (4) holds by using item (4) of \cref{lemma:oneroundbohr}, plus that after $O(\log(1/\beta))$ rounds the total size of the ``bad event'' (not in $\mc{E}$) is at most $2^{-O(\log(1/\beta))} \le \beta^{O(1)}$.
\end{proof}

Applying \cref{lemma:2dimbohr} gives a way to split $S(X, Y, D)$ into $S(X_i, Y_i, D)$ where $X_i \times Y_i$ are a partition of measure of $X \times Y$ (minus some small error). Say that $X_i \subseteq x_i + B_1^{(i)}$ and $Y_i \subseteq y_i + B_2^{(i)}$ as in \cref{lemma:2dimbohr}. Then $S(X_i, Y_i, D) \approx S(X_i, Y_i, D_i)$ where $D_i \coloneqq D \cap (x_i + y_i + B_1^{(i)})$. While $X_i$ and $Y_i$ are algebraically spread, $D_i$ may not be $\ell_1$-spread, which we wanted to enforce. Thus, we design a scheme to partition $D_i$ into $\ell_1$-spread pieces, while not affecting the average measure of $X_i$ and $Y_i$ on those pieces.

\begin{lemma}
\label{lemma:dsplit}
Let $X, D \subseteq B_1$ and $Y \subseteq B_2$ where $B_1, B_2, \dots$ form a $(d,\eta)$-small sequence of Bohr sets. Assume that $X$ and $Y$ are $(r, \eta_s, \eps)$-algebraically spread. Let $\beta \in (0, 1)$ be a parameter, where $\eta \le (\eps\beta/d)^{O(1)}$. There is a distribution $\nu$ over $(x_i + B_1^{(i)}) \times (y_i + B_2^{(i)})$ satisfying the following properties:
\begin{enumerate}
\item $B_1^{(i)}, B_2^{(i)}, \dots$ form a $(d, \eta)$-exact sequence of Bohr sets.
\item $\nu(B_1^{(i)}) \ge \eta^{O(\log(1/(\eps\alpha\beta))^2)} \nu(B_1)$.
\item For all $(x_i + B_1^{(i)}) \times (y_i + B_2^{(i)}) \sim \mu$, it holds that either: 
    \begin{enumerate}
        \item $D_i \coloneqq D \cap (x_i + y_i + B_1^{(i)})$ is $(B_1^{(i)}, B_9^{(i)}, \eps)$ $\ell_1$-spread, or 
        \item $D_i/|B_1^{(i)}| \le \beta^3$.
    \end{enumerate}
\item For any $1$-bounded function $f: B_1 \times B_2 \to [-1, 1]$ it holds that
\[ \E_{x \in B_1, y \in B_2}[f(x, y)] = \E_{(x_i + B_1^{(i)}) \times (y_i + B_2^{(i)}) \sim \mu} \E_{\substack{x \sim x_i + B_1^{(i)} \\ y \sim y_i + B_2^{(i)}}}[f(x, y)] + O(\beta^3). \]
\item The probability over $(x_i + B_1^{(i)}) \times (y_i + B_2^{(i)}) \sim \mu$ that $X_i \coloneqq X \cap (x_i + B_1^{(i)})$ and $Y_i \coloneqq Y \cap (y_i + B_2^{(i)})$ are both $(r, \eta_s', 10\eps)$-algebraically spread for $\eta_s' \coloneqq \eta^{-O(\log(1/(\eps\alpha\beta))^2)} \eta_s$ is at least $1/2$.
\end{enumerate}
\end{lemma}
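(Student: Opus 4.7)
\textbf{Plan.} We build $\mu$ by a recursive partition guided by failures of $\ell_1$-spreadness of $D$. At each stage we carry a rectangle $(x+B_1^{(i)})\times(y+B_2^{(i)})$ whose chain $B_1^{(i)},B_2^{(i)},\dots$ is a $(d,\eta)$-exact sub-chain of the original chain, obtained by descending $9$ positions per iteration so that the radius shrinks by $\eta^{O(1)}$ while the rank is preserved. Set $D_{x,y}\coloneqq D\cap(x+y+B_1^{(i)})$ and write $\delta_{D,i}$ for its density. If $\delta_{D,i}\le\beta^3$, declare the rectangle an output of type (3b); if $D_{x,y}$ is $(B_1^{(i)},B_9^{(i)},\eps)$ $\ell_1$-spread, declare an output of type (3a). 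Otherwise, partition $x+B_1^{(i)}$ into $B_9^{(i)}$-shifts, split $y+B_2^{(i)}$ consistently using \cref{lem:shift-invar}, weight each child rectangle by its natural fraction of the parent, and recurse on each child with chain $B_1^{(i+1)}\coloneqq B_9^{(i)}$, $B_2^{(i+1)}\coloneqq B_{10}^{(i)}$, etc.

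The crux is a log-potential argument bounding the recursion depth. Failure of $(B_1^{(i)},B_9^{(i)},\eps)$ $\ell_1$-spreadness gives $\E_{z\sim B_1^{(i)}}|\E_{z'\sim B_9^{(i)}}\mbm{1}_D(z+z')-\delta_{D,i}|>\eps\delta_{D,i}$, so a constant fraction of the $B_9^{(i)}$-shifts have density at least $(1+\Omega(\eps))\delta_{D,i}$ (``dense'' branches) or at most $(1-\Omega(\eps))\delta_{D,i}$ (``sparse'' branches). On dense branches $\delta_D$ strictly increases by $1+\Omega(\eps)$ per level and is bounded by $1$, capping such a branch at $O(\eps^{-1}\log(1/\delta_{D,\min}))$ levels; sparse branches drop in density monotonically until they hit case (3b). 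With the external assumption $\delta_{D,\min}\ge\exp(-O(\log(1/(\eps\alpha\beta))^2))$ inherited from the surrounding density-increment argument, the overall depth is $O(\log(1/(\eps\alpha\beta))^2)$. This yields item (1) (rank preservation via exact chains), item (2) (radius bound $\nu(B_1^{(i)})\ge\eta^{O(\log^2(1/(\eps\alpha\beta)))}\nu(B_1)$ by shrinking $\eta^{O(1)}$ per level), and item (3) (enforced by the termination conditions). For item (4), each subdivision replaces a parent rectangle by a weighted family of children whose measure faithfully covers the parent up to the \cref{lem:shift-invar} error $O(\eta d)$; summing over $O(\log^2(1/(\eps\alpha\beta)))$ levels gives total error $O(\eta d\cdot\log^2)\le O(\beta^3)$ under the hypothesis $\eta\le(\eps\beta/d)^{O(1)}$.

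For item (5), observe that the $(r,\eta_s,\eps)$-algebraic spreadness of $X$ in $B_1$ is inherited by any sub-Bohr $B'\subseteq B_1^{(i)}$ satisfying $\nu(B')\ge\eta_s\nu(B_1)$. The choice $\eta_s'\coloneqq\eta^{-O(\log^2(1/(\eps\alpha\beta)))}\eta_s$ ensures this whenever $\nu(B')\ge\eta_s'\nu(B_1^{(i)})$, and in particular $|X_i|/|B_1^{(i)}|\le(1+\eps)\delta_X$ always. Combined with item (4), $\E_\mu[|X_i|/|B_1^{(i)}|]=\delta_X\pm O(\beta^3)$; applying \cref{fct:rev-mark} with $\gamma\coloneqq 11\eps/2$ bounds the $\mu$-probability of $|X_i|/|B_1^{(i)}|<(1-\gamma)\delta_X$ by roughly $2/13$. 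On the complementary event, any admissible $B'\subseteq B_1^{(i)}$ and shift $a$ satisfy
$|(X_i-a)\cap B'|/|B'|\le(1+\eps)\delta_X\le(1+10\eps)|X_i|/|B_1^{(i)}|$,
certifying $(r,\eta_s',10\eps)$-spreadness of $X_i$. The identical argument for $Y_i$ and a union bound give probability at least $1/2$, as required.

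\textbf{Main obstacle.} The most delicate step is the log-potential calculation: one must ensure that the recursion terminates at depth $O(\log^2(1/(\eps\alpha\beta)))$, controlling both dense and sparse branches simultaneously, rather than proliferating indefinitely along intermediate-density paths before they hit the $\beta^3$ threshold. This is the ``additional recursive layer'' foreshadowed in the proof overview, and is the principal place where the weak $\ell_1$-spreadness condition must interact cleanly with the potential controlling $\delta_D$.
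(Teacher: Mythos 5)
Your high-level strategy matches the paper's: repeatedly subdivide pieces where $D$ is not $\ell_1$-spread, track depth with a log-potential, and use reverse Markov to certify algebraic spreadness of $X_i,Y_i$ on most pieces. But the heart of the proof — bounding the recursion depth — has a gap that you yourself acknowledge in your ``Main obstacle'' paragraph, and it is not a minor technicality.

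Your dense/sparse branch argument does not close. From failure of $\ell_1$-spreadness you conclude a constant fraction of $B_9^{(i)}$-shifts are $(1+\Omega(\eps))$-dense or $(1-\Omega(\eps))$-sparse, and you then argue each branch separately: dense branches terminate because density is bounded by $1$, sparse branches terminate at the $\beta^3$ floor. But a branch is not monotone: a shift that was ``dense'' at one level can be ``sparse'' at the next, and vice versa, so densities along a path can oscillate around their starting value indefinitely without ever hitting either ceiling or floor. This is exactly the proliferation of ``intermediate-density paths'' you flag, and the argument as written does not prevent it.

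The paper resolves this not by analyzing branches but by tracking the \emph{expected} potential $\Phi^{(t)}=\E_{x+B'\sim\mu^{(t)}}\big[-\log\big(\gamma+\tfrac{|D\cap(x+B')|}{|B'|}\big)\big]$ with the regularizer $\gamma=\beta^6$, and invoking the quantitative convexity bound
\[
-\log\delta_1 \ge -\log\delta_0 + 1 - \tfrac{\delta_1}{\delta_0} + \Omega\big(\min\{1,(\tfrac{\delta_1}{\delta_0}-1)^2\}\big).
\]
When you average the child densities over the random shift, the linear term $1 - \delta_{x'}/\delta$ averages to $O(\eta d/\gamma)$ (since $\E_{x'}[\delta_{x'}]\approx\delta$ by regularity), while the quadratic term, lower-bounded via the $\ell_1$-spreadness failure and Jensen, contributes $\Omega(\eps^2)$. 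So $\Phi$ increases by $\Omega(\eps^2)$ in expectation on bad pieces regardless of whether the subdivision is ``dense'' or ``sparse,'' and the depth bound $O(\eps^{-2}\log(1/\gamma))$ per round follows from $\Phi\in[-\gamma,\log(1/\gamma)]$. Switching from per-branch monotonicity to an averaged log-potential with a first-order cancellation and a second-order gain is the missing idea, and without it your depth bound (hence items (1), (2), (4)) is unsupported. You also need the $\gamma$-regularization inside the $\log$: without it the potential is unbounded above on very sparse pieces and the per-step error $O(\eta d/\gamma)$ cannot be controlled, which is precisely why the cutoff density $\beta^3$ appears as one of the termination cases.

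The remainder of your outline (recursing on the still-bad residual, lifting the $D$-partition to rectangles via a random $y\sim B_2$, reverse Markov for item (5)) is essentially what the paper does and is fine once the depth bound is established.
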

\begin{proof}
We first describe how to do one round of partitioning to $D$, which we then recurse heavily on.

\textbf{One round partitioning:} Let $\gamma = \beta^6$. We define a sequence of distributions $\mu^{(0)}, \mu^{(1)}$, $\dots$ over shifted regular Bohr sets $x + B' \subseteq B_1$. $\mu^{(0)}$ is defined to have all mass on $B_1$. Let $t$ be a time step, and for $x + B' \sim \mu^{(t)}$ say that $x+B'$ is \emph{bad} if $\frac{|D \cap (x+B')|}{|B'|} \ge \gamma$ and $D \cap (x+B')$ is not $(B', B'', \eps)$ $\ell_1$-spread where $B''$ has the same frequencies as $B'$ and $\nu(B'')/\nu(B') \in [\eta^8/500, \eta^8]$. If the probability over $x+B' \sim \mu$ that $x+B'$ is bad is at least $1/2$, define $\mu^{(t+1)}$ as follows. For the not bad $x+B' \sim \mu$, keep the same mass in $\mu^{(t)}$. Otherwise, replace the mass of $x+B'$ with $x+x'+B''$ where $x' \sim B'$ is uniform.

We prove that this process terminates within $T = O(\eps^{-2}\log(1/\gamma))$ steps. To prove this, consider the potential function 
\[ \Phi^{(t)} \coloneqq \E_{x + B' \sim \mu^{(t)}}\left[-\log\left(\gamma + \frac{|D \cap (x+B')|}{|B'|}\right) \right]. \]
Note that $\Phi^{(0)} \ge -\log(\gamma+1) \ge -\gamma$ and $\Phi^{(t)} \le \log(1/\gamma)$ always. We will use the following inequality in our analysis: for any $\delta_0, \delta_1 > 0$ it holds that
\begin{equation}
-\log \delta_1 = -\log \delta_0 - \log\left(1 + \left(\frac{\delta_1}{\delta_0}-1\right) \right) \ge -\log \delta_0 + 1 - \frac{\delta_1}{\delta_0} + \Omega\left(\min\left\{1, \left(\frac{\delta_1}{\delta_0} - 1\right)^2 \right\} \right). \label{eq:logineq}
\end{equation}
We now lower bound $\Phi^{(t+1)}$. Consider a bad $x+B'$ and let $\delta \coloneqq \frac{|(x+B') \cap D|}{|B'|}$. For $x' \sim B'$ define $\delta_{x'} \coloneqq \frac{|(x+x'+B'') \cap D|}{|B''|}$ -- we know that $\E_{x'}[\delta_{x'}] = \delta \pm O(\eta d)$. Additionally, because $D \cap (x+B')$ was not $(B', B'', \eps)$ $\ell_1$-spread we know that
\begin{equation} \E_{x'}\left[\mbm{1}_{\delta_{x'} < \delta}(\delta - \delta_{x'}) \right] \ge \eps
\delta/2 - O(\eta d). \label{eq:notspread} \end{equation}
Combining this with \eqref{eq:logineq} gives that
\begin{align*}
\E_{x'}[-\log(\gamma + \delta_{x'})] + \log(\gamma+\delta) &\ge \E_{x'}\left[1 -\frac{\gamma+\delta_{x'}}{\gamma+\delta} + \Omega\left(\min\left\{1, \frac{(\delta-\delta_x)^2}{(\gamma+\delta)^2} \right\} \right) \right] \\
&\ge -O(\eta d \cdot \gamma^{-1}) + \Omega(\eps^2) \ge \Omega(\eps^2),
\end{align*}
where we have used that $\gamma + \delta \le 2\delta$ and \eqref{eq:notspread}. Thus by definition $\Phi^{(t+1)} \ge \Phi^{(t)} + \Omega(\eps^2)$ (at least $1/2$ of $x+B' \sim \mu^{(t)}$ was bad), so the total number of iterations is bounded by $T = O(\eps^{-2}\log(1/\gamma))$ as desired.

\textbf{Recursive partitioning:} Given $\mu^{(T)}$ where the one round partitioning scheme terminated, consider each bad $x+B' \sim \mu^{(T)}$. For each of these, apply the one round partitioning scheme to it recursively. Do this until the recursion depth is $O(\log(1/\gamma))$.

\textbf{Constructing $\mu$:} We now describe the distribution $\mu$. Let $\mu'$ be the distribution over $x+B'$ after doing the recursive partitioning described in the above paragraph. For $x+B' \sim \mu'$ put the following mass in $\mu$: pick $y \sim B_2$ uniformly, and put $(x-y+B') \times (y+B'')$ in $\mu$, where $B''$ is chosen to be regular, with the same frequencies as $B'$, and $\nu(B'')/\nu(B') \in [\eta/2, \eta]$.

\textbf{Verifying conclusions:} We prove that the construction described satisfies all the desired conclusions. (1) follows by construction. (2) follows because the recursion depth of the recursive partitioning is $O(\log(1/\beta))$ and the number of iterations in the one round partition is $O(\eps^{-2}\log(1/\beta))$, each of which drops the radius by $\eta^{O(1)}$. (3) follows by construction, after throwing out the bad pieces (of which the total mass is at most $\beta^{100}$, because of the recursion).

We now verify (4). It suffices to verify (4) under a single partition of $B_1$ in the one-round partitioning scheme, as we can then apply induction. This amounts to verifying that
\begin{align*}
    \E_{\substack{x \sim B_1 \\ y \sim B_2}} \E_{\substack{x' \in x-y+B_9 \\ y' \in y+B_{10}}} f(x', y') = 
    \E_{\substack{x \sim B_1 \\ y \sim B_2 \\ x' \sim B_9 \\ y' \sim B_{10}}} f(x-y+x', y+y') = \E_{\substack{x \sim B_1 \\ y \sim B_2}} f(x, y) + O(\eta d),
\end{align*}
as desired. Because there are $O(\log(1/\beta)^2)$ layers of recursion and the total size of bad pieces is at most $\beta^4$ by the recursion, the result follows.

(5) follows because the expectation density of $X_i$ is $\delta_X + O(\eta d)$ by (4) applied to $f = \mbm{1}_X$. Thus the spreadness of $X$ and the reverse Markov inequality (\cref{fct:rev-mark}) imply that the probability over $\mu$ that $|X_i|/|B_1^{(i)}| \ge (1-8\eps)\delta_X$ is at least $3/4$. The same reasoning applies to $Y$, and thus (5) follows by a union bound.
\end{proof}

We now have the necessary tools to start with $A \subseteq S(X, Y, D)$ and pass to a subset $A' \subseteq S(X', Y', D')$ where $X', Y'$ are algebraically spread and $D'$ is $\ell_1$-spread. The benefit of this is that for example, the size of the container $S(X', Y', D')$ is close to what is expected.

\begin{theorem}
\label{thm:bohrpseudo}
Let $X, D \subseteq B_1$ and $Y \subseteq B_2$ where $B_1, B_2, \dots$ form a $(d,\eta)$-small sequence of Bohr sets. Define $\delta_Y = |Y|/|B_2|$, $\delta_D = |D|/|B_1|$, and $\tilde{\delta}_X \coloneqq \|X\|_{(B_1, B_2, B_3, K, K)}$. Let $A \subseteq S(X, Y, D)$ with $|A| \ge \alpha\tilde{\delta}_X\delta_Y\delta_D|B_1||B_2|$. Then there are shifted Bohr sets $x + B_1' \subseteq B_1$ and $y + B_2' \subseteq B_2$ where $B_1', B_2', \dots$ form a $(d',\eta)$-exact sequence of Bohr sets, and $X', Y', D'$ satisfying:
\begin{enumerate}
    \item $X' \subseteq X \cap (x+B_1')$, $Y' \subseteq Y \cap (y+B_2')$, and $D' \subseteq D \cap (x+y+B_1')$,
    \item $d' \le \rank(B_1) + O(rg^2\log(1/(\eps\alpha\delta_D))^2)$, and $\nu(B_1') \ge \eta_s^{O(g^2\log(1/(\eps\alpha\delta_D))^2)}\nu(B_1)$, for
    \[ g = O(\log(1/(\delta_X\delta_Y)) + \log(1/(\eps\alpha\delta_D))^3), \]
    \item $X'$ and $Y'$ are $(r, \eta_s, \eps_s)$-algebraically spread,
    \item $D'$ is $(B_1', B_9', \eps_s)$ $\ell_1$-spread,
    \item $|D'|/|B_1'| \ge \Omega(\eps\alpha\delta_D)$, and $\frac{|X'||Y'|}{|B_1'||B_2'|} \ge 2^{-O(\log(1/(\eps_s\alpha\delta_D))^3)}\tilde{\delta}_X\delta_Y$, and
    \item $A' \coloneqq A \cap S(X', Y', D')$ satisfies $|A'| \ge (1-\eps)\alpha|S(X', Y', D')|$.
\end{enumerate}
\end{theorem}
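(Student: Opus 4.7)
The plan is to lift the density-increment averaging of \cref{thm:algopseudo} to the Bohr-set setting, using \cref{lemma:2dimbohr} in place of \cref{lemma:2dim} to partition $X \times Y$ into algebraically spread pieces and then \cref{lemma:dsplit} to refine each piece so that the corresponding restriction of $D$ is $\ell_1$-spread. The hypothesis $|A| \ge \alpha\tilde{\delta}_X\delta_Y\delta_D|B_1||B_2|$ paired with the upper bound $|S(X,Y,D)|/(|B_1||B_2|) \le (1+O(\eps_s))\tilde{\delta}_X\delta_Y\delta_D$, obtained from \cref{lem:upper-bound} after substituting $(x,y) \mapsto (x+y,y)$ and using the symmetry $-B_2 = B_2$ of Bohr sets (so that $\mbm{1}_X$ plays the role of the bounded-grid-norm function $g$), will provide the two-sided estimate that drove the finite-field argument.

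First I would apply \cref{lemma:2dimbohr} with $\beta$ chosen smaller than any polynomial in $\eps_s\alpha\delta_D$ and with target spreadness $(r, \eta_s, \eps_s/2)$, producing a measure $\mu_1$ over rectangles $X_i \times Y_i \subseteq (x_i + B_1^{(i)}) \times (y_i + B_2^{(i)})$ in which both $X_i$ and $Y_i$ are already algebraically spread. For each such rectangle, I would then apply \cref{lemma:dsplit} to the triple $(X_i, Y_i, D_i)$ with $D_i \coloneqq D \cap (x_i + y_i + B_1^{(i)})$, obtaining a sub-distribution $\mu_2^{(i)}$ over finer rectangles for which either $D_j \coloneqq D \cap (x_j + y_j + B_1^{(j)})$ is $(B_1^{(j)}, B_9^{(j)}, \eps_s)$ $\ell_1$-spread or $|D_j|/|B_1^{(j)}| \le \beta^3$, and for which $\mu_2^{(i)}$-mass at least $1/2$ lies on rectangles where $X_j$ and $Y_j$ remain spread with slightly worse parameters $(r, \eta_s', 10\eps_s)$. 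Composing these to get $\mu$ and using the measure-preservation items of both lemmas yields, for any $1$-bounded $f$,
\[
\E_{\text{piece} \sim \mu} \E_{(x,y) \in \text{piece}}[f(x,y)] = \E_{x \sim B_1, y \sim B_2}[f(x,y)] + O(\beta).
\]

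Next I would carry out the first-moment argument. Call a piece \emph{bad} if either the spreadness of $X_j$ or $Y_j$ fails, or $|D_j|/|B_1^{(j)}| \le c\eps_s\alpha\delta_D$ for a small absolute constant $c$. Applying measure-preservation to $f = \mbm{1}_A$ and $f = \mbm{1}_{S(X,Y,D)}$, together with the upper bound on $|S(X,Y,D)|$ above, gives (after normalizing by piece sizes) the inequality
\[
\E_\mu \Big[\,|A \cap S_j| - (1 - 4\eps_s)\alpha\,|S_j| - c\eps_s\alpha\delta_D\,|X_j|\,|Y_j|\,\mbm{1}_{\text{bad}}\,\Big] \ge 0,
\]
where $S_j \coloneqq S(X_j, Y_j, D_j)$; the $\mu$-mass of bad pieces is absorbed by the density cap $\beta^3$ (for the sparse-$D$ branch) together with the $1/2$-probability structural guarantee of \cref{lemma:dsplit} (for the spreadness branch). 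Hence some good piece satisfies $|A \cap S_j| \ge (1-4\eps_s)\alpha|S_j| + c\eps_s\alpha\delta_D|X_j||Y_j|$. The second summand forces $|D_j|/|B_1^{(j)}| \ge \Omega(\eps\alpha\delta_D)$, giving conclusion~(5) for $D'$, and the first, combined with the two-sided $|S_j|$ estimate coming from \cref{lem:conv-lower-bound} and \cref{lem:product-spread} (applicable because $X_j, Y_j$ are algebraically spread and $D_j$ is $\ell_1$-spread), yields the density bound in~(6). Conclusions~(1)--(4) and the lower bound on $|X_j||Y_j|$ in~(5) then follow by composing the rank, radius, and density estimates of the two partitioning lemmas.

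The main obstacle I anticipate is the first-moment argument itself: the two failure modes (loss of algebraic spreadness of $X_j$ or $Y_j$, and $D_j$ being overly sparse) must be controlled simultaneously, with the coefficient $c\eps_s\alpha\delta_D$ tuned so that the $1/2$-probability spreadness failure and the $\beta^3$ density cap each contribute negligibly relative to the $\Omega(\eps_s)\alpha\tilde{\delta}_X\delta_Y\delta_D$ surplus. A secondary subtlety, which I would verify with care, is the chain certifying $|S_j| \approx \tilde{\delta}_{X_j}\delta_{Y_j}\delta_{D_j}|B_1^{(j)}||B_2^{(j)}|$: algebraic spreadness of $X_j$ must first be converted to a Gowers-grid-norm bound via the contrapositives of \cref{thm:bohrkk} and \cref{thm:ap+KM}, which then feeds \cref{lem:product-spread} and \cref{lem:conv-lower-bound} to give the two-sided bound on the container size.
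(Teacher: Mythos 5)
You have the right two ingredients, \cref{lemma:2dimbohr} for making $X,Y$ algebraically spread and \cref{lemma:dsplit} for making $D$ $\ell_1$-spread, and the right shape of first-moment argument. But the single-pass plan has a genuine gap, and you flagged it yourself without resolving it: \cref{lemma:dsplit}, item (5), only promises that at least $1/2$ of its output mass lies on pieces where $X_j$ and $Y_j$ retain spreadness. After one round of \cref{lemma:2dimbohr} followed by one round of \cref{lemma:dsplit}, as much as $1/2$ of $\mu$'s mass sits on ``bad'' pieces on which neither $X_j$ nor $Y_j$ need be spread, and hence on which you have no two-sided control of $|S_j|$. In the normalized first moment
\[
\E_{\mu}\!\left[\frac{1}{|X_j||Y_j|}\Bigl(|A\cap S_j| - (1-\eps)\alpha|S_j| - \tfrac{\eps}{2}\alpha\delta_D|X_j||Y_j|\Bigr)\right],
\]
each bad piece contributes at least $-O(\alpha)$ per rectangle element (one can only use $|A\cap S_j|\ge 0$ and $|S_j|\le|X_j||Y_j|$), so a bad mass of $1/2$ contributes a deficit of order $\Omega(\alpha)$, which dwarfs the available surplus of $\Theta(\eps_s\alpha\delta_D)$ since $\delta_D$ may be extremely small. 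No choice of the constant $c$ in your $c\eps_s\alpha\delta_D$ term can rescue this: the coefficient multiplies the surplus side, not the bad-mass side. (Also note the $\mbm{1}_{\text{bad}}$ in your displayed inequality is misplaced --- the extra $c\eps_s\alpha\delta_D|X_j||Y_j|$ term must be subtracted on \emph{every} piece, as that subtraction is what forces $|D_j|/|B_1^{(j)}|\ge\Omega(\eps\alpha\delta_D)$ on the extracted piece.)

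The paper's proof of \cref{thm:bohrpseudo} resolves this by \emph{iterating} the two-stage pseudorandomization: it defines a sequence $\mu^{(0)},\dots,\mu^{(T)}$ and at each step first applies \cref{lemma:2dimbohr} to all pieces whose $X$ or $Y$ has lost spreadness, then applies \cref{lemma:dsplit} to all pieces whose $D$ is not $\ell_1$-spread. Because \cref{lemma:dsplit} preserves spreadness with probability at least $1/2$, the bad mass decays geometrically, and after $T = O(\log(1/(\eps_s\alpha\delta_D)))$ rounds it is at most $2^{-T}\ll \eps_s\alpha\delta_D$, at which point the first-moment inequality goes through. This iteration is also why the rank and radius degradations in \cref{thm:bohrpseudo} carry an extra factor of $\log(1/(\eps\alpha\delta_D))^2$: a single pass of each lemma would not produce those bounds, and the mismatch in your stated rank estimate is a secondary symptom of the missing iteration.

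Your secondary concern, about certifying $|S_j|$ via the chain \cref{thm:bohrkk} $\to$ \cref{thm:ap+KM} $\to$ \cref{lem:conv-lower-bound}/\cref{lem:product-spread}, is reasonable but is not how the paper closes this part: the paper uses only \cref{lem:upper-bound} together with the spreadness of $X_j,Y_j$ and the lower bound $|A\cap S_j|\le |S_j|$ to deduce $|D_j|/|B_1^{(j)}|\ge\Omega(\eps\alpha\delta_D)$.
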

\begin{proof}
We define a sequence of distributions $\mu^{(0)}, \dots, \mu^{(T)}$ over triples $(X_i, Y_i, D_i)$ such that there are $B_1^{(i)}, B_2^{(i)}$ and $x_i, y_i$ such that $X_i \subseteq x_i + B_1^{(i)}$, $Y_i \subseteq y_i + B_2^{(i)}$, and $D_i \subseteq x_i+y_i+B_1^{(i)}$. Define $\mu^{(0)}$ to be identically $(X, Y, D)$.

At a time step $t$, first define $\mu^{(t)'}$ to be the result of replacing each $(X', Y', D') \sim \mu^{(t)}$ which does not satisfy (3) in \cref{thm:bohrpseudo} with the distribution given by applying \cref{lemma:2dimbohr} with the choice $\beta = (\eps\alpha\delta_D)^{O(1)}$, where $\delta_D = |D|/|B_1|$ (the original density of $D$), but throwing out any subpiece where the densities of $X''$ or $Y''$ has decreased from that of $X'$ or $Y'$ by more than a $\beta^{O(1)}$ factor (recall that it cannot increase significantly because $X', Y'$ are algebraically spread). Formally, \cref{lemma:2dimbohr} gives a distribution over subsets $X'' \times Y'' \subseteq X' \times Y'$ where $X'' \subseteq x'' + B_1''$ and $Y'' \subseteq y'' + B_2''$ for some regular Bohr sets $B_1'', B_2''$. In this case, put the triple $(X'', Y'', D'')$ in the distribution where $D'' \coloneqq D' \cap (x''+y''+B_1'')$.

Now define $\mu^{(t+1)}$ by replacing each $(X', Y', D') \sim \mu^{(t)'}$ which does not satisfy (4) in \cref{thm:bohrpseudo} with the distribution given by \cref{lemma:dsplit}.

Consider $\mu^{(T)}$ for $T = O(\log(1/
(\eps_s\alpha\delta_D)))$. We want to extract a piece $(X', Y', D')$ where $X', Y', D'$ are all not too small, and $A \cap S(X', Y', D')$ is relatively large. This motivates the calculation
\begin{align*}
    & \E_{(X', Y', D') \sim \mu^{(T)}}\left[\frac{1}{|X'||Y'|}\left(|A \cap S(X', Y', D')| - (1-\eps)\alpha|S(X', Y', D')| - \frac{\eps}{2}\alpha\delta_D|X'||Y'|\right) \right] \\
    & \qquad \qquad \qquad \qquad \ge \frac{1}{|X||Y|}|A| - \frac{1}{|X||Y|}(1-\eps)\alpha|S(X, Y, D)| - \frac{\eps}{2}\alpha\delta_D - O(\beta T) - 2^{-T} \\
    & \qquad \qquad \qquad \qquad \ge \frac{\eps}{3}\alpha\delta_D.
\end{align*}
Here we have repeatedly used item (4) of \cref{lemma:2dimbohr} and \cref{lemma:dsplit}. Also, we threw out at most $\beta^{O(1)}$ extra mass from small pieces when going from $\mu^{(t)'}$ to $\mu^{(t+1)}$. Finally, the $2^{-T}$ is because the number of iterations is $T = O(\log(1/(\eps\alpha\delta_D)))$, and the amount of mass in level $T$ that is still \emph{bad} is at most $2^{-T}$ by item (5) of \cref{lemma:dsplit}.
Thus there is a piece $(X', Y', D')$ where
\[ |A \cap S(X', Y', D')| \ge (1-\eps)\alpha|S(X', Y', D')| + \frac{\eps}{3}\alpha\delta_D|X'||Y'|. \]
Let us verify the hypotheses for this choice of $X', Y', D'$. Indeed, (1), (2), (3), (4), (6) follow by construction. For (5), the bound on $\frac{|X'||Y'|}{|B_1'||B_2'|}$ follows by \cref{lemma:2dimbohr} item (5). For $D'$, note that
\[ \frac{\eps}{3}\alpha\delta_D|X'||Y'| \le |S(X', Y', D')|| \le 2\delta_{D'}|X'||Y'| \]
where we have used \cref{lem:upper-bound} and the spreadness of $X'$ and $Y'$.
\end{proof}

\section{Density Increment by Reduction to the Grid Norm}
\label{sec:dsbohr}
\subsection{Density increment}
We now carry out the requisite density increment argument over general abelian groups. The analysis here corresponds in a certain sense to combining \cref{lem:VNL}, \cref{lemma:llarge}, and \cref{lemma:densincr}; however the analysis is rather more technical in the case of general abelian groups.

The first technical issue is that the parametrization used for corners over finite fields 
\[(x',y'),~(x',-x'+z'),\text{ and }(-y'+z',y')\]
directly is unusable over general group. This is due to the fact that we are assuming our set of corners lives $B_1\times B_2$ and thus any naive parameterization fails to gives that each coordinate varies over $B_1\times B_2$ uniformly. This is fixed via parameterizing corners in the form 
\[(x+x',y+y'),~(x+x',y-x'+z'),\text{ and }(x-y' + z',y+y').\]
In this parametrization we will always have that $x\sim B_1$, $y\sim B_2$, $x'\sim B_3$, $y'\sim B_4$, and $z' \sim B_5$. 

In our analysis $B_1, B_2, \dots$ will be a $(d,\eta)$-exact sequence of Bohr sets. We will have $3$ majorants, corresponding to rows, columns, and diagonals which satisfy 
\[\E_{x\sim B_1}[\mbm{1}_X(x)] = \delta_X,~\E_{y\sim B_2}[\mbm{1}_Y(y)] = \delta_Y,\text{ and }\E_{z\sim B_1}[\mbm{1}_D(z)] = \delta_D.\]
Furthermore we will assume that the indicator of our set $f:G\times G\to \{0,1\}$ satisfies 
\begin{align*}
\E_{\substack{x\sim B_1\\y\sim B_2}}f(x,y) &= \alpha \cdot \delta_X \delta_Y\delta_D.
\end{align*}
We additionally assume that our set possesses few corners; precisely we will assume that 
\begin{align*}
\sup_{\substack{x\in B_1\\y\in B_2}}\E_{\substack{x'\sim B_3\\y'\sim B_4\\z'\sim B_5}}f(x+x',y+y')f(x+x',y-x'+z')f(x-y'+z',y+y') &\le  2^{-3}\cdot \alpha^3 \cdot \delta_X^2 \delta_Y^2\delta_D^2.
\end{align*}

We fix a set of constant size parameters 
\[1/C\ll \eps_{s}\ll \eps_{R}\ll \eps_{L}\ll \eps, \]
where $\ll$ means ``much less'', so that for example $\eps_L$ is chosen to be sufficiently small in terms of $\eps$, etc. Also, by ``constant size'' we mean that these will be chosen to be absolute constants independent of $\alpha$ or $\delta_X,\delta_Y,\delta_D$. 

We define $K = C\lceil \log(4/(\delta_X\delta_Y\delta_D))\rceil$ and assume that $\eta \le e^{-O(K^3)} \cdot d^{-O(1)}.$ The crucial input pseudorandomness condition on the container functions $(X,Y,D)$ will be that 
\begin{align*}
\snorm{\mbm{1}_X}_{(B_1,B_8,B_9, K, K)}&\le (1+\eps_{s})\delta_X \\
\snorm{\mbm{1}_Y}_{(B_2,B_8,B_9, K, K)}&\le (1+\eps_{s})\delta_Y \\
\E_{x\sim B_1}|\E_{z\sim B_{10}}\mbm{1}_D(x+z) - \delta_D|&\le \eps_s\cdot \delta_D.
\end{align*}
Intuitively, this means that $X, Y$ are algebraically spread and that $D$ is $\ell_1$-spread. This asymmetry between $X,Y,$ and $D$ will persist throughout much of the analysis and cause substantial headaches. 

The first step in our analysis is noting that when passing to smaller Bohr sets various grid norm guarantees are likely to persist.
\begin{lemma}\label{lem:high-moment}
Fix $i\in \{3,4,5,6,7\}$ and for $x \in B_1$ denote 
\[\mc{E}_{X,i}(x) = \mbm{1}\Big[\snorm{\mbm{1}_X(x+\cdot)}_{(B_i,B_8,B_9, K, K)}\ge (1+2\eps_{s})\delta_X\Big].\]
Denote $\mc{E}_{Y,i}$ analogously. Then 
\[\E_{x\sim B_1}[\mc{E}_{X,i}(x)]\le e^{-\Omega(\eps_s K^2)}\]
and analogously for $\mc{E}_{Y,i}$.
\end{lemma}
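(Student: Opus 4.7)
The plan is to apply Markov's inequality to the $K^2$-th power of the grid norm, using the assumed bound on $\snorm{\mbm{1}_X}_{(B_1,B_8,B_9,K,K)}$ as the baseline. The main observation is that by unfolding the definition of the Gowers grid norm and using approximate translation invariance of a regular Bohr set under shifts from a much smaller Bohr set, the expectation over $x \sim B_1$ of the shifted grid norm recovers essentially the unshifted grid norm.

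Concretely, I would first write
\[
\E_{x\sim B_1}\snorm{\mbm{1}_X(x+\cdot)}_{(B_i,B_8,B_9,K,K)}^{K^2} = \E_{\substack{x\sim B_1\\ t\sim B_i\\ y_1,\dots,y_K\sim B_8\\ z_1,\dots,z_K\sim B_9}}\prod_{i',j'\in[K]}\mbm{1}_X(x+t+y_{i'}+z_{j'}).
\]
Since $i\ge 3$, we have $\nu(B_i)/\nu(B_1)\le \eta^2\le 1/(100d)$, so \cref{lem:shift-invar} (applied with the $1$-bounded function $x\mapsto \E_{y,z}\prod \mbm{1}_X(x+y_{i'}+z_{j'})$) lets us absorb the shift by $t\in B_i$ inside $B_1$ at cost $O(\eta d)$. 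Thus
\[
\E_{x\sim B_1}\snorm{\mbm{1}_X(x+\cdot)}_{(B_i,B_8,B_9,K,K)}^{K^2} \le \snorm{\mbm{1}_X}_{(B_1,B_8,B_9,K,K)}^{K^2} + O(\eta d) \le ((1+\eps_s)\delta_X)^{K^2} + O(\eta d),
\]
by the input pseudorandomness assumption on $X$.

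Now apply Markov's inequality:
\[
\Pr_{x\sim B_1}[\mc{E}_{X,i}(x)=1] \le \frac{((1+\eps_s)\delta_X)^{K^2} + O(\eta d)}{((1+2\eps_s)\delta_X)^{K^2}} \le \Big(\tfrac{1+\eps_s}{1+2\eps_s}\Big)^{K^2} + O(\eta d)\delta_X^{-K^2}.
\]
The first term is at most $e^{-\Omega(\eps_s K^2)}$, and the second is negligible under the assumption $\eta \le e^{-O(K^3)}\cdot d^{-O(1)}$, since $K = \Theta(\log(1/(\delta_X\delta_Y\delta_D)))$ gives $\delta_X^{-K^2} \le e^{O(K^3)}$. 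The identical argument with $Y$ and $B_2$ in place of $X$ and $B_1$ handles $\mc{E}_{Y,i}$. No step looks delicate; the only thing to verify carefully is that the shift-invariance error $O(\eta d)$ remains subdominant after dividing by $\delta_X^{K^2}$, which is exactly what the standing hypothesis $\eta \le e^{-O(K^3)}\cdot d^{-O(1)}$ is designed to guarantee.
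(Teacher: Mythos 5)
Your proof is correct and follows essentially the same route as the paper: use \cref{lem:shift-invar} to replace the $x$-shifted grid norm (averaged over $x\sim B_1$) by the unshifted one at cost $O(\eta d)$ (the paper writes this error as $O(K^2\eta d)$, which is a more conservative accounting, but either suffices given the standing hypothesis on $\eta$), then apply Markov's inequality against the threshold $((1+2\eps_s)\delta_X)^{K^2}$ and control the residual $O(\eta d)\delta_X^{-K^2}$ using $\eta \le e^{-O(K^3)}d^{-O(1)}$. The only cosmetic difference is that the paper phrases Markov via the indicator $\mc{E}_{X,i}(x)$ inserted inside the expectation, whereas you state the Markov step as an explicit ratio; the content is identical.
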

\begin{proof}
We may observe that
\begin{align*}
\snorm{\mbm{1}_X}_{(B_1,B_8,B_9, K, K)}^{K^2} &=\E_{\substack{x\sim B_1\\y_1,\ldots,y_K\sim B_8\\z_1,\ldots,z_K\sim B_9}}\prod_{\substack{j\in [K]\\\ell\in [K]}}\mbm{1}_X(x + y_j + z_\ell)\\&= \E_{\substack{x\sim B_1,x'\sim B_i\\y_1,\ldots,y_K\sim B_8\\z_1,\ldots,z_K\sim B_9}}\prod_{\substack{j\in [K]\\\ell\in [K]}}\mbm{1}_X(x + x'+y_j + z_\ell) \pm O(K^2 \eta d)\\
&=\E_{x\sim B_1} \snorm{\mbm{1}_X(x + \cdot)}_{(B_i,B_8,B_9,K,K)}^{K^2}\pm O(K^2 \eta d)\\
&\ge \E_{x\sim B_1} \mc{E}_{X,i}(x) \cdot ((1+2\eps_s) \cdot \delta_X)^{K^2}-O(K^2 \eta d).
\end{align*}
Recall that we assume that $\snorm{\mbm{1}_X}_{(B_1,B_8,B_9,K,K)} \le (1+\eps_s) \delta_X$.
Rearranging and using $\eta \le e^{-O(K^3)} \cdot d^{-O(1)}$ we immediately have the desired bound. 
\end{proof}

We define $(x,y)\in B_1\times B_2$ to be \emph{well--conditioned} if 
\[x\in \bigcap_{4\le j\le 7}\on{supp}(1-\mc{E}_{X,i})\text{ and }y\in \bigcap_{4\le j\le 7}\on{supp}(1-\mc{E}_{Y,i}).\]

Next we define different types of dense rectangles, which are cheap ways to obtain a density increment. Otherwise, we are able to guarantee that on most of the space that the densities of the objects we care about such as $X, Y, D$ are relatively unchanged when passing to the smaller Bohr sets.

We say  $(x,y)$ gives a \emph{dense} rectangle of Type 1 if $(x,y)$ is well--conditioned and 
\begin{align*}
\E_{\substack{x'\sim B_3\\y'\sim B_4}}f(x+x',y+y')&\ge (1+\eps_R)\alpha \delta_X\delta_Y\cdot \E_{z\sim B_3}\mbm{1}_D(x+y+z)\\
\E_{z\sim B_3}\mbm{1}_D(x+y+z) &\ge \eps_s^2 \cdot \alpha^2 \cdot \delta_D.
\end{align*}

We say  $(x,y)$ gives a \emph{dense} rectangle of Type 2 if $(x,y)$ is well--conditioned and 
\begin{align*}
\E_{\substack{x'\sim B_3\\z'\sim B_5}}f(x+x',y-x'+z')&\ge (1+\eps_R)\alpha \delta_X\delta_Y\cdot \E_{z'\sim B_5}\mbm{1}_D(x+y+z')\\
\E_{z'\sim B_5}\mbm{1}_D(x+y+z') &\ge \eps_s^2 \cdot \alpha^2 \cdot \delta_D.
\end{align*}

We say  $(x,y)$ gives a \emph{dense} rectangle of Type 3 if $(x,y)$ is well--conditioned and 
\begin{align*}
\E_{\substack{y'\sim B_4\\z'\sim B_5}}f(x-y'+z',y+y')&\ge (1+\eps_R)\alpha \delta_X\delta_Y\cdot \E_{z'\sim B_5}\mbm{1}_D(x+y+z')\\
\E_{z'\sim B_5}\mbm{1}_D(x+y+z') &\ge \eps_s^2 \cdot \alpha^2 \cdot \delta_D.
\end{align*}

We now prove that if $(x,y)$ gives a dense rectangle this is sufficient to obtain a density increment. 
\begin{lemma}\label{lem:dense-rectangle}
Suppose that $(x,y)$ gives a dense rectangle of Type 1, 2, or 3. Then there exists $i\in\{3,4,5,6,7\}$ and $(x^{\ast},y^{\ast})$ such that the following all hold:
\begin{align*}
\E_{\substack{x'\sim B_i\\y'\sim B_{i+1}}}f(x^{\ast}+x',y^{\ast}+y')&\ge (1+\eps_R/2)\cdot \alpha \delta_X\delta_Y\cdot \E_{z'\sim B_i}\mbm{1}_D(x^{\ast}+y^{\ast} + z')\\
\E_{z'\sim B_i}\mbm{1}_D(x^{\ast}+y^{\ast} + z') &\ge \eps_s^4 \cdot \alpha^{3} \cdot \delta_D\\
\snorm{\mbm{1}_X(x^{\ast} + \cdot)}_{(B_i,B_8,B_9, K, K)}&\le (1+3\eps_{s})\delta_X\\
\snorm{\mbm{1}_Y(y^{\ast} + \cdot)}_{(B_{i+1},B_8,B_9, K, K)}&\le (1+3\eps_{s})\delta_Y.
\end{align*}
\end{lemma}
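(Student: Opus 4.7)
The plan is to handle the three types of dense rectangles in turn. In each case, I will choose an output index $i \in \{3, 4, 5, 6, 7\}$ and set $(x^*, y^*) = (x + u^*, y + v^*)$ for a well-chosen shift $(u^*, v^*)$, then verify the four required conclusions. The key tools will be \cref{lem:shift-invar} (to transfer averaged identities across Bohr sets), \cref{lem:high-moment} (to control the measure of shifts at which a grid norm is large), and \cref{fct:rev-mark} (to extract a positive-measure set of good shifts from an averaged density increment).

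For Type 1, whose density condition lives on the product rectangle $B_3 \times B_4$, I plan to take output index $i = 4$ with an averaged shift $(u^*, v^*) \in B_3 \times B_4$. The starting point is the expansion identity
\[
\E_{x' \sim B_3,\, y' \sim B_4} g(x + x', y + y') = \E_{u \sim B_3,\, v \sim B_4} \E_{x' \sim B_4,\, y' \sim B_5} g(x + u + x', y + v + y') + O(\eta d),
\]
valid for any $1$-bounded $g$ by repeated application of \cref{lem:shift-invar}, applied to both $g = f$ and $g$ built from $\mbm{1}_D$. This converts the Type 1 hypothesis into an averaged density condition over $(u,v) \in B_3 \times B_4$, and simultaneously shows that on average $\E_{z' \sim B_4}\mbm{1}_D((x+u)+(y+v)+z')$ equals the original $D$-density $\E_{z \sim B_3}\mbm{1}_D(x+y+z) \ge \eps_s^2 \alpha^2 \delta_D$. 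A reverse Markov argument then yields a positive-measure subset of $(u,v)$ at which the output conditions (1) with constant $\eps_R/2$ and (2) with lower bound $\eps_s^4 \alpha^3 \delta_D$ both hold at $(x+u, y+v)$.

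For the grid norm conditions (3)–(4), I plan to adapt the proof of \cref{lem:high-moment}: the same expansion there yields
\[
\|\mbm{1}_X\|_{(B_1, B_8, B_9, K, K)}^{K^2} = \E_{w \sim B_1,\, u \sim B_3} \|\mbm{1}_X((w+u) + \cdot)\|_{(B_4, B_8, B_9, K, K)}^{K^2} + O(K^2\eta d),
\]
so Markov's inequality shows that the fraction of $(w, u) \in B_1 \times B_3$ where the inner norm exceeds $(1+3\eps_s)\delta_X$ is at most $e^{-\Omega(\eps_s K^2)}$. Pigeonholing in $w$ (using that $x$ itself is well-conditioned, so avoids the exceptional set for index $4$) and the analogous estimate for $Y$ at index $5$, this controls the measure of bad shifts $(u,v) \in B_3 \times B_4$. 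For $K$ sufficiently large (which holds by the assumption $K \ge C\lceil\log(4/(\delta_X \delta_Y \delta_D))\rceil$), this bad set is much smaller than the density-good set, so an intersection exists and provides the desired $(u^*, v^*)$. Types 2 and 3 feature sheared rectangles $f(x+x', y-x'+z')$ and $f(x-y'+z', y+y')$; I will first convert these into product form via substitutions such as $w = -x'+z'$ (for Type 2) combined with an additional averaging over a small Bohr set to smear the support-dependence, with the error absorbed since $\nu(B_{i+1})/\nu(B_i) \le \eta$. The remainder of the argument then mirrors Type 1 with appropriately shifted indices.

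The main obstacle, recurring in each of the three cases, is simultaneously maintaining grid norm control on $X$ and $Y$ at the shifted location $(x+u, y+v)$ while preserving the density increment: well-conditioning of $(x, y)$ only yields grid norm bounds at the original point and only at indices $4$–$7$, so a shift large enough (in $B_3$ or $B_4$) to meaningfully change the density rectangle must be shown to still land in a grid-norm-good region at the target index. The averaged variant of \cref{lem:high-moment} described above is precisely what allows this; the asymmetric freedom between the output index range $\{3, 4, 5, 6, 7\}$ and the well-conditioning range $\{4, 5, 6, 7\}$ is exactly enough to accommodate all three types.
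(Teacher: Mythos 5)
Your overall structure (choose $(x^*,y^*)$ and $i$ per type, transfer the density condition via shift-invariance, control grid norms at the shifted base point, extract a good shift by a positive-average argument) is the right one, and it does track the paper's. But there are two genuine problems.

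\textbf{Type 1 is immediate — no shift needed.} The paper handles Type 1 by simply taking $(x^*,y^*)=(x,y)$ and $i=3$: the output density inequality is the Type-1 hypothesis with a weaker constant, $\eps_s^2\alpha^2\delta_D\ge\eps_s^4\alpha^3\delta_D$ gives condition (2), and the grid-norm conditions are part of being well-conditioned. Your plan to shift to $(x+u^*,y+v^*)$ with $(u^*,v^*)\sim B_3\times B_4$ and output $i=4$ is unnecessary, and in fact introduces the very problem you then struggle with: after shifting, neither the grid-norm condition at $x+u^*$ nor the lower bound on the local $D$-density is automatic. (Also note that your claim to control $\E_{u\sim B_3}\snorm{\mbm{1}_X(x+u+\cdot)}_{(B_4,\ldots)}^{K^2}$ via the global norm together with ``pigeonholing in $w$'' does not localize to the specific well-conditioned $x$; the averaging in \cref{lem:high-moment} is over a uniform $w\sim B_1$, and pigeonhole gives you \emph{some} good $w$, not your fixed $x$.)

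\textbf{The reverse-Markov step is wrong, and the fix is precisely where \cref{lem:upper-bound} enters.} You say a reverse Markov argument yields a positive-measure set of $(u,v)$ on which both the increment (condition (1)) and the lower bound on local $D$-density (condition (2)) hold. But \cref{fct:rev-mark} requires the variable to be \emph{upper-bounded} by $(1+\rho)$ times its mean, and $\E_{z'\sim B_4}\mbm{1}_D((x+u)+(y+v)+z')$ has no such bound; it can be as large as $1$. The correct argument (the paper's, for Types 2 and 3) is to exclude the bad shifts where the local $D$-density is small and show the remaining mass is positive. But this is where the quantitative difficulty lies: the density-increment quantity, integrated over the shifts, is only of order $\eps_R\alpha\delta_X\delta_Y\cdot\eps_s^2\alpha^2\delta_D$ (it carries the factor $\delta_X\delta_Y$), whereas the naive bound on the contribution of a bad shift — bounded only by its local $D$-density, at most $\eps_s^4\alpha^3\delta_D$ — \emph{does not} carry $\delta_X\delta_Y$, which can be $\exp(-\log(1/\alpha)^{\Omega(1)})$. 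One must recover the missing $\delta_X\delta_Y$ factors; the paper does so by first restricting to the grid-norm event $\mc{E}(x',z')$ and then applying \cref{lem:upper-bound} (with the $X$- and $Y$-grid-norm bounds that $\mc{E}$ certifies) to bound the contribution from shifts where the local $D$-density is small. Your proposal never invokes \cref{lem:upper-bound}, and without it the argument does not close quantitatively — not just in your Type 1 averaging variant, but also in Types 2 and 3, which you describe only as ``mirroring Type 1 with appropriately shifted indices.''
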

\begin{proof}
We observe that the Type 1 case is immediate by definition. The Type 2 and Type 3 cases are essentially identical; we handle the Type 2 case.

The input condition to our lemma implies that
\begin{align*}
\E_{\substack{x'\sim B_3\\z'\sim B_5}}f(x+x',y-x'+z')&\ge (1+3\eps_R/4)\alpha \delta_X\delta_Y\cdot \E_{z'\sim B_5}\mbm{1}_D(x+y+z') + \delta_X\delta_Y \cdot \eps_s^{3} \alpha^2 \delta_D.
\end{align*}
Combining the above with \cref{lem:shift-invar} and the choice of $\eta$ now gives 
\begin{align*}
\E_{\substack{x'\sim B_3\\z'\sim B_5}}&\Big(\E_{\substack{x''\sim B_6\\ y''\sim B_7}}f(x+x'+x'',y-x'+z'+y'') - (1+2\eps_R/3)\alpha \delta_X\delta_Y\cdot \mbm{1}_D(x+y+z' + x'' + y'')\Big) \\
&\qquad\qquad\qquad\ge \delta_X\delta_Y \cdot \eps_s^{3} \alpha^2 \delta_D.
\end{align*}
Let
\begin{align*}
    \mc{E}(x',z') &= \mbm{1}\Big[\snorm{\mbm{1}_X(x+x'+\cdot)}_{(B_6,B_8,B_9,K,K)}\le (1+3\eps_s)\delta_X\Big] \\
    &\qquad \cdot \mbm{1}\Big[\snorm{\mbm{1}_Y(y-x'+z'+\cdot)}_{(B_7,B_8,B_9,K,K)}\le (1+3\eps_s)\delta_X\Big].
\end{align*}
Then via the proof of \cref{lem:high-moment}, we have that 
\begin{align*}
\E_{\substack{x'\sim B_3\\z'\sim B_5}}\mc{E}(x',z')&\Big(\E_{\substack{x''\sim B_6\\ y''\sim B_7}}f(x+x'+x'',y-x'+z'+y'')- (1+3\eps_R/5)\alpha \delta_X\delta_Y\cdot \mbm{1}_D(x+y+z' + x'' + y'')\Big) \\
&\qquad\qquad\qquad\ge \delta_X\delta_Y \cdot \eps_s^{3} \alpha^2 \delta_D.
\end{align*}
Let 
\[\mc{E}_2(x',z') = \mbm{1}\Big[\E_{\substack{x''\sim B_6\\y''\sim B_7}}D(x+y+z'+x''+y'')\ge \eps_s^4 \cdot \alpha^3 \delta_D\Big].\]
Via \cref{lem:upper-bound}, we have that 
\begin{align*}
\E_{\substack{x'\sim B_3\\z'\sim B_5}}\mc{E}(x',z')\mc{E}_2(x',z')&\Big(\E_{\substack{x''\sim B_6\\ y''\sim B_7}}f(x+x'+x'',y-x'+z'+y'')\\
&\qquad\qquad -(1+3\eps_R/5)\alpha \delta_X\delta_Y\cdot \mbm{1}_D(x+y+z' + x'' + y'')\Big)>0.
\end{align*}
Taking $x'$ and $z'$ such that $\mc{E}(x',z')\mc{E}_2(x',z') = 1$ and the inner expression is strictly positive then completes the proof. 
\end{proof}

A major technical issue when handling the case of well--conditioned rectangles is that one has no matching lower bounds on the density of $X$, $Y$, and $D$. We now use the reverse Markov inequality (\cref{fct:rev-mark}) to ensure that the density is preserved with at least ``99\%'' probability.
\begin{lemma}\label{lem:density-drop}
For $i\in \{3,4,5,6,7\}$, define
\[\mc{E}_{\on{Small},X,i}(x) = \mbm{1}\Big[\E_{x'\sim B_i}\mbm{1}_X(x+x')\le (1-\eps_{s}^{1/2})\delta_X\Big].\]
Define $\mc{E}_{\on{Small},Y,i}$ analogously. Then we have that 
\[\E_{x\sim B_1}[\mc{E}_{\on{Small},X,i}(x)] = O(\eps_{s}^{1/2}) \]
and similarly for $\mc{E}_{\on{Small},Y,i}$.

Furthermore define 
\[\mc{E}_{\on{Special},D,i}(z) = \mbm{1}\Big[\E_{z'\sim B_i}\Big|\E_{z''\sim B_{10}}\mbm{1}_D(z+z'+z'')-\delta_D\Big|\ge \eps_s^{1/2}\cdot \delta_D\Big].\]
Then \[\E_{z\sim B_1}[\mc{E}_{\on{Special},D,i}(z)] = O(\eps_{s}^{1/2}). \]
\end{lemma}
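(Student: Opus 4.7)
\textbf{Proof plan for \cref{lem:density-drop}.} The third conclusion (about $D$) is the easiest and comes first: define $g(z) = |\E_{z'' \sim B_{10}}\mbm{1}_D(z+z'') - \delta_D|$, so the input hypothesis on $D$ reads $\E_{z\sim B_1}[g(z)] \le \eps_s \delta_D$. Since $B_i$ is a smaller Bohr set in a $(d,\eta)$-small chain and $B_1$ is regular, the shift-invariance of regular Bohr sets (\cref{lem:shift-invar}) yields
\[
\E_{z \sim B_1}\E_{z' \sim B_i}[g(z+z')] = \E_{z \sim B_1}[g(z)] + O(\eta d) \le 2\eps_s \delta_D.
\]
An application of Markov's inequality then gives $\Pr_z[\E_{z'\sim B_i}[g(z+z')] \ge \eps_s^{1/2}\delta_D] \le 2\eps_s^{1/2} = O(\eps_s^{1/2})$, which is exactly the claimed bound on $\E_z[\mc{E}_{\on{Special},D,i}(z)]$.

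The $X$-side (the $Y$-side is symmetric) is more delicate because we need a \emph{two-sided} control on $h(x) \coloneqq \E_{x'\sim B_i}\mbm{1}_X(x+x')$ out of only one-sided grid norm information. Define $\wt{h}(x) \coloneqq \E_{x'\sim B_i,\,y\sim B_{i+1}}\mbm{1}_X(x+x'+y)$; by shift-invariance $h(x) = \wt h(x) + O(\eta d)$ pointwise (here $i+1 \le 8$ so $B_{i+1}$ lies in the chain). By Jensen's inequality on the $y$-variable,
\[
\wt h(x)^K \le \E_{x_1',\ldots,x_K'\sim B_i,\,y\sim B_{i+1}}\prod_{j=1}^K \mbm{1}_X(x+x_j'+y),
\]
so $\E_x\wt h(x)^K \le \snorm{\mbm{1}_X}_{(B_1,B_i,B_{i+1},K,1)}^{K}$. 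Another application of Jensen (now in the $z$-slot, factoring the $K$ copies of $y$) dominates the $(K,1)$ norm by the $(K,K)$ norm, and then monotonicity (\cref{lem:monotone}), combined with the assumption $\snorm{\mbm{1}_X}_{(B_1,B_8,B_9,K,K)}\le (1+\eps_s)\delta_X$, gives $\snorm{\mbm{1}_X}_{(B_1,B_i,B_{i+1},K,K)} \le (1+\eps_s)\delta_X + O((\eta d)^{1/K^2})$. Since we assumed $\eta \le e^{-O(K^3)}d^{-O(1)}$, the error term is absorbable and we obtain $\E_x h(x)^K \le ((1+2\eps_s)\delta_X)^K$.

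With this moment bound in hand, let $E = \{x\in B_1 : h(x) \ge (1+\eps_s^{1/2}/4)\delta_X\}$. Markov gives
\[
\Pr_x[x \in E] \le \Big(\tfrac{1+2\eps_s}{1+\eps_s^{1/2}/4}\Big)^K \le e^{-\Omega(\eps_s^{1/2} K)}.
\]
Since $K \ge C\log(1/(\delta_X\delta_Y\delta_D))$ with $C$ sufficiently large (depending on $\eps_s$), this probability is both much smaller than $\delta_X$ and also smaller than $\eps_s^{1/2}$. Using $\E_x h(x) = \delta_X + O(\eta d)$ together with $\Pr[E] \ll \delta_X$ and $h\le 1$, one gets $\E[h \mid E^c] \ge (1 - o(1))\delta_X$. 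Conditioning on $E^c$, we have the uniform upper bound $h(x) \le (1+\eps_s^{1/2}/4)\delta_X \le (1 + O(\eps_s^{1/2}))\E[h\mid E^c]$, so the reverse Markov inequality (\cref{fct:rev-mark}) with $\rho = O(\eps_s^{1/2})$ and $\gamma = \eps_s^{1/2}/2$ yields
\[
\Pr\big[h(x) \le (1-\eps_s^{1/2})\delta_X \,\big|\, E^c\big] \le \frac{\rho}{\gamma + \rho} = O(\eps_s^{1/2}).
\]
Combining with the bound on $\Pr[E]$ gives the desired estimate $\E[\mc{E}_{\on{Small},X,i}(x)] = O(\eps_s^{1/2})$. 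The argument for $Y$ is identical with $B_2$ in place of $B_1$.

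The main obstacle is the one-sided nature of the grid norm hypothesis: the $(K,K)$-grid norm bound directly controls only the upper tail of $h$, so we need a two-step argument (a Markov step on a high moment, followed by a reverse Markov step on the good set) rather than a single application of one inequality. Ensuring the Markov step truncates only a negligible fraction compared to $\delta_X$ is what forces $K$ to be at least of order $\log(1/\delta_X)$, which is why $K = C\lceil\log(4/(\delta_X\delta_Y\delta_D))\rceil$ with large $C$ was built into the setup.
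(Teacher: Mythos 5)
The $D$-part of your proposal is correct and matches the paper's proof (Markov's inequality applied to the $\ell_1$-spreadness hypothesis after smoothing by $z' \sim B_i$ via shift-invariance).

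The $X$-part, however, has a quantitative error in the final reverse-Markov step. You define the truncation set $E$ with threshold $(1+\eps_s^{1/2}/4)\delta_X$, and after conditioning on $E^c$ you have the upper bound $h(x) \le (1+\eps_s^{1/2}/4)\delta_X \le (1+\rho)\E[h\mid E^c]$ with $\rho = \Theta(\eps_s^{1/2})$. Applying \cref{fct:rev-mark} with $\gamma = \eps_s^{1/2}/2$ (which is forced, since you want to land below $(1-\eps_s^{1/2})\delta_X$), you get $\rho/(\gamma+\rho)$ with $\rho$ and $\gamma$ of the \emph{same} order $\eps_s^{1/2}$, so this ratio is $\Theta(1)$, not $O(\eps_s^{1/2})$. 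The conclusion you claim does not follow.

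The fix is to use a much tighter truncation threshold, of order $(1+O(\eps_s))\delta_X$ rather than $(1+O(\eps_s^{1/2}))\delta_X$. Then the uniform upper bound on $E^c$ gives $\rho = O(\eps_s)$, and reverse Markov with $\gamma = \Theta(\eps_s^{1/2})$ yields $\rho/(\gamma+\rho) = O(\eps_s^{1/2})$, as required. Your Markov bound for the truncation event still works with the tighter threshold: $\Pr[h \ge (1+3\eps_s)\delta_X] \le \big(\tfrac{1+2\eps_s}{1+3\eps_s}\big)^K \le e^{-\Omega(\eps_s K)}$, which is still $\ll \delta_X$ and $\ll \eps_s^{1/2}$ since $K \ge C\log(1/(\delta_X\delta_Y\delta_D))$ with $C \gg 1/\eps_s$. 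This is essentially what the paper does: \cref{lem:high-moment} produces the event $\mc{E}_{X,i}$ with threshold $(1+2\eps_s)\delta_X$ (giving $\rho = O(\eps_s)$ on its complement), and then reverse Markov closes the argument. Apart from this constant-threshold issue, your moment-bound derivation via $\wt h$ and grid-norm monotonicity is a valid alternative to invoking \cref{lem:high-moment} directly.
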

\begin{proof}
We first handle the case of $X$ (with $Y$ being completely analogous) and then handle the case of $D$. 

For bounding $\mc{E}_{\on{Small},X,i}$, observe that 
\begin{align*}
\E_{x'\sim B_i}\mbm{1}_X(x+x') &\le \E_{\substack{x'\sim B_i\\ x_1\sim B_8\\x_2\sim B_9}}\mbm{1}_X(x+x' + x_1 + x_2) + O(\eta d)\le \snorm{\mbm{1}_X(x+\cdot)}_{(B_i,B_8,B_9,K,K)} + O(\eta d).
\end{align*}
By applying \cref{lem:high-moment}, we have that 
\[\E_{x\sim B_1}(1-\mc{E}_{X,i}(x))\E_{x'\sim B_i}\mbm{1}_X(x+x') \ge (1-\eps_s) \delta_X,\]
and for $x$ such that $\mc{E}_{X,i}(x) = 0$ we have that $\E_{x'\sim B_i}\mbm{1}_X(x+x') \le (1+3\eps_s)\delta_X$.

Applying \cref{fct:rev-mark} to $(1-\mc{E}_{X,i}(x))\E_{x'\sim B_i}\mbm{1}_X(x+x')$, we have that 
\[\Pr_{x\sim B_1}\Big[(1-\mc{E}_{X,i}(x))\E_{x'\sim B_i}\mbm{1}_X(x+x')\le (1-\eps_{s}^{1/2})\cdot \delta_X\Big] = O(\eps_s^{1/2}).\]
This combined with \cref{lem:high-moment} immediately gives the desired result for $\mc{E}_{\on{Small},X,i}$.

We now handle the event for $D$. The result follows via Markov's inequality and the bound that \[\E_{\substack{z\sim B_1\\z'\sim B_i}}\Big|\E_{z''\sim B_{10}}\mbm{1}_D(z+z'+z'')-\delta_D\Big|\le 2\eps_s\delta_D,\]
giving the desired result. 
\end{proof}

We now define the notion of a great pair. We say $(x,y)$ is a \emph{great pair} if $(x,y)$ is well--conditioned,
\[x\in \bigcap_{4\le j\le 7}\on{supp}(1-\mc{E}_{\on{Small},X,j}), ~y\in \bigcap_{4\le j\le 7}\on{supp}(1-\mc{E}_{\on{Small},Y,j}),\text{ and }x+y\in \bigcap_{4\le j\le 7}\on{supp}(1-\mc{E}_{\on{Special},D,j}) \]
and that
\begin{align*}
\E_{\substack{x'\sim B_3\\y'\sim B_4}}f(x+x',y+y')&\ge (1-\eps_{R}^{1/2})\alpha \delta_X\delta_Y\delta_D\\
\E_{\substack{x'\sim B_3\\z'\sim B_5}}f(x+x',y-x' + z')&\ge (1-\eps_{R}^{1/2})\alpha \delta_X\delta_Y\delta_D\\
\E_{\substack{y'\sim B_4\\z'\sim B_5}}f(x-y' + z',y+y')&\ge (1-\eps_{R}^{1/2})\alpha \delta_X\delta_Y\delta_D.
\end{align*}

The existence of a ``great pair'' will allow us to completely localize our analysis to within a certain subset of rectangles where we have appropriately lower--bounded marginals. We will derive the existence of a ``great pair'' via an averaging argument and noting that the mass in ``misbehaving'' pairs is bounded via the condition of having no dense rectangles. 

\begin{lemma}\label{lem:great-pair}
If $f$ has no dense rectangles (of Type 1, 2, or 3), then there exists a great pair $(x,y)$.
\end{lemma}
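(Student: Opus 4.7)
The plan is a standard first-moment plus reverse-Markov argument applied to the three rectangle averages
\begin{align*}
V_1(x,y) &\coloneqq \E_{x'\sim B_3,\,y'\sim B_4}f(x+x',y+y'),\\
V_2(x,y) &\coloneqq \E_{x'\sim B_3,\,z'\sim B_5}f(x+x',y-x'+z'),\\
V_3(x,y) &\coloneqq \E_{y'\sim B_4,\,z'\sim B_5}f(x-y'+z',y+y').
\end{align*}
By \cref{lem:shift-invar} and the input identity $\E_{x\sim B_1,\,y\sim B_2}f(x,y)=\alpha\delta_X\delta_Y\delta_D$, each $V_i$ has mean $\alpha\delta_X\delta_Y\delta_D+O(\eta d)$.

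First I would declare the \emph{good set} $G\subseteq B_1\times B_2$ to consist of pairs $(x,y)$ avoiding each of the exceptional events $\mc{E}_{X,j}(x)$, $\mc{E}_{Y,j}(y)$, $\mc{E}_{\on{Small},X,j}(x)$, $\mc{E}_{\on{Small},Y,j}(y)$, and $\mc{E}_{\on{Special},D,j}(x+y)$ for the relevant $j\in\{3,\ldots,7\}$. \Cref{lem:high-moment,lem:density-drop} and a union bound give $\Pr[\bar G]\le O(\eps_s^{1/2})$; the $e^{-\Omega(\eps_s K^2)}$ terms from \cref{lem:high-moment} are negligible by the choice $K=C\lceil\log(4/(\delta_X\delta_Y\delta_D))\rceil$ with $C$ large. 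For $(x,y)\in G$, the $\mc{E}_{\on{Special}}$-avoidance (combined with \cref{lem:shift-invar} to compare the $B_j$- and $B_{10}$-smoothings of $\mbm{1}_D$) gives $\E_{z\sim B_j}\mbm{1}_D(x+y+z)\in[(1-2\eps_s^{1/2})\delta_D,(1+2\eps_s^{1/2})\delta_D]$ for all relevant $j$, which in particular comfortably exceeds the threshold $\eps_s^2\alpha^2\delta_D$ that distinguishes dense from sparse rectangles. Consequently, the ``no Type-$i$ dense rectangle'' hypothesis implies
\[
V_i(x,y)\;\le\;(1+\eps_R)\alpha\delta_X\delta_Y\cdot\E_{z}\mbm{1}_D(x+y+z)\;\le\;(1+3\eps_R)\alpha\delta_X\delta_Y\delta_D \qquad \text{on } G.
\]

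I would then apply \cref{fct:rev-mark} to $V_i\mbm{1}_G$, whose mean is $\alpha\delta_X\delta_Y\delta_D-\E[V_i\mbm{1}_{\bar G}]$ and whose maximum is $(1+3\eps_R)\alpha\delta_X\delta_Y\delta_D$, to conclude that the fraction of $(x,y)\in G$ with $V_i(x,y)\ge(1-\eps_R^{1/2})\alpha\delta_X\delta_Y\delta_D$ is at least $1-O(\eps_R^{1/2})$, provided we can show $\E[V_i\mbm{1}_{\bar G}]\le O(\eps_R\alpha\delta_X\delta_Y\delta_D)$. A final union bound over $i=1,2,3$ and $\bar G$, with $\eps_s\ll\eps_R$, leaves a set of positive measure of great pairs.

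The main technical obstacle is precisely that bad-set estimate: the naive bound $V_i\le 1$ together with $\Pr[\bar G]=O(\eps_s^{1/2})$ is useless, since $\eps_s^{1/2}$ is an absolute constant that can dwarf $\alpha\delta_X\delta_Y\delta_D$. The resolution is to exploit the product upper bound $V_i(x,y)\le\E_{x'\sim B_3}\mbm{1}_X(x+x')\cdot\E_{y'\sim B_4}\mbm{1}_Y(y+y')$ (and similarly $V_i\le\E_{x'y'}\mbm{1}_D(x+y+\cdot)$ for events anchored on $x+y$), decomposing $\bar G$ event by event so that each individual event contributes at most (its measure)$\cdot\delta_X\delta_Y$ or $\cdot\delta_X\delta_D$ or $\cdot\delta_Y\delta_D$ rather than merely its measure. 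Higher-moment concentration of $\E_{x'}\mbm{1}_X(x+x')$ coming from the input Gowers grid norm bound $\snorm{\mbm{1}_X}_{(B_1,B_8,B_9,K,K)}\le(1+\eps_s)\delta_X$ (and symmetrically for $Y$) is used to sharpen these estimates so that the total contribution scales multiplicatively with $\delta_X\delta_Y\delta_D$ as required, making it comparable to $\alpha\delta_X\delta_Y\delta_D$ after $K$ is taken large enough and $\eps_s$ small enough relative to $\eps_R$.
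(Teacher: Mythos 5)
Your outline matches the paper's: compute the first moment of the three rectangle averages $V_1, V_2, V_3$, restrict to a good set, cap $V_i$ from above there via the ``no dense rectangle'' hypothesis, and run reverse Markov. You have also correctly located the one nontrivial step, namely the bad-set estimate $\E[V_i\mbm{1}_{\bar G}]\ll\alpha\delta_X\delta_Y\delta_D$. The issue is that your proposed resolution of that step does not close.

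The product bound $V_i\le d_X(x)d_Y(y)$ (or $V_i\le d_D(x+y)$) loses an entire factor $\delta_D$ (respectively $\delta_X\delta_Y$), and concentration of $d_X, d_Y$ cannot recover it. Even in the best case, the route you describe yields $\E[V_i\mbm{1}_{\bar G}]\lesssim\eps_s^{c}\,\delta_X\delta_Y\delta_D$ for some small constant power $c$, with no factor of $\alpha$. But $\eps_s$ is an \emph{absolute} constant, while $\alpha\to 0$ as the density increment proceeds, so this bound is generically far larger than the target $\alpha\delta_X\delta_Y\delta_D$, and the mean of $V_i\mbm{1}_G$ can become negative, making reverse Markov vacuous. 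No choice of $K$ or $\eps_s$ relative to $\eps_R$ fixes this: the bad events $\mc{E}_{\on{Small},X,j}$, $\mc{E}_{\on{Special},D,j}$ have probability $\Theta(\eps_s^{1/2})$ by Markov and reverse Markov, and the $\ell_1$ hypothesis on $D$ gives no higher-moment improvement. The ingredient you are missing is that the ``no dense rectangle'' hypothesis must be invoked on $\bar G$ as well, not just on $G$: whenever $\mc{E}_1(x,y)$ holds and $\E_{z}\mbm{1}_D(x+y+z)\ge\eps_s^2\alpha^2\delta_D$, the hypothesis gives the pointwise bound $V_i(x,y)\le(1+\eps_R)\alpha\delta_X\delta_Y\,\E_z\mbm{1}_D(x+y+z)$, which carries the crucial extra factor of $\alpha$ (and when $\E_z\mbm{1}_D(x+y+z)<\eps_s^2\alpha^2\delta_D$, \cref{lem:upper-bound} kills the contribution directly). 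Combined with the estimate $\E_{z\sim B_1}G(z)\E_{z'}\mbm{1}_D(z+z')\le(\tau+\eps_s)\delta_D+O(\eta d)$ for any $G$ of measure $\le\tau$ (a consequence of the $\ell_1$-spreadness of $D$), this gives $\E[V_i\mbm{1}_{\bar G}]\lesssim\eps_s^{1/2}\alpha\delta_X\delta_Y\delta_D\ll\eps_R\alpha\delta_X\delta_Y\delta_D$, which is what the reverse Markov argument actually needs.
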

\begin{proof}
By assumption, we have that 
\[\E_{\substack{x\sim B_1\\y\sim B_2}}f(x,y) = \alpha \delta_X\delta_Y\delta_D.\]
Define 
\[\mc{E}_1(x,y) = \prod_{i\in\{3,4,5,6,7\}}(1-\mc{E}_{X,i}(x))(1-\mc{E}_{Y,i}(y)).\]
Via \cref{lem:high-moment}, we immediately have that 
\begin{align*}
\E_{\substack{x\sim B_1\\y\sim B_2}} \mc{E}_1(x,y)\E_{\substack{x'\sim B_3\\y'\sim B_4\\z'\sim B_5}}&[f(x+x',y+y') + f(x+x', y-x' + z') + f(x-y'+z',y+y')] \\
&= 3\alpha \delta_X\delta_Y\delta_D + O(\eta d) + O( e^{\Omega(-\eps_sK^2)}).
\end{align*}

Let $G:B_1\to [0,1]$ be such that $\E[G(z)]\le \tau$. Then 
\begin{align*}
\E_{z\sim B_1}G(z)\E_{z'\sim B_i}\mbm{1}_D(z+z') &=\E_{z\sim B_1}G(z)\E_{\substack{z'\sim B_i\\z''\sim B_9}}\mbm{1}_D(z+z'+z'') + O(\eta d)\\
&\le \E_{\substack{z\sim B_1\\z'\sim B_i}}G(z)\Big|\E_{z''\sim B_9}\mbm{1}_D(z+z'+z'') - \delta_D\Big|+ \delta_X \cdot \tau + O(\eta d)\\
&\le (\tau + \eps_s) \cdot \delta_D + O(\eta d).
\end{align*}

We next define 
\[\mc{E}_2(x,y) = \prod_{4\le j\le 7}(1-\mc{E}_{\on{Special},D,j}(x+y)).\]
Via combining \cref{lem:density-drop} and that there are no dense rectangles, we have that 
\begin{align*}
\E_{\substack{x\sim B_1\\y\sim B_2}} \mc{E}_1(x,y)\mc{E}_2(x,y)\E_{\substack{x'\sim B_3\\y'\sim B_4\\z'\sim B_5}}&[f(x+x',y+y') + f(x+x', y-x' + z') + f(x-y'+z',y+y')] \\
&\ge 3(1-\eps_R^{3/4})\alpha \delta_X\delta_Y\delta_D.
\end{align*}
We are using implicitly here that if the density of $D$ is especially small on the relevant shift of a Bohr set (e.g. $\le \eps_s^3 \cdot \alpha^2 \cdot \delta_D$) then \cref{lem:upper-bound} removed the contribution. 

Finally we may define 
\[\mc{E}_3(x,y) = \prod_{4\le j\le 7}(1-\mc{E}_{\on{Small},X,j}(x)) \cdot \prod_{4\le j\le 7}(1-\mc{E}_{\on{Small},Y,j}(y)).\]
Here by the assumption of no dense rectangle and \cref{lem:density-drop}, we immediately have that 
\begin{align*}
\E_{\substack{x\sim B_1\\y\sim B_2}} \mc{E}_1(x,y)\mc{E}_2(x,y)\mc{E}_3(x,y)\E_{\substack{x'\sim B_3\\y'\sim B_4\\z'\sim B_5}}&[f(x+x',y+y') + f(x+x', y-x' + z') + f(x-y'+z',y+y')] \\
&\ge 3(1-2\eps_R^{3/4})\alpha \delta_X\delta_Y\delta_D.
\end{align*}
Furthermore by the assumption of no dense rectangle, if $\mc{E}_1(x,y)\mc{E}_2(x,y)\mc{E}_3(x,y)$ hold then 
\begin{align*}
\E_{\substack{x'\sim B_3\\y'\sim B_4}}[f(x+x',y+y')] &\le (1 + 2\eps_R) \cdot \delta_X\delta_Y\delta_Z\\
\E_{\substack{x'\sim B_3\\z'\sim B_5}}[f(x+x', y-x' + z')]&\le (1 + 2\eps_R) \cdot \delta_X\delta_Y\delta_Z\\
\E_{\substack{y'\sim B_4\\z'\sim B_5}}[f(x-y'+z',y+y')]&\le (1 + 2\eps_R) \cdot \delta_X\delta_Y\delta_Z.
\end{align*}
The existence of a great pair then follows via the reverse Markov inequality (\cref{fct:rev-mark}).
\end{proof}

For the remainder of the analysis in this section we will fix a pair $(x,y)$ which is ``great''.

The next step in our analysis is analogous to the ``degree--regularization'' procedures in the finite field section where low density rows are removed (see \Cref{lemma:llarge}). We define 
\[Y_{-}(y') = \mbm{1}_Y(y+y') \mbm{1}[\E_{z'\sim B_5}f(x-y'+z',y+y')\ge (1-\eps_L) \cdot \alpha \cdot \delta_{X}\delta_D].\]
We say that $f$ is \emph{poor} in the third coordinate with respect to the direction $Y$ if 
\[\E_{y'\sim B_4}[(\mbm{1}_Y(y+y') - Y_{-}(y'))] \ge \eps_{L} \cdot \delta_{Y}.\] We now derive a density increment provided that $f$ is poor in the third coordinate with respect to the direction $Y$.
\begin{lemma}\label{lem:dens-diag-increment}
Suppose that $f$ is poor in the third coordinate with respect to the direction $Y$. Then there exists $(x^{\ast},y^{\ast})$ and a function $g:B_{7}\to \{0,1\}$ such that:
\begin{align*}
\E_{\substack{x''\sim B_6\\y''\sim B_7}}g(y'')\cdot f(x^{\ast}+x'',y^{\ast}+y'')&\ge (1+\eps_L^2/2) \cdot \alpha \delta_X\cdot \E_{y''\sim B_7}g(y'')\cdot \E_{x''\sim B_6}\mbm{1}_D(x^{\ast} + y^{\ast} + x'')\\
\E_{y''\sim B_7}[g(y'')] &\ge \eps_L^2 \cdot \alpha^2 \cdot \delta_Y\\
\E_{x''\sim B_6}\mbm{1}_D(x^{\ast} + y^{\ast} + x'') &\ge \eps_L^2 \cdot \alpha^2 \cdot \delta_D\\
g(y'')&\le \mbm{1}_Y(y^{\ast} + y'')\\
\snorm{\mbm{1}_X(x^{\ast} + \cdot)}_{(B_6,B_8,B_9, K, K)}&\le (1+3\eps_{s})\delta_X\\
\snorm{\mbm{1}_Y(y^{\ast} + \cdot)}_{(B_7,B_8,B_9, K, K)}&\le (1+3\eps_{s})\delta_Y.
\end{align*}
\end{lemma}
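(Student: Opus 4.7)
The strategy parallels the row-removal step in the finite field case (\cref{lemma:llarge}), implemented in the Bohr-set language: split the greatness inequality for the third vertex into contributions from $Y_-$ and the poor rows, harvest a multiplicative boost on $Y_-$, then promote this column-level boost to a rectangle density increment of the shape $(x^\ast+B_6)\times(y^\ast+B_7)$. Set $\rho(y')\coloneqq \E_{z'\sim B_5}f(x-y'+z',y+y')$, $a\coloneqq \E_{y'\sim B_4}Y_-(y')$, and $b\coloneqq \E_{y'\sim B_4}[\mbm{1}_Y(y+y')-Y_-(y')]$. The great-pair condition gives $\E_{y'}\rho\ge (1-\eps_R^{1/2})\alpha\delta_X\delta_Y\delta_D$; on the other hand $\rho\le (1+\eps_s^{1/2})\delta_D$ follows from $f\le \mbm{1}_D(x+y+z')$ and $\mc{E}_{\on{Special},D,5}(x+y)=0$, while each poor row contributes strictly less than $(1-\eps_L)\alpha\delta_X\delta_D$. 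Combining these with the poor assumption $b\ge \eps_L\delta_Y$ and the well-conditioned envelope $a+b\le (1+3\eps_s)\delta_Y$ simultaneously yields
\[\E_{y'}\bigl[Y_-(y')\rho(y')\bigr]\ \ge\ (1+\tfrac{\eps_L^2}{2})\,\alpha\delta_X\delta_D\cdot a \quad\text{and}\quad a\ \ge\ \tfrac{\eps_L}{2}\,\alpha\delta_X\delta_Y.\]

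To localize to the target rectangle, observe that shifting $y'\to y'+y''$ with $y''\sim B_7$ together with $z'\to z'+y''$ leaves $x-y'+z'$ invariant while moving the second coordinate to $y+y'+y''$, and a further $x''\sim B_6$ shift of $z'$ adds $+x''$ only in the first coordinate. By \cref{lem:shift-invar} the Step 1 mass equals
\[\E_{\substack{y'\sim B_4,\ z'\sim B_5\\ y''\sim B_7,\ x''\sim B_6}}Y_-(y'+y'')\,f\bigl(x-y'+z'+x'',\ y+y'+y''\bigr)\;+\;O(\eta d).\]
For anchors $(y',z')$ set $x^\ast=x-y'+z'$, $y^\ast=y+y'$, $g(y'')\coloneqq Y_-(y'+y'')$ (so $g\le \mbm{1}_Y(y^\ast+\cdot)$), and let $M(y',z')\coloneqq \E_{y'',x''}g(y'')f(x^\ast+x'',y^\ast+y'')$, $A(y')\coloneqq \E_{y''}g(y'')$, $D(z')\coloneqq \E_{x''}\mbm{1}_D(x+y+z'+x'')$. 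Independence of $y',z'$ together with the split dependence of $A,D$ gives $\E[AD]=a\delta_D+O(\eta d)$, and with Step 1
\[\E_{y',z'}\Bigl[M-(1+\tfrac{\eps_L^2}{3})\alpha\delta_X\,A\,D\Bigr]\ \ge\ \Omega(\eps_L^2)\,\alpha\delta_X\delta_D\,a\ >\ 0.\]
Grid-norm failures of $\mbm{1}_X(x^\ast+\cdot)$ and $\mbm{1}_Y(y^\ast+\cdot)$ at level $(1+3\eps_s)$ occur on $(y',z')$-mass $e^{-\Omega(\eps_s K^2)}$ by \cref{lem:high-moment}; the events $\{D<\eps_L^2\alpha^2\delta_D\}$ and $\{A<\eps_L^2\alpha^2\delta_Y\}$ each have $O(\eps_s^{1/2})$ mass via \cref{fct:rev-mark}, using $\mc{E}_{\on{Special},D,6}(x+y)=0$ and the algebraic spread of $Y$ to bound the respective upper envelopes and the Step 1 baseline $a\gtrsim \eps_L\alpha\delta_X\delta_Y$ as the mean for $A$. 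A union bound delivers an anchor escaping every bad event; the corresponding $(x^\ast,y^\ast,g)$ satisfies every conclusion.

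The essential technical point is the shift choice above: using the \emph{same} $y''$ to shift both $y'$ and $z'$ is what keeps the first coordinate of $f$ independent of $y''$ and makes the rectangle $(x^\ast+B_6)\times(y^\ast+B_7)$ well defined, so that the density-increment ratio reads off cleanly in Step 3. The union bound is also tight: the multiplicative surplus $\Omega(\eps_L^2)\,\alpha\delta_X\delta_D\,a$ must dominate both the $O(\eps_s^{1/2})$ reverse-Markov losses and the $\eps_L^2\alpha^2\delta_Y$-scale loss appearing in the control of the event $\{A<\eps_L^2\alpha^2\delta_Y\}$; this is precisely what the hierarchy $1/C\ll \eps_s\ll \eps_R\ll \eps_L\ll \eps$ fixed at the start of the section is engineered to guarantee.
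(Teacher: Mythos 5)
There is a genuine gap in Step 3 that stems from your choice of $g = Y_-(y'+\cdot)$ rather than the paper's choice.

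The paper's proof takes $g_1$ to be a mass-$\eps_L\delta_Y$ \emph{subset of the poor rows} $\mbm{1}_Y(y+\cdot) - Y_-$ (the printed ``$g_1\le Y_-$'' is a typo, as is apparent from the subsequent display) and then sets $g_2 = \mbm{1}_Y(y+\cdot) - g_1 \supseteq Y_-$, so that $\E[g_2] \approx (1-\eps_L)\delta_Y$. You instead restrict all the way down to $Y_-$. The problem is that $a := \E_{y'\sim B_4}[Y_-(y')]$ has no lower bound independent of $\delta_X$. Indeed, writing $T = \E_{y'}\rho(y') \approx \alpha\delta_X\delta_Y\delta_D$ and using $\rho \le (1+\eps_s^{1/2})\delta_D$ pointwise together with the poor-row upper bound, one only gets $a \gtrsim \eps_L\alpha\delta_X\delta_Y$, and equality can essentially occur when the mass of $f$ concentrates on a thin set of high-density $y'$-columns. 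When $\delta_X$ is small (which in the density increment it will be, often $\delta_X \ll \alpha$), this falls below the target $\eps_L^2\alpha^2\delta_Y$ demanded in the lemma, so your $g$ cannot satisfy the second conclusion. This would also wreck the downstream bookkeeping: the $\delta_Y$-loss per iteration would pick up a factor of $\delta_X$, which compounds across the $O(\log(1/\alpha))$ iterations of the main argument.

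The reverse-Markov claim for the event $\{A(y') < \eps_L^2\alpha^2\delta_Y\}$ fails for the same reason. The envelope you invoke from the algebraic spread of $Y$ gives $A(y') \le (1+O(\eps_s))\delta_Y$, but the mean $\E_{y'}[A] = a$ can be smaller by a factor of $\eps_L\alpha\delta_X$. \Cref{fct:rev-mark} is only useful when the envelope is within a $(1+\rho)$ factor of the mean for small $\rho$; here $\rho$ is on the order of $1/(\eps_L\alpha\delta_X)$, and the bound it returns is vacuous. The paper avoids this entirely: for its $g_2$ the mean is $\approx(1-\eps_L)\delta_Y$, comparable to the envelope, \emph{and} it does not even use reverse Markov for the analog of your $\{A < \cdots\}$ event; it instead bounds the \emph{contribution} of the bad anchors by majorizing $f$ with $\mbm{1}_X\mbm{1}_D$ and invoking the counting lemma \cref{lem:upper-bound} (this is the $\mc{E}_3$ step in the paper), so that the bad contribution is $O(\eps_L^3\alpha^2\delta_X\delta_Y\delta_D)$, small compared to the surplus. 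To repair your argument you should replace $Y_-$ by $\mbm{1}_Y(y+\cdot)$ minus a mass-$\eps_L\delta_Y$ subset of poor rows, and use contribution bounds via \cref{lem:upper-bound} (as for $\mc{E}_2,\mc{E}_3$) rather than reverse Markov for the small-$A$ and small-$D$ events. The rest of your Step 2 (the shift bookkeeping, in particular using the same $y''$ in both coordinates so the $B_6\times B_7$ rectangle stays well defined) and the grid-norm failure control via \cref{lem:high-moment} are fine and match the paper.
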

\begin{proof}
By assumption we have that 
\[\E_{\substack{y'\sim B_4\\z'\sim B_5}}f(x-y'+z',y+y')\ge (1-\eps_R^{1/2}) \cdot \alpha \cdot \delta_X\delta_Y\delta_D.\]
We define $g_1:B_4\to \{0,1\}$ such that $g_1(y')\le Y_{-}(y')$ and $\E_{y'\sim B_4}g_1(y') = \eps_{L} \cdot \delta_Y$. We have that 
\[\E_{\substack{y'\sim B_4\\z'\sim B_5}}f(x-y'+z',y+y')(\mbm{1}_Y(y+y') - g_1(y'))\ge (1 + 3\eps_L^2/4) \cdot \alpha \cdot \delta_X\delta_D \cdot \E_{y'\sim B_4}[\mbm{1}_Y(y+y') - g_1(y')].\] 
This implies that if $g_2(y') = \mbm{1}_Y(y+y') - g_1(y')$ then 
\[\E_{\substack{y'\sim B_4\\z'\sim B_5}}\E_{\substack{x''\sim B_6\\y''\sim B_7}}f(x-y'+z'+x'',y+y'+y'')g_2(y' + y'')\ge (1 + 5\eps_L^2/8) \cdot \alpha \cdot \delta_X\delta_D \cdot \E_{y'\sim B_4}[g_2(y')].\] 
We now set
\begin{align*}
\mc{E}_1(y',z') = \mbm{1}[\snorm{\mbm{1}_X(x-y'+z'+\cdot)}_{(B_6,B_8,B_9,K,K)}&\le (1+3\eps_{s})\delta_X \\
\vee \snorm{\mbm{1}_Y(y+y'+\cdot)}_{(B_7,B_8,B_9,K,K)}&\le (1+3\eps_{s})\delta_Y].
\end{align*}
Via the analysis in \cref{lem:dense-rectangle}, we immediately have that 
\[\E_{\substack{y'\sim B_4\\z'\sim B_5}}\mc{E}_1(y',z')\E_{\substack{x''\sim B_6\\y''\sim B_7}}f(x-y'+z'+x'',y+y'+y'')g_2(y' + y'')\ge (1 + 9\eps_L^2/16) \cdot \alpha \cdot \delta_X\delta_D \cdot \E_{y'\sim B_4}[g_2(y')].\] 

We next define 
\begin{align*}
\mc{E}_2(z') &=\mbm{1}\big[\E_{z''\sim B_6}\mbm{1}_D(x+y+z'+z'')\le \alpha^2 \cdot \eps_L^2 \cdot \delta_D\big].
\end{align*}
Observe that 
\begin{align*}
\E_{\substack{y'\sim B_4\\z'\sim B_5}}&\mc{E}_1(y',z')\mc{E}_2(z')\E_{\substack{x''\sim B_6\\y''\sim B_7}}f(x-y'+z'+x'',y+y'+y'')g_2(y' + y'')\\
&\le \E_{\substack{y'\sim B_4\\z'\sim B_5}}\mc{E}_1(y',z')\mc{E}_2(z')\E_{\substack{x''\sim B_6\\y''\sim B_7}}\mbm{1}_X(x-y'+z'+x'')\mbm{1}_Y(y+y'+y'')\mbm{1}_D(x+y+z'+x''+y'')\\
&\le 2\alpha^2 \cdot \eps_L^2 \cdot \delta_X\delta_Y\delta_D.
\end{align*}
Thus we may assume that 
\begin{align*}
\E_{\substack{y'\sim B_4\\z'\sim B_5}}&\mc{E}_1(y',z')(1-\mc{E}_2(z'))\E_{\substack{x''\sim B_6\\y''\sim B_7}}f(x-y'+z'+x'',y+y'+y'')g_2(y' + y'')\\
&\ge (1 + 17\eps_L^2/32) \cdot \alpha \cdot \delta_X\cdot \E_{z'\sim B_5}\mbm{1}_D(x+y+z') \cdot \E_{y'\sim B_4}[g_2(y')].
\end{align*}
We then define
\begin{align*}
\mc{E}_3(y') &=\mbm{1}[\E_{y''\sim B_7}g_2(y'+y'')\le \eps_L^3 \cdot \alpha^2 \cdot \delta_Y].
\end{align*}
We may observe that 
\begin{align*}
\E_{\substack{y'\sim B_4\\z'\sim B_5}} &\mc{E}_1(y',z')(1-\mc{E}_2(z'))\mc{E}_3(y')\E_{\substack{x''\sim B_6\\y''\sim B_7}}f(x-y'+z' + x'',y+y' +y'')g_2(y'+y'')\\
&\le \E_{\substack{y'\sim B_4\\z'\sim B_5}} \mc{E}_1(y',z')\mc{E}_3(y')\E_{\substack{x''\sim B_6\\y''\sim B_7}}\mbm{1}_X(x-y'+z'+x'')\mbm{1}_D(x+y+z' + x'' + y'')g_2(y'+y'').
\end{align*}
We are now in position to use \cref{lem:upper-bound} on the inner sum. Applying \cref{lem:upper-bound} and using the definition of $\mc{E}_3$ (and using $\mc{E}_1$ to guarantee spreadness), we find that 
\begin{align*}
\E_{\substack{y'\sim B_4\\z'\sim B_5}} \mc{E}_1(y',z')\mc{E}_3(y')&\E_{\substack{x''\sim B_6\\y''\sim B_7}}\mbm{1}_X(x-y'+z'+x'')\mbm{1}_D(x+y+z' + x'' + y'')g_2(y'+y'')\\
&\le 2 \alpha^2 \cdot \eps_L^3 \cdot \delta_X\delta_Y\delta_D. 
\end{align*}
Thus we have that 
\begin{align*}
\E_{\substack{y'\sim B_4\\z'\sim B_5}} &\mc{E}_1(y',z')(1-\mc{E}_2(z'))(1-\mc{E}_3(y'))\Big(\E_{\substack{x''\sim B_6\\y''\sim B_7}}f(x-y'+z' + x'',y+y' +y'')g_2(y'+y'')\\
&- (1 + \eps_L^2/2) \cdot \alpha \cdot \delta_X\cdot g_2(y' + y'')\mbm{1}_D(x+y+z' + x'' + y'')\Big)>0.
\end{align*}
Choosing $y'$ and $z'$ such that the expression in the brackets is positive while $(1-\mc{E}_1(y',z')(1-\mc{E}_2(y',z'))(1-\mc{E}_3(y'))\neq 0$ gives the result. 
\end{proof}

In a completely analogous manner we may define that $f$ is poor in the third coordinate in the $Z$ direction, that $f$ is poor in the second coordinate in the $X$ or $Z$ direction, or that $f$ is poor in the first coordinate in the $X$ or $Y$ direction. At various points in the proof we will require that $f$ is not poor in certain senses and we will derive these as the analysis proceeds and then handle the analog of \cref{lem:dens-diag-increment}.

Recall that
\[Y_{-}(y') = \mbm{1}_Y(y+y') \mbm{1}[\E_{z'\sim B_5}f(x-y'+z',y+y')\ge (1-\eps_L) \cdot \alpha \cdot \delta_{X}\delta_D]\]
and define 
\[f^{-}(y',z') = f(x-y'+z',y+y') \cdot Y_{-}(y').\]

Let $\alpha^{\ast}$ be such that 
\[\alpha^{\ast} \delta_{X} \E_{y'\sim B_4}Y_{-}(y') \cdot \E_{z'\sim B_5}\mbm{1}_D(x+y+z') = \E_{\substack{y'\sim B_4\\z'\sim B_5}}f^{-}(y',z');\]
observe by construction that $\alpha^{\ast} = \alpha (1\pm \eps_L^{1/2})$.
Finally we define 
\[h(y',z') = \alpha^{\ast} \delta_{X}Y_{-}(y')\mbm{1}_D(x+y+z').\]

We will consider the difference between the following pair of counting operators. We will consider corners of the form 
\begin{align*}
\E_{\substack{x'\sim B_3\\y'\sim B_4\\z'\sim B_5}}&(f(x+x',y+y')L^{-}(y'))(f(x+x',y-x'+z'))(f(x-y'+z',y+y')Y_{-}(y')) \\
&= \E_{\substack{x'\sim B_3\\y'\sim B_4\\z'\sim B_5}}(f(x+x',y+y')Y_{-}(y'))(f(x+x',y-x'+z'))(f^{-}(y',z'))
\end{align*}
and compare this to 
\[\E_{\substack{x'\sim B_3\\y'\sim B_4\\z'\sim B_5}}(f(x+x',y+y')Y_{-}(y'))(f(x+x',y-x'+z'))(h(y',z')).\]

Due to the assumption that our set has few corners, we immediately have that 
\begin{align*}
\E_{\substack{x'\sim B_3\\y'\sim B_4\\z'\sim B_5}}&(f(x+x',y+y')Y_{-}(y'))(f(x+x',y-x'+z'))(f(x-y'+z',y+y')Y_{-}(y'))\\
&\le \E_{\substack{x'\sim B_3\\y'\sim B_4\\z'\sim B_5}}f(x+x',y+y')f(x+x',y-x'+z')f(x-y'+z',y+y')\le 2^{-3}\cdot \alpha^{3}\delta_X^2\delta_Y^2\delta_Z^2.
\end{align*}
On the other hand, we have that 
\begin{align*}
\E_{\substack{x'\sim B_3\\y'\sim B_4\\z'\sim B_5}}&(f(x+x',y+y')Y_{-}(y'))(f(x+x',y-x'+z'))(h(y',z'))\\
& = \alpha^{\ast}\delta_X \cdot \E_{\substack{x'\sim B_3\\y'\sim B_4\\z'\sim B_5}}(f(x+x',y+y')Y_{-}(y'))(f(x+x',y-x'+z'))\\
& = \alpha^{\ast}\delta_X \cdot \E_{x'\sim B_3}(\E_{y'\sim B_4}f(x+x',y+y')Y_{-}(y'))(\E_{z'\sim B_5}f(x+x',y-x'+z')).
\end{align*}

We first observe that as $(x,y)$ is not poor in the $x$ direction in the second coordinate; for all but $O(\eps_L\delta_X|B_3|)$ many $x'\in\on{supp}(X(x+\cdot))$ we have that 
\[\E_{z'\sim B_5}f(x+x',y-x'+z')\ge (1-\eps_L)\cdot \alpha \cdot \delta_Y\delta_D.\]
In the case where $(x,y)$ is poor in the direction $x$ in the second coordinate, we may use \cref{lem:untilt-2} (with $g_2 = \mbm{1}_D(x+y+\cdot)$ and $g_1$ being the indicator of the complement of the sparse $x$ coordinates).

Thus there exists $X_{-}:B_3\to \{0,1\}$ with $X_{-}(\cdot)\le \mbm{1}_X(x+\cdot)$ and $\E_{x'\sim B_3}[X_{-}(x')]\ge (1-O(\eps_L))\delta_X$ such that our count is lower bounded by 
\[\alpha^2 \delta_X\delta_Y\delta_D \cdot (4/5) \cdot \E_{\substack{x'\sim B_3\\y'\sim B_4}}f(x+x',y+y')Y_{-}(y')X_{-}(x').\]

Now observe that if 
\[\E_{\substack{x'\sim B_3\\y'\sim B_4}}f(x+x',y+y')(\mbm{1}_Y(y+y')-Y_{-}(y'))\ge \frac{1}{10} \cdot \alpha\delta_X\delta_Y\delta_D,\]
then we have a massive density increment as $\E_{y'\sim B_4}[(\mbm{1}_Y(y+y')-Y_{-}(y'))]\le \eps_L \cdot \delta_Y$. This case immediately gives a density increment of the required type. 

Analogously we have that if
\[\E_{\substack{x'\sim B_3\\y'\sim B_4}}f(x+x',y+y')(\mbm{1}_X(x+x')-X_{-}(x'))\ge \frac{1}{10} \cdot \alpha\delta_X\delta_Y\delta_D\]
we have a massive density increment as $\E_{x'\sim B_3}[(\mbm{1}_X(x+x')-X_{-}(x'))]\le \eps_L \cdot \delta_X$. We handle processing this density increment at the end. 

Now observe that 
\begin{align*}
X_{-}(x')Y_{-}(y') &\ge \mbm{1}_X(x+x')\mbm{1}_Y(y+y') - \mbm{1}_Y(y+y')(\mbm{1}_X(x+x')-X_{-}(x')) \\
&\qquad - \mbm{1}_X(x+x')(\mbm{1}_Y(y+y')-Y_{-}(y')),
\end{align*}
and this is $(\mbm{1}_X(x+x')-X_{-}(x'))(\mbm{1}_Y(y+y')-Y_{-}(y'))\ge 0$. Thus if we do not have a massive density increment, we have that 
\[\E_{\substack{x'\sim B_3\\y'\sim B_4}}f(x+x',y+y')X_{-}(x')Y_{-}(y')\ge \alpha \cdot \delta_X\delta_Y\delta_Z \cdot 3/4;\]
observe here that $1 - 2(1/10) = 4/5>3/4$. Thus we obtain a lower bound (or a suitable density increment) of the form 
\begin{align*}
\E_{\substack{x'\sim B_3\\y'\sim B_4\\z'\sim B_5}}&(f(x+x',y+y')Y_{-}(y'))(f(x+x',y-x'+z'))(h(y',z'))\ge \frac{3}{4}\cdot \alpha^3 \delta_X^2\delta_Y^2\delta_D^2.
\end{align*}

Therefore we may assume that 
\[\Big|\E_{\substack{x'\sim B_3\\y'\sim B_4\\z'\sim B_5}}(f(x+x',y+y')Y_{-}(y'))(f(x+x',y-x'+z'))(f^{-}(y',z')-h(y',z'))\Big|\ge \frac{\alpha^3 \delta_X^2\delta_Y^2\delta_D^2}{2}.\]

We now handle the one deferred density increment.
\begin{lemma}\label{lem:one-off-density}
Suppose that there exists $g:B_3\to \{0,1\}$ with $\E_{x'\sim B_3}[g(x')] = \eps_L \cdot \delta_X$, $g(x')\le \mbm{1}_X(x+x')$, and 
\[\E_{\substack{x'\sim B_3\\y'\sim B_4}}f(x+x',y+y')g(x') \ge 2^{-4} \cdot \alpha \delta_X\delta_Y\delta_Z.\]

Then there exists $(x^{\ast},y^{\ast})$ and a function $h:B_{5}\to \{0,1\}$ such that:
\begin{align*}
\E_{\substack{x''\sim B_5\\y''\sim B_4}}h(x'')\cdot f(x^{\ast}+x'',y^{\ast}+y'')&\ge 2 \cdot \alpha \delta_Y \cdot \E_{x''\sim B_5}h(x'')\cdot \E_{y''\sim B_4}\mbm{1}_D(x^{\ast} + y^{\ast} + y''); \\
\E_{x''\sim B_5}[h(x'')] &\ge \eps_L^2 \cdot \alpha^2 \cdot \delta_D\cdot \delta_X; \\
\E_{y''\sim B_4}\mbm{1}_D(x^{\ast} + y^{\ast} + y'') &\ge \eps_L^2 \cdot \alpha^2 \cdot \delta_D; \\
h(x'')&\le \mbm{1}_X(x^{\ast} + x''); \\
\snorm{\mbm{1}_Y(y^{\ast} + \cdot)}_{(B_4,B_8,B_9, K, K)}&\le (1+3\eps_{s})\delta_Y.
\end{align*}
\end{lemma}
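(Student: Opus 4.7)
The argument parallels the proof of \cref{lem:dens-diag-increment}: apply shift-invariance to reshape the hypothesis, then extract the output parameters $(x^{\ast}, y^{\ast}, h)$ by simultaneously filtering out several bad events on the selector $x'_0\in B_3$. The natural choice is $x^{\ast}=x+x'_0$, $y^{\ast}=y$, and $h(x'')=g(x'_0+x'')$ as a function on $B_5$, for some $x'_0\in B_3$ to be chosen. With this choice $h\le\mbm{1}_X(x^{\ast}+\cdot)$ is automatic from $g\le\mbm{1}_X(x+\cdot)$, and $\snorm{\mbm{1}_Y(y^{\ast}+\cdot)}_{(B_4,B_8,B_9,K,K)}\le (1+3\eps_s)\delta_Y$ is immediate from $y\notin\mc{E}_{Y,4}$, part of the great-pair condition.

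Applying shift-invariance in $x'\sim B_3$ by $x''\in B_5$ (a negligible shift since $\nu(B_5)\ll\nu(B_3)/(100d)$), the hypothesis becomes
\[
\E_{x'_0\sim B_3}\Lambda(x'_0)\ge 2^{-4}\alpha\delta_X\delta_Y\delta_D - O(\eta d),\qquad \Lambda(x'_0):=\E_{\substack{x''\sim B_5\\y''\sim B_4}}g(x'_0+x'')f(x+x'_0+x'', y+y''),
\]
so that $\Lambda(x'_0)$ is precisely the LHS of the desired density-increment inequality.

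I would then filter out three bad events on $x'_0$: $\mc{E}_p=\mbm{1}[p(x'_0)<\eps_L^2\alpha^2\delta_X\delta_D]$, $\mc{E}_q=\mbm{1}[q(x'_0)<\eps_L^2\alpha^2\delta_D]$, and $\mc{E}_\Lambda=\mbm{1}[\Lambda(x'_0)<2\alpha\delta_Y\, p(x'_0)q(x'_0)]$, where $p(x'_0):=\E_{x''\sim B_5}g(x'_0+x'')$ and $q(x'_0):=\E_{y''\sim B_4}\mbm{1}_D(x+x'_0+y+y'')$. The key pointwise inequality $\Lambda(x'_0)\le p(x'_0)q(x'_0)+O(\eta d)$, obtained from $f\le\mbm{1}_D$ along the diagonal together with shift-invariance of $y''\sim B_4$ by $x''\in B_5$, lets me bound $\E[\mc{E}_p\Lambda]$ and $\E[\mc{E}_q\Lambda]$ using $\E[p]=\eps_L\delta_X+O(\eta d)$ and $\E[q]\approx\delta_D$ (the latter from the $\ell_1$-spreadness of $D$). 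The contribution $\E[\mc{E}_\Lambda\Lambda]$ is bounded by $2\alpha\delta_Y\E[pq]$.

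The crux is then estimating $\E[pq]$. After a shift-invariance substitution $\bar x=x'_0+x''$, it reduces to $\E_{\bar x\sim B_3,\bar y\sim B_4}g(\bar x)\mbm{1}_D(x+y+\bar x+\bar y)$. Using the $\ell_1$-spreadness of $D$ propagated from the global $(B_1,B_{10},\eps_s)$-condition to the $B_4$-scale and applying the great-pair property at $x+y$ (so that the local $D$-density hovers near $\delta_D$ on the relevant shifts), I would argue $\E[pq]\le(1+O(\eps_s^{1/2}))\eps_L\delta_X\delta_D$. Combined with the a priori bound $\alpha\delta_D=O(\eps_L)$ — which follows from the hypothesis via the trivial bound $L\le\E[g]\cdot(1+2\eps_s)\delta_Y$ and the great-pair upper bound on $\mbm{1}_Y(y+\cdot)$ — the total bad contribution is strictly less than $L=2^{-4}\alpha\delta_X\delta_Y\delta_D$ for $\eps_L$ sufficiently small, so some $x'_0\in B_3$ avoids all three bad events, completing the proof. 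The main technical obstacle is tightly controlling $\E[pq]$: this requires carefully transferring the $\ell_1$-spreadness of $D$ from the $B_{10}$-scale at which it is given to the $B_4$-scale that appears in the integrand, leveraging both the global spread condition and the great-pair property for $x+y$.
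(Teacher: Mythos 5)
Your scaffolding — taking $x^{\ast}=x+x_0'$, $y^{\ast}=y$, $h(x'')=g(x_0'+x'')$, establishing the key estimate that $\E_{x_0'\sim B_3}[p(x_0')q(x_0')]\approx \eps_L\delta_X\delta_D$ via the $\ell_1$-spreadness of $D$, deducing the a priori bound $\alpha\delta_D=O(\eps_L)$, and selecting $x_0'$ avoiding a small list of bad events — matches the paper's strategy. However, the pointwise inequality $\Lambda(x_0')\le p(x_0')q(x_0')+O(\eta d)$ on which your filtering rests is obtained from $f\le \mbm{1}_X\mbm{1}_Y\mbm{1}_D$ by \emph{throwing away the $\mbm{1}_Y$ factor}, and this is where the argument breaks. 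Concretely, under $\mc{E}_p$ you get $\E[\mc{E}_p\Lambda]\le \eps_L^2\alpha^2\delta_X\delta_D\cdot\E[q]\approx\eps_L^2\alpha^2\delta_X\delta_D^2$, and under $\mc{E}_q$ you get $\E[\mc{E}_q\Lambda]\le\eps_L^2\alpha^2\delta_D\cdot\E[p]\approx\eps_L^3\alpha^2\delta_X\delta_D$; to conclude these are $\ll L=2^{-4}\alpha\delta_X\delta_Y\delta_D$ you need, respectively, $\eps_L^2\alpha\delta_D\lesssim\delta_Y$ and $\eps_L^3\alpha\lesssim\delta_Y$. Neither is guaranteed: in this paper's regime $D$ is deliberately kept \emph{much denser} than $X,Y$ (indeed $\delta_D\ge 2^{-O(\log(1/\alpha)^3)}$ while $\delta_X,\delta_Y$ can be as small as $2^{-O(\log(1/\alpha)^{10})}$), and $\eps_L$ is an absolute constant, so both ratios can be enormous. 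The relation $\alpha\delta_D=O(\eps_L)$ you derive does not rescue this: plugging it in still leaves a factor $\eps_L^3/\delta_Y$ unbounded.

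The paper's proof avoids this by never discarding $\mbm{1}_Y$. For the analog of your $\mc{E}_p$ (its $\mc{E}_1$) it bounds $f$ by $\mbm{1}_Y$ alone (discarding $\mbm{1}_D\le 1$, not $\mbm{1}_Y$), producing a factor $\approx\delta_Y$ from $\E_{y'}\mbm{1}_Y(y+y')$, so that $\E[\mc{E}_1\cdot\Lambda]\lesssim\alpha^2\delta_X\delta_Y\delta_D\le 2^{-2}L$. For the analog of your $\mc{E}_q$ (its $\mc{E}_3$) it keeps the full triple product $\mbm{1}_X\mbm{1}_Y\mbm{1}_D$, which requires the extra good event $\mc{E}_2$ asserting $\snorm{\mbm{1}_X(x'+\cdot)}_{(B_5,B_8,B_9,K,K)}\le(1+3\eps_s)\delta_X$ so the $\delta_X$ factor can be extracted (essentially via \cref{lem:upper-bound}). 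You never introduce this $X$-spreadness filter, and since your scheme also tries to extract the $\mbm{1}_D$ factor via $q$, you cannot simultaneously retain $\delta_Y$. To fix the proposal you would need to redo the two bad-event bounds exactly along the paper's lines: use $f\le\mbm{1}_Y$ for the ``$p$ small'' event, add the $X$-spreadness event to handle the ``$q$ small'' event, and keep $\mbm{1}_Y$ in both so that each bad contribution carries the $\delta_Y$ factor needed to compare against $L$.
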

\begin{proof}
Observe that the input relation may be written as  
\[\E_{\substack{x'\sim B_3\\y'\sim B_4\\x''\sim B_5}}f(x+x'+x'',y+y')g(x'+x'') \ge 2^{-4} \cdot \alpha \delta_X\delta_Y\delta_D - O(\eta d).\]
The crucial claim to prove is that 
\[\E_{\substack{x'\sim B_3\\y'\sim B_4\\x''\sim B_5}}g(x'+x'')\mbm{1}_D(x+y+x'+x''+y') = (1\pm \eps_s^{\Omega(1)})\cdot \E_{x'\sim B_3}[g(x')] \cdot \delta_D.\]
To prove this, it suffices to note that 
\begin{align*}
\Big|\E_{\substack{x'\sim B_3\\y'\sim B_4\\x''\sim B_5}}g(x'+x'')&(\mbm{1}_D(x+y+x'+x''+y')-\delta_D)\Big| = \Big|\E_{\substack{x'\sim B_3\\y'\sim B_4\\x''\sim B_5}}g(x'+x'')(\mbm{1}_D(x+y+x'+y')-\delta_D)\Big|\\
&= \Big|\E_{\substack{x'\sim B_3\\y'\sim B_4\\x''\sim B_5\\y''\sim B_6}}g(x'+x'')(\mbm{1}_D(x+y+x'+y'+y'')-\delta_D)\Big|\\
&\le \E_{\substack{x'\sim B_3\\y'\sim B_4}}\big|\E_{x''\sim B_5}\mbm{1}_X(x'+x'')\big| \cdot \big|\E_{y''\sim B_6}\mbm{1}_D(x+y+x'+y'+y'')-\delta_D\big|\\
&\le 2\delta_X \cdot \E_{\substack{x'\sim B_3}} \cdot \big|\E_{y''\sim B_6}\mbm{1}_D(x+y+x'+y'')-\delta_D\big|+\eps_s \delta_X\delta_D \le 4\eps_s \cdot \delta_X\delta_D.
\end{align*}

Thus we may write 
\[\E_{\substack{x'\sim B_3\\y'\sim B_4\\x''\sim B_5}}f(x+x'+x'',y+y')g(x'+x'') \ge 3\alpha \cdot \delta_Y \cdot \E_{\substack{x'\sim B_3\\y'\sim B_4\\x''\sim B_5}}g(x'+x'')\mbm{1}_D(x+y+x'+x''+y')+ 2^{-5} \cdot \alpha \delta_X\delta_Y\delta_D.\]

We define 
\[\mc{E}_1(x') = \mbm{1}\big[\E_{x''\sim B_5}g(x'+x'') \le 2^{-7}\cdot \alpha^2 \cdot \delta_D \cdot \delta_X\big].\]
Then 
\begin{align*}
\E_{\substack{x'\sim B_3}}&\mc{E}_1(x') \E_{\substack{y'\sim B_4\\x''\sim B_5}}f(x+x'+x'',y+y')g(x'+x'')\\
&\le \E_{\substack{x'\sim B_3}}\mc{E}_1(x') \E_{\substack{y'\sim B_4\\x''\sim B_5}}\mbm{1}_Y(y+y')g(x'+x'')\le 2^{-6} \cdot \alpha^2 \cdot \delta_D\delta_X\delta_Y.
\end{align*}

Thus 
\begin{align*}
\E_{\substack{x'\sim B_3\\y'\sim B_4}}&(1-\mc{E}_1(x')) \E_{\substack{y'\sim B_4\\x''\sim B_5}}f(x+x'+x'',y+y')g(x'+x'') \\
&\ge 3\alpha \cdot \delta_Y \cdot \E_{\substack{x'\sim B_3\\y'\sim B_4\\x''\sim B_5}}g(x'+x'')\mbm{1}_D(x+y+x'+x''+y')+ 2^{-6} \cdot \alpha \delta_X\delta_Y\delta_D.
\end{align*}
We define 
\[\mc{E}_2(x') = \mbm{1}\big[\snorm{\mbm{1}_X(x'+\cdot)}_{(B_5,B_8,B_9,K,K)}\le (1+3\eps_s)\cdot \delta_X\big]\]
and we have that 
\begin{align*}
\E_{\substack{x'\sim B_3}}&(1-\mc{E}_1(x')) \mc{E}_2(x')\E_{\substack{y'\sim B_4\\x''\sim B_5}}f(x+x'+x'',y+y')g(x'+x'') \\
&\ge 3\alpha \cdot \delta_Y \cdot \E_{\substack{x'\sim B_3\\y'\sim B_4\\x''\sim B_5}}g(x'+x'')\mbm{1}_D(x+y+x'+x''+y')+ 2^{-7} \cdot \alpha \delta_X\delta_Y\delta_D.
\end{align*}
We finally define 
\[\mc{E}_3(x') = \mbm{1}\big[\E_{\substack{y'\sim B_4\\x''\sim B_5}}\mbm{1}_D(x+y+x'+y'+x'') \le 2^{-9}\cdot \alpha^2 \cdot \delta_D \big].\]
Then 
\begin{align*}
\E_{\substack{x'\sim B_3}}&\mc{E}_2(x')\mc{E}_3(x')\E_{\substack{y'\sim B_4\\x''\sim B_5}}f(x+x'+x'',y+y')g(x'+x'')\\
&\le \E_{\substack{x'\sim B_3}}\mc{E}_2(x')\mc{E}_3(x')\E_{\substack{y'\sim B_4\\x''\sim B_5}}\mbm{1}_D(x+x'+x''+y+y')\mbm{1}_X(x'+x'')\mbm{1}_Y(y+y')\le 2^{-8}\cdot \alpha^2 \cdot \delta_X\delta_Y\delta_D.
\end{align*}

Thus 
\begin{align*}
\E_{\substack{x'\sim B_3}}&(1-\mc{E}_1(x')) \mc{E}_2(x')(1-\mc{E}_3(x'))\Big(\E_{\substack{y'\sim B_4\\x''\sim B_5}}f(x+x'+x'',y+y')g(x'+x'')\\
&\qquad\qquad\qquad- 3\alpha \cdot \delta_Y \cdot g(x'+x'')\mbm{1}_D(x+y+x'+x''+y')\Big)>0
\end{align*}
and choosing $x'$ such that $(1-\mc{E}_1(x')) \mc{E}_2(x')(1-\mc{E}_3(x')) = 0$ and the internal bracket is positive gives the result.
\end{proof}

We now let $k_{\ast} = 2^{20}\lceil \log(1/(\alpha\delta_D))\rceil$. Observe that by H\"{o}lder's inequality, we have
\begin{align*}
\Big|\E_{\substack{x'\sim B_3\\y'\sim B_4\\z'\sim B_5}}&(f(x+x',y+y')Y_{-}(y'))(f(x+x',y-x'+z'))(f^{-}(y',z')-h(y',z'))\Big|^{k_{\ast}}\\
&\le (\E_{\substack{x'\sim B_3\\y'\sim B_4}}f(x+x',y+y')Y_{-}(y'))^{k_{\ast} - 1} \\
&\qquad\qquad\cdot \E_{\substack{x'\sim B_3\\y'\sim B_4}}f(x+x',y+y')Y_{-}(y')(\E_{z'\sim B_5}(f(x+x',y-x'+z'))(f^{-}(y',z')-h(y',z')))^{k_{\ast}}\\
&\le (2\alpha \delta_X\delta_Y\delta_D)^{k_{\ast} - 1} \cdot \E_{\substack{x'\sim B_3\\y'\sim B_4}}(\E_{z'\sim B_5}(f(x+x',y-x'+z'))(f^{-}(y',z')-h(y',z')))^{k_{\ast}}.
\end{align*}
Via rearranging, this implies that 
\[4^{-k_{\ast}}\alpha^{2k_{\ast}+1}(\delta_X\delta_Y\delta_D)^{k_{\ast} + 1}\le \E_{\substack{x'\sim B_3\\y'\sim B_4}}(\E_{z'\sim B_5}(f(x+x',y-x'+z'))(f^{-}(y',z')-h(y',z')))^{k_{\ast}}.\]
Due to the choice of $k_{\ast}$, this in fact implies that 
\[8^{-k_{\ast}}\alpha^{2k_{\ast}}(\delta_X\delta_Y)^{k_{\ast} + 1}\delta_D^{k_{\ast}}\le \E_{\substack{x'\sim B_3\\y'\sim B_4}}(\E_{z'\sim B_5}(f(x+x',y-x'+z'))(f^{-}(y',z')-h(y',z')))^{k_{\ast}}.\]
We remark here that this final step is crucial; we will be able to absorb the loss of a few factors of $\delta_D$ but will not be able to absorb corresponding losses in $\delta_X$ and $\delta_Y$. 

We next observe that 
\begin{align*}
\Big(\E_{\substack{x'\sim B_3\\y'\sim B_4}}&(\E_{z'\sim B_5}(f(x+x',y-x'+z'))(f^{-}(y',z')-h(y',z')))^{k_{\ast}}\Big)^2\\
&= \E_{z_1',\ldots,z_{k_{\ast}}'\sim B_5}\Big(\E_{x'\sim B_3}\prod_{j=1}^{k_{\ast}}f(x+x',y-x'+z_j')\Big)\cdot \Big(\E_{y'\sim B_4}\prod_{j=1}^{k_{\ast}}(f^{-}(y',z_j')-h(y',z_j'))\Big)\\
&\le \Big(\E_{z_1',\ldots,z_{k_{\ast}}'\sim B_5}\E_{x_1',x_2'\sim B_3}\prod_{j=1}^{k_{\ast}}f(x+x_1',y-x_1'+z_j')f(x+x_2',y-x_2'+z_j')\Big)\\
&\qquad\qquad \cdot \Big(\E_{z_1',\ldots,z_{k_{\ast}}'\sim B_5}\E_{y_1',y_2'\sim B_4}\prod_{j=1}^{k_{\ast}}(f^{-}(y_1',z_j')-h(y_1',z_j'))(f^{-}(y_2',z_j')-h(y_2',z_j'))\Big).
\end{align*}

Therefore we either have that at least one of the following holds:
\begin{align*}
2^{k_{\ast}}\alpha^{2k_{\ast}}\delta_X^{2k_{\ast}}\delta_{Y}^{2}\delta_D^{k_{\ast}}&\le\Big(\E_{z_1',\ldots,z_{k_{\ast}}'\sim B_5}\E_{x_1',x_2'\sim B_3}\prod_{j=1}^{k_{\ast}}f(x+x_1',y-x_1'+z_j')f(x+x_2',y-x_2'+z_j')\Big)\\
128^{-k_{\ast}}\alpha^{2k_{\ast}}\delta_X^2\delta_{Y}^{2k_{\ast}}\delta_D^{k_{\ast}}&\le\Big(\E_{z_1',\ldots,z_{k_{\ast}}'\sim B_5}\E_{y_1',y_2'\sim B_4}\prod_{j=1}^{k_{\ast}}(f^{-}(y_1',z_j')-h(y_1',z_j'))(f^{-}(y_2',z_j')-h(y_2',z_j'))\Big).
\end{align*}

We will proceed under the assumption that the latter case holds; the former is strictly simpler. 

Observe that $f^{-}(y,z) - h(y,z)$ is supported only on $\on{supp}(Y_{-}(\cdot)) \times \on{supp}(\mbm{1}_D(x+y+\cdot))$. Furthermore for $y\in \on{supp}(Y_{-}(\cdot))$, we have that 
\[\E_{z}f^{-}(y,z)\ge (1-\eps_L)\cdot \alpha \cdot \delta_X \delta_D.\]

Therefore we exactly have the required setup to apply \cref{lem:spectral-pos}. Letting $p = 2^{1000}\cdot k_{\ast}$, we have that 
\[(1+ 1/128)^{2p}\alpha^{2p}\delta_X^2\delta_{Y}^{2p}\delta_D^{p}\le\E_{y_1',y_2'\sim B_4}\E_{z_1',\ldots,z_{p}'\sim B_5}\prod_{j=1}^{p}f^{-}(y_1',z_j')f^{-}(y_2',z_j').\]

We now seek to apply \cref{thm:quasisifting2}. In order to do so, we will need to properly bound the size of the associated container coming from $(X,Y_{-},D)$. We claim that 
\[\E_{\substack{y'\sim B_4\\z'\sim B_5}}\mbm{1}_X(x-y'+z')Y_{-}(y')\mbm{1}_D(x+y+z')  = (1\pm \eps_L^{\Omega(1)})\cdot \delta_X\delta_Y\delta_D.\]

To prove this, first observe that we have that 
\[0\le \E_{\substack{y'\sim B_4\\z'\sim B_5}}\mbm{1}_X(x-y'+z')(\mbm{1}_Y(y+y') -Y_{-}(y'))\mbm{1}_D(x+y+z') \le \eps_L^{\Omega(1)} \cdot \delta_X\delta_Y\]
where the first is by definitions and the second via \cref{lem:upper-bound}.

Therefore it suffices to bound 
\[\E_{\substack{y'\sim B_4\\z'\sim B_5}}\mbm{1}_X(x-y'+z')\mbm{1}_Y(y+y')\mbm{1}_D(x+y+z') = (1+\eps_s^{\Omega(1)})\delta_X\delta_Y\delta_D.\]
This however is an immediate consequence of \cref{lem:conv-lower-bound-2}.

We next observe that one may obtain suitable upper bounds on 
\[\E_{\substack{y'\sim B_3\\z'\sim B_5}}[A(y')B(z')\mbm{1}_X(x-y'+z')]\]
via \cref{lem:upper-bound} for any functions $A:B_3\to [0,1]$ and $B:B_5\to [0,1]$. 

Therefore we may apply \cref{thm:quasisifting2}. We obtain functions $g_1(y')\le \mbm{1}_Y(y+y')$ and $g_2(z') \le \mbm{1}_D(x+y+z')$ such that:
\begin{align*}
\E_{y'\sim B_4}[g_1(y')]&\ge \delta_{Y} \cdot (\alpha/2)^{O(k_\ast^2\log(1/\alpha))} = \delta_{Y} \cdot (1/2)^{O(\log(1/(\alpha\delta_D))^2\log(1/\alpha)^2)}; \\
\E_{z'\sim B_5}[g_2(z')]&\ge \delta_D\cdot (\alpha/2)^{O(\log(1/\alpha))} = \delta_D\cdot (1/2)^{O(\log(1/\alpha)^2)}; \\
\E_{\substack{y'\sim B_4\\z'\sim B_5}}[f(x-y'+z',y+y')g_1(y')g_2(z')]&\ge (1+2\eps) \cdot \alpha \delta_X \cdot \E_{y'\sim B_4}[g_1(y')] \cdot \E_{z'\sim B_5}[g_2(z')].
\end{align*}

The above density increment is on a tilted rectangle. We now obtain the desired density increment via a further averaging argument. 
\begin{lemma}\label{lem:output-dense}
Suppose that $(x,y)$ is a good pair. Furthermore consider $g_1:B_4\to \{0,1\}$ and $g_2:B_5\to \{0,1\}$ such that 
\begin{align*}
\E_{y'\sim B_4}[g_1(y')]&\ge \delta_{Y} \cdot (1/2)^{O(\log(1/(\alpha\delta_D))^2\log(1/\alpha)^2)}; \\
\E_{z'\sim B_5}[g_2(z')]&\ge \delta_D\cdot (1/2)^{O(\log(1/\alpha)^2)}; \\
g_1(y')&\le \mbm{1}_Y(y+y'); \\
g_2(z')&\le \mbm{1}_D(x+y+z'); \\
\E_{\substack{y'\sim B_4\\z'\sim B_5}}f(x-y'+z',y+y')g_1(y')g_2(z')&\ge (1+2\eps) \cdot \alpha \delta_X \cdot \E_{y'\sim B_4}[g_1(y')] \cdot \E_{z'\sim B_5}[g_2(z')].
\end{align*} 
Then there exist $x^{\ast}$, $y^{\ast}$, and $h_1:B_{7}\to \{0,1\}$ and $h_2:B_6\to \{0,1\}$ such that:
\begin{align*}
h_1(y')&\le \mbm{1}_Y(y^{\ast}+y'); \\
h_2(z')&\le \mbm{1}_D(x^{\ast}+y^{\ast}+z'); \\
\E_{y'\sim B_7}[h_1(y')]&\ge \delta_{Y} \cdot (\alpha/2)^{O(k_\ast^2\log(1/\alpha))} = \delta_{Y} \cdot (1/2)^{O(\log(1/(\alpha\delta_D))^2\log(1/\alpha)^2)}; \\
\E_{z'\sim B_6}[h_2(z')]&\ge \delta_D\cdot (1/2)^{O(\log(1/\alpha)^2)}; \\
\E_{\substack{x'\sim B_6\\y'\sim B_7}}f(x^{\ast} + x',y^{\ast}+y')h_1(y')h_2(x' + y')&\ge (1+\eps) \cdot \alpha \delta_X \cdot \E_{y'\sim B_7}[h_1(y')]\cdot \E_{z'\sim B_6}[h_2(z')]; \\
\snorm{\mbm{1}_X(x^{\ast}+\cdot)}_{(B_6,B_8,B_9,K,K)}&\le (1+4\eps_s)\cdot \delta_X.
\end{align*}
\end{lemma}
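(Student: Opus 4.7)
The plan is to convert the tilted-rectangle density increment into an $S$-container density increment by a smoothing and averaging argument, selecting a single good center point $(y_0', z_0')$ inside $\mathrm{supp}(g_1 g_2)$. First, using shift invariance (\cref{lem:shift-invar}) with the much finer Bohr sets $B_6, B_7$, I will introduce the smoothed quantities
\[
F(y',z') \coloneqq \E_{x'' \sim B_6,\, y'' \sim B_7} f(x - y' + z' + x'', y + y' + y''),
\]
\[
\beta_1(y') \coloneqq \E_{y'' \sim B_7} \mbm{1}_Y(y + y' + y''), \qquad \beta_2(z') \coloneqq \E_{z'' \sim B_6} \mbm{1}_D(x + y + z' + z''),
\]
so that the hypothesis becomes $\E[g_1 g_2 F] \ge (1 + 2\eps - O(\eta d))\alpha\delta_X \E[g_1]\E[g_2]$. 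The target will be to choose $x^\ast = x - y_0' + z_0'$, $y^\ast = y + y_0'$, $h_1 = \mbm{1}_Y(y^\ast + \cdot)|_{B_7}$ and $h_2 = \mbm{1}_D(x^\ast + y^\ast + \cdot)|_{B_6}$; since $f \le \mbm{1}_X \mbm{1}_Y \mbm{1}_D$, the additional factors $h_1 h_2$ in the target expectation are subsumed by $f$, so the target inequality amounts exactly to $F(y_0', z_0') \ge (1 + \eps)\alpha\delta_X \beta_1(y_0')\beta_2(z_0')$.

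Next, I will compare $\E[g_1 g_2 F]$ to $\E[g_1 g_2 \beta_1 \beta_2]$. Using the great-pair grid-norm bounds $\snorm{\mbm{1}_Y(y+\cdot)}_{(B_4,B_8,B_9,K,K)} \le (1 + 2\eps_s)\delta_Y$ and similarly for $D$, together with Markov and \cref{fct:rev-mark}, both $\beta_1$ and $\beta_2$ concentrate at $\delta_Y, \delta_D$ in the sense $\E[g_1 \beta_1] = (1 \pm O(\eps_s^{1/4}))\delta_Y \E[g_1]$ and likewise for $g_2 \beta_2$, with the high-tail and low-tail exceptions absorbed into acceptable errors by the choice of $K$. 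Independence of $y'$ and $z'$ then gives $\E[g_1 g_2 \beta_1 \beta_2] = \E[g_1 \beta_1]\E[g_2 \beta_2] \le (1 + O(\eps_s^{1/4}))\delta_Y\delta_D \E[g_1]\E[g_2]$. Combining with the smoothed hypothesis,
\[
\E\bigl[g_1 g_2 (F - (1+\eps)\alpha\delta_X \beta_1 \beta_2)\bigr] \ge \alpha\delta_X \E[g_1]\E[g_2]\bigl((1 + 2\eps) - (1 + \eps)(1 + O(\eps_s^{1/4}))\delta_Y \delta_D\bigr) \ge \tfrac{\eps}{2}\alpha\delta_X \E[g_1]\E[g_2],
\]
where the last inequality uses $\delta_Y \delta_D \le 1$ and $\eps_s \ll \eps$.

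To also secure the grid norm bound on $\mbm{1}_X(x^\ast + \cdot)$ and the mass lower bounds on $h_1, h_2$, I will introduce three bad events: $\mc{B}_X(y',z') = \mbm{1}[\snorm{\mbm{1}_X(x-y'+z'+\cdot)}_{(B_6,B_8,B_9,K,K)} > (1 + 3\eps_s)\delta_X]$, whose mass is $e^{-\Omega(\eps_s K^2)}$ by the argument of \cref{lem:high-moment}; and $\mc{B}_Y(y') = \mbm{1}[\beta_1 < \tau_Y]$, $\mc{B}_D(z') = \mbm{1}[\beta_2 < \tau_D]$, with $\tau_Y, \tau_D$ set to the required lower bounds on $\E[h_1], \E[h_2]$. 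On $\neg\mc{B}_X$, \cref{lem:upper-bound} yields the pointwise bound $F \le (1 + O(\eps))\delta_X \beta_1 \beta_2$, so on $\mc{B}_Y \cap \neg\mc{B}_X$ we have $F \le \delta_X \tau_Y$, contributing at most $\delta_X \tau_Y \E[g_1]\E[g_2]$ to $\E[g_1 g_2 F]$; this is negligible because $\tau_Y \ll \eps\alpha$ by the given form of $\tau_Y$. The analogous estimate handles $\mc{B}_D$, and the $\mc{B}_X$ contribution $e^{-\Omega(\eps_s K^2)}$ is subdominant to the main term $\eps\alpha\delta_X \E[g_1]\E[g_2]$ because $K = C\log(1/(\delta_X \delta_Y \delta_D))$ with $C$ large after $\eps_s$ and $\delta_X \delta_Y \delta_D \le e^{-\log(1/\alpha)^{\Omega(1)}}$ in the density-increment regime. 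Inserting $(1-\mc{B}_X)(1-\mc{B}_Y)(1-\mc{B}_D)$ into the previous display preserves strict positivity, so some $(y_0', z_0')$ satisfies $g_1(y_0')g_2(z_0') = 1$, all three bad events zero, and $F(y_0', z_0') \ge (1+\eps)\alpha\delta_X\beta_1(y_0')\beta_2(z_0')$. The stated conclusions then follow immediately: $\E[h_1] = \beta_1(y_0') \ge \tau_Y$, $\E[h_2] = \beta_2(z_0') \ge \tau_D$, the grid norm bound on $\mbm{1}_X(x^\ast + \cdot)$ holds by $\mc{B}_X(y_0',z_0') = 0$, and the inequality becomes exactly the required $(1+\eps)\alpha\delta_X \E[h_1]\E[h_2]$ bound up to $O(\eta d)$ shift errors.

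The main technical obstacle is the delicate numeric balancing in the third paragraph: the $\mc{B}_X$ contribution of mass $e^{-\Omega(\eps_s K^2)}$ must be shown subdominant to the main-term scale $\alpha\delta_X \E[g_1]\E[g_2]$, whose lower bound carries the factor $(\alpha/2)^{O(k_\ast^2 \log(1/\alpha))}$ with $k_\ast = O(\log(1/(\alpha\delta_D)))$; this requires $K$ to grow sufficiently fast relative to $\log(1/\alpha)$, which holds automatically because the density-increment procedure drives $\delta_X, \delta_Y, \delta_D$ down to $e^{-\log(1/\alpha)^{\Omega(1)}}$ before this lemma is invoked.
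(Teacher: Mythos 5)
Your proof is essentially correct and arrives at the conclusion by a genuinely different (and in some ways cleaner) choice of $h_1,h_2$ than the paper's. The paper takes $h_1, h_2$ to be translates of $g_1, g_2$ (so $h_1(y'')=g_1(y_0'+y'')$, $h_2(w)=g_2(z_0'+w)$) and controls their local densities via the events $\mc{E}_2, \mc{E}_3$ defined in terms of $g_1, g_2$. You instead take $h_1, h_2$ to be the full indicator restrictions $\mbm{1}_Y(y^\ast+\cdot)$, $\mbm{1}_D(x^\ast+y^\ast+\cdot)$, use $f\le\mbm{1}_X\mbm{1}_Y\mbm{1}_D$ to subsume $h_1h_2$ into $f$, and control $\beta_1,\beta_2$ via bad events $\mc{B}_Y,\mc{B}_D$. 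Both are legitimate since the lemma only requires $h_i$ to be bounded above by the respective indicators and below in density; your choice maximizes the masses $\E[h_1],\E[h_2]$, while the paper's keeps them as refinements of $g_1,g_2$. Either feeds cleanly into \cref{thm:bohrpseudo}.

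One claim in your second paragraph is false as stated and should be removed (though, fortunately, it is not needed): you appeal to a ``great-pair grid-norm bound \ldots and similarly for $D$'' to get $\E[g_2\beta_2]\le(1+O(\eps_s^{1/4}))\delta_D\E[g_2]$. No grid-norm control on $D$ is available anywhere in the setup --- the only pseudorandomness imposed on $D$ is the $\ell_1$-spreadness $\E_x|\E_{z}\mbm{1}_D(x+z)-\delta_D|\le\eps_s\delta_D$, and that one-sided averaged condition cannot prevent $\beta_2$ from being large precisely on $\mathrm{supp}(g_2)$; in fact $g_2\le\mbm{1}_D(x+y+\cdot)$ positively correlates $g_2$ with local peaks of $D$, and $\E[g_2]$ can be far smaller than $\eps_s$, so the $\ell_1$ error $\eps_s\delta_D$ is not absorbable. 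The good news is your argument does not need this: the trivial bound $\beta_1\beta_2\le1$ already gives $\E[g_1g_2\beta_1\beta_2]\le\E[g_1]\E[g_2]$, and then $\E[g_1g_2(F-(1+\eps)\alpha\delta_X\beta_1\beta_2)]\ge\bigl((1+2\eps)-(1+\eps)\bigr)\alpha\delta_X\E[g_1]\E[g_2]=\eps\alpha\delta_X\E[g_1]\E[g_2]>0$, which is the only fact you subsequently use. The rest of your bookkeeping --- the pointwise bound $F\le(1+O(\eps_s))\delta_X\beta_1\beta_2+\text{err}$ from \cref{lem:upper-bound} on $\neg\mc{B}_X$, and its use to bound the $\mc{B}_Y$ and $\mc{B}_D$ contributions by $O(\delta_X\tau_Y)\E[g_1]\E[g_2]$ and $O(\delta_X\tau_D)\E[g_1]\E[g_2]$ respectively --- is sound, and your final remark about balancing $e^{-\Omega(\eps_s K^2)}$ against $\alpha\delta_X\E[g_1]\E[g_2]$ correctly identifies where the paper's choice of $K$ is used.
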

\begin{proof}
Let $z' = z' + x'' + y''$ and $y' = y' + y''$ with $x''\sim B_6$ and $y''\sim B_7$. We then have that 
\begin{multline*}
    \E_{\substack{y'\sim B_4\\z'\sim B_5\\x''\sim B_6\\y''\sim B_7}}f(x-y'+z' + x'',y+y' + y'')g_1(y'+y'')g_2(z'+x'' + y'') \\
    \ge (1+2\eps) \cdot \alpha \delta_X \cdot \E_{y'\sim B_4}[g_1(y')] \cdot \E_{z'\sim B_5}[g_2(z')] - O(\eta d).
\end{multline*}

We define 
\[\mc{E}_1(y',z') = \mbm{1}\big[\snorm{\mbm{1}_X(x-y'+z'+\cdot)}_{(B_6,B_8,B_9,K,K)}\le (1+4\eps_s)\cdot \delta_X\big]\]
and by the proof of \cref{lem:high-moment} we have that 
\begin{align*}
\E_{\substack{y'\sim B_4\\z'\sim B_5}}\mc{E}_1(y',z')&\E_{\substack{x''\sim B_6\\y''\sim B_7}}f(x-y'+z' + x'',y+y' + y'')g_1(y'+y'')g_2(z'+x'' + y'')\\
&\ge (1+2\eps) \cdot \alpha \delta_X \cdot \E_{y'\sim B_4}[g_1(y')] \cdot \E_{z'\sim B_5}[g_2(z')] - O(\eta d).
\end{align*}

We then define 
\[\mc{E}_2(y',z') = \mbm{1}\Big[\E_{\substack{x''\sim B_6}}g_2(z'+x'') \le 2^{-3}\eps^2\alpha^2 \cdot \E_{z\sim B_5}[g_2(z)]\Big].\]
Observe that 
\begin{align*}
\E_{\substack{y'\sim B_4\\z'\sim B_5}}&\mc{E}_1(y',z')\mc{E}_2(y',z')\E_{\substack{x''\sim B_6\\y''\sim B_7}}f(x-y'+z' + x'',y+y' + y'')g_1(y'+y'')g_2(z'+x'' + y'')\\
&\le \E_{\substack{y'\sim B_4\\z'\sim B_5}}\mc{E}_1(y',z')\mc{E}_2(y',z')\E_{\substack{x''\sim B_6\\y''\sim B_7}}\mbm{1}_X(x-y'+z'+x'')g_1(y'+y'')g_2(z'+x'' + y'')\\
&\le 2\E_{\substack{y'\sim B_4\\z'\sim B_5}} \E_{\substack{x''\sim B_6\\y''\sim B_7}}\delta_X \cdot \Big(\eps^2\alpha^2/8 \cdot \E_{z\sim B_5}[g_2(z)]\Big) \cdot g_1(y' + y'')\\
&\le 2^{-2}\eps^2 \cdot \alpha^2 \cdot \delta_X \cdot \E_{y'\sim B_4}[g_1(y')] \cdot \E_{z'\sim B_5}[g_2(z')]
\end{align*}
where we have used \cref{lem:upper-bound} and the definition of $\mc{E}_2(y',z')$.

We may similarly define  
\[\mc{E}_3(y',z') = \mbm{1}[\E_{\substack{y''\sim B_7}}g_1(y'+y'') \le 2^{-3}\eps^2\alpha^2 \cdot \delta_D^2 \cdot \E_{z\sim B_4}[g_1(z)]].\]
Using the same analysis, except replacing the final internal bound on the $g_2$ expectation by $g_1$, we obtain that 
\begin{align*}
\E_{\substack{y'\sim B_4\\z'\sim B_5}}&\mc{E}_1(y',z')\mc{E}_3(y',z')\E_{\substack{x''\sim B_6\\y''\sim B_7}}f(x-y'+z' + x'',y+y' + y'')g_1(y'+y'')g_2(z'+x'' + y'')\\
&\le \E_{\substack{y'\sim B_4\\z'\sim B_5}}\mc{E}_1(y',z')\mc{E}_3(y',z')\E_{\substack{x''\sim B_6\\y''\sim B_7}}\mbm{1}_X(x-y'+z' + x'')g_1(y'+y'')\\
&\le 2^{-2}\eps^2 \cdot \alpha^2 \cdot \delta_X \delta_D \cdot \cdot \E_{y'\sim B_4}[g_1(y')] \cdot \E_{z'\sim B_5}[g_2(z')].
\end{align*}

This implies that 
\begin{align*}
\E_{\substack{y'\sim B_4\\z'\sim B_5}}&\mc{E}_1(y',z')(1-\mc{E}_2(y',z'))(1-\mc{E}_3(y',z')) \\
&\cdot \Big(\E_{\substack{x''\sim B_6\\y''\sim B_7}}f(x-y'+z' + x'',y+y' + y'')g_1(y'+y'')g_2(z'+x'' + y'') \\
&\qquad - (1+9\eps/8) \cdot \alpha \cdot \delta_X \cdot \E_{\substack{x''\sim B_6\\y''\sim B_7}}[g_1(y'+y'')g_2(z'+x'')]\Big)>0.
\end{align*}
The result then follows by choosing a $(y',z')$ pair such that the LHS is strictly positive. (Observe this forces $\mc{E}_1(y',z')(1-\mc{E}_2(y',z'))(1-\mc{E}_3(y',z')) = 1$ which are precisely the conditions we require.) 
\end{proof}

We briefly end this section with a discussion of the case of 
\begin{align*}
2^{k_{\ast}}\alpha^{2k_{\ast}}\delta_X^{2k_{\ast}}\delta_{Y}^{2}\delta_D^{k_{\ast}}&\le\Big(\E_{z_1',\ldots,z_{k_{\ast}}'\sim B_5}\E_{x_1',x_2'\sim B_3}\prod_{j=1}^{k_{\ast}}f(x+x_1',y-x_1'+z_j')f(x+x_2',y-x_2'+z_j')\Big)
\end{align*}
where certain slight differences arise. Observe that in this case there is no need to invoke spectral positivity. The analysis to obtain the appropriate upper-regularity on the container is exactly as before; the critical issue is that the density increment coming from \cref{thm:quasisifting2} now has a different form. 

We obtain functions $g_1(x')\le \mbm{1}_X(x+x')$ and $g_2(z') \le \mbm{1}_D(x+y+z')$ such that:
\begin{align*}
\E_{x'\sim B_3}[g_1(x')]&\ge \delta_{X} \cdot (\alpha/2)^{O(k_\ast^2\log(1/\alpha))} = \delta_{X} \cdot (1/2)^{O(\log(1/(\alpha\delta_D))^2\log(1/\alpha)^2)}; \\
\E_{z'\sim B_5}[g_2(z')]&\ge \delta_D\cdot (\alpha/2)^{O(\log(1/\alpha))} = \delta_D\cdot (1/2)^{O(\log(1/\alpha)^2)}; \\
\E_{\substack{x'\sim B_3\\z'\sim B_5}}[f(x+x',y-x'+z')g_1(x')g_2(z')]&\ge (1+2\eps) \cdot \alpha \delta_Y \cdot \E_{x'\sim B_3}[g_1(x')] \cdot \E_{z'\sim B_5}[g_2(z')].
\end{align*}

We now derive an increment on an untilted rectangle. A key differing feature is that $x$ will now range in a smaller Bohr set than $y$. 
\begin{lemma}\label{lem:untilt-2}
Suppose that $(x,y)$ is a good pair. Furthermore consider $g_1:B_3\to \{0,1\}$ and $g_2:B_5\to \{0,1\}$ such that 
\begin{align*}
\E_{x'\sim B_3}[g_1(x')]&\ge \delta_{X} \cdot (1/2)^{O(\log(1/(\alpha\delta_D))^2\log(1/\alpha)^2)}; \\
\E_{z'\sim B_5}[g_2(z')]&\ge\delta_D\cdot (1/2)^{O(\log(1/\alpha)^2)}; \\
g_1(x')&\le \mbm{1}_X(x+x'); \\
g_2(z')&\le \mbm{1}_D(x+y+z'); \\
\E_{\substack{x'\sim B_3\\z'\sim B_5}}[f(x+x',y-x'+z')g_1(x')g_2(z')]&\ge (1+2\eps) \cdot \alpha \delta_Y \cdot \E_{x'\sim B_3}[g_1(x')] \cdot \E_{z'\sim B_5}[g_2(z')].
\end{align*}
Then there exists $(x^{\ast},y^{\ast})$ and $h_1:B_6\to \{0,1\}$ and $h_2:B_5\to \{0,1\}$ such that:
\begin{align*}
\E_{x'\sim B_6}[h_1(x')]&\ge  \delta_{X} \cdot (1/2)^{O(\log(1/(\alpha\delta_D))^2\log(1/\alpha)^2)}; \\
\E_{z'\sim B_5}[h_2(z')]&\ge \delta_D\cdot (1/2)^{O(\log(1/\alpha)^2)}; \\
h_1(x')&\le \mbm{1}_X(x^{\ast}+x'); \\
h_2(z')&\le \mbm{1}_D(x^{\ast}+y^{\ast}+z'); \\
\E_{\substack{x'\sim B_6\\z'\sim B_5}}[f(x^{\ast} + x',y^{\ast} + y')h_1(x')h_2(y')]&\ge (1+\eps) \cdot \alpha \delta_Y \cdot \E_{x'\sim B_3}[h_1(x')] \cdot \E_{z'\sim B_5}[h_2(z')]; \\
\snorm{Y(y^{\ast}+\cdot)}_{(B_6,B_8,B_9,K,K)}&\le (1+4\eps_s) \cdot \delta_Y.
\end{align*}
\end{lemma}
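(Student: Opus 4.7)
The plan is to follow the template of \cref{lem:output-dense}, adapted to the mirror-tilted configuration $f(x+x',\,y-x'+z')$. I will introduce smearing variables $x'' \sim B_6$ and $y'' \sim B_7$ and perform the substitution $x' \to x' + x''$, $z' \to z' + x'' + y''$. By \cref{lem:shift-invar}, the hypothesized tilted increment rewrites, up to $O(\eta d)$ error, as
\[
    \E_{\substack{x' \sim B_3,\, z' \sim B_5 \\ x'' \sim B_6,\, y'' \sim B_7}} f(x + x' + x'',\, y - x' + z' + y'')\, g_1(x' + x'')\, g_2(z' + x'' + y'') \;\ge\; (1+2\eps)\,\alpha\,\delta_Y\,\E[g_1]\,\E[g_2].
\]
The point of this substitution is that for any fixed $(x',z')$, setting $x^{\ast} = x + x'$ and $y^{\ast} = y - x' + z'$, the two arguments of $f$ trace out an \emph{untilted} rectangle $(x^{\ast}+B_6) \times (y^{\ast}+B_7)$. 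Moreover $x^{\ast}+y^{\ast} = x + y + z'$, so that $g_2(z'+x''+y'') = \mbm{1}_D(x^{\ast}+y^{\ast}+(x''+y''))$ naturally lives in the diagonal direction.

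To select a good $(x',z')$, I will exclude three bad events. First, $\mc{E}_Y(x',z')$ asks that $\snorm{\mbm{1}_Y((y - x' + z') + \cdot)}_{(B_7,B_8,B_9,K,K)} > (1+4\eps_s)\delta_Y$; repeating the proof of \cref{lem:high-moment} shows $\Pr[\mc{E}_Y]\le e^{-\Omega(\eps_s K^2)}$. Second, $\mc{E}_1(x',z')$ asks that $\E_{x''\sim B_6}[g_1(x'+x'')] \le 2^{-3}\eps^2\alpha^2\E[g_1]$. Third, $\mc{E}_2(x',z')$ asks that $\E_{x''\sim B_6,\, y''\sim B_7}[g_2(z'+x''+y'')] \le 2^{-3}\eps^2\alpha^2\E[g_2]$. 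Using $f \le \mbm{1}_Y$ on the second coordinate together with the $Y$-grid-norm control provided by $\neg\mc{E}_Y$, \cref{lem:upper-bound} shows that the contribution of either $\mc{E}_1$ or $\mc{E}_2$ to the smeared count is at most $O(\eps^2\alpha^2\delta_Y\E[g_1]\E[g_2])$, which is absorbed by the $\eps$-slack in the main inequality (since $\eps\alpha < 1$).

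Choosing $(x',z')$ avoiding all three bad events and setting $h_1(x'') := g_1(x'+x'')$ and $h_2(w) := g_2(z'+w)$, viewed as a function on $B_6+B_7$, produces the claimed $(1+\eps)$-increment on the untilted rectangle, together with the required Gowers grid norm control on $\mbm{1}_Y(y^{\ast}+\cdot)$. The density lower bounds $\E[h_1]\ge \delta_X \cdot (1/2)^{O(\log(1/(\alpha\delta_D))^2\log(1/\alpha)^2)}$ and $\E[h_2] \ge \delta_D \cdot (1/2)^{O(\log(1/\alpha)^2)}$ follow directly from $\neg\mc{E}_1$ and $\neg\mc{E}_2$ combined with the input hypotheses on $\E[g_1]$ and $\E[g_2]$ (the extra factor $\eps^2\alpha^2$ is harmlessly absorbed into the exponent). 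The principal obstacle is purely bookkeeping: the natural domain of $h_2$ is the sumset $B_6+B_7$ rather than a single $B_i$ in the small sequence, so one must exploit the fact that $B_1,B_2,\ldots$ is $(d,\eta)$-small with $\eta$ sufficiently small to identify $B_6+B_7$ inside $B_5$ as stated (using e.g.~$2B_6 \subseteq B_5$ under the ratio assumption) while verifying that the identification does not disturb the density bounds or the spreadness of $Y$ at $y^{\ast}$.
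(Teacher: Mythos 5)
Your proposal departs from the paper at the very first step, and the departure is not a bookkeeping issue — it is the reason your $h_2$ ends up with the wrong domain and the claimed density lower bound breaks.

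You smear with two independent variables $x''\sim B_6$, $y''\sim B_7$, sending $x' \to x'+x''$ and $z' \to z'+x''+y''$, so that the $f$-argument traces $(x^*+B_6)\times(y^*+B_7)$. The paper instead smears with a \emph{single} variable $\ell\sim B_6$, sending $x'\to x'+\ell$ and $z'\to z'+\ell$. The whole point of this choice is the cancellation
\[
    y - (x'+\ell) + (z'+\ell) \;=\; y - x' + z',
\]
so only the first coordinate of $f$ gets smeared and becomes $x+x'+\ell$; the second coordinate stays $y^*+z' = (y-x')+z'$ with $z'\sim B_5$ intact. Consequently the untilted rectangle is $(x^*+B_6)\times(y^*+B_5)$ — narrow in the $X$-direction, \emph{wide} in the $Y$-direction — and the diagonal coordinate is $z'+\ell$, which up to an $O(\eta d)$ shift lives on $B_5$, so $h_2$ genuinely sits on $B_5$ and $\E_{z'\sim B_5}[h_2]$ is controlled. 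This asymmetry is not optional: the lemma statement has $h_1:B_6\to\{0,1\}$, $h_2:B_5\to\{0,1\}$, $z'\sim B_5$, and the remark preceding the lemma (``$x$ will now range in a smaller Bohr set than $y$'') together with the role-switching step in the proof of \cref{thm:countmain} depend on exactly this shape.

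In your version, $h_2$ is naturally a function on $B_6+B_7$, and your plan to ``identify $B_6+B_7$ inside $B_5$'' cannot repair this: extending $h_2$ by zero to $B_5$ multiplies its $B_5$-density by $|B_6+B_7|/|B_5| = \eta^{\Theta(d)}$, obliterating the claimed bound $\E_{z'\sim B_5}[h_2] \ge \delta_D\cdot 2^{-O(\log(1/\alpha)^2)}$. There is also a quantitative problem with your $\mc{E}_1$: the threshold $\tfrac{\eps^2\alpha^2}{8}\E[g_1]$ is not small enough. After bounding $f\le\mbm{1}_Y$ and applying \cref{lem:upper-bound}, the surviving bad-event contribution is of order $\delta_Y\,\E_{x''}[g_1]\cdot\delta_D$, which you must compare against the main term $\alpha\delta_Y\E[g_1]\E[g_2]$; since $\E[g_2]$ can be as small as $\delta_D\cdot 2^{-O(\log(1/\alpha)^2)}$, a factor of $\alpha^2$ does not absorb it. The paper uses the threshold $\eps^2\cdot 2^{-O(\log(1/\alpha)^3)}\cdot\E[g_1]$ in its $\mc{E}_2$ for exactly this reason. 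Finally, note that once the single-smearing trick is used, $g_2$'s density is never split into a smaller Bohr set, so no separate exception event for $g_2$ is needed at all; needing one is a symptom of having shrunk the wrong side.
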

\begin{proof}
Observe that the final condition implies that
\[\E_{\substack{x'\sim B_3\\z'\sim B_5\\\ell\sim B_6}}[f(x+x'+\ell,y-x'+z')g_1(x'+\ell)g_2(z'+\ell)]\ge (1+2\eps) \cdot \alpha \delta_Y \cdot \E_{x'\sim B_3}[g_1(x')] \cdot \E_{z'\sim B_5}[g_2(z')] - O(\eta d).\]
We define
\begin{align*}
\mc{E}_1(x') &= \mbm{1}\Big[\snorm{\mbm{1}_Y(y-x'+\cdot)}_{(B_5,B_8,B_9,K,K)}\le (1+3\eps_s)\delta_Y\Big]
\end{align*}
and by the proof of \cref{lem:high-moment} we have that 
\begin{align*}
&\E_{\substack{x'\sim B_3}}\mc{E}_1(x')\E_{\substack{z'\sim B_5\\\ell\sim B_6}}[f(x+x'+\ell,y-x'+z')g_1(x'+\ell)g_2(z'+\ell)] \\
~& \qquad \qquad \qquad \ge (1+2\eps) \cdot \alpha \delta_Y \cdot \E_{x'\sim B_3}[g_1(x')] \cdot \E_{z'\sim B_5}[g_2(z')] - O(\eta d).
\end{align*}
We now define 
\[\mc{E}_2(x') = \mbm{1}[\E_{\ell\sim B_6}[g_1(x' + \ell)]\le \eps^2 \cdot 2^{-O(\log(1/\alpha)^3)} \cdot \E_{x''\sim B_3}[g_1(x'')]].\]
We now have that 
\begin{align*}
\E_{\substack{x'\sim B_3}}&\mc{E}_1(x')\mc{E}_2(x')\E_{\substack{z'\sim B_5\\\ell\sim B_6}}[f(x+x'+\ell,y-x'+z')g_1(x'+\ell)g_2(z'+\ell)]\\
&\le \E_{\substack{x'\sim B_3}}\mc{E}_1(x')\mc{E}_2(x')\E_{\substack{z'\sim B_5\\\ell\sim B_6}}[\mbm{1}_Y(y-x'+z')g_1(x'+\ell)\mbm{1}_D(x+y+z'+\ell)]\\
&\le 2\delta_Y\delta_D \cdot \E_{\substack{x'\sim B_3}}\mc{E}_1(x')\mc{E}_2(x')\E_{\substack{z'\sim B_5\\\ell\sim B_6}}[g_1(x'+\ell)] + O(e^{-\Omega(\eps_s K)})\\
&\le 2\delta_Y\delta_D \cdot \eps^2 \cdot 2^{-O(\log(1/\alpha)^3)} \cdot \E_{x''\sim B_3}[g_1(x'')] +  O(e^{-\Omega(\eps_s K)}) + O(\eta d).
\end{align*}
Here we have applied \cref{lem:upper-bound}. Therefore we have that 
\begin{align*}
\E_{\substack{x'\sim B_3}}&\mc{E}_1(x')(1-\mc{E}_2(x'))\Big(\E_{\substack{z'\sim B_5\\\ell\sim B_6}}[f(x+x'+\ell,y-x'+z')g_1(x'+\ell)g_2(z'+\ell)]\\
& -(1+7\eps/4) \cdot \alpha \delta_X \cdot g_1(x'+\ell)g_2(z'+\ell)]\Big)>0.
\end{align*}
Thus there exists $x'$ with $\mc{E}_1(x')(1-\mc{E}_2(x')) = 1$ such that 
\[\E_{\substack{z'\sim B_5\\\ell\sim B_6}}[f(x+x'+\ell,y-x'+z')g_1(x'+\ell)g_2(z'+\ell)]\ge (1+7\eps/4) \cdot \alpha \delta_X \cdot \E_{\substack{z'\sim B_5\\\ell\sim B_6}}[g_1(x'+\ell)g_2(z'+\ell)].\]
This immediately gives the desired result. 
\end{proof}

\subsection{Completing the proof}
We now tie together the final loose ends; this material is little more than chaining various lemmas to complete the proof. We will in fact prove the stronger counting version of \cref{thm:main}, which we will need later to prove \cref{cor:3d,corollary:4nof}.
\begin{theorem}
\label{thm:countmain}
Let $G$ be a finite abelian group and $A \subseteq G \times G$ with $|A| = \alpha|G|^2$. Then
\[ \Pr_{x, y, d \in G}\left[(x, y), (x, y+d), (x+d, y) \in A \right] \ge 2^{-O(\log(1/\alpha)^{600})}. \]
\end{theorem}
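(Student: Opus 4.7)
The plan is to run the density increment iteration developed in \Cref{sec:dsbohr}, but interpret the terminal step as producing many corners rather than deriving a contradiction. Initialize with $B_1^{(0)} = B_2^{(0)} = \cdots = G$ (a $(0, \eta)$-exact sequence for any fixed tiny $\eta$), $X^{(0)} = Y^{(0)} = D^{(0)} = G$, $\alpha^{(0)} = \alpha$, and $A^{(0)} = A$. Apply \Cref{thm:bohrpseudo} to pseudorandomize. At iteration $t$, we have $A^{(t)} \subseteq S(X^{(t)}, Y^{(t)}, D^{(t)})$ with density $\alpha^{(t)} \delta_X^{(t)}\delta_Y^{(t)}\delta_D^{(t)}$, with the $(X^{(t)}, Y^{(t)}, D^{(t)})$ satisfying the spreadness conditions required by \Cref{sec:dsbohr}. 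Consider the dichotomy:
\begin{equation*}
\sup_{\substack{x \in B_1^{(t)} \\ y \in B_2^{(t)}}} \E_{\substack{x' \sim B_3^{(t)} \\ y' \sim B_4^{(t)} \\ z' \sim B_5^{(t)}}} f(x{+}x', y{+}y')\, f(x{+}x', y{-}x'{+}z')\, f(x{-}y'{+}z', y{+}y') \;\le\; 2^{-3} (\alpha^{(t)})^3 (\delta_X^{(t)} \delta_Y^{(t)} \delta_D^{(t)})^2,
\end{equation*}
where $f = \mbm{1}_{A^{(t)}}$. If this inequality holds, the analysis of \Cref{sec:dsbohr} (the great pair from \Cref{lem:great-pair} together with \Cref{lem:dense-rectangle,lem:dens-diag-increment,lem:one-off-density,lem:output-dense,lem:untilt-2}) produces a $(1+\Omega(1))$ multiplicative density increment, after which we reapply \Cref{thm:bohrpseudo}.

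Since $\alpha^{(t)} \le 1$, the iteration must terminate within $T = O(\log(1/\alpha))$ steps, meaning the sup condition fails at some step $t^\ast \le T$. At that point there exists $(x, y) \in B_1^{(t^\ast)} \times B_2^{(t^\ast)}$ for which the local corner count is at least $2^{-3}(\alpha^{(t^\ast)})^3 (\delta_X^{(t^\ast)} \delta_Y^{(t^\ast)} \delta_D^{(t^\ast)})^2$. Crucially, for fixed $(x, y)$ the map $(x', y', z') \mapsto (a, b, d) \coloneqq (x+x',\, y+y',\, z'-x'-y')$ is a bijection between $B_3^{(t^\ast)} \times B_4^{(t^\ast)} \times B_5^{(t^\ast)}$ and its image in $G^3$, and the three sampled points form a genuine corner $(a, b), (a+d, b), (a, b+d)$ in $A$. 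Hence
\begin{equation*}
\Pr_{x,y,d \in G}[(x,y), (x+d, y), (x, y+d) \in A] \;\ge\; 2^{-3} (\alpha^{(t^\ast)})^3 (\delta_X^{(t^\ast)} \delta_Y^{(t^\ast)} \delta_D^{(t^\ast)})^2 \cdot \frac{|B_3^{(t^\ast)}||B_4^{(t^\ast)}||B_5^{(t^\ast)}|}{|G|^3}.
\end{equation*}

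It remains to bound the parameters at step $t^\ast$. Each invocation of \Cref{thm:bohrpseudo} causes $\delta_D$ to drop by $\Omega(\eps \alpha \delta_D)$, the densities $\delta_X, \delta_Y$ to drop by $2^{-O(\log(1/(\eps_s \alpha \delta_D))^3)}$, the rank to grow by $O(r g^2 \log(1/(\eps \alpha \delta_D))^2)$ with $g = O(\log(1/(\delta_X \delta_Y)) + \log(1/(\eps \alpha \delta_D))^3)$, and the radius of the Bohr sets to contract by $\eta_s^{O(g^2 \log(1/(\eps \alpha \delta_D))^2)}$. Chaining these estimates across $T = O(\log(1/\alpha))$ iterations (while picking $\eta_s, \eta$ tiny enough at the start to absorb all subsequent shrinkage), one obtains final parameters of the form $\delta_X^{(t^\ast)}, \delta_Y^{(t^\ast)}, \delta_D^{(t^\ast)} \ge 2^{-O(\log(1/\alpha)^{C_1})}$, $\rank(B_i^{(t^\ast)}) \le O(\log(1/\alpha)^{C_2})$, $\nu(B_i^{(t^\ast)}) \ge 2^{-O(\log(1/\alpha)^{C_3})}$, and hence $|B_i^{(t^\ast)}|/|G| \ge 2^{-O(\log(1/\alpha)^{C_4})}$ by \Cref{lem:size-bounded}. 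Plugging in yields the desired bound $2^{-O(\log(1/\alpha)^{600})}$.

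The main obstacle is the bookkeeping: each of the three pseudorandomness axes ($X, Y$ spreadness; $D$ $\ell_1$-spreadness; Bohr set geometry) interacts through nested logs in $g$ and in the exponents of \Cref{thm:bohrpseudo}, so the induction on the $O(\log(1/\alpha))$ iterations must be run carefully to confirm that the compound loss is still only a polynomial in $\log(1/\alpha)$ with the stated exponent $600$. A secondary concern is ensuring that the spreadness and regularity hypotheses required by every intermediate lemma (e.g.\ $\eta \le e^{-O(K^3)} d^{-O(1)}$ in \Cref{sec:dsbohr}, or $\eta \le (\eps\beta/d)^{O(1)}$ in \Cref{lemma:dsplit}) are preserved throughout — which amounts to verifying that the initial choice of $\eta$ (a function of $|G|$ only, chosen after the tower of constants) is small enough to survive all $T$ contractions.
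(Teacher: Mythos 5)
Your proposal follows essentially the same path as the paper's own proof: maintain $A \cap S(X,Y,D)$ inside a pair of nested Bohr sets, enforce algebraic spreadness of $X,Y$ and $\ell_1$-spreadness of $D$ via \Cref{thm:bohrpseudo}, apply the \Cref{sec:dsbohr} machinery to obtain a multiplicative density increment whenever the sup-count condition holds, and conclude that after $O(\log(1/\alpha))$ iterations the sup condition must fail. The final conversion is also the same: a fixed $(x_0,y_0)$ witnessing the failure of the sup, a bijection from $B_3 \times B_4 \times B_5$ to distinct corners of $A$, and \Cref{lem:size-bounded} to bound $|B_i|/|G|$ by $\nu(B_i)^{\rank(B_i)}$. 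In fact your explicit bijection $(x',y',z') \mapsto (x_0+x',\,y_0+y',\,z'-x'-y')$ makes concrete a step the paper leaves implicit when it passes from the sup condition to the statement $\Pr_{x,d\in B_1, y\in B_2}[\cdots] \ge \alpha^3\delta_X^2\delta_Y^2\delta_D^2/10$.

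Two small points worth tightening. First, the paper explicitly notes that after an application of \Cref{lem:untilt-2}, the roles of $X$ and $Y$ (i.e.\ which one lives in the wider Bohr set) must be swapped; your proposal does not address this, and without it the iteration as you've set it up would not literally return to the required normal form. Second, your bookkeeping paragraph correctly identifies that the entire verification reduces to chaining the quantitative losses, but the paper commits to concrete choices ($K = O(\log(1/\alpha)^{30})$, $r = O(\log(1/\alpha)^{350})$, $\eta = 2^{-O(\log(1/\alpha)^{100})}$, $\eta_s = \eta^C$, and rank growth $O(\log(1/\alpha)^{400})$ per step); any complete writeup must make analogous choices and verify that they satisfy all of the intermediate spreadness and radius hypotheses at every iteration, not only the terminal one. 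You flag this, but it is the load-bearing part of the argument and cannot be left as a remark.

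Neither of these is a conceptual gap: your approach is correct and is the paper's approach.
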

\begin{proof}
Initially let $|A| = \alpha|G|^2$.
We will maintain Bohr sets $B_1, B_2$ of the same frequencies and $\nu(B_2)/\nu(B_1) \le \eta$, and $X \subseteq x + B_1$, $Y \subseteq y + B_2$, and $D \subseteq x+y+B_1$. The density of $A \cap S(X, Y, D)$ within $S(X, Y, D)$ will increase over the course of the procedure. Throughout this proof, let $\delta_X = |X|/|B_1|$, $\delta_Y = |Y|/|B_2|$, and $\delta_D = |D|/|B_1|$.

We will start describing how large various parameters are over the course of the density increment. We will prove that $\delta_D \ge 2^{-O(\log(1/\alpha)^3)}$ and $\delta_X, \delta_Y \ge 2^{-O(\log(1/\alpha)^{10})}$. Also we will choose $K = O(\log(1/\alpha)^{30})$, $\eta = 2^{-O(\log(1/\alpha)^{100})}$, and $\eta_s = \eta^C$ for sufficiently large constant $C$.
Finally we will choose $r = O(\log(1/\alpha)^{350})$.

Before a density increment, we will ensure the following pseudorandomness properties on $X, Y, D$. We will maintain that $X, Y$ are $(r, \eta_s, \eps_s)$-algebraically spread, and that $D$ is $(B_1, B_9, \eps_s)$ $\ell_1$-spread. Let us argue that this implies that $\|\mbm{1}_X\|_{(B_1, B_8, B_9, K, K)} \le (1+O(\eps_s))\delta_X$ and similarly $\|\mbm{1}_Y\|_{(B_2, B_8, B_9, K, K)} \le (1+O(\eps_s))\delta_Y$. Indeed, if the former fails, applying \cref{thm:bohrkk} and then \cref{thm:ap+KM} gives that $X$ admits a density increment onto a Bohr set with rank increase at most $O(\log(1/\delta_X^K)^8) \le O(\log(1/\alpha)^{350})$.

To apply the results in the section we must check that \cref{lem:upper-bound} provides enough combinatorial spreadness to apply \cref{thm:quasisifting2}. This holds for the choice of $K$ and $\eta$. Thus, we may apply the results in this section to conclude that either:
\[ \Pr_{x, d \in B_1, y \in B_2}\left[(x, y), (x+d, y), (x, y+d) \in A \right] \ge \alpha^3\delta_X^2\delta_Y^2\delta_D^2/10, \]
or $A$ admits a density increment of the form described in \cref{lem:output-dense} or \cref{lem:untilt-2}. It is worth pointing out that in \cref{lem:untilt-2}, that now $Y, D$ are in the larger Bohr set and $X$ is in the smaller one, but this can be handled by switching the roles of $X$ and $Y$ in the analysis each time this happens. In this density increment, the size of $\delta_D$ drops by $2^{-O(\log(1/\alpha)^2)}$ as stated, so the total drop is $2^{-O(\log(1/\alpha)^3)}$ over $O(\log(1/\alpha))$ steps. The drop of $\delta_X$ and $\delta_Y$ is $2^{-O(\log(1/(\alpha\delta_D))^2\log(1/\alpha)^2)} = 2^{-O(\log(1/\alpha)^8)}$ so a total of $2^{-O(\log(1/\alpha)^9)}$ over $O(\log(1/\alpha))$ steps. The radii of the Bohr sets $B_1, B_2$ also drop by $\eta^{O(1)}$ as stated.

After a density increment we apply \cref{thm:bohrpseudo} to pseudorandomize $X, Y, D$. Let us first discuss how this affects $\delta_D$ and $\delta_X, \delta_Y$. $\delta_D$ drops by an additional $O(\eps\alpha)$ per step, which does not affect the lower bound. $\delta_X$ and $\delta_Y$ drop by $2^{-O(\log(1/\delta_D)^3)} = 2^{-O(\log(1/\alpha)^9)}$, for a total of $2^{-O(\log(1/\alpha)^{10})}$ over $O(\log(1/\alpha))$ steps.

Now we track how the rank and radius of $B_1, B_2$ change. Note in \cref{thm:bohrpseudo} that $g = O(\log(1/\alpha)^9)$, so the rank of $B_1$ increases by $O(rg^2\log(1/\delta_D)^2) = O(\log(1/\alpha)^{400})$. The radius decreases of $\eta_s^{O(g^2\log(1/\delta_D)^2)} = 2^{-O(\log(1/\alpha)^{150})}$. Thus, the number of corners in $A$ at the end is at least
\[ \alpha^3\delta_X^2\delta_Y^2\delta_D^2|B_1||B_2| \ge \nu(B_2)^{O(\rank(B_2))}|G|^2 \ge 2^{-O(\log(1/\alpha)^{600})}|G|^2, \]
where we have applied \cref{lem:size-bounded}.
\end{proof}

\section{Coloring Bounds for 3-Dimensional Corners}
\label{sec:3dcorners}

In this section we prove \cref{cor:3d}. The proof requires the definition of a cylinder intersection, which is known to be related to the corners problem and communication complexity.
\begin{definition}[Cylinder intersection]
\label{def:cylinder}
Let $G$ be an abelian group and $S_{XY}, S_{YZ}, S_{XZ} \subseteq G \times G$. Then the cylinder intersection $\mc{I}(S_{XY}, S_{YZ}, S_{XZ})$ is defined as
\[ G \times G \times G \supseteq \mc{I}(S_{XY}, S_{YZ}, S_{XZ}) \coloneqq \{ (x,y,z) \in G \times G \times G : (x,y) \in S_{XY}, (y,z) \in S_{YZ}, (x,z) \in S_{XZ}\}. \]
\end{definition}

\cref{cor:3d} follows by inducting on the following statement. In the statement below, one should think of $*$ as representing points that are uncolored, where the number of uncolored points is at most $\widetilde{O}(|G|^2)$.
\begin{lemma}
\label{lemma:3d}
Let $A = \mc{I}(S_{XY}, S_{YZ}, S_{XZ}) \subseteq G \times G \times G$ be a cylinder intersection. Let $f: A \to [L] \cup \{*\}$ be a coloring of $A$, and let $U = |f^{-1}(*)|$. If $A$ contains no monochromatic 3D corners with colors in $[L]$, then there are subsets $A' \subseteq A$ and $S_{XY}' \subseteq S_{XY}, S_{YZ}' \subseteq S_{YZ}, S_{XZ}' \subseteq S_{XZ}$ such that:
\begin{enumerate}
    \item $A' = \mc{I}(S_{XY}', S_{YZ}', S_{XZ}')$ is a cylinder intersection.
    \item There is a color $c \in [L]$ such that $|A' \cap f^{-1}(\{c, *\})| \le U + |G|^2$. Informally, almost all elements of $A'$ are in one of $L-1$ colors.
    \item There is a universal constant $C$ such that for $\delta \coloneqq \frac{|A| - U}{L|G|^3}$, it holds that
    \[ \frac{|A'|}{|G|^3} \ge e^{-O(\log(1/\delta)^C)}. \]
\end{enumerate}
\end{lemma}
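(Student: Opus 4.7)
The proof follows a Graham--Solymosi-style reduction \cite{GS06} to the 2D counting corners theorem \cref{thm:countmain}. The overall plan is to extract a dense color class, use the 2D corners theorem on its slices to force an avoidance constraint, and then interpret that constraint as a cylinder intersection.

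First, by pigeonhole on the $L$ non-star colors we pick $c \in [L]$ with $|A_c| \coloneqq |f^{-1}(c)| \ge (|A|-U)/L = \delta |G|^3$; this will be the color in conclusion~(2). Since the coloring has no monochromatic 3D corner, $A_c$ itself is 3D-corner-free: there is no $(x,y,z) \in A_c$ and $d \neq 0$ with $(x+d,y,z), (x,y+d,z), (x,y,z+d) \in A_c$.

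Next, for each $z \in G$ set $A_c^{(z)} \coloneqq \{(x,y) : (x,y,z) \in A_c\}$. Since $\E_z |A_c^{(z)}|/|G|^2 \ge \delta$, a $\delta/2$-fraction of $z$'s satisfy $|A_c^{(z)}|/|G|^2 \ge \delta/2$; for each such slice, \cref{thm:countmain} produces at least $2^{-O(\log(1/\delta)^{600})}|G|^3$ nontrivial 2D corners $(x,y), (x+d,y), (x,y+d) \in A_c^{(z)}$. The 3D-corner-freeness of $A_c$ then forces $(x,y,z+d) \notin A_c$ for each such quadruple $(x,y,z,d)$, and pigeonholing on $d$ yields a fixed $d^* \neq 0$ giving at least $2^{-O(\log(1/\delta)^C)}|G|^3$ forbidden triples. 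Define $\Phi^* \coloneqq \{(x,y,z+d^*) : (x,y,z), (x+d^*,y,z), (x,y+d^*,z) \in A_c\}$; then $\Phi^* \cap A_c = \emptyset$ and $|\Phi^*| \ge 2^{-O(\log(1/\delta)^C)}|G|^3$.

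Finally, one interprets $\Phi^*$ as sitting inside a cylinder intersection $\mc{I}(S_{XY}', S_{YZ}', S_{XZ}')$ whose faces are built from the shadows $\pi_{XY}(A_c), \pi_{YZ}(A_c), \pi_{XZ}(A_c)$ and their $d^*$-shifts; for example $S_{XY}' = \pi_{XY}(A_c) \cap (\pi_{XY}(A_c) - (d^*, 0)) \cap (\pi_{XY}(A_c) - (0, d^*))$, and analogously for the other two faces. Setting $A' \coloneqq A \cap \mc{I}(S_{XY}', S_{YZ}', S_{XZ}')$ makes conclusion~(1) immediate, and the count $|\Phi^*|$ provides $|A'| \ge e^{-O(\log(1/\delta)^C)}|G|^3$ in conclusion~(3). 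For conclusion~(2), any $c$-colored point $(x,y,z) \in A'$ would, upon unwinding the three pair constraints, complete a 3D corner with displacement $d^*$ inside $A_c$ --- forbidden by the first step --- with the exception of a degenerate diagonal contribution of size at most $|G|^2$; the uncolored points contribute separately (at most $U$), giving $|A' \cap f^{-1}(\{c, *\})| \le U + |G|^2$.

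The main obstacle is the final step: verifying that the shift-by-$d^*$ structure of $\Phi^*$ really factors through three coordinate-pair constraints (the definition of a cylinder intersection), and that joint satisfaction of those constraints together with $(x,y,z) \in A_c$ genuinely forces a 3D corner in $A_c$ up to the stated $|G|^2$ slack. Keeping track of how the three $d^*$-shifts on the coordinate planes $(x,y), (y,z), (x,z)$ interact --- and ensuring no density is lost beyond the quasipolynomial bound inherited from \cref{thm:countmain} --- is the substantive Graham--Solymosi bookkeeping.
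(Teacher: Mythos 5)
There is a genuine gap: you never invoke the hyperplane restriction $x+y+z=g$, which is the actual content of the Graham--Solymosi trick and the engine of the paper's proof. The paper pigeonholes not just on the color $c$ but also on the value $g$ of $x+y+z$, producing a set $T = \{(x,y,z) \in f^{-1}(c) : x+y+z = g\}$ with $|T| \ge \frac{|A|-U}{L|G|} = \delta|G|^2$. Crucially, because $T$ lies in a single affine hyperplane, its projection $S_{XY}' = \pi_{XY}(T)$ together with the constraint $x+y+z=g$ \emph{uniquely determines} $T$: $(x,y) \in S_{XY}'$ iff $(x,y,g-x-y) \in T$, and similarly for the other two faces. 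This is what makes the cylinder intersection $A' = \mc{I}(S_{XY}', S_{YZ}', S_{XZ}')$ useful --- any $(x,y,z) \in A'$ has three canonical companions $(x,y,g-x-y), (g-y-z,y,z), (x,g-x-z,z)$ which are genuine points of $T \subseteq f^{-1}(c)$, forming a 3D corner with $(x,y,z)$ at displacement $d = g-x-y-z$, so $(x,y,z)$ cannot itself be colored $c$ unless $d=0$, i.e.\ it is one of the at most $|T| \le |G|^2$ points of $T$. Your approach replaces $T$ by a dense color class $A_c \subseteq f^{-1}(c)$ with no such hyperplane constraint, and therefore the projections $\pi_{XY}(A_c), \pi_{YZ}(A_c), \pi_{XZ}(A_c)$ do \emph{not} determine membership in $A_c$: a point $(x,y,z)$ in the cylinder intersection of your shifted projections only witnesses that each coordinate pair is a projection of \emph{some} point of $A_c$, with no coherence between the three hidden coordinates, so no 3D corner in $A_c$ is forced.

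A few downstream consequences of the same gap. First, your $\Phi^*$-construction does not sit cleanly inside the proposed cylinder intersection: to put $(x,y,z+d^*) \in \Phi^*$ into $\mc{I}(S_{XY}', S_{YZ}', S_{XZ}')$, you would need e.g.\ $(y, z+d^*) \in \pi_{YZ}(A_c)$, which says some $(x', y, z+d^*) \in A_c$ --- this is not implied by $(x,y,z), (x+d^*,y,z), (x,y+d^*,z) \in A_c$ and is actually in tension with the fact that $(x,y,z+d^*) \notin A_c$. Second, your conclusion~(2) bound of $U + |G|^2$ has no source: in the paper, the $|G|^2$ slack comes precisely from $|T| \le |G|^2$ (a two-dimensional slice of $G^3$), and your $A_c$ has size up to $|G|^3$ with no such constraint, so the ``degenerate diagonal'' hand-wave does not yield the required bound. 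Third, fixing a single $d^*$ by pigeonhole is unnecessary and lossy; the paper's argument requires no such fixing because the shift $d = g-x-y-z$ varies with the point, and the count in conclusion~(3) comes from one clean application of \cref{thm:countmain} to the corner-free set $S_{XY}' \subseteq G \times G$ of density $\ge \delta$. Your slice-by-slice averaging followed by pigeonholing on $d$ is not wrong for producing \emph{some} lower bound, but it does not assemble into the cylinder intersection structure the lemma demands.
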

\begin{proof}
By the Pigeonhole principle, there is a color $c \in [L]$ and $g \in G$ such that the set
\[ T = \{(x,y,z) \in G \times G \times G : (x,y,z) \in A, f((x,y,z)) = c, x+y+z = g\} \]
is large, specifically that $|T| \ge \frac{|A| - U}{L|G|}$. Let $S_{XY}'$, $S_{YZ}'$, and $S_{XZ}'$ be the projections of $T$ onto the $XY$, $XZ$, and $YZ$ faces, formally
\[ S_{XY}' \coloneqq \{(x,y) \in G \times G : \exists z \in G \text{ such that } (x,y,z) \in T\}, \]
and similarly for $S_{YZ}'$ and $S_{XZ}'$. Because $T \subseteq A$ we know that $S_{XY}' \subseteq S_{XY}$, $S_{YZ}' \subseteq S_{YZ}$, $S_{XZ}' \subseteq S_{XZ}$ and thus $A' \subseteq A$.

We now check item (2). This amounts to checking that if a point in $A'$ is colored $c$, then it must lie in $T$, and then noting that $|T| \le |G|^2$. Indeed, assume that $(x,y,z) \in A'$, so that $(x,y,g-x-y)$, $(g-y-z,y,z)$, $(x,g-x-z,z)$ are in $T$. If $x+y+z \neq g$, then $(x,y,z)$ forms a 3D corner with these points (for $d = g-x-y-z$). Thus, $(x,y,z)$ cannot have color $c$.

To prove (3) we first prove the claim that the size of $A'$ is at least the number of 2D corners in $S_{XY}' \subseteq G \times G$. Indeed, assume that $S_{XY}'$ contains a 2D corner $(x,y)$, $(x,y+d)$, $(x+d,y) \in S_{XY}'$. Then $(x,y,g-x-y)$, $(x,y+d,g-x-y-d)$, $(x+d,y,g-x-y-d) \in T$ by the definition of $S_{XY}'$, so $(x,y,g-x-y-d) \in A'$. Each triple $(x,y,d)$ generates a distinct point, so the claim is proven. Because $|S_{XY}'| = |T| \ge \delta|G|^2$, (3) follows by \cref{thm:countmain} (the counting version of \cref{thm:main}).
\end{proof}

From here we can conclude the proof of \cref{cor:3d}.
\begin{proof}[Proof of \cref{cor:3d}]
Suppose by contradiction there exists a coloring of $G \times G \times G$ with $L = c \log \log \log |G|$ colors so that there are no monochromatic 3D corners.
We will iteratively apply \cref{lemma:3d} to restrict our attention to increasingly smaller subsets of $G\times G\times G$ with additional colors removed from consideration (by replacing any occurrences of them with $*$).
Our contradiction will arise from running out of colors before coloring all points in $G\times G\times G$.

Let $A_0 = G \times G \times G$. We will define $A_t$ for $t = 1, \dots, L$, and let $\delta_t = |A_t|/|G|^3$, so that $\delta_0 = 1$. We will maintain the invariant that $\delta_t \ge |G|^{-1/2}$. With this invariant and the choice of $L$, choosing $A_{t+1}$ to be the $A'$ in \cref{lemma:3d} for $A = A_t$ gives that
\begin{equation} \delta_{t+1} = \frac{|A_{t+1}|}{|G|^3} \ge e^{-O(\log(1/\delta)^C)} \ge e^{-O(\log(2L/\delta_t)^C)} \label{eq:iterate}
\end{equation}
for $\delta = \frac{|A_t| - |f^{-1}(*)|}{L|G|^3}$. Then, we have used that $\delta \ge \frac{\delta_t}{2L}$ because $\delta_t \ge |G|^{-1/2}$ and $|f^{-1}(*)| \le L|G|^2$ by induction using item (2). Iterating \eqref{eq:iterate} gives that $\log(1/\delta_L) \le L^{C^L}$, so $\delta_L \ge e^{-L^{C^L}}$. For $L = c\log\log\log|G|$ for sufficiently small $c$, we know that $\delta_L \ge |G|^{-1/2}$, thus establishing that the desired invariant holds throughout. This is a contradiction because we have no colors left.
\end{proof}
It is worth emphasizing again that we were only able to obtain ``reasonable'' bounds for the 3D corners problem in the coloring setting because of our quasipolynomial bounds for the density version of 2D corners. With the analogous version of the above argument, one can check that even inverse logarithmic bounds for density 2D corners would only yield tower type bounds for coloring 3D corners.

\appendix

\section{Almost Periodicity}\label{sec:almost-period}

We first state the key almost periodicity result which we require. This is proven as \cite[Theorem~5.4]{SS16}; we quote the statement from \cite[Theorem~8]{Mil24} (which while stated for $\mb{Z}/N\mb{Z}$ follows for general $G$ by changing each occurrence of $\mb{Z}/N\mb{Z}$ in the half page deduction from \cite[Theorem~5.4]{SS16} to $G$).

\begin{theorem}\label{thm:AP-input}
Let $\eps\in (0,1/2)$. Let $B_1, B_2, \dots$ be a $(d,\eta)$-small sequence of Bohr sets (see \cref{def:etasmall}).

Let $Y\subseteq B_1$ and $Z\subseteq B_2$ with $\beta = \E_{x\sim B_1}[\mbm{1}_{Y}(x)]$ and $\gamma = \E_{x\sim B_2}[\mbm{1}_{Z}(x)]$. Let $D\subseteq G$ be such that $|Y|\le |D|\le 2|B_1|$.

Then there exists a regular Bohr set $B'\subseteq B_2$ of dimension at most $d+d'$ where 
\[d' = O \left( \eps^{-4}\log(2\beta^{-1})^{3}\log(2\gamma^{-1}) \right)\]
and radius $r_2 \cdot (\eps \beta)/(24d^3d')$ and such that 
\[\Big|\E_{\substack{b\sim B'\\y\sim B_1\\z\sim B_2}}\mbm{1}_{D}(z-y + b) \mbm{1}_{Y}(y)\mbm{1}_{Z}(z) -\E_{\substack{y\sim B_1\\z\sim B_2}}\mbm{1}_{D}(z-y) \mbm{1}_{Y}(y)\mbm{1}_{Z}(z)\Big|\le \eps \beta \gamma.\]
\end{theorem}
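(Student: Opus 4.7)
The plan is to deduce this from the standard $L^p$ almost periodicity theorem of Croot--Sisask/Sanders--Schoen (the Bohr-set version), applied to the convolution of $\mathbbm{1}_Y$ with $\mathbbm{1}_D$, followed by H\"older's inequality to integrate against $\mathbbm{1}_Z$. The first step is to recast the statement in almost-periodicity language. Define $g: G \to [0,1]$ by
\[ g(w) \;:=\; \E_{y \sim B_1}\bigl[\mathbbm{1}_Y(y)\mathbbm{1}_D(w-y)\bigr], \]
so that $\E_{w \sim B_2} g(w)$ is an average of translates of the convolution $\mathbbm{1}_Y \ast \mathbbm{1}_D$ and is bounded by $\beta \cdot |D|/|G|$ up to regularity errors. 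The quantity we must bound rewrites as
\[ \Big|\E_{b \sim B'} \E_{z \sim B_2}\mathbbm{1}_Z(z)\bigl(g(z+b) - g(z)\bigr)\Big|, \]
and so it suffices to produce a regular $B' \subseteq B_2$ of the prescribed dimension and radius such that $\tau_b g$ is close to $g$ in the appropriate $L^p(B_2)$ sense for every $b \in B'$.

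The second step is to invoke the Sanders--Schoen $L^p$ almost periodicity theorem (cf.~\cite[Theorem 5.4]{SS16}) on the convolution $g$. For any positive integer $p$, this produces a regular Bohr set $B' \subseteq B_2$ obtained from $B_2$ by adjoining $O(\eps^{-2} p \log(2\beta^{-1}))$ new frequencies, with radius at least $r_2\cdot \eps\beta/\poly(d,p)$, such that
\[ \Big(\E_{z \sim B_2}\bigl|g(z+b)-g(z)\bigr|^p\Big)^{1/p} \;\le\; \eps \beta \]
uniformly for $b\in B'$. The hypothesis $|Y|\le|D|\le 2|B_1|$ is precisely what is needed to normalize the convolution $\mathbbm{1}_Y \ast \mathbbm{1}_D$ so that Sanders--Schoen applies with density parameter $\beta$; the regularity and the $d^3$ denominator in the radius arise from having to regularize the intersection Bohr set and to invoke shift-invariance on $B_2$ when transferring between $\ell^p$-norms on $B_2$ and its translates.

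The third step is H\"older. For any $b\in B'$,
\[ \E_{z\sim B_2}\mathbbm{1}_Z(z)\bigl|g(z+b)-g(z)\bigr| \;\le\; \gamma^{1-1/p}\cdot \Big(\E_{z\sim B_2}\bigl|g(z+b)-g(z)\bigr|^p\Big)^{1/p} \;\le\; \gamma^{1-1/p}\cdot \eps\beta. \]
Choosing $p = \lceil \log(2\gamma^{-1})\rceil$ makes $\gamma^{1-1/p} \le e\gamma$, and then averaging over $b\sim B'$ (with the absolute values outside) yields the claimed bound $\eps\beta\gamma$, after rescaling $\eps$ by an absolute constant. This choice of $p$ contributes the factor $\log(2\gamma^{-1})$ to the codimension, and the cumulative codimension bound $d' = O(\eps^{-4}\log(2\beta^{-1})^3\log(2\gamma^{-1}))$ absorbs any slack coming from the internal application of Chang's theorem inside Sanders--Schoen (which is what converts an approximate shift-stabilizer into an honest Bohr set) and from regularization. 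The main conceptual obstacle is already hidden in the cited almost periodicity theorem itself; given that as a black box, the remaining work is the bookkeeping needed to certify that its output has the stated dimension/radius relative to $B_2$ and that H\"older converts $L^p$-closeness into the desired $\eps\beta\gamma$ discrepancy.
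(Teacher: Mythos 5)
Your proposal is correct and matches the route the paper itself takes: the paper does not prove this theorem but cites it directly to Sanders--Schoen \cite[Theorem~5.4]{SS16} as quoted in \cite[Theorem~8]{Mil24}, and your convolution-plus-H\"older deduction (apply $L^p$ almost periodicity to $g = \E_{y\sim B_1}[\mbm{1}_Y(y)\mbm{1}_D(\cdot-y)]$, then H\"older against $\mbm{1}_Z$ with $p \approx \log(2\gamma^{-1})$ so that $\gamma^{1-1/p} \le e\gamma$) is precisely the standard half-page deduction being referenced there. The dimension bookkeeping also checks out, since $d' = O(\eps^{-4}\log(2\beta^{-1})^3\log(2\gamma^{-1}))$ comfortably absorbs the codimension produced by the Chang-type step inside the cited almost periodicity theorem with your choice of $p$.
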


We now give the proof of \cref{thm:ap+KM}.
\begin{proof}[{Proof of \cref{thm:ap+KM}}]
We first begin with essentially the dependent random choice argument of \cite{KM23}; we follow the proof as in \cite[Theorem~3.7]{FHHK24}. We let $B_3$ be a regular Bohr set with the same frequencies as $B_i$ and such that $r_3/r_2\in [\eta/2,\eta]$. Observe that 
\[\E_{\substack{x\sim B_1\\y\sim B_2\\z\sim B_3}}f_1(x-z)f_2(y + z)g(x+y)\ge (1+9\eps/10)\cdot \E[f_1] \cdot \E[f_2] \cdot \E[g].\]
Let $p = 1000\lceil k\log(1/\eps)/\eps\rceil$ and observe that 
\begin{align*}
\E_{\substack{x\sim B_1\\y\sim B_2\\z\sim B_3}}f_1(x-z)f_2(y + z)g(x+y)&\le (\E_{\substack{x\sim B_1\\z\sim B_3}}f_1(x-z))^{p-1}\cdot \Big(\E_{\substack{x\sim B_1}}\Big(\E_{y\sim B_2}f_2(y+z)g(x+y)\Big)^{p}\Big).
\end{align*}
Via the definition of $p$, this implies that 
\[(1+7\eps/8)^p \cdot (\E[f_2]\E[g])^{p}\le \E_{\substack{x\sim B_1\\y_1,\ldots,y_p\sim B_2\\z\sim B_3}}\prod_{j=1}^pf_2(y_j+z)g(x+y_j).\]
Define 
\[\mc{E}_1(y_1,\ldots,y_p) = \mbm{1}\Big[\E_{x\sim B_1}\prod_{j=1}^pg(x+y_j)\le (\E[g]\E[f_2])^p\text{ or }\E_{z\sim B_3}\prod_{j=1}^pf_2(y_j+z)\le (\E[g]\E[f_2])^p\Big]\]
and observe that 
\[(1+3\eps/4)^p \cdot (\E[f_2]\E[g])^{p}\le \E_{\substack{x\sim B_1\\y_1,\ldots,y_p\sim B_2\\z\sim B_3}}(1-\mc{E}_1(y_1,\ldots,y_p))\prod_{j=1}^pf_2(y_j+z)g(x+y_j).\]
We define 
\[\mc{E}_2(x,z) = \mbm{1}\Big[\E_{y\sim B_2}f_2(y+z)g(x+y)\le (1+5\eps/8) \cdot (\E[f_2]\cdot \E[g])\Big].\]
Observe that 
\begin{align*}
\E_{\substack{x\sim B_1\\y_1,\ldots,y_p\sim B_2\\z\sim B_3}}\mc{E}_2(x,z)&\prod_{j=1}^pf_2(y_j+z)g(x+y_j) \\ 
&\le 10^{-5}\cdot \eps \cdot \E_{\substack{x\sim B_1\\y_1,\ldots,y_p\sim B_2\\z\sim B_3}}(1-\mc{E}_1(y_1,\ldots,y_p))\prod_{j=1}^pf_2(y_j+z)g(x+y_j).\end{align*}
Then there exists $(y_1,\ldots,y_p)\in B_2^{\otimes p}$ such that $\mc{E}_1(y_1,\ldots,y_p) = 0$ and 
\begin{align*}
\E_{\substack{x\sim B_1\\z\sim B_3}}\mc{E}_2(x,z)\prod_{j=1}^pf_2(y_j+z)g(x+y_j)\le 10^{-5}\cdot \eps \cdot \E_{\substack{x\sim B_1\\z\sim B_3}}\prod_{j=1}^pf_2(y_j+z)g(x+y_j).\end{align*}
Let $H_1(x) =\prod_{j=1}^{p}g(x+y_j)$ and $H_2(z) =\prod_{j=1}^{p}f_2(y_j+z)$. By the definition of $\mc{E}_1$ we have that $\E[H_1]\ge (\E[g]\E[f_2])^p$ and $\E[H_2]\ge (\E[g]\E[f_2])^p$. The above conclusion is then equivalent to 
\[\E_{\substack{x\sim B_1\\z\sim B_3}}H_1(x)\mc{E}_2(x,z)H_2(z) \le 10^{-5}\cdot \eps \cdot \E_{\substack{x\sim B_1\\z\sim B_3}}H_1(x)H_2(z).\]

If we now forget the precise details of the construction, we may assume that $H_1:B_1\to \{0,1\}$ and $H_2:B_3\to \{0,1\}$ with $\E[H_1]\ge (\E[g]\E[f_2])^p/2$, $\E[H_2]\ge (\E[g]\E[f_2])^p/2$ and 
\[\E_{\substack{x\sim B_1\\z\sim B_3}}H_1(x)\mc{E}_2(x,z)H_2(z) \le 10^{-5}\cdot \eps \cdot \E_{\substack{x\sim B_1\\z\sim B_3}}H_1(x)H_2(z)\]
via \cref{lem:extract-cor}.

We now observe that 
\[\E_{y\sim B_2}f_2(y+z)g(x+y) = \E_{y\sim B_2}f_2(y)g(x+y-z)+O(\eta d).\]
Let $\mc{E}_3(t) = \mbm{1}[\E_{y\sim B_2}f_2(y)g(t + y)\le (1+9\eps/16) \cdot \E[f_2]\E[g]]$ and note that we have 
\[\E_{\substack{x\sim B_1\\z\sim B_3}}H_1(x)\mc{E}_3(x-z)H_2(z) \le 10^{-5}\cdot \eps \cdot \E_{\substack{x\sim B_1\\z\sim B_3}}H_1(x)H_2(z).\]
This is equivalent to 
\[\E_{\substack{x\sim B_1\\z\sim B_3}}H_1(x)(1-\mc{E}_3(x-z))H_2(z) \ge (1-10^{-5}\cdot \eps) \cdot \E_{\substack{x\sim B_1\\z\sim B_3}}H_1(x)H_2(z).\]
Observe that if $\E_{x\sim B_1}(1-\mc{E}_3(x))\le \E[H_1(x)]/2$ then 
\[\E_{\substack{x\sim B_1\\z\sim B_3}}H_1(x)(1-\mc{E}_3(x-z))H_2(z)\le \E_{\substack{x\sim B_1\\z\sim B_3}}(1-\mc{E}_3(x-z))H_2(z)\le 3/4 \cdot \E_{\substack{x\sim B_1\\z\sim B_3}}H_1(x)H_2(z)\]
which is a contradiction.

We now apply \cref{thm:AP-input}. We first suppose that $\E_{x\sim B_1}(1-\mc{E}_3(x))\ge \E[H_1(x)]$. Then $(1-\mc{E}_3)$ will be $\mbm{1}_D$, $H_1$ will be $\mbm{1}_Y$, and $H_2$ will be $\mbm{1}_Z$. There exists a Bohr set $B'$ of dimension bounded by $d + d'$ with 
\[d'\ll \eps^{-4} p^{4} \cdot \log(1/(\E[f_2]\E[g]))^{4} \ll \eps^{-8}(\log(1/\eps))^4 \cdot k^{8}\]
and radius $r'\gg r_3 \cdot \eps \cdot (\E[f_2]\E[g])^{-O(k\log(1/\eps)/\eps)}/d^4$ such that 
\[\E_{\substack{x\sim B_1\\z\sim B_3\\t\sim B'}}H_1(x)(1-\mc{E}_3(x-z+t)H_2(z) \ge (1-10^{-4}\cdot \eps) \cdot \E_{\substack{x\sim B_1\\z\sim B_3}}H_1(x)H_2(z).\]
Else we have that $\E_{x\sim B_1}(1-\mc{E}_3(x))\in [\E[H_1(x)]/2,\E[H_1(x)]]$ and observe that 
\[\E_{\substack{x\sim B_1\\z\sim B_3}}H_1(x+z)(1-\mc{E}_3(x))H_2(z) \ge (1-2\cdot 10^{-5}\cdot \eps) \cdot \E_{\substack{x\sim B_1\\z\sim B_3}}H_1(x)H_2(z).\]
Thus (taking $\mbm{1}_Z = H_2$, $\mbm{1}_D = H_1$ and $\mbm{1}_Y(y) = (1-\mc{E}_3)(-y)$) there exists $B'$ with the same properties as earlier such that 
\[\E_{\substack{x\sim B_1\\z\sim B_3\\t\sim B'}}H_1(x+z+t)(1-\mc{E}_3(x))H_2(z) \ge (1-5\cdot 10^{-5}\cdot \eps) \cdot \E_{\substack{x\sim B_1\\z\sim B_3}}H_1(x)H_2(z).\]
Via a change of variable we get 
\begin{align*}
    \E_{\substack{x\sim B_1\\z\sim B_3\\t\sim B'}}H_1(x)(1-\mc{E}_3(x-z-t))H_2(z) &= \E_{\substack{x\sim B_1\\z\sim B_3\\t\sim B'}}H_1(x)(1-\mc{E}_3(x-z+t))H_2(z) \\
    &\ge (1-10^{-4}\cdot \eps) \cdot \E_{\substack{x\sim B_1\\z\sim B_3}}H_1(x)H_2(z);
\end{align*}
this is the same conclusion as earlier. 

This immediately gives the desired conclusion modulo unwinding definitions. Observe that we have that 
\[\E_{\substack{x\sim B_1\\z\sim B_3\\t\sim B'}}H_1(x)(1-\mc{E}_3(x-z+t))H_2(z) \ge (1-10^{-4}\cdot \eps) \cdot \E_{\substack{x\sim B_1\\z\sim B_3}}H_1(x)H_2(z).\]
Via the definition of $\mc{E}_3$, we obtain that 
\[\E_{\substack{x\sim B_1\\y\sim B_2\\z\sim B_3}}H_1(x)H_2(z)f_2(y)\Big(\E_{t\sim B'}g(x-z+y + t)\Big)\ge (1+17\eps/32) \cdot \E[H_1] \cdot \E[H_2]\cdot \E[f_2] \cdot \E[g].\]
Therefore there exists $x\in B_1, y\in B_2, z\in B_3$ such that 
\[\Big(\E_{t\sim B'}g(x-z+y + t)\Big)\ge (1+17\eps/32) \cdot \E[g]\]
as desired.
\end{proof}

\bibliographystyle{amsplain0}
\bibliography{main.bib}
\end{document}